\documentclass[10pt]{amsart}
\pdfoutput=1
\usepackage{ mathrsfs }
\usepackage{amsmath, amsthm, amssymb,slashed,stmaryrd}
\usepackage{ifpdf}
\usepackage[pdftex]{graphicx}
\usepackage{tikz}
\usetikzlibrary{matrix,arrows,calc}

\usepackage[pdftex,plainpages=false,hypertexnames=false,pdfpagelabels]{hyperref}
 \setlength\topmargin{0in}
\setlength\headheight{0in}
\setlength\headsep{.2in}
\setlength\textheight{9in}
\addtolength{\hoffset}{-0.25in} 
\addtolength{\textwidth}{.5in} 
\setlength\parindent{0.25in} 

\long\def\todo#1{{\color{red} {#1}}}
\long\def\stodo#1{{\color{blue} {#1}}}

 \theoremstyle{plain}
 \newtheorem{thm}{Theorem}[section]
    
 \newtheorem{cor}[thm]{Corollary}
 \newtheorem{lem}[thm]{Lemma}
 \newtheorem{prop}[thm]{Proposition}
 
 \theoremstyle{definition}
 \newtheorem{defn}[thm]{Definition}
 \newtheorem{notation}[thm]{Notation}
 \newtheorem{ex}[thm]{Example}
 \newtheorem{constr}[thm]{Construction} 
 \newtheorem*{thm*}{Theorem}
 \theoremstyle{remark}
 \newtheorem{rmk}[thm]{Remark}

\def\beq{\begin{eqnarray}}
\def\eeq{\end{eqnarray}}
 \newcommand{\bp}{\begin{proof}[Proof]}
 \newcommand{\ep}{\end{proof}}

\DeclareSymbolFont{bbold}{U}{bbold}{m}{n}
\DeclareSymbolFontAlphabet{\mathbbold}{bbold}

\def\Ell{{\rm Ell}}
\def\bS{{\mathbb{S}}}
\def\dEll{\widehat{\rm Ell}{}}
\def\TMF{{\rm TMF}}

\def\Eu{{\rm Eu}}
\def\Th{{\rm Th}}

\def\MO{{\rm MO}}
\def\MString{{\rm MString}}
\def\O{{\mathcal{O}}}
\def\H{{\rm H}}
\def\HH{{\mathbb H}}
\def\Spin{{\rm Spin}}
\def\U{{\rm U}}
\def\SU{{\rm SU}}
\def\SO{{\rm SO}}
\def\BSO{{\rm BSO}}
\def\BU{{\rm BU}}
\def\BO{{\rm BO}}
\def\BSU{{\rm BSU}}

\def\Ad{{\rm Ad}}
\def\Map{{\sf Map}}

\def\MU{{\rm MU}}
\def\Mell{\mathcal{M}_{\rm ell}}
\def\che{{\sf c}}

\def\EE{{\mathcal{E}}}
\def\Euni{\mathcal{E}}

\def\pt{{\rm pt}}

\def\Sym{{\rm Sym}}
\def\Poly{{\rm Poly}}

\def\MF{{\rm MF}}
\def\JF{{\rm JF}}
\def\vol{{\rm vol}}

\def\Pic{{\sf Pic}}
\def\ev{{\rm ev}}
\def\odd{{\rm odd}}

\def\ch{{\rm ch}}
\def\Wit{{\rm Wit}}

\def\R{{\mathbb{R}}}
\def\fg{{\mathfrak{g}}}
\def\M{{\mathbb{M}}}
\def\E{{\mathbb{E}}}

\def\CP{{\mathbb{CP}}}

\def\id{{{\rm id}}}
\def\K{{\rm {K}}}
\def\C{{\mathbb{C}}}

\def\Z{{\mathbb{Z}}}
\def\X{{\mathcal{X}}}
\def\F{{\mathcal{F}}}

\def\Bun{{\sf Bun}}

\def\SL{{\rm SL}}

\def\Hom{{\sf Hom}}
\def\SM{{\sf Map}}
\def\Rep{{\rm Rep}}
\def\Spec{{\rm Spec}}


\def\Lat{{\sf Lat}}
\def\cL{{\mathcal{F}}}

\newcommand{\op}{{\sf{op}}}   
\newcommand{\sq}{/}
\newcommand{\cq}{/^c}
\newcommand{\nsq}{{\sq^{\!\nabla}}\!}

\def\downin{\ensuremath{\rotatebox[origin=c]{90}{$\in$}}}

\vfuzz4pt 
\hfuzz4pt 

\newcommand\nc{\newcommand}
\nc\mf\mathfrak
\nc\mc\mathcal
\nc\mb\mathbb

\begin{document}

\title[A de Rham model for equivariant elliptic cohomology]{A de Rham model for complex analytic equivariant elliptic cohomology}

\author{Daniel Berwick-Evans and Arnav Tripathy}

\date{\today}

\begin{abstract}
We construct a cocycle model for complex analytic equivariant elliptic cohomology that refines Grojnowski's theory when the group is connected and Devoto's when the group is finite. We then construct Mathai--Quillen type cocycles for equivariant elliptic Euler and Thom classes, explaining how these are related to positive energy representations of loop groups. Finally, we show that these classes give a unique complex analytic equivariant refinement of Hopkins' ``theorem of the cube" construction of the $\MString$-orientation of elliptic cohomology. 
\end{abstract}

\maketitle 
\setcounter{tocdepth}{1}
\tableofcontents

\section{Introduction}\label{sec:conv}


Equivariant K-theory facilitates a rich interplay between representation theory and topology. For example, universal Thom classes come from representations of spin groups; power operations are controlled by the representation theory of symmetric groups; and
the equivariant index theorem permits geometric constructions of representations of Lie groups. 

\emph{Equivariant elliptic cohomology} is expected to lead to an even deeper symbiosis between representation theory and topology. First evidence appears in the visionary work of Grojnowski~\cite{Grojnowski} and Devoto~\cite{DevotoII}. 
Grojnowski's complex analytic equivariant elliptic cohomology (defined for connected Lie groups) makes contact with positive energy representations of loop groups~\cite{Ando,GanterEllipticWCF}. 
Devoto's construction (defined for finite groups) interacts with moonshine phenomena~\cite{BakerThomas,GanterHecke,Morava}. 

Equivariant elliptic cohomology over the complex numbers is already a deep object. By analogy, equivariant K-theory with complex coefficients subsumes the character theory of compact Lie groups, which in turn faithfully encodes their representation theory. Analogously, equivariant elliptic cohomology over the complex numbers should be viewed as a home for ``elliptic character theory," although the complete picture of what elliptic representation theory really \emph{is} remains an open question~\cite{Segal_Elliptic,GKV,HKR,BZN1}. 


This paper gives a cocycle model for complex analytic equivariant elliptic cohomology as a sheaf of commutative differential graded algebras on the moduli space of $G$-bundles over elliptic curves. The approach is uniform in the group~$G$. When $G$ is connected, we recover a cocycle model for Grojnowski's equivariant elliptic cohomology, and when $G$ is finite we recover a cocycle model for Devoto's. One great utility of cocycle models is that they bring new computational tools for applications. 
The elliptic cocycles presented below are concrete and explicit, namely compatible equivariant differential forms on certain fixed point sets. This makes them well-suited for applications. 

One source of such applications has been long in the making. Indeed, Grojnowski's original motivation for studying equivariant elliptic cohomology was to construct certain elliptic algebras, e.g., an elliptic analog of the affine Hecke algebra. Crucially, he recognized that such algebras should arise geometrically by applying equivariant elliptic cohomology to certain varieties, such as the Steinberg variety. This is the third step in the program that produces increasingly sophisticated representation-theoretic objects by applying first ordinary equivariant cohomology, then equivariant K-theory, and next equivariant elliptic cohomology to varieties built out of algebraic groups. The cohomological and K-theoretic variants of this paradigm have already met great success, notably in Kazhdan--Lusztig's K-theoretic construction of the affine Hecke algebra~\cite{KL}. The program has seen further development in recent years with the expectation of new examples from supersymmetric gauge theory~\cite{BDGH, BDGHK}. In the corresponding mathematical theory of symplectic resolutions, the closely related work of Maulik--Okounkov~\cite{MO} constructs representations of generalized quantum groups by applying equivariant cohomology theories to Nakajima quiver varieties. Equivariant elliptic cohomology is starting to play an increasingly important role at this nexus of representation theory, geometry and physics, e.g., in the work of Zhao--Zhong~\cite{ZhaoZhong} and Yang--Zhao~\cite{YangZhao}. The construction by Aganagic--Okounkov of \emph{elliptic stable envelopes}~\cite{AganagicOkounkov} in the (extended) equivariant elliptic cohomology of symplectic resolutions has far-reaching consequences in enumerative geometry and integrable systems. In particular, it interweaves with the recent elliptic Schubert calculus of Rimanyi and Weber~\cite{Rimanyi}. We emphasize that these applications are already quite interesting for complex analytic equivariant elliptic cohomology; refinements to objects over~$\Z$ will further deepen the story. 

Such refinements are the subject of Lurie's ongoing work as surveyed in~\cite{Lurie} with the state of the art being finite group equivariant elliptic cohomology~\cite{LurieIII}. The setup is inherently derived: Lurie's equivariant elliptic cohomology arises as a certain sheaf of $E_\infty$-ring spectra. The cocycle model below begins to bridge the gap between Grojnowski's approach and Lurie's. Indeed, over the complex numbers $E_\infty$-ring spectra can be modeled by commutative differential graded algebras (cdgas). Our model for equivariant elliptic cohomology is a sheaf of cdgas on a moduli space of $G$-bundles over elliptic curves. The higher derived sections of this sheaf are previously unexplored and further intertwine representation theoretic data with the rich geometry of elliptic curves, e.g., see Remark~\ref{rmk:derived} and Example~\ref{ex:derived} below. 


\subsection*{Motivation for the definition of elliptic cocycles} The precise form of our definition of elliptic cocycles (Definition~\ref{defn:ellcocycle}) takes motivation from three sources. 

The first is the preexisting de~Rham model for complexified equivariant K-theory. Cocycles in this case are ``bouquets" of equivariant differential forms that assemble into sections of a sheaf over the moduli space of $G$-bundles on the circle~$S^1$, or equivalently, the quotient of a Lie group~$G$ acting on itself by conjugation; see Block--Getzler~\cite[\S1]{BlockGetzler}, Duflo--Vergne~\cite{DufloVergne} and Vergne~\cite[Definition~23]{Vergne}. These bouquets appear naturally in equivariant index theory, as discussed in~\cite[Chapter~7 Additional remarks]{BGV}. 
With the correct perspective, Definition~\ref{defn:ellcocycle} is a natural generalization to \emph{elliptic bouquets} as a sheaf on the moduli space of $G$-bundles on elliptic curves. Indeed, Vergne has recently (and independently) produced a de~Rham model similar to Definition~\ref{defn:ellcocycle} in the special case of $\U(1)$-equivariant elliptic cohomology~\cite{VergneEll}.

The second motivation comes from ``delocalizing" Borel equivariant elliptic cohomology, as emphasized by Grojnowski~\cite[\S1]{Grojnowski}. As reviewed in~\S\ref{sec:complete} below, the Atiyah--Segal completion theorem compares equivariant K-theory with Borel equivariant K-theory. For $G=\U(1)$, the Atiyah--Segal completion map~\eqref{eq:AtiyahSegal} restricts functions on the multiplicative group $\mathbb{G}_m$ to functions on the \emph{formal} multiplicative group, $\widehat{\mathbb{G}}_m$. Demanding naturality in the group $G$ and using techniques of reduction to maximal tori, much of equivariant K-theory can then be constructed out of the multiplicative group~\cite[\S2]{Lurie}. By the definition of elliptic cohomology, the Borel equivariant elliptic cohomology group~$\Ell(B\U(1))$ can be interpreted as functions on the (moduli of) formal elliptic groups. In the spirit of delocalization, one might expect $\U(1)$-equivariant elliptic cohomology to be (the sheaf of) functions on the universal elliptic curve. If one then starts with the de~Rham model for Borel equivariant elliptic cohomology and then follows Grojnowski's delocalization procedure, this gives another road to Definition~\ref{defn:ellcocycle} for $G=\U(1)$. Further exploiting naturality in the group and reducing to maximal tori then leads to the general definition, much in the same spirit of~\cite[\S2.6]{Grojnowski} and~\cite[\S3.4-3.5]{Lurie}.

The third motivation for Definition~\ref{defn:ellcocycle} is an anticipated relationship between elliptic cohomology and 2-dimensional supersymmetric quantum field theory through a conjectured isomorphism~\cite{Witten_Dirac,Segal_Elliptic,ST04,ST11}
\beq
&&\left\{\begin{array}{c} 2{\rm -dimensional \ quantum\ field\ theories} \\ {\rm with} \ \mathcal{N}=(0,1)\ {\rm supersymmetry\ over}\ M\end{array}\right\}/{\rm deformation} \stackrel{\sim}{\dashrightarrow} \TMF(M)\label{eq:TMFconj}
\eeq
that realizes deformation classes of field theories as classes in the universal elliptic cohomology theory of topological modular forms (TMF). This cohomology theory is constructed as the global sections of a sheaf of $E_\infty$-ring spectra over the moduli stack of elliptic curves. One of the great challenges is to relate this sophisticated homotopical object to quantum field theory: at a superficial level, the candidate objects from physics have absolutely nothing to do with the objects in homotopy theory. In confronting this challenge, Lurie suggests~\cite[\S5.5]{Lurie} that an equivariant refinement would go a long way to constructing the map~\eqref{eq:TMFconj}. 

Pondering such an equivariant refinement of~\eqref{eq:TMFconj} is what originally lead us to Definition~\ref{defn:ellcocycle} of equivariant elliptic cocycles, albeit by a fairly circuitous route. To summarize, a model for (non-equivariant) complex analytic elliptic cohomology of a manifold~$M$ comes from considering functions on the moduli space of classical fields for the 2-dimensional $\mathcal{N}=(0,1)$ supersymmetric sigma model with target $M$~\cite{DBE_MQ}. Turning on background gauge fields for a gauge group $G$ that acts on $M$ results in a moduli space of fields whose functions are a model for complex analytic equivariant elliptic cohomology of the $G$-manifold~$M$~\cite{Ell1}. This equivariant moduli space is a union of \emph{twist fields} parameterized by pairs of commuting elements in~$G$, e.g., see~\cite{stringorbifolds}. Functions on twist fields for a fixed pair of commuting elements is precisely the data~\eqref{eq:cocycledef} for an elliptic cocycle, and compatibility between twist fields begets properties (1) and (2) in Definition~\ref{defn:ellcocycle}. We explain this connection to physics more fully in the companion paper~\cite{Ell1}. Together with the de~Rham model of this paper, we obtain an equivariant refinement of the isomorphism~\eqref{eq:TMFconj} over~$\C$. The easier case of a finite group is treated in~\cite{DBE_Equiv}. We view these results as a first step towards Lurie's proposed construction of the isomorphism~\eqref{eq:TMFconj}.


\subsection*{Outline and overview of results} 
Let~$G$ be a compact Lie group.
We construct a cocycle model for $G$-equivariant complex analytic elliptic cohomology as a functor~$\dEll_G^\bullet$ from $G$-manifolds to sheaves of commutative differential graded algebras (cdgas) on a stack~$\Bun_G(\EE)$. 

In~\S\ref{sec:1} we define the stack~$\Bun_G(\EE)$ and describe some of its basic geometric features. Roughly, $\Bun_G(\EE)$ classifies isomorphism classes of flat $G$-bundles over complex analytic elliptic curves. When $G=T$ is a torus, we identify
\beq
\Bun_T(\EE)\simeq \underbrace{\mathcal{E}^\vee\times_{\Mell} \cdots \times_{\Mell} \mathcal{E}^\vee}_{{\rm rk}(T){\rm \ times}}\label{eq:fibprodEll}
\eeq
with the iterated fiber product of the (dual) universal elliptic curve over the moduli stack~$\Mell$ of elliptic curves. This gives $\Bun_T(\EE)$ a holomorphic structure, and $\Bun_G(\EE)$ has a similar holomorphic structure for general $G$. Supposing that~$G$ is connected, $\Bun_G(\EE)$ supports holomorphic line bundles called \emph{Looijenga line bundles}. When $G$ is simple and simply connected, sections are spanned by (super) characters of positive energy representations of the loop group~$LG$, where the level of the representation determines the isomorphism class of the Looijenga line bundle. When~$G$ is finite, $\Bun_G(\EE)$ supports line bundles constructed by Freed and Quinn~\cite{FreedQuinn} in their study of Chern--Simons theory. Such line bundles are central to generalized moonshine when $G$ is the monster group; see Remark~\ref{rmk:moonshine}. 

In~\S\ref{sec:users} we define the sheaf~$\dEll_G^\bullet(M)$ for a $G$-manifold~$M$ and derive some of its basic properties. For example, there is a canonical identification $\dEll_G^0(\pt)\simeq \mathcal{O}_{\Bun_G(\EE)}$ with the sheaf of holomorphic functions on $\Bun_G(\EE)$. This gives $\dEll_G^\bullet(M)$ the canonical structure of a sheaf of $\mathcal{O}_{\Bun_G(\EE)}$-modules (Proposition~\ref{prop:analytic}). We show that restricting~$\dEll_G^\bullet(M)$ along the section $0\colon \Mell\to \Bun_G(\EE)$ associated to the trivial $G$-bundle gives a map to Borel equivariant elliptic cohomology. This constructs an elliptic Atiyah--Segal completion map (Theorem~\ref{thm:complete}) in the same spirit as advocated in~\cite[\S2]{Lurie}. 

In~\S\ref{sec:Groj} we prove that $\dEll_G^\bullet(M)$ is a cocycle refinement of Grojnowski's complex analytic equivariant elliptic cohomology when~$G$ is connected (Theorem~\ref{thm:Groj}). In~\S\ref{sec:Devoto} we prove that $\dEll_G^\bullet(M)$ is a cocycle refinement of Devoto's equivariant elliptic cohomology over~$\C$ when $G$ is finite (Theorem~\ref{thm:Devoto}). 
In these sections we also briefly review the preexisting definitions. 

In~\S\ref{sec:Thom} we construct cocycle representatives of equivariant elliptic Euler and Thom classes for the groups $G=\U(n)$ and $\Spin(2n)$ (Propositions~\ref{thm:Euler1} and~\ref{thm:Thom}, respectively). These cocycles come from products of certain theta functions, interpreted as sections of the sheaf~$\dEll_G$ twisted by a (Looijenga) line bundle. In this way, the Euler and Thom cocycles are determined by characters of level~1 vacuum representations of loop groups, see Proposition~\ref{thm:Euler2}. 

Thom classes determine orientations for equivariant elliptic cohomology, leading to elliptic Chern classes of vector bundles, wrong-way maps, and elliptic fundamental classes of appropriately oriented submanifolds. The $\U(n)$-equivariant Thom cocycle therefore leads to an equivariant and cocycle refinement of the $\MU\langle 6\rangle$-orientation, and the $\Spin(2n)$-equivariant Thom cocycle leads to an equivariant and cocycle refinement of the ${\rm MString}=\MO\langle 8\rangle$-orientation. We verify compatibility with the corresponding nonequivariant classes in complex analytic elliptic cohomology in Theorem~\ref{thm:compat}. 


In~\S\ref{sec:FGL}, we compare the equivariant characteristic classes from~\S\ref{sec:Thom} with the ones studied by Hopkins~\cite{HopkinsICM94} and Ando--Hopkins--Strickland~\cite{AHSI} in their construction of $\MU\langle 6\rangle$- and $\MString$-orientations of elliptic cohomology theories and~TMF. First, we make a basic observation: complex orientations of elliptic cohomology theories over~$\C$ do not admit equivariant refinements (Proposition~\ref{eq:nogo}). However, they do admit \emph{unique} twisted equivariant refinements (Proposition~\ref{prop:twistedequiv}). Next we turn our attention to $\MU\langle 6\rangle$-orientations. Recall there is a version of the splitting principle for bundles with $\U\langle 6 \rangle$-structure so that characteristic classes are determined by the class associated to the virtual vector bundle~$(L_1 - 1) \otimes (L_2 - 1) \otimes (L_3 - 1)$ over $[\pt/\U(1)]^{\times 3}$ for $L_i$ the tautological bundles on their respective factors. There is a (non-equivariant) characteristic class of this virtual vector bundle determined by the $\MU\langle6\rangle$- and $\MString$-orientations of elliptic cohomology. We show this class has a unique complex analytic equivariant refinement (Theorem~\ref{thm:cube}), whose existence is a consequence of the theorem of the cube. Finally, we show that this equivariant class agrees with the Euler class associated from the unique twisted equivariant complex orientation.  

In Appendix A, we give some essential background for the paper, starting in~\S\ref{sec:technicalities} with some useful results in Lie theory. We briefly review the Cartan model for equivariant cohomology in~\S\ref{sec:equivdeRham}. Finally, we use the language of smooth stacks (i.e., stacks on the site of smooth manifolds) throughout the paper; we review some key aspects in~\S\ref{appen:stacks}.

\subsection*{Notation and conventions}

For simplicity we assume that a $G$-manifold $M$ embeds $G$-equivariantly into a finite-dimensional $G$-representation. This is automatically satisfied when $M$ is compact by results of Mostow~\cite{Mostow} and Palais~\cite{Palais}. 

For a $G$-manifold $M$, we use the notation~$M/G$ to denote the Lie groupoid quotient with underlying stack $[M/G]$. Let $M\cq G$ denote the coarse quotient, taken in sheaves on the site of smooth manifolds. When the $G$-action is free, the stack $[M/G]$ is representable and we often identify it with the (coarse) quotient in manifolds. We refer to~\S\ref{appen:stacks} for more detail. We remark that in topology sometimes $M/\!\!/G$ is used to denote the stacky quotient, but we avoid this notation because it conflicts with the standard notation for the GIT quotient. 

Sheaf conditions are always imposed in the strict (i.e., non-homotopical) sense. For example, a sheaf of commutative differential graded algebras is a chain complex of sheaves with a commutative graded multiplication. 

Tensor products of algebras of functions or spaces of sections will always be taken as the projective tensor product of Fr\'echet spaces. This is a completion of the algebraic tensor product having the key property that $C^\infty(M)\otimes C^\infty(N)\simeq C^\infty(M;C^\infty(N))\simeq C^\infty(M\times N)$ for manifolds $M$ and $N$. 

Finally, we view modular forms as functions on the upper half plane $\HH$ with properties. The sheaf of holomorphic functions on $\HH$ will always be taken to be the one that imposes meromorphicity at infinity, so that by ``modular forms," we always mean ``weakly holomorphic modular forms." More precisely, for an open $U\subset \HH$, the sections $\mathcal{O}(U)$ are the holomorphic functions on $U$ with at most polynomial growth along any geodesic (in the hyperbolic metric) escaping to $\partial \HH$.
 
\subsection*{Acknowledgements}
Ian Grojnowski's vision for deploying equivariant elliptic cohomology to study representation-theoretic problems has been a constant source of inspiration throughout the duration of this project. A tremendous amount of elliptic cohomology was also developed in notes of Mike Hopkins, to whom we owe a great intellectual debt. 
We also wish to thank Matt Ando, Nora Ganter, Tom Nevins, Andrei Okounkov, and Charles Rezk for stimulating conversations, 
Kiran Luecke for comments on an earlier draft, and anonymous referees whose comments helped improve the exposition. Finally, A.T. acknowledges the support of MSRI and the NSF through grants~1705008 and~1440140.

\section{$G$-bundles on elliptic curves} \label{sec:1}

This section uses the language of smooth stacks; we refer to~\S\ref{appen:stacks} for a brief introduction. In particular, see Example~\ref{ex:stackquotient} for the definition of a quotient stack, Definition~\ref{defn:atlas} for the notion of an atlas of a differentiable stack, and Definition~\ref{defn:holomorphic} for the notion of a holomorphic structure on a smooth stack. 

\subsection{Elliptic curves}
Consider the $\SL_2(\Z)$-action on the upper half plane $\HH$ by fractional linear transformations,
\beq
\left(\left[\begin{array}{cc} a & b \\ c& d\end{array}\right] ,\tau\right)\mapsto \frac{a\tau+b}{c\tau+d},\qquad \left[\begin{array}{cc} a & b \\ c& d\end{array}\right]\in \SL_2(\Z), \ \ \tau\in \HH. \label{eq:fraclin}
\eeq
Define the \emph{moduli stack of elliptic curves} as the quotient stack,
$$
\Mell\simeq[\HH/\SL_2(\Z)].
$$
Since the $\SL_2(\Z)$-action preserves the complex structure on $\HH$, the map $\HH\to \Mell$ is a holomorphic atlas that endows the smooth stack $\Mell$ with a complex analytic structure. 

Consider the quotient manifold
\beq
\widetilde{\Euni}:=(\C\times \HH)/\Z^2,\label{eq:Euniv}
\eeq
for the free $\Z^2$-action $(n,m)\cdot (z,\tau)= (z+\tau n+m,\tau)$ where $z\in\C$, $\tau\in \HH$ and $(n,m)\in \Z^2$. There is an action of $\SL_2(\Z)$ on $\widetilde{\Euni}$ that covers the action~\eqref{eq:fraclin} on~$\HH$,
$$
\left(\left[\begin{array}{cc} a & b \\ c &d\end{array}\right],z,\tau\right)\mapsto \left(\frac{z}{c\tau+d},\frac{a\tau+b}{c\tau+d}\right),\qquad z\in \C, \ \tau\in \HH, \ \left[\begin{array}{cc} a & b \\ c &d\end{array}\right]\in \SL_2(\Z).
$$
Define the \emph{universal elliptic curve} as the quotient stack $\Euni\simeq [\widetilde{\Euni}/\SL_2(\Z)]$. The evident map $\C\times \HH\to \EE$ provides a holomorphic atlas for~$\EE$, and the projection $\C\times \HH\to \HH$ induces a map of complex analytic stacks $\Euni \to \Mell.$ We remark that for a complex manifold $S$ and a map $f\colon S\to \Mell$ of complex analytic stacks, the 2-pullback $f^*\Euni\to S$ gives the family of complex analytic elliptic curves over $S$ classified by~$f$. 

There is a similarly defined universal \emph{dual} elliptic curve, $\Euni^\vee\simeq [\widetilde{\Euni}^\vee/\SL_2(\Z)]$, for $\widetilde{\Euni}^\vee$ defined as a quotient as in~\eqref{eq:Euniv} but for the $\Z^2$-action 
\beq
(n,m)\cdot (z,\tau)= (z+n-\tau m,\tau).\label{eq:Eunivee}
\eeq
The evident map $\C\times \HH\to \EE^\vee$ also provides a holomorphic atlas for $\Euni^\vee$, and the obvious map $\Euni^\vee\to \Mell$ is again a map of complex analytic stacks. 

\begin{rmk} \label{rmk:dualell}
More geometrically, the dual of an elliptic curve is its space of degree-zero line bundles. In the complex analytic setting, this is the space of topologically trivial line bundles endowed with flat, unitary connections. We identify a point in $\widetilde{\Euni}^\vee$ with such a line bundle as follows: $(x-\tau y,\tau)\in \widetilde{\Euni}^\vee$ for $x,y\in \R$ gets sent to the line bundle $L$ corresponding to the one-dimensional representation of the fundamental group 
$$
\pi_1(E_\tau)=\pi_1(\C/\langle\tau,1\rangle)\to \U(1),\qquad  \tau m+n\mapsto e^{2\pi i (mx+ny)}. 
$$
\end{rmk}


\subsection{The smooth stack $\Bun_G(\EE)$ of $G$-bundles}\label{sec:BunGE}

A smooth manifold $N$ determines a sheaf on the site of manifolds whose value on $S$ is the set of smooth maps $S\to N$. Below we will often identify a manifold with its representable sheaf, where $N(S)$ denotes the value of this sheaf on the manifold $S$, also called the \emph{$S$-points of $N$}. 

An interesting class of non-representable sheaves on the site of smooth manifolds arises by considering (non-smooth) subsets of a smooth manifold. We refer to Example~\ref{ex:subobject} for a discussion of subobject sheaves.

\begin{defn} \label{defn:C2G}
Define $\mathcal{C}^2(G)$ as the subobject of the representable sheaf $G\times G$ whose $S$-points are smooth maps $S\to G\times G$ that pointwise commute in $G$, 
$$
\mathcal{C}^2(G)(S):=\{h_1,h_2\colon S\to G\mid h_1(s)h_2(s)=h_2(s)h_1(s) \ \forall s\in S\}. 
$$
Equivalently, $\mathcal{C}^2(G)$ is the sheaf of smooth families of homomorphisms $\Z^2\to G$. 
\end{defn}

Typically the sheaf $\mathcal{C}^2(G)$ fails to be representable when $G$ is nonabelian. To avoid cluttering some formulas below, we often use the notation $h\in \mathcal{C}^2(G)$ to denote a pair of commuting elements $h=(h_1,h_2)\in G\times G$, rather than the more cumbersome $h\in \mathcal{C}^2(G)(\pt)$. 

We refer to Example~\ref{ex:coarsequotient} for a discussion of coarse quotients in sheaves. 

\begin{defn}
Let $\mathcal{C}^2[G]$ denote the coarse quotient sheaf, $\mathcal{C}^2(G)\cq G_0=:\mathcal{C}^2[G]$ where $G_0<G$ is the connected component of the identity acting on~$\mathcal{C}^2(G)\subset G\times G$ by restriction of the conjugation action $g\cdot(h_1,h_2)=(gh_1g^{-1},gh_2g^{-1})$ for $g\in G_0$ and $(h_1,h_2)\in G\times G$. 
\end{defn}

There is a left action of $\SL_2(\Z)$ on the sheaf $\mathcal{C}^2(G)$
\beq
(h_1,h_2)\mapsto (h_1^dh_2^{-b},h_1^{-c}h_2^a),\qquad \left[\begin{array}{cc} a & b \\ c & d\end{array}\right]\in \SL_2(\Z), \quad h_1,h_2\colon S\to G.\label{eq:SL2Zaction}
\eeq
This is the precomposition $\SL_2(\Z)$-action on families of homomorphisms~$\Z^2\to G$; note that precomposition actions are naturally right actions, and the signs in~\eqref{eq:SL2Zaction} are the result of turning this into a left action. The action~\eqref{eq:SL2Zaction} descends to the quotient $\mathcal{C}^2[G]$. There is a residual $\pi_0(G)$-action on $\mathcal{C}^2[G]$ by conjugation, and this commutes with the $\SL_2(\Z)$-action. Viewing $\HH$ as a representable sheaf with the $\SL_2(\Z)$-action~\eqref{eq:fraclin},  we obtain an $\SL_2(\Z)\times\pi_0(G)$-action on the sheaf $\HH\times \mathcal{C}^2[G]$ which defines the generalized (action) Lie groupoid $\HH\times \mathcal{C}^2[G]/\SL_2(\Z)\times \pi_0(G)$, i.e., a groupoid object in sheaves, see Definition~\ref{defn:genLiegrpd}. 


\begin{defn}\label{defn:BunG}
Define the smooth stack
\beq
\Bun_G(\EE)\simeq [\HH\times \mathcal{C}^2[G]/\SL_2(\Z)\times \pi_0(G)]\label{eq:BunG}
\eeq
as the stack underlying the generalized Lie groupoid. This stack is natural in $G$: a homomorphism $G\to H$ determines a functor $\Bun_G(\EE)\to \Bun_H(\EE).$
\end{defn}

\begin{rmk}
The projection $\Bun_G(\EE)\to \Mell$ witnesses $\Bun_G(\EE)$ as a \emph{relative} coarse moduli space of $G$-bundles on elliptic curves. Indeed, a pair of commuting elements defines a flat $G$-bundle on an elliptic curve with chosen generators for its fundamental group, and a conjugacy class of such a pair is a $G$-bundle up to isomorphism. Hence for $G$ connected, if we fix an elliptic curve and generators for its fundamental group (specified in terms of $\tau\in \HH$), the fiber of $\Bun_G(\EE)$ at $\tau$ is the moduli space of isomorphism classes of~$G$-bundles.
\end{rmk} 

\begin{rmk} 
Categorically-minded readers might find the above version of $\Bun_G(\EE)$ peculiar, preferring instead the stack that records all isomorphisms between $G$-bundles. However, $\Bun_G(\EE)$ as defined in~\eqref{eq:BunG} turns out to be the right home for complex analytic equivariant elliptic cohomology. Lurie's construction also takes place over a moduli space of $G$-bundles rather than the full moduli stack~\cite[Remark~5.1]{Lurie}. One subtlety in~\eqref{eq:BunG} is that we do not pass completely to the coarse quotient $\mathcal{C}^2(G)\cq G=\mathcal{C}^2[G]\cq \pi_0(G)$ because the action by~$\pi_0(G)$ is important for certain desired applications of equivariant elliptic cohomology for finite groups; see Remark~\ref{rmk:pi0}.
\end{rmk} 

\subsection{Sheaves on $\Bun_G(\EE)$} 

We shall define sheaves on $\Bun_G(\EE)$ as $\SL_2(\Z)\times \pi_0(G)$-equivariant sheaves on $\HH\times \mathcal{C}^2[G]$. Since $\mathcal{C}^2[G]$ typically fails to be representable, we first require the following definition. 

\begin{defn} \label{defn:opensub} A \emph{subsheaf} of a sheaf~$F$ is a morphism of sheaves $U\to F$ with the property that the induced map of sets $U(S)\to F(S)$ is injective for each manifold~$S$.  An \emph{open subsheaf} of a sheaf $F$ is a subsheaf $U \subset F$ with the property that for any representable sheaf~$S$ and any $S$-point $S\to F$, the pullback 
\beq \begin{tikzpicture}[baseline=(basepoint)]; 
\node (A) at (0,0) {$U_S$}; 
\node (B) at (4,0) {$U$}; 
\node (C) at (0,-1) {$S$}; 
\node (D) at (4,-1) {$F$};
\draw[->] (A) to (B); 
\draw[->] (A) to (C); 
\draw[->] (B) to  (D);
\draw[->] (C) to  (D);
\path (0,-.75) coordinate (basepoint); 
\end{tikzpicture}\label{eq:opensubsheafpull}
\eeq
is representable by a manifold, and the map $U_S\to S$ of representable sheaves is determined by the inclusion of an open subsubmanifold in $S$. Let ${\rm Open}(F)$ denote the category whose objects are open subsheaves of $F$ and morphisms are inclusions, which we write e.g., as $U \subset V \subset F$. An \emph{open cover} of a sheaf $F$ is a collection of open subsheaves $\{U_\alpha \to F\}$ with the property that for any representable sheaf~$S$ and any $S$-point $S\to F$, the pullback~\eqref{eq:opensubsheafpull} is representable and determines an open cover $\{(U_\alpha)_S \to S\}$ of the manifold~$S$. 
\end{defn}

\begin{ex}\label{ex:representableopen} An open submanifold $U\subset N$ of a manifold $N$ determines an open subsheaf of the sheaf represented by $N$. Similarly, an open cover of $N$ determines an open cover of the sheaf represented by $N$. \end{ex}

For more properties of the category of open subsheaves, we refer to~\cite[\S A.4]{Ell1}. 

\begin{defn}\label{defn:sheafonsheaf}
A \emph{presheaf} on a sheaf~$F$ is a functor ${\rm Open}(F)^{\rm op}\to {\sf Set}$. A presheaf is a \emph{sheaf} if it satisfies the usual sheaf condition for open covers of $F$. 
\end{defn}

\begin{defn} 
A \emph{presheaf on $\Bun_G(\EE)$} is an $\SL_2(\Z)\times \pi_0(G)$-equivariant presheaf on $\HH\times\mathcal{C}^2[G]$. A presheaf on $\Bun_G(\EE)$ is a \emph{sheaf} if its underlying presheaf on $\HH\times\mathcal{C}^2[G]$ satisfies the sheaf condition. 
\end{defn}


To construct sheaves on $\Bun_G(\EE)$, we will need a supply of open subsheaves. We will build these by starting with open submanifolds of $G\times G$ viewed as open subsheaves via Example~\ref{ex:representableopen}, and then restricting along the inclusion $\mathcal{C}^2(G)\subset G\times G$ to obtain an open subsheaf of $\mathcal{C}^2(G)$. We descend to an open subsheaf of $\mathcal{C}^2[G]$ using the following lemma. 
\begin{lem}\label{lem:cov1}
Let $U$ be a $G_0$-invariant open submanifold of $G\times G$. Identify $U$ with its representable open subsheaf. Then the pulback is a $G_0$-invariant open subsheaf of $\mathcal{C}^2(G)$
\beq
\begin{tikzpicture}[baseline=(basepoint)];
\node (A) at (0,0) {$U\bigcap \mathcal{C}^2(G)$};
\node (B) at (3,0) {$U$};
\node (C) at (0,-1.2) {$\mathcal{C}^2(G)$};
\node (D) at (3,-1.2) {$G\times G,$}; 
\draw[->] (A) to  (B);
\draw[->] (A) to  (C);
\draw[->] (C) to  (D);
\draw[->] (B) to (D);
\path (0,-.75) coordinate (basepoint);
\end{tikzpicture}\label{eq:GGcov1}
\eeq
and the coarse quotient sheaf $(U\bigcap \mathcal{C}^2(G))\cq G_0$ is an open subsheaf of $\mathcal{C}^2[G]$. 
\end{lem}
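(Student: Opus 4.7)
The plan is to prove the two claims in turn, first the pointwise openness for $\mathcal{C}^2(G)$ and then the descent to the coarse quotient.

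\textbf{Step 1 (the subsheaf of $\mathcal{C}^2(G)$).} The first claim is almost tautological. An $S$-point of $\mathcal{C}^2(G)$ is a smooth map $\phi\colon S\to G\times G$ whose image lies in the commuting pairs. The 2-pullback $U\cap \mathcal{C}^2(G)$ on $S$ is by construction $\phi^{-1}(U)\subset S$, which is open in $S$ because $U$ is open in $G\times G$. Hence $U\cap \mathcal{C}^2(G)\hookrightarrow \mathcal{C}^2(G)$ is an open subsheaf in the sense of Definition~\ref{defn:opensub}. The conjugation $G_0$-action on $G\times G$ preserves $U$ by hypothesis and preserves $\mathcal{C}^2(G)\subset G\times G$ since conjugation takes commuting pairs to commuting pairs; thus $U\cap \mathcal{C}^2(G)$ is $G_0$-invariant.

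\textbf{Step 2 (descent to $\mathcal{C}^2[G]$).} By the universal property of the coarse quotient sheaf (Example~\ref{ex:coarsequotient}), an $S$-point $\sigma\colon S\to \mathcal{C}^2[G]$ is given locally by lifts: there is an open cover $\{V_\alpha\}$ of $S$ and smooth maps $\psi_\alpha\colon V_\alpha\to \mathcal{C}^2(G)$ whose classes in $\mathcal{C}^2[G]$ agree with $\sigma|_{V_\alpha}$, and such that on overlaps $V_\alpha\cap V_\beta$ (after refinement) there are smooth $g_{\alpha\beta}\colon V_\alpha\cap V_\beta\to G_0$ with $\psi_\beta = g_{\alpha\beta}\cdot \psi_\alpha$. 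Set $W_\alpha:=\psi_\alpha^{-1}(U)\subset V_\alpha$, which is open by Step~1. The $G_0$-invariance of $U$ guarantees $\psi_\alpha(s)\in U \Leftrightarrow \psi_\beta(s)\in U$ on overlaps, so $W_\alpha\cap V_\beta = W_\beta\cap V_\alpha$, and the $W_\alpha$ glue to a well-defined open submanifold $W\subset S$.

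\textbf{Step 3 (identifying the pullback).} I would then verify that $W$ represents the pullback $\bigl((U\cap \mathcal{C}^2(G))\cq G_0\bigr)\times_{\mathcal{C}^2[G]} S$. Unwinding the coarse-quotient definitions, an $S$-point of the pullback is an $S$-point of $\mathcal{C}^2[G]$ that locally lifts to an $S$-point of $U\cap \mathcal{C}^2(G)$; by $G_0$-invariance of $U$ this happens exactly on the open subsheaf $W$ just constructed, independently of the local lift chosen. Injectivity of the induced map on $S$-points (which is the remaining ingredient to conclude we have an open subsheaf) follows from the same observation: two $S$-points of $U\cap \mathcal{C}^2(G)$ that become equivalent in $\mathcal{C}^2[G]$ are already equivalent via the same $G_0$-valued gauge transformation inside $U\cap \mathcal{C}^2(G)$.

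\textbf{Main obstacle.} The routine openness of Step~1 is immediate; the substance is in Steps~2--3, where one must show that the local construction of $W$ is independent of the chosen lift and glues to a global open subsheaf compatibly with the sheafification inherent in the coarse quotient. This is precisely what the $G_0$-invariance of $U$ is arranged to achieve, so the argument is in effect a careful bookkeeping of descent along the $G_0$-action rather than a genuinely difficult step.
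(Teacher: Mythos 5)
Your proposal is correct and supplies exactly the sort of formal unwinding the paper has in mind: the paper itself gives no argument for this lemma, stating only that it is ``largely formal'' and pointing to \cite[Proposition~A.40]{Ell1}. Your Step~1 correctly identifies the representable pullback with $\phi^{-1}(U)\subset S$, and Steps~2--3 carry out the descent and verify both the representability of pullbacks and the injectivity on $S$-points demanded by Definition~\ref{defn:opensub}, with the $G_0$-invariance of $U$ doing the work of making local lifts and their constructions independent of the chosen $G_0$-gauge. One small phrasing note: in Step~3 the injectivity should be stated for $S$-points of $(U\cap\mathcal{C}^2(G))\cq G_0$ (which, after sheafification, are represented by local lifts) rather than for $S$-points of $U\cap\mathcal{C}^2(G)$ directly, but the underlying observation --- that a $G_0$-gauge transformation between lifts landing in $U$ stays inside $U\cap\mathcal{C}^2(G)$ --- is the right one and closes the gap.
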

\bp The argument is largely formal, e.g., see \cite[Proposition~A.40]{Ell1}. \ep

Next, let $B_h^\epsilon=B_{h_1}^\epsilon\times B_{h_2}^\epsilon\subset G\times G$ denote the product of $\epsilon$-balls in $G$ around $h_1$ and $h_2$, which we parameterize by $\Ad$-invariant $\epsilon$-balls in the Lie algebra,
\beq
&&B_h^\epsilon=\{(X_1,X_2)\in B_\epsilon(\mf{g}) \times B_{\epsilon}(\mf{g})\}\to G\times G\quad (X_1,X_2)\mapsto (h_1e^{X_1},h_2e^{X_2}).\label{eq:itsabasis}
\eeq
Let $G_0\cdot B_h^\epsilon\subset G\times G$ denote the orbit of $B_h^\epsilon$ under the conjugation action by~$G_0$. We observe that  $G_0\cdot B_h^\epsilon$ is again an open submanifold of $G\times G$, which we identify with its representable open subsheaf. The restriction of $G_0\cdot B_h^\epsilon\bigcap \mathcal{C}^2(G)$ to the subsheaf $\mathcal{C}^2(G)\hookrightarrow G\times G$ defines a $G_0$-invariant open subsheaf of $\mathcal{C}^2(G)$. Applying Lemma~\ref{lem:cov1} yields the following. 

\begin{cor}\label{cor:opencover1} For any $\delta>0$, the collection $\{U_h^\epsilon\}_{(h,\epsilon)\in \mathcal{C}^2(G)\times(0,\delta)}$ of open subsheaves defined by 
$$
U_h^{\epsilon} := (G_0\cdot B_h^\epsilon\bigcap \mathcal{C}^2(G))\cq G_0
$$ 
is an open cover of~$\mathcal{C}^2[G]$. \end{cor}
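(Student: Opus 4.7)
The plan is to unpack Definition~\ref{defn:opensub} and verify directly that for any representable sheaf $S$ and any map $S\to \mathcal{C}^2[G]$, the pullbacks $\{(U_h^\epsilon)_S\}$ form an open cover of the manifold~$S$. Since each $U_h^\epsilon$ is already known to be an open subsheaf of $\mathcal{C}^2[G]$ by Lemma~\ref{lem:cov1}, only the covering property remains.

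First I would reduce to the case where the given $S$-point of $\mathcal{C}^2[G]=\mathcal{C}^2(G)\cq G_0$ lifts to an $S$-point of $\mathcal{C}^2(G)$, i.e., to a pair $(h_1,h_2)\colon S\to G\times G$ of smooth maps whose values pointwise commute. Such lifts exist locally on $S$ by the definition of the coarse quotient sheaf (Example~\ref{ex:coarsequotient}), and since the conclusion we must verify is itself a local statement about open covers of~$S$, refining $S$ by such a cover is harmless.

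Next, fix $s\in S$ and set $h(s):=(h_1(s),h_2(s))\in \mathcal{C}^2(G)$. For any $\epsilon\in(0,\delta)$, the parameterization~\eqref{eq:itsabasis} makes it manifest that $h(s)\in B_{h(s)}^\epsilon$, corresponding to $(X_1,X_2)=(0,0)$. By continuity of $(h_1,h_2)$ there is an open neighborhood $V$ of $s$ in $S$ with $(h_1,h_2)(V)\subset B_{h(s)}^\epsilon$; since $V$ lands in commuting pairs by construction, we obtain a factorization $V\to B_{h(s)}^\epsilon\bigcap \mathcal{C}^2(G)\hookrightarrow G_0\cdot B_{h(s)}^\epsilon\bigcap \mathcal{C}^2(G)$. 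Composing with the coarse quotient by $G_0$ exhibits $V$ as contained in the pullback $(U_{h(s)}^\epsilon)_S$. As $s$ ranges over $S$, these neighborhoods assemble into an open cover of $S$ by opens pulled back from the $U_h^\epsilon$, which is precisely what is required.

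The main (mild) obstacle is the initial lifting step, which hinges on the description of $S$-points of a coarse quotient sheaf; once local lifts are in hand, the rest is a routine continuity argument, with the distinguished basepoint $(X_1,X_2)=(0,0)$ in~\eqref{eq:itsabasis} doing all the work.
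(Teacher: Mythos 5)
Your proposal correctly fills in a verification that the paper leaves implicit. The paper asserts Corollary~\ref{cor:opencover1} as an immediate consequence of Lemma~\ref{lem:cov1} and the observation that $G_0\cdot B_h^\epsilon$ is open, but Lemma~\ref{lem:cov1} only certifies that each $U_h^\epsilon$ is an open subsheaf; the covering property is left to the reader, and your argument supplies it in the way the surrounding text (the label ``itsabasis'' on~\eqref{eq:itsabasis}) suggests was intended.

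The structure of your argument is sound: reduce to a local lift along the coarse-quotient map using Example~\ref{ex:coarsequotient}, then use the distinguished basepoint $(X_1,X_2)=(0,0)$ of $B_{h(s)}^\epsilon$ together with continuity of $(h_1,h_2)$ to find a neighborhood $V\ni s$ factoring through $B_{h(s)}^\epsilon\cap\mathcal{C}^2(G)\hookrightarrow G_0\cdot B_{h(s)}^\epsilon\cap\mathcal{C}^2(G)$, hence through the pullback $(U_{h(s)}^\epsilon)_S$. The locality reduction is justified since $(U_h^\epsilon)_{S_i}=(U_h^\epsilon)_S\cap S_i$ by composability of pullbacks, so covering each piece $S_i$ of a cover of $S$ suffices. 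No gaps; this is essentially the paper's (unwritten) argument made explicit.
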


The next step is to describe the open subsheaves $U_h^{\epsilon}$ in a manner that will be useful in later constructions. We start with some notation. For each pair of commuting elements $h\in \mathcal{C}^2(G)$, and a choice of maximal commuting subalgebra $\mf{t}_{\mf{g}^h}\subset \mf{g}^h$, define the following (non-open) submanifold of~$G\times G$:
\beq
&&T_h^\epsilon =\{(X_1,X_2)\in B_\epsilon(\mf{t}_{\mf{g}^h}) \times B_{\epsilon}(\mf{t}_{\mf{g}^h})\}\hookrightarrow G\times G,\quad (X_1,X_2)\mapsto (h_1e^{X_1},h_2e^{X_2}) \label{eq:BGopen}
\eeq
where $B_{\epsilon}(\mf{t}_{\mf{g}^h})$ is an $\epsilon$-ball about the origin for an $\Ad$-invariant metric on $\mf{g}$ restricted to~$\mf{t}_{\mf{g}^h}$. There is an evident map of smooth manifolds $T_h^\epsilon\hookrightarrow B_h^\epsilon\to G_0\cdot B_h^\epsilon\subset G\times G$. The induced map of representable sheaves factors through the subsheaf $\mathcal{C}^2(G)\subset G\times G$. Next define the finite group 
\beq
W^h := N_{G_0^h}(T_{G_0^h}) / T_{G_0^h}\label{eq:Weyldefn}
\eeq
where $(G_0)^h=:G_0^h<G_0$ is the $h$-fixed subgroup, $T_{G_0^h}$ denotes a maximal torus of the identity connected component of~$G_0^h$, and $N_{G_0^h}(T_{G_0^h})<G_0^h$ is the normalizer of $T_{G_0^h}$ in~$G_0^h$. The notation $W^h$ is intended to evoke a Weyl group, though in general~$G_0^h$ need not be connected in which case $W^h$ is larger than the Weyl group of the identity component of~$G_0^h$

\begin{lem}\label{lem:cov2} For a fixed pair of commuting elements $h=(h_1,h_2)\in \mathcal{C}^2(G)$, there exists a real number $\delta=\delta(h)>0$ depending on $h$ such that for all $\epsilon\in (0,\delta)$, there is an isomorphism of subsheaves of~$\mathcal{C}^2[G]$,
\beq
\begin{tikzpicture}[baseline=(basepoint)];
\node (A) at (0,0) {$T_h^\epsilon\cq W^h$};
\node (B) at (3,-.6) {$\mathcal{C}^2[G]$};
\node (C) at (0,-1.2) {$U_h^{\epsilon}$};
\draw[->] (A) to (B);
\draw[->] (A) to node [left] {$\simeq $} (C);
\draw[->] (C) to  (B);
\path (0,-.75) coordinate (basepoint);
\end{tikzpicture}\nonumber
\eeq
between the open subsheaf $U_h^{\epsilon}$ of~$\mathcal{C}^2[G]$ and the coarse quotient sheaf~$T_h^\epsilon\cq W^h$.
\end{lem}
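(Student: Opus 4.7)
The plan is to define $\Phi \colon T_h^\epsilon \cq W^h \to U_h^\epsilon$ and verify that it is bijective on $S$-points for every manifold $S$, which suffices for an isomorphism of subsheaves since both sides are coarse quotient sheaves. The definition of $\Phi$ is forced: because $\mathfrak{t}_{\mathfrak{g}^h}$ is abelian and pointwise fixed by $\Ad_{h_1}$ and $\Ad_{h_2}$, the assignment $(X_1, X_2) \mapsto (h_1 e^{X_1}, h_2 e^{X_2})$ lands in $B_h^\epsilon \cap \mathcal{C}^2(G)$, and the $W^h$-action on $T_h^\epsilon$ lifts to conjugation by representatives in $N_{G_0^h}(T_{G_0^h}) \subset G_0$, which is trivialized in the subsequent $G_0$-quotient.

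The key geometric step is a slice-theorem-type local description of $\mathcal{C}^2(G)$ near the $G_0$-orbit of $(h_1, h_2)$. For $\epsilon < \delta(h)$ sufficiently small, I would show that the map $(g, k_1, k_2) \mapsto (g h_1 k_1 g^{-1}, g h_2 k_2 g^{-1})$ induces an isomorphism of sheaves
\begin{equation*}
G_0 \times_{G_0^h} \{(k_1, k_2) \in \mathcal{C}^2(G_0^h) : k_i \text{ near } e\} \;\xrightarrow{\sim}\; G_0 \cdot B_h^\epsilon \cap \mathcal{C}^2(G).
\end{equation*}
A linearized Baker--Campbell--Hausdorff computation identifies the tangent space $T_{(h_1, h_2)}\mathcal{C}^2(G)$ with the kernel of the linear map $(X_1, X_2) \mapsto (\Ad_{h_2^{-1}} - 1) X_1 - (\Ad_{h_1^{-1}} - 1) X_2$, in which $\mathfrak{g}^h \oplus \mathfrak{g}^h$ is a transverse slice to the tangent space of the orbit; the implicit function theorem then promotes this infinitesimal picture to the displayed local isomorphism of sheaves. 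Passing to the $G_0$-quotient identifies $U_h^\epsilon$ with the coarse quotient $\{(k_1, k_2) \in \mathcal{C}^2(G_0^h) : k_i \text{ near } e\} \cq G_0^h$.

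It remains to identify this coarse quotient with $T_h^\epsilon \cq W^h$. Any two commuting elements of the compact Lie group $G_0^h$ near the identity lie in a common maximal torus, since the abelian subgroup they generate has compact closure contained in some maximal torus of the identity component of $G_0^h$, and all maximal tori of this identity component are mutually conjugate. Hence every such commuting pair is $G_0^h$-conjugate to one of the form $(e^{Y_1}, e^{Y_2})$ with $Y_i \in \mathfrak{t}_{\mathfrak{g}^h}$, and the residual conjugation ambiguity on $T_{G_0^h} \times T_{G_0^h}$ is precisely $W^h = N_{G_0^h}(T_{G_0^h})/T_{G_0^h}$, giving the desired identification.

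The main obstacle is the slice-theorem decomposition of the (generically singular) commuting variety $\mathcal{C}^2(G)$: the classical slice theorem for compact group actions is stated for smooth manifolds, so one either works in the stratified setting or, as sketched above, proceeds directly using exponential coordinates and the implicit function theorem. Smoothness of the decomposition in $S$-families is automatic from the local isomorphism property, and any discrete $W^h$-monodromy arising as $(Y_1(s), Y_2(s))$ crosses Weyl chamber walls is absorbed by the coarse quotient structure on both sides.
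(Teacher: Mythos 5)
Your overall plan mirrors the paper's—both arguments reduce to the two Lie-theoretic facts that (i) every commuting pair near the $G_0$-orbit of $h$ can be conjugated into $T_h^\epsilon$, and (ii) the residual conjugation ambiguity on $T_h^\epsilon$ is exactly $W^h$. The difference, and the source of the gap, is in how you try to establish them.

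The problematic step is the slice-theorem/implicit-function-theorem decomposition of $G_0 \cdot B_h^\epsilon \cap \mathcal{C}^2(G)$. You write down the Zariski tangent space of $\mathcal{C}^2(G)$ at $(h_1,h_2)$ as $\ker\bigl((X_1,X_2)\mapsto(\Ad_{h_2^{-1}}-1)X_1-(\Ad_{h_1^{-1}}-1)X_2\bigr)$ and then appeal to the implicit function theorem to promote the linear slice decomposition to a local one. But the implicit function theorem requires $\mathcal{C}^2(G)$ to be smooth near $(h_1,h_2)$, and this is genuinely false in general: already for $G=\SU(2)$, the commuting variety $\{(X,Y)\in\mf{g}\times\mf{g}:[X,Y]=0\}$ is the cone $\{X\wedge Y=0\}\subset\R^3\times\R^3$, which is singular at the origin, so $\mathcal{C}^2(G)$ is singular at $(e,e)$. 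The dimension of the Zariski tangent space jumps at such points, and neither the classical slice theorem nor a naive IFT argument goes through. You flag this as an obstacle and suggest one can "proceed directly using exponential coordinates and the implicit function theorem," but that is precisely the step that fails; the singularity is not a technicality to be absorbed.

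The paper sidesteps this by never applying any transversality argument to $\mathcal{C}^2(G)$ itself. Instead, Lemma~\ref{lem:centslices} runs the infinitesimal/IFT argument on the \emph{smooth} ambient space $G$ for a \emph{single} group element, using Lemma~\ref{lem:centslicesprep} (self-adjointness of $\Ad_g$ for the Killing form) to split $\mf{g}=\ker A_g\oplus\operatorname{image}A_g$. Lemma~\ref{lem:ellcentslices} then bootstraps the pair case from two applications of the single-element case together with the observation that small commuting exponentials have commuting logarithms (your "common maximal torus" fact). Injectivity on isomorphism classes is handled separately by Lemma~\ref{lem:conj} and Proposition~\ref{prop:conjugate}, which again only ever invoke conjugation and centralizers of smooth group elements, never the local geometry of $\mathcal{C}^2(G)$. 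Finally, the passage from a bijection on isomorphism classes to an isomorphism of sheaves uses that both sides are subsheaves of the representable $(G\times G)\cq G_0$, so they are determined by their underlying sets. If you want to salvage your version, you should follow the paper's reduction: prove the slice statement for a single element in the smooth space $G$, and then assemble the pair case without ever taking tangent spaces of $\mathcal{C}^2(G)$.
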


\bp
Consider the (strictly) commuting triangle in generalized Lie groupoids
\beq
\begin{tikzpicture}[baseline=(basepoint)];
\node (A) at (0,0) {$T_h^\epsilon/N_{G_0^h}(T_{G_0^h})$};
\node (B) at (3,-.6) {$\mathcal{C}^2(G)/G_0$};
\node (C) at (0,-1.2) {$(G_0\cdot B_\epsilon^h\bigcap \mathcal{C}^2(G))/G_0.$};
\draw[->] (A) to (B);
\draw[->] (A) to node [left] {$\varphi$} (C);
\draw[->] (C) to  (B);
\path (0,-.75) coordinate (basepoint);
\end{tikzpicture}\nonumber
\eeq
The arrows come from regarding the sheaves of objects of these groupoids as subsheaves of the representable sheaf $G\times G$ and taking the obvious inclusions;  on morphisms these arrows are determined the inclusion $N_{G_0^h}(T_{G_0^h})<G_0$. 
For $\epsilon$ sufficiently small, the map $\varphi$ is essentially surjective by Lemma~\ref{lem:ellcentslices}. By Lemma~\ref{lem:conj} and Proposition~\ref{prop:conjugate}, $\varphi$ induces an isomorphism on isomorphism classes of objects (as sets). This implies an isomorphism of subsheaves of $\mathcal{C}^2(G)\cq G_0\subset (G\times G)\cq G_0$. Passing to coarse quotients, $\varphi$ determines an isomorphism of sheaves $T_h^\epsilon\cq N_{G_0^h}(T_{G_0^h})\stackrel{\sim}{\to} U_h^\epsilon$ over~$\mathcal{C}^2[G]$. Next we identify the coarse quotient sheaves~$T_h^\epsilon\cq N_{G_0^h}(T_{G_0^h}) \simeq T_h^{\epsilon}\cq W^h$ using that the conjugation action of $T_{G_0^h}<N_{G_0^h}(T_{G_0^h})$ on $T_{G_0^h}$ is trivial, as $T_{G_0^h}$ is a torus acting on itself by conjugation. Commutativity of the diagram therefore identifies $T_h^{\epsilon}\cq W^h$ with the open subsheaf $U_h^{\epsilon}$ of $\mathcal{C}^2[G]=\mathcal{C}^2(G)\cq G_0$.
%
%
\ep

%



For each pair of commuting elements $h\in \mathcal{C}^2(G)$, we fix once and for all a choice of maximal commuting subalgebra $\mf{t}_{\mf{g}^h}\subset \mf{g}^h$, which in turn gives a map $T_h^\epsilon\to U_h^\epsilon$ for each $U_h^\epsilon$ in the cover of $\mathcal{C}^2[G]$ from Corollary~\ref{cor:opencover1}, where we assume that $\delta=\delta(h)$ is sufficiently small to satisfy the hypothesis of Lemma~\ref{lem:cov2}. Below, we drop the $h$-dependence of $\delta$ in the notation. We use this data to construct sheaves on $\Bun_G(\EE)$ as follows.

\begin{prop}\label{prop:BunGsheaf}
Fixing the choices described above, suppose we are given data:
\begin{enumerate}
\item[(D1)] $W^h$-equivariant sheaves $\mathcal{F}_h^\epsilon$ on each manifold $\HH\times T_h^\epsilon$ for all $\epsilon<\delta$;
\item[(D2)] for each $(g,\gamma)\in G\times \SL_2(\Z)$ determining an isomorphism from an open subset of $V\subset T_h^\epsilon$ to an open subset $V'\subset T_{h'}^{\epsilon'}$, we require the data of an isomorphism of sheaves $(g,\gamma)^*(\mathcal{F}_{h'}^{\epsilon'}|_{V'})\simeq \mathcal{F}_h^\epsilon|_V$ on $V$. 
\end{enumerate}
We require these data satisfy the following conditions. 
\begin{enumerate}
\item[(C1)] The equivariant structure on the sheaf $\mathcal{F}_h^\epsilon$ from (2) associated with automorphisms of $T_h^\epsilon$ for $g\in N_{G_0^h}(T_{G_0^h})$ agrees with the equivariant structure in (1) via the action through the quotient $W^h=N_{G_0^h}(T_{G_0^h})/T_{G_0^h}$.
\item[(C2)] The isomorphisms (D2) satisfy the natural compatibilities for nested open subsets and products of elements of $G\times \SL_2(\Z)$. 
\end{enumerate}
Then the above determines a $\pi_0(G)\times\SL_2(\Z)$-equivariant sheaf on $\HH\times\mathcal{C}^2[G]$, which we identify with a sheaf on $\Bun_G(\EE)\simeq [\HH\times\mathcal{C}^2[G]/\pi_0(G)\times \SL_2(\Z)]$.
%
%
%
\end{prop}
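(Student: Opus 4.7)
The plan is to build the sheaf by descent along an open cover of $\HH \times \mathcal{C}^2[G]$ and then promote it to a $\pi_0(G) \times \SL_2(\Z)$-equivariant sheaf using the remaining isomorphisms in (D2). Explicitly, pulling back the cover $\{U_h^\epsilon\}$ of $\mathcal{C}^2[G]$ from Corollary~\ref{cor:opencover1} along the projection $\HH \times \mathcal{C}^2[G] \to \mathcal{C}^2[G]$ produces an open cover $\{\HH \times U_h^\epsilon\}$ of $\HH \times \mathcal{C}^2[G]$. It therefore suffices to construct a sheaf on each chart $\HH \times U_h^\epsilon$ and to provide gluing data on pairwise (and triple) intersections, and then to verify the $\pi_0(G)\times\SL_2(\Z)$-equivariance.

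For each chart, Lemma~\ref{lem:cov2} identifies $U_h^\epsilon$ with the coarse quotient sheaf $T_h^\epsilon \cq W^h$. By the universal property of coarse quotients together with Definition~\ref{defn:sheafonsheaf}, a sheaf on $\HH \times (T_h^\epsilon \cq W^h)$ is the same datum as a $W^h$-equivariant sheaf on the manifold $\HH \times T_h^\epsilon$. The input (D1) provides exactly such a sheaf, so the combination of (D1) and Lemma~\ref{lem:cov2} produces a sheaf on each $\HH \times U_h^\epsilon$.

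To glue these local sheaves together, I would invoke (D2): on an overlap $\HH \times U_h^\epsilon \cap \HH \times U_{h'}^{\epsilon'}$, points of the preimages in $T_h^\epsilon$ and $T_{h'}^{\epsilon'}$ are related by the conjugation action of some $g \in G_0$ (possibly composed with a Weyl element), and (D2) supplies the corresponding isomorphism of sheaves. Condition (C2) applied to pairs and triples of such elements is precisely the cocycle condition needed for these isomorphisms to assemble into descent data for the cover $\{\HH \times U_h^\epsilon\}$, producing a sheaf on $\HH \times \mathcal{C}^2[G]$. The $\pi_0(G)\times\SL_2(\Z)$-equivariant structure is then read off from applying (D2) to the elements $(g,\gamma) \in \pi_0(G)\times\SL_2(\Z) \subset G\times \SL_2(\Z)$, with (C2) once more enforcing the required group axioms.

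The main obstacle, which is really a careful bookkeeping check rather than a deep difficulty, is that a single chart $T_h^\epsilon$ may be related to many others (and even to itself, through nontrivial $(g,\gamma)$) in several distinct ways, so one must ensure that all the transition isomorphisms composed out of (D2) agree. This is where (C1) plays its role: the self-isomorphisms of $\mathcal{F}_h^\epsilon$ supplied by (D2) for elements $g \in N_{G_0^h}(T_{G_0^h}) \subset G_0$ factor through $W^h$, and (C1) says these match the $W^h$-action in (D1), so the two sources of $W^h$-equivariance on $T_h^\epsilon$ are compatible. Once (C1) and (C2) are unpacked in this way, the descent argument is a formal consequence of the standard construction of equivariant sheaves on a stacky quotient.
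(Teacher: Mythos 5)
Your overall strategy matches the paper's: build the sheaf on the cover $\{\HH\times U_h^\epsilon\}$ from the $W^h$-equivariant data (D1) via Lemma~\ref{lem:cov2}, then use (D2) to supply $\pi_0(G)\times\SL_2(\Z)$-equivariance, with (C1) and (C2) guaranteeing consistency. But there are two points worth flagging.

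First, a minor overclaim: you assert that a sheaf on $\HH\times(T_h^\epsilon\cq W^h)$ is ``the same datum as'' a $W^h$-equivariant sheaf on $\HH\times T_h^\epsilon$. That is false in general when the $W^h$-action has fixed points: a coarse quotient sheaf forgets stabilizers, so the functor from equivariant sheaves to sheaves on the coarse quotient (taking $W^h$-invariant direct image, as the paper does explicitly via $(q_h)_*$) is neither full nor essentially surjective. Fortunately you only need the one direction, so the construction survives, but the equivalence claim should be dropped.

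Second, and more substantively, there is a gap in your well-definedness argument. You write that the transition isomorphisms must be checked for consistency, and that ``(C1) plays its role'' because the self-isomorphisms of $\mathcal{F}_h^\epsilon$ supplied by (D2) for $g\in N_{G_0^h}(T_{G_0^h})$ factor through $W^h$. But you never establish that the ambiguity in choosing a lift $(g,\gamma)\in G\times\SL_2(\Z)$ of a given $([g],\gamma)\in\pi_0(G)\times\SL_2(\Z)$ actually lands in $N_{G_0^h}(T_{G_0^h})$. A priori, two lifts $g$ and $g'$ only differ by an element $(g')^{-1}g\in G_0^h$. To conclude that $(g')^{-1}g$ normalizes $T_{G_0^h}$ one must argue that both $g$ and $g'$ carry $\mf{t}_{\mf{g}^h}$ to $\mf{t}_{\mf{g}^{h'}}$; the paper does this using Lemma~\ref{lem:lcrank} (local constancy of $\mathrm{rank}\,G_0^h$), which forces $(g')^{-1}g$ to preserve $\mf{t}_{\mf{g}^h}$ and therefore lie in $N_{G_0^h}(T_{G_0^h})$. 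Only after that reduction does (C1) apply. This is not mere bookkeeping; it is the substance of why the descent data is independent of choices, and without it the proof does not close.
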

\bp
Throughout, we shall identify open subsets of $\HH\times T_h^\epsilon$ with open subsheaves of the associated representable sheaf. First define a sheaf subordinate to the open cover $\{\HH\times U_h^\epsilon\}$ of $\HH\times\mathcal{C}^2[G]$ as the direct image of the data (D1) along the $W^h$-quotient maps
$$
q_h\colon \HH\times T_h^\epsilon \to \HH\times T_h^\epsilon\cq W^h\simeq \HH\times U_h^\epsilon. 
$$
Explicitly, the sections of the direct image $(q_h)_*\F_h^\epsilon$ assign to an open subsheaf $U\subset \HH\times U_h^\epsilon$ the $W^h$-invariant sections of $\F_h^\epsilon$ on $q_h^{-1}(U)\subset \HH\times T_h^\epsilon$. 

Next we wish to descend the sheaf on $\{\HH\times U_h^\epsilon\}$ defined above to a $\pi_0(G)\times \SL_2(\Z)$-equivariant sheaf on $\HH\times\mathcal{C}^2[G]$. This is both data and property, namely, the \emph{data} of isomorphisms of sheaves covering isomorphisms between open subsheaves specified by the action of $\pi_0(G)\times \SL_2(\Z)$, and the \emph{property} of compatibility between multiplication in the group $\pi_0(G)\times \SL_2(\Z)$ and composition of isomorphisms of sheaves. 

We first construct the descent data. So suppose we are given $([g],\gamma)\in \pi_0(G)\times \SL_2(\Z)$ that sends an open subsheaf $U\subset \HH\times U_h^\epsilon$ to an open subsheaf $U'\subset \HH\times U_{h'}^{\epsilon'}$. Since all maximal abelian subalgebras $\mf{t}_{\mf{g}^h}\subset \mf{g}^h$ are conjugate, there exists a choice $(g,\gamma)\in G\times \SL_2(\Z)$ sending $q_h^{-1}(U)\subset \HH\times T_h^\epsilon$ to $q_{h'}^{-1}(U')\subset \HH\times T_{h'}^{\epsilon'}$. Then the data (D2) specifies an isomorphism between invariant sections. This gives the descent data for the desired $\pi_0(G)\times \SL_2(\Z)$-equivariant sheaf on $\HH\times\mathcal{C}^2[G]$.

It remains to verify the descent property. This amounts to showing that the isomorphisms constructed above are well-defined (i.e., independent of the choice of representative $g$ of $[g]\in \pi_0(G)$) and compatible with the group structure on $\pi_0(G)\times \SL_2(\Z)$. So suppose we have two lifts $(g,\gamma),(g',\gamma)\in G\times \SL_2(\Z)$. Then $(g')^{-1}g\in G_0$; as it must also commute with $h$, $(g')^{-1}g \in G_0^h$. But now, using Lemma~\ref{lem:lcrank}, $g$ and $g'$ must both conjugate $\mf{t}_{\mf{g}^h}$ to $\mf{t}_{\mf{g}^{h'}}$ and hence $(g')^{-1}g$ preserves $\mf{t}_{\mf{g}^h}$. It follows that $(g')^{-1}g \in N_{G_0^h}(T_{G_0^h})$, and so the requirement (C1) implies that $(g')^{-1}g$ acts through $W^h$, and hence the action is trivial on $W^h$-invariant sections. This shows that the isomorphisms of sheaves are well-defined. The compatibility of these isomorphism is a direct consequence of condition (C2), using that the quotient map $G\to G/G_0\simeq \pi_0(G)$ is a homomorphism. 
\ep

\subsection{Holomorphic functions on $\Bun_G(\EE)$}

We endow $\Bun_G(\EE)$ with a sheaf of holomorphic functions whose definition is motivated by the identification between flat $\U(1)$ bundles on an elliptic curve with the dual elliptic curve: the former is a priori only a smooth moduli space whereas the latter has an obvious holomorphic structure; see Remark~\ref{rmk:dualell}. More generally, we have the following holomorphic structure on flat $T$-bundles. 

\begin{ex}\label{ex:torussheaf}
Let $T$ be a torus, so that all pairs of elements commute and
\beq
\Bun_T(\EE)\simeq [\HH\times T\times T/\SL_2(\Z)].\label{eq:univT}
\eeq
Then there is an isomorphism of smooth stacks $\Bun_T(\EE)\simeq \Euni^{\vee,{\rm rk}(T)}$
where 
\beq
\Euni^{\vee,{\rm rk}(T)}:=\underbrace{\mathcal{E}^\vee\times_{\Mell} \cdots \times_{\Mell} \mathcal{E}^\vee}_{{\rm rk}(T){\rm \ times}}\label{eq:fiberedprod}
\eeq
is the iterated fibered product of the universal (dual) elliptic curve. Here we use that the map $\EE^\vee\to \Mell$ is induced by an \emph{isofibration} of complex analytic groupoids so that the 2-fibered product of stacks~\eqref{eq:fiberedprod} agrees with the strict fibered product of groupoids. Hence, we obtain a holomorphic atlas $\HH\times \C^{{\rm rk}(T)}\to \Bun_T(\EE)$ from
\beq
&&u\colon \HH\times \C^{{\rm rk}(T)}\simeq \HH\times \mf{t}_\C\simeq \HH\times \mf{t}\times \mf{t}\stackrel{\exp}{\to} \HH\times T\times T\to \Bun_T(\EE)\nonumber\\ 
&&(\tau,X_1-\tau X_2)\stackrel{u}{\mapsto} (\tau,e^{X_1},e^{X_2}), \ X_1,X_2\in\mf{t}.\label{eq:BunThol}
\eeq
The $\SL_2(\Z)$-actions preserve the evident complex structures, so that we indeed have a holomorphic atlas for~$\Bun_T(\EE)$. 
\end{ex}

For an open subset $U_0\subset \HH$, we promote $U_0\times T_h^\epsilon$ (for $T_h^\epsilon$ from~\eqref{eq:BGopen}) to a complex manifold, with holomorphic structure coming from the isomorphism~$\varphi$
\beq
&&U_0\times T_h^\epsilon \subset U_0\times \mf{t}_{\mf{g}^h}\times \mf{t}_{\mf{g}^h}\stackrel{\varphi}{\to} U_0\times (\mf{t}_{\mf{g}^h})_\C,\quad \varphi(\tau,X_1,X_2)=(\tau,X_1-\tau X_2)\label{eq:BunThol2}
\eeq
that for each $\tau$ identifies $T_h^\epsilon$ as an open submanifold of the complex vector space~$(\mf{t}_{\mf{g}^h})_\C$ with the standard complex structure.

\begin{defn}\label{defn:BunGholo} Let $U\subset  \HH\times\mathcal{C}^2[G]$ be an open subsheaf. A function $f\in C^\infty(U)$ is \emph{holomorphic} if for every $U_0\times T_h^\epsilon$ for $U_0\subset \HH$ and $T_h^\epsilon$ from~\eqref{eq:BGopen}, $f$ is holomorphic on restriction to the pullback 
\beq
\begin{tikzpicture}[baseline=(basepoint)];
\node (A) at (0,0) {$P_h$};
\node (B) at (3,0) {$U$};
\node (C) at (0,-1.2) {$U_0\times T_h^\epsilon$};
\node (D) at (3,-1.2) {$\HH\times \mathcal{C}^2[G]$}; 
\draw[->] (A) to  (B);
\draw[->] (A) to  (C);
\draw[->] (C) to  (D);
\draw[->] (B) to (D);
\path (0,-.75) coordinate (basepoint);
\end{tikzpicture}\label{eq:GGcov}
\eeq
using the evident complex structure on the open subset $P_h\subset U_0\times T_h^\epsilon\subset \HH\times (\mf{t}_{\mf{g}^h})_\C$.
\end{defn}

\begin{lem}
Definition~\ref{defn:BunGholo} defines a subsheaf of smooth functions on $\Bun_G(\EE)$ that when $G=T$ is the sheaf of holomorphic functions defined in Example~\ref{ex:torussheaf}. 
\end{lem}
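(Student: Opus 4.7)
The proof breaks into two parts: first, that Definition~\ref{defn:BunGholo} actually defines a sheaf (i.e., that the pointwise holomorphicity condition is well-posed and compatible with the equivariance already imposed by $\Bun_G(\EE)$); second, that this sheaf recovers the one from Example~\ref{ex:torussheaf} when $G=T$. The overall strategy is to reduce both points to the behavior of the standard complex structure on $\HH\times(\mf{t}_{\mf{g}^h})_\C$, which we understand explicitly via the coordinate $Z=X_1-\tau X_2$.

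For the sheaf axiom, the key observation is that holomorphicity is a local condition, so once we verify that the defining property is well-posed, it will be inherited as a subsheaf of smooth functions. Well-posedness in turn requires two checks. The first is independence of the choice of maximal commuting subalgebra $\mf{t}_{\mf{g}^h}\subset\mf{g}^h$: any two such choices are related by conjugation by an element of $G_0^h$, and the adjoint action of a compact Lie group is $\R$-linear on $\mf{g}$ and so extends to a $\C$-linear automorphism of~$\mf{g}_\C$; hence it preserves the complex structure on $(\mf{t}_{\mf{g}^h})_\C$ defined by~\eqref{eq:BunThol2}. The second check is that the holomorphicity condition on a $P_h$ chart transports correctly to a $P_{h'}$ chart when $(h',\tau')$ lies in the $\SL_2(\Z)\times\pi_0(G)$-orbit of $(h,\tau)$. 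The $\pi_0(G)$-action reduces as above to the case of a fixed $\mf{t}_{\mf{g}^h}$, while the $\SL_2(\Z)$-action on $\HH\times T_h^\epsilon\hookrightarrow \HH\times\mf{t}_{\mf{g}^h}\times\mf{t}_{\mf{g}^h}$ restricts to the action already treated on the torus $T_{G_0^h}$; the complex structure on the latter is the one on the iterated dual elliptic curve over~$\Mell$, which is $\SL_2(\Z)$-invariant by construction (cf.\ Example~\ref{ex:torussheaf} applied to $T_{G_0^h}$).

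For the torus case, the verification is essentially tautological: when $G=T$, we have $\mf{g}^h=\mf{g}=\mf{t}$ and $\mf{t}_{\mf{g}^h}=\mf{t}$ for every $h\in T\times T$, so that the complex-structure map $\varphi$ of~\eqref{eq:BunThol2} coincides with the holomorphic atlas map $u$ of~\eqref{eq:BunThol}. Hence a smooth function is holomorphic in the sense of Definition~\ref{defn:BunGholo} if and only if it is holomorphic with respect to the atlas of Example~\ref{ex:torussheaf}, and the two sheaves agree.

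The main obstacle is the $\SL_2(\Z)$-compatibility in the second part of the well-posedness check. One wants to avoid computing the $\SL_2(\Z)$-action on the coordinate $Z=X_1-\tau X_2$ by hand; the cleanest route, which I would pursue, is to note that the embedding $T_{G_0^h}\hookrightarrow G$ of a maximal torus of $G_0^h$ induces a map of stacks $\Bun_{T_{G_0^h}}(\EE)\to \Bun_G(\EE)$ whose domain already carries the holomorphic structure of Example~\ref{ex:torussheaf}. The $\SL_2(\Z)$-action in Definition~\ref{defn:BunGholo} is the pullback of the $\SL_2(\Z)$-action on the domain, and holomorphicity of this pullback is inherited from the torus case. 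This reduction is what makes the general statement manageable, and it justifies both the independence-of-choice and the equivariance-compatibility without an explicit matrix computation.
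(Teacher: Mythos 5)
Your overall strategy matches the paper's: verify that the holomorphicity condition is preserved by the transformations needed to assemble a sheaf on $\Bun_G(\EE)$ via Proposition~\ref{prop:BunGsheaf}, then match the torus formula. Your handling of the $W^h$/conjugation-invariance and of the torus case itself are both fine, and your observation that $\Ad(g)$ is $\C$-linear on $\mf{g}_\C$ and therefore compatible with $\varphi(\tau,X_1,X_2)=(\tau,X_1-\tau X_2)$ is the same $\C$-linearity the paper invokes.

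There is, however, a genuine omission. You reduce the well-posedness check to two items: independence of the choice of $\mf{t}_{\mf{g}^h}$, and transport under the $\SL_2(\Z)\times\pi_0(G)$-orbit of $(h,\tau)$. But the cover $\{T_h^\epsilon\}$ of $\mathcal{C}^2[G]$ has overlaps with $(g,\gamma)=(e,e)$: for $h'=(h_1e^{X_1^0},h_2e^{X_2^0})$ close to $h$, the charts $T_h^\epsilon$ and $T_{h'}^{\epsilon'}$ overlap, and the transition in the complex coordinate $Z=X_1-\tau X_2$ is a \emph{translation} by $Z^0=X_1^0-\tau X_2^0$. This is precisely the $(e,e)$ case of datum (D2) in Proposition~\ref{prop:BunGsheaf}, and it is not in the $\SL_2(\Z)\times\pi_0(G)$-orbit of $(h,\tau)$, so your "transport" check does not cover it. The paper handles this by the separate observation that holomorphicity on $(\mf{t}_{\mf{g}^h})_\C$ is invariant not only under linear transformations but also under translations. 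You should add this third check.

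Your route for $\SL_2(\Z)$-compatibility---pulling back holomorphicity along $\Bun_{T_{G_0^h}}(\EE)\to\Bun_G(\EE)$---is different from the paper's, which simply notes that $\mf{g}^h=\mf{g}^{\gamma h}$ and therefore the same model vector space $(\mf{t}_{\mf{g}^h})_\C$ may be used on both sides; the invariance of the complex structure then follows as in formula~\eqref{eq:BunThol}. Your reduction is morally correct, but as phrased it is slightly imprecise: $\Bun_{T_{G_0^h}}(\EE)$ maps into the component of the trivial bundle, near $e$, whereas the chart $T_h^\epsilon$ is centered at $h$. The two are related by the same translation you haven't addressed, so fixing the gap above would also make this reduction precise. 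The paper's direct argument sidesteps the issue, and so is the cleaner of the two.
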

\bp
Holomorphic structures on a complex vector space are invariant under linear transformations and translations. For the vector space $(\mf{t}_{\mf{g}^h})_\C$, this implies that the subsheaf of functions defined above is invariant under the $W^h$-action (which acts linearly on $(\mf{t}_{\mf{g}^h})_\C$) and that these subsheaves are compatible under restriction (which compares holomorphic functions on $(\mf{t}_{\mf{g}^h})_\C$ related by a translation). We further observe that these subsheaves are invariant under the action of~$\SL_2(\Z)$, since $\mf{g}^h=\mf{g}^{\gamma\cdot h}$ for $\gamma\in \SL_2(\Z)$ and so we may take $(\mf{t}_{\mf{g}^h})_\C=(\mf{t}_{\mf{g}^{\gamma h}})_\C$. By Proposition~\ref{prop:BunGsheaf}, Definition~\ref{defn:BunGholo} therefore determines a $\pi_0(G)\times \SL_2(\Z)$-equivariant subsheaf of smooth functions on~$\HH\times\mathcal{C}^2[G]$. Comparing formulas~\eqref{eq:BunThol} and~\eqref{eq:BunThol2}, we find that this subsheaf agrees with the one from Example~\ref{ex:torussheaf} when~$G=T$. 
\ep

\begin{notation}
Let $\mathcal{O}_{\Bun_G(\EE)}$ denote the sheaf of holomorphic functions on $\Bun_G(\EE)$. 
\end{notation}


\begin{ex}\label{ex:holomorphic}
Let $G$ be connected with torsion-free fundamental group, maximal torus $T<G$, and Weyl group $W=N(T)/ T$. Borel~\cite[Corollary~3.5]{Borel} shows that in this case any pair of commuting elements can be simultaneously conjugated into $T$, and that pairs of elements in~$T$ are conjugate if and only if they are conjugate by an element of $N(T)$. In brief, we have~$\mathcal{C}^2[G]\simeq (T\times T)\cq W$. This gives the description
$$
\Bun_G(\EE)\simeq [(\widetilde{\Euni}^{\vee,{\rm rk}(T)}\cq W)/\SL_2(\Z)].
$$ 
so that holomorphic functions on $\Bun_G(\EE)$ are determined by $W$-invariant holomorphic functions on~$\widetilde{\Euni}^{\vee,{\rm rk}(T)}$. 
More explicitly, a locally-defined function on $\Bun_G(\EE)$ is holomorphic if and only if its pullback along 
\beq
&&\HH\times \mf{t}_\C\stackrel{\varphi}{\simeq} \HH\times\mf{t}\times\mf{t}\to \HH\times T\times T\to \HH\times (T\times T)\cq W \to \Bun_G(\EE)\label{eq:BunGhol}
\eeq
defines a holomorphic function on a (necessarily $W$-invariant) open subset of $\HH\times \mf{t}_\C$. In the above, the isomorphism $\varphi$ is defined in~\eqref{eq:BunThol2}. 
\end{ex}
\begin{ex}\label{ex:opensub}
When $G$ is connected (without any additional hypotheses) a choice of maximal torus $T<G$ induces an inclusion of stacks
\beq
[(\widetilde{\Euni}^{\vee,{\rm rk}(T)}\cq W)/\SL_2(\Z)]\simeq [(\HH\times (T\times T)\cq W)/\SL_2(\Z)]\hookrightarrow \Bun_G(\EE)\label{eq:opensub}
\eeq
as the connected component of trivial bundle on $\Bun_G(\EE)$. Hence, holomorphic functions on the image of this inclusion are determined by holomorphic functions on $\widetilde{\Euni}^{\vee,{\rm rk}(T)}$ as in the previous example. 
\end{ex}

\begin{ex} \label{ex:SO3}
When the fundamental group of $G$ has torsion,~\eqref{eq:opensub} can fail to be surjective even when $G$ is connected. For example, take $G=\SO(3)$. Then pairs of commuting elements are given by either pairs of rotations about a fixed common axis or pairs of reflections about orthogonal axes. In the former case, both elements are in a common maximal torus, whereas there is a unique conjugacy class for the latter pair. Hence $\mc{C}^2[G] = (T \times T) \cq W \sqcup \pt$, where $T = \SO(2)$ parameterizes rotations about a fixed axis and $W = \mb{Z}/2$ acts by inversion. So we find
\beq
\Bun_{\SO(3)}(\EE) \simeq [(\widetilde{\Euni}^{\vee}\cq \Z/2)/\SL_2(\Z)] \sqcup \Mell.\label{eq:obsholo}
\eeq
The forgetful map $\Bun_{\SO(3)}(\EE) \to \Mell$ comes from the projection $\EE^\vee\to \Mell$ on the first component and is the identity on the second component. The holomorphic functions on $\Bun_{\SO(3)}(\EE)$ from Definition~\ref{defn:BunGholo} are then the obvious ones in the description~\eqref{eq:obsholo}. 
\end{ex}

\subsection{Modular forms and theta functions}\label{sec:Loo} 

\begin{defn}\label{defn:holo}
A \emph{holomorphic line bundle} on $\Bun_G(\EE)$ is a locally free rank one sheaf of modules over~$\mathcal{O}_{\Bun_G(\EE)}$. 
\end{defn}

There is a holomorphic line bundle $\omega$ over $\Mell$ whose fiber at a given elliptic curve $E$ is the vector space of holomorphic 1-forms on~$E$. Pulling back along $\HH\to \Mell$, the line $\omega^{\otimes k}$ trivializes with trivializing section determined by the holomorphic 1-form descending from~$dz$ on $\C$ along the quotient~\eqref{eq:Euniv}. Sections can be described explicitly as 
$$
F(\gamma\cdot \tau)=(c\tau+d)^{k}F(\tau)\qquad {\rm for}\qquad \gamma=\left[\begin{array}{cc} a & b \\ c & d\end{array}\right]\in \SL_2(\Z). 
$$
Global sections of $\omega^{\otimes k}$ are then modular forms of weight~$k$. We recall our standing convention from the end of~\S\ref{sec:conv}: we always impose meromorphicity at the cusp and hence global sections of $\omega^{\otimes k}$ are (weakly holomorphic) modular forms. For cohomology theories valued in modular forms, it is customary to double the degree and take the dual grading as follows. 

\begin{defn}
Define the graded commutative algebra of modular forms, $\MF$ whose $2k^{\rm th}$ graded piece~$\MF^{2k}$ is weakly holomorphic modular forms of weight $-k$, and whose $(2k+1)$st graded piece~$\MF^{2k+1}$ is zero.
\end{defn}

\begin{rmk} We recall that 
$$
\MF=\C[c_4,c_6,\Delta^{\pm 1}]/(c_4^3-c_6^2-1728\Delta)
$$
where $c_4=\frac{E_4}{2\zeta(4)}$ and $c_6=\frac{E_6}{2\zeta(6)}$ are normalized Eisenstein series (for our conventions, see~\eqref{eq:Eisenstein}), and $\Delta=\eta^{24}$ is the discriminant. With our grading conventions, $|c_j|=|E_j|=-2j$, and $|\Delta|=-24$. Finally, since $\Delta$ is invertible, we have $\MF^\bullet\simeq \MF^{\bullet+24}$. 
\end{rmk}



%
%

Consider now the case of $G$ connected with maximal torus~$T<G$ and Weyl group $W=N(T)/ T$, and let $X_*(T)={\rm ker}(\mathfrak{t}\to T)$ be the cocharacter lattice. 

\begin{defn} \label{defn:Loo} Let $\ell$ be a $W$-invariant positive definite inner product on $\mf{t}$ satisfying $\ell(n, n) \in 2\mb{Z}$ for $n \in X_*(T)$. The \emph{level $\ell$ Looijenga line} $\mc{L}_\ell$ is a holomorphic line bundle on the substack $[(\mathcal{E}^{\vee, {\rm rk}(T)}\cq W)\sq \SL_2(\Z)]\subset \Bun_G(\EE)$ whose sections are $W$-invariant holomoprhic functions $f$ on $\HH\times\mf{t}_\C\simeq \mb{H} \times \mf{t} \times \mf{t}$ (using the isomorphism~\eqref{eq:BunGhol}) satisfying the $X_*(T)^{\oplus 2}\rtimes \SL_2(\Z)$-equivariance properties 
\beq
f(\tau, z+m+n\tau) &=& \exp\Big(-\pi i(2\ell(n,z) + \ell(n, n) \tau)\Big)f(\tau, z) \nonumber\\
f(\gamma \cdot \tau, z/(c\tau+d)) &=&  \exp \Big( \pi i c(c \tau + d)^{-1} \ell(z,z)\Big)f(\tau, z)\nonumber
\eeq
for $z\in \mf{t}_\C$, $n,m\in X_*(T)$, and $\gamma\in \SL_2(\Z)$, where we use that $\mathcal{E}^{\vee, {\rm rk}(T)} \simeq [(\mb{H} \times \mf{t}_\C)\sq X_*(T)^{\oplus 2}]$. 
\end{defn}

\begin{rmk} As noted in Example~\ref{ex:holomorphic}, if $G$ in addition has torsion-free fundamental group, then the above in fact defines a holomorphic line bundle on $\Bun_G(\EE)$ (as opposed to simply some substack thereof). If one additionally supposes that $G$ is simple and simply-connected, we have that $W$-invariant positive definite inner products~$\ell$ are naturally identified with elements of~$\H^4(BG;\Z)\simeq \Z$. We often make this identification, taking~$\ell\in \Z$ in these cases. \end{rmk} 

Now consider the case that $G$ is finite so that $\pi_0(G)=G$ and $\mathcal{C}^2(G)=\mathcal{C}^2[G]$. For a 3-cocycle $\ell\colon G\times G\times G\to \U(1)$ defining a class $[\ell]\in \H^3(BG;\U(1))\simeq \H^4(BG;\Z)$, Freed and Quinn~\cite{FreedQuinn} construct a line bundle on $[\mathcal{C}^2(G)/G\times\SL_2(\Z)]$; see also~\cite[\S2]{GanterHecke}. The vector space of sections of this line bundle is the value of Chern--Simons theory for the group~$G$ on the torus. 
An explicit cocycle for the line bundle on the groupoid $\mathcal{C}^2(G)/G$ is given by (e.g., see~\cite[\S3.4]{Willerton})
\beq
&&\frac{\ell(g,h_1,h_2)\ell(gh_2g^{-1},g,h_1)\ell(gh_1g^{-1},gh_2g^{-1},g)}{\ell(g,h_2,h_1)\ell(gh_1g^{-1},g,h_2)\ell(gh_2g^{-1},gh_1g^{-1},g)}\in C^\infty(\mathcal{C}^2(G)\times G;\U(1))\label{eq:conj}
\eeq
where $h=(h_1,h_2)\in \mathcal{C}^2(G)$, $g\in G$. 
Pulling back along
$$
\Bun_G(\EE)\to [\mathcal{C}^2(G)/\SL_2(\Z)\times G]
$$
defines line bundles $\mathcal{L}^\ell$ on $\Bun_G(\EE)$. Since $\mathcal{C}^2(G)/\SL_2(\Z)\times G$ is a discrete groupoid, $\mathcal{L}^\ell$ canonically comes with the structure of a holomorphic line bundle on $\Bun_G(\EE)$. 

\begin{rmk}\label{rmk:pi0}
By virtue of~\eqref{eq:conj}, $\mathcal{L}^\ell$ does \emph{not} pull back from the coarse quotient,
$$
\Bun_G(\EE)=(\HH\times\mathcal{C}^2(G))/(G\times\SL_2(\Z))\to (\HH\times\mathcal{C}^2(G)\cq G)/\SL_2(\Z),
$$
but rather depends on the conjugation action of $\pi_0(G)=G$ on $\mathcal{C}^2(G)$. This motivated our Definition~\ref{defn:BunG} of $\Bun_G(\EE)$. The line bundles $\mathcal{L}^\ell$ are important for applications of equivariant elliptic cohomology to generalized moonshine (see the next remark) and discrete torsion~\cite{Vafatorsion,Sharpediscrete,Andodiscrete,DBE_Equiv}. 
\end{rmk}

\begin{rmk}\label{rmk:moonshine}
For $G=\M$ the monster group, there is a particularly interesting line bundle on $\Bun_G(\EE)$. Johnson-Freyd has identified a class $[\omega^\sharp]\in \H^4(B\M;\Z)$ of order~24~\cite{TheoMoonshine}. The resulting line bundle $\mathcal{L}^{\omega^\sharp}$ is important in generalized moonshine~\cite{Mason,CarnahanI}. Roughly, the statement of generalized moonshine is that there exists a lift of the modular function $J(\tau)\in \mathcal{O}(\HH)$ to a holomorphic section $J_{\omega^\sharp}\in \Gamma(\Bun_\M(\EE);\mathcal{L}^{\omega^\sharp})$ so that the restriction of $J_{\omega^\sharp}$ to the trivial $\M$-bundle recovers the function~$J$. Explicitly, such a lift is data for each pair of commuting elements in the monster with compatibility properties for conjugation and $\SL_2(\Z)$-actions. Ganter has explained how the section $J_{\omega^\sharp}$ (and certain important additional properties, see~\cite[\S1.1]{GanterHecke}) can be rephrased in the language of equivariant elliptic cohomology and its power operations. It is expected that a deeper picture will emerge from a better geometric understanding of equivariant elliptic cohomology~\cite{Morava}.
\end{rmk}

\section{A cocycle model for equivariant elliptic cohomology}\label{sec:users}

We briefly review the notation used throughout this section. A pair of commuting elements in a compact Lie group~$G$ is denoted by~$h = (h_1, h_2)\in \mathcal{C}^2(G)$. For $g\in G$, let $ghg^{-1}=(gh_1g^{-1},gh_2g^{-1})$ denote the conjugate commuting pair. For $G$ acting on a manifold~$M$, let $M^h=M^{h_1,h_2}$ denote the submanifold of~$M$ fixed by~$h_1$ and~$h_2$. The connected component of the identity of $G$ is denoted $G_0< G$, and let~$G_0^h=(G_0)^h<G_0$ denote the subgroup fixed by the conjugation action of $h_1$ and $h_2$ on $G_0$. When $h_1,h_2\in G_0$ are in the connected component of the identity, we note that $G_0^h=C(h)=C(h_1,h_2)$ is the centralizer of~$h_1$ and~$h_2$. Let $\mf{g}$ be the Lie algebra of $G$, equipped with its adjoint $G$-action. We observe that the Lie algebra of $G_0^h$ is~$\mf{g}^h$, the subalgebra of $\fg$ fixed by the adjoint action of $h_1,h_2$. Let $\mf{t}_{\mf{g}^h}$ be a maximal commuting subalgebra of $\mf{g}^h$, or equivalently, $\mf{t}_{\mf{g}^h}$ is the Lie algebra of a maximal torus of the identity component of~$G_0^h$.

\subsection{The sheaf of equivariant elliptic cocycles on $\Bun_G(\EE)$}\label{sec:defn}

The goal of this subsection is to assign to a $G$-manifold $M$ a complex of sheaves $\dEll^\bullet_G(M)$ on $\Bun_G(\EE)$. Roughly, $\dEll^\bullet_G(M)$ is constructed from stitching together $G_0^h$-equivariant de~Rham complexes of~$M^h$ for all $h\in \mathcal{C}^2(G)$. To state an important compatibility condition between these complexes, we require some control over how fixed point sets $M^h$ vary with~$h$. Recall the notation $T_h^\epsilon:=B_\epsilon(\mf{t}_{\mf{g}^h}) \times B_{\epsilon}(\mf{t}_{\mf{g}^h})\subset \mf{t}_{\fg^h}\times \mf{t}_{\fg^h}$ from~\eqref{eq:BGopen}. Informally, the following lemma states that fixed points~$M^h$ get smaller for small deformations of~$h$ parameterized by~$T_h^\epsilon$.


\begin{lem} \label{lem:BlockGetzler} Fix a $G$-manifold $M$. For any pair of commuting elements $h=(h_1,h_2)\in \mathcal{C}^2(G)$, there exists a real number~$\delta=\delta(h)>0$ such that for all $\epsilon\in (0,\delta)$, any $(X_1,X_2)\in T_h^\epsilon$ has the property that
\beq
M^{h'}\subset M^{h}, \qquad G_0^{h'}<G_0^h,\label{eqlem:BlockGetzler}
\eeq
where $h'=(h_1',h_2') = (h_1e^{X_1}, h_2e^{X_2})$. Furthermore, for such $(X_1, X_2) \in T_h^\epsilon$, the vector fields on $M^h$ generated by~$X_1$ and $X_2$ vanish on the submanifold~$M^{h'}\subset M^{h}$. 
\end{lem}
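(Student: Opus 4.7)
The plan is to embed $M$ equivariantly into a finite-dimensional $G$-representation $V$ (by the standing assumption on $G$-manifolds) and reduce the entire lemma to an eigenvalue and weight-space analysis on $V$. The enabling structural observation is that since $X_1, X_2 \in \mf{t}_{\mf{g}^h} \subseteq \mf{g}^h$, the four elements $h_1, h_2, e^{X_1}, e^{X_2}$ mutually commute, so they act on $V$ through simultaneously diagonalizable operators.

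For the inclusion $M^{h'} \subseteq M^h$, I would decompose $V = \bigoplus V_{\lambda_1, \lambda_2}$ into joint eigenspaces of $h_1$ and $h_2$. On a component with $(\lambda_1,\lambda_2) \neq (1,1)$, say $\lambda_i \neq 1$, the operator $h'_i = h_i e^{X_i}$ has eigenvalues $\lambda_i \mu$ with $\mu$ an eigenvalue of $e^{X_i}$ close to $1$ for $X_i$ small. Since only finitely many eigenvalues appear, I can choose $\delta = \delta(h) > 0$ small enough that these perturbed eigenvalues remain bounded away from $1$ on every such subspace. Hence $V^{h'_i} \subseteq V^{h_i}$, and intersecting gives $V^{h'} \subseteq V^h$, whence $M^{h'} = M \cap V^{h'} \subseteq M \cap V^h = M^h$. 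Running the identical argument on the adjoint representation of $G_0$ on $\mf{g}$ yields $\mf{g}^{h'} \subseteq \mf{g}^h$, and hence the identity-component inclusion $(G_0^{h'})_0 \subseteq (G_0^h)_0$.

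The delicate step is upgrading to the full subgroup inclusion $G_0^{h'} < G_0^h$, which must account for possibly disconnected component groups. For $g \in G_0^{h'}$, the equations $(gh_i g^{-1}) e^{\Ad(g)X_i} = h_i e^{X_i}$ together with conjugacy of $h_i$ and $g h_i g^{-1}$ in $G_0$ force $g h_i g^{-1} = h_i$ when $\epsilon$ is small, since both presentations are small perturbations of $h_i$ within the compact abelian subgroup they generate. I would formalize this using the structural Lie-theoretic results of Appendix~A that control how commuting pairs and their centralizers deform (in the spirit of Lemma~\ref{lem:lcrank} and the conjugacy statements used in Lemma~\ref{lem:cov2}). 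I expect this component-counting step to be the main obstacle.

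Finally, for the vector-field vanishing, let $p \in M^{h'}$. From $p \in M^h$ we have $h_i \cdot p = p$, and combined with $h'_i \cdot p = p$ this yields $e^{X_i} \cdot p = p$. To upgrade this to $\xi_{X_i}(p) = 0$, I would decompose $V$ into weight spaces $V_\lambda$ for the torus $T_{G_0^h}$; on $V_\lambda$, $e^{X_i}$ acts by the scalar $e^{\lambda(X_i)}$ while $X_i$ acts infinitesimally by $\lambda(X_i)$. Shrinking $\delta$ further (relative to the finitely many weights occurring in $V$) so that $|\lambda(X_i)| < 2\pi$ on all of them, the equation $e^{\lambda(X_i)} = 1$ forces $\lambda(X_i) = 0$. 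Hence $e^{X_i} p = p$ implies $X_i \cdot p = 0$ in $V$, and since $M \hookrightarrow V$ is $G$-equivariant, the vector field $\xi_{X_i}$ on $M^h$ is the restriction of the linear vector field on $V$, so $\xi_{X_i}(p) = 0$ as claimed.
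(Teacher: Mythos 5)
The first and third parts of your argument are sound. Your eigenvalue analysis on a finite-dimensional equivariant embedding is a self-contained re-derivation of what the paper gets by citing Block--Getzler \cite[Lemma~1.3]{BlockGetzler} and applying it to each $h_i$ separately; and your weight-space argument for the vanishing of the vector fields (bounding $|\lambda(X_i)| < 2\pi$ over the finitely many weights) is essentially the same reduction-to-$V$ idea the paper uses, just written out more explicitly. Both are fine.

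The genuine gap is in the step $G_0^{h'} < G_0^h$. As you correctly observe, the adjoint-representation eigenvalue argument only yields $\mf{g}^{h'} \subset \mf{g}^h$, hence an inclusion of identity components, and your attempt to handle $g \in G_0^{h'}$ far from the identity via $(gh_ig^{-1})e^{\Ad(g)X_i} = h_ie^{X_i}$ does not obviously close: for arbitrary $g \in G_0^{h'}$ there is no reason $gh_ig^{-1}$ should be a small perturbation of $h_i$. Worse, the escape hatch you propose --- ``formalize this in the spirit of Lemma~\ref{lem:lcrank}'' --- is circular: Lemma~\ref{lem:lcrank} is \emph{proved} in Appendix~A using the present Lemma~\ref{lem:BlockGetzler}, so you cannot lean on it here. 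The paper sidesteps the whole issue in one line: $G_0$ is itself a $G$-manifold under the conjugation action (with $G_0^h$ as the fixed-point set of $h$), so the already-established set inclusion of fixed-point loci, applied to $M = G_0$, gives $G_0^{h'} = (G_0)^{h'} \subset (G_0)^h = G_0^h$ directly. Since fixed-point sets of conjugation are subgroups, this is the desired subgroup inclusion, with no component-counting to worry about. You should replace your paragraph on this step with that observation.
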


\bp 
Block and Getzler~\cite[Lemma~1.3]{BlockGetzler} prove a version of the above for fixed points by a single element $h$ deformed by an element $X \in \mf{g}^h$, and we apply their lemma twice. Indeed, their lemma provides a ball so that $X_1\in B_{\epsilon_1}(\mf{t}_{\mf{g}^{h_1}})$ has $M^{h_1e^{X_1}} \subset M^{h_1}$ and another ball so that $X_2\in B_{\epsilon_2}(\mf{t}_{\mf{g}^{h_2}})$ has $M^{h_2e^{X_2}} \subset M^{h_2}$. Setting $\epsilon={\rm min}(\epsilon_1,\epsilon_2)$ and considering the special case where $h_1$ and $h_2$ commute and $X_1,X_2\in \mf{t}_{\mf{g}^h}$, we have
$$
M^{h'}=M^{h_1',h_2'}=M^{h_1e^{X_1}, h_2e^{X_2}} = M^{h_1e^{X_1}} \cap M^{h_2e^{X_2}} \subset M^{h_1} \cap M^{h_2} = M^{h_1, h_2}=M^h.
$$ 
The same argument applied to $M=G_0$ shows that $G_0^{h'}<G_0^h$, proving the first assertion of the lemma. For the second assertion, for a fixed $X_1,X_2$ we
claim that $M^{h_1e^{\epsilon X_1},h_2e^{\epsilon X_2}}\subset M^{h_1,h_2}$ is constant for~$\epsilon>0$ sufficiently small. This is a consequence of our assumption (see the end of~\S\ref{sec:conv}) that~$M$ can be equivariantly embedded in a finite-dimensional $G$-representation. Indeed, choosing an invariant metric on the representation reduces to the case that $M=V$ is a (finite-dimensional) vector space with its standard action by $G={\rm O}(V)$ the orthogonal group.  Fixed-point loci are now determined by the subspace corresponding to eigenvalue $1$ and the constancy of the fixed-point loci for $\epsilon$ sufficiently small may be seen directly. Hence $X_1$ and $X_2$ both vanish on~$M^{h'}$. 
\ep

Given a commutative algebra~$A$ over $\C$ and a $G$-manifold $M$, define
\beq
\Omega^{\bullet}_G(M;A[\beta, \beta^{-1}]):=\bigoplus_j \mathcal{O}_0(\mf{g}_\C;\Omega^j(M;A[\beta, \beta^{-1}]))^G\qquad  |\beta|=- 2, \label{eq:Cartancplx}
\eeq
as the stalk at $0\in \mf{g}_\C$ of $G$-invariant holomorphic functions on $\mf{g}_\C$ valued in $\Omega^j(M;A[\beta, \beta^{-1}])$ for the adjoint $G$-action on $\mf{g}_\C$ and the $G$-action on $\Omega^j(M)$ induced by the $G$-action on~$M$. Endow this with the total grading from differential forms and the graded ring $A[\beta,\beta^{-1}]$. Equip $\Omega^{\bullet}_G(M;A[\beta,\beta^{-1}])$ with the Cartan differential $Q=d-\beta^{-1} \iota$ (see~\eqref{eq:Cartandiff}). 

\begin{rmk}
We emphasize that the grading on~\eqref{eq:Cartancplx} is not the usual $\Z$-grading for equivariant de~Rham cohomology with values in a graded ring: germs of holomorphic functions on the Lie algebra (and in particular, polynomials) are in degree zero in~\eqref{eq:Cartancplx}. This choice is essentially forced upon us because power series rings can only be equipped with the trivial grading. Another option is to work with $\Z/2$-graded complexes. This approach works well for equivariant elliptic cohomology at a fixed elliptic curve (as in~\cite{Grojnowski}), where elliptic cohomology is 2-periodic and so can be computed by a $\Z/2$-graded complex. However for families of elliptic curves this 2-periodicity is typically broken, so one can no longer express equivariant elliptic cohomology in terms of a sheaf of~$\Z/2$-graded complexes. 
\end{rmk}

We now give the main definition of the paper. Below let~$U_0\subset \HH$ denote an open subset and $U_h^{\epsilon} \subset \mathcal{C}^2[G]$ an open subsheaf defined in Corollary~\ref{cor:opencover1}. We recall from Lemma~\ref{lem:cov2} that $U_h^{\epsilon}\simeq T_h^\epsilon\cq W^h$, and we demand that $\epsilon<\delta=\delta(h)$ is chosen so that deformations in $T_h^\epsilon$ satisfy~\eqref{eqlem:BlockGetzler}, where the existence of such a $\delta$ is guaranteed by Lemma~\ref{lem:BlockGetzler}. 

\begin{defn} \label{defn:ellcocycle}
Given a $G$-manifold $M$, for each $U_0\times U_h^{\epsilon}\subset \HH\times\mathcal{C}^2[G]$ defined above, let $\dEll^\bullet_G(M)(U_0\times U_h^{\epsilon})$ denote the cdga whose elements $\alpha\in \dEll^\bullet_G(M)(U_0\times U_h^{\epsilon})$ are sets $\{\alpha_{h'}\}$ of compatible equivariant differential forms defined as follows. For all $h'\in \mathcal{C}^2(G)$ such that $[h']\in U_h^\epsilon\subset \mathcal{C}^2[G]$, we require the data of
\beq
\alpha_{h'}\in  \Omega^{\bullet}_{G_0^{h'}} (M^{h'};\mathcal{O}(U_0)[\beta, \beta^{-1}])\simeq \mathcal{O}(U_0;\Omega^{\bullet}_{G_0^{h'}} (M^{h'};\C[\beta, \beta^{-1}])).\label{eq:cocycledef}
\eeq
These data are required to satisfy:
\begin{enumerate}
\item \emph{Invariance}: for all $g\in G_0$, there is an equality of equivariant differential forms
\beq
\alpha_{h'}=g^*\alpha_{gh'g^{-1}}\label{eq:conjugationinvariance}
\eeq
where $g^*$ is the pullback along left multiplication by $g$, $M^{h'}\to M^{gh'g^{-1}}$. 

\item \emph{Analyticity}: for $h_1'=h_1e^{X_1}$ and $h_2'=h_2e^{X_2}$ as in Lemma~\ref{lem:BlockGetzler}, there is an equality of germs of holomorphic functions on $(\mf{t}_{\fg^{h'}})_\C\subset \fg^{h'}_\C$ determined by $\alpha_h$ and $\alpha_{h'}$,
\beq
&&\alpha_{h'}(X)={\rm res} (\alpha_{h}(X+(X_1-\tau X_2)))\in \Omega^{\bullet}_{G_0^{h'}}(M^{h'};\O(U_0)[\beta,\beta^{-1}])\label{eq:analyticity}
\eeq
where $X\in (\mf{t}_{\mf{g}^{h'}})_{\mb{C}}$ and ${\rm res}\colon \Omega^{\bullet}_{G_0^h}(M^{h};\O(U_0)[\beta,\beta^{-1}])\to \Omega^{\bullet}_{G_0^{h'}}(M^{h'};\O(U_0)[\beta,\beta^{-1}])$ is restriction along the inclusions $M^{h'}\hookrightarrow M^h$ and $G_0^{h'}<G_0^h$ from Lemma~\ref{lem:BlockGetzler}. 
\end{enumerate}
Define the differential $Q$ on $\dEll^\bullet_G(M)(U_0\times U_h^{\epsilon})$ via the Cartan differentials applied to each~$\alpha_{h'}$ in~\eqref{eq:cocycledef}. Compatibility of these differentials with the analyticity property follows from the last statement in Lemma~\ref{lem:BlockGetzler}. 
If $([g],\gamma)\in \pi_0(G)\times \SL_2(\Z)$ maps the open subsheaf $U_0\times U_h^{\epsilon}$ to an open subsheaf of $V_0\times U_{k}^{\epsilon'}\subset \HH\times\mathcal{C}^2[G]$, define the restriction map
\beq
\dEll^\bullet_G(M)(V_0\times U_{k}^{\epsilon'})\to \dEll^\bullet_G(M)(U_0\times U_h^{\epsilon})\label{eq:restrictionmapscdga}
\eeq 
in terms of maps for each~$[h']\in U_h^{\epsilon}$, 
%
\beq
\Omega^{\bullet}_{G_0^{\gamma\cdot gh'g^{-1}}} (M^{\gamma\cdot gh'g^{-1}};\mathcal{O}(V_0)[\beta, \beta^{-1}]){\longrightarrow} \Omega^{\bullet}_{G_0^{h'}} (M^{h'};\mathcal{O}(U_0)[\beta, \beta^{-1}])\label{eq:equiv}
\eeq
using the pullback of functions along $U_0\hookrightarrow V_0$ and the isomorphisms $M^{h'}\simeq M^{\gamma\cdot gh'g^{-1}}$, $G_0^{h'}\simeq G_0^{\gamma\cdot gh'g^{-1}}$. We then modify this pullback map by rescaling the Lie algebra~$\mf{g}^{h'}$ by $c\tau+d$ (so $z\in (\mf{g}^{h'})^\vee$ is sent to $\frac{z}{c\tau+d}$), and sending $\beta$ to $\beta/(c\tau+d)$. The map~\eqref{eq:equiv} is independent of the choice of lift $g\in G$ because of the $G_0$-invariance property~\eqref{eq:conjugationinvariance}. 
\end{defn}

\begin{rmk}
By the analyticity property, the data of $\{\alpha_{h'}\}$ comprising $\alpha\in \dEll^\bullet_G(M)(U_0\times U_h^{\epsilon})$ is completely determined by the equivariant differential form~$\alpha_h$. We carry around the additional data~$\{\alpha_{h'}\}$ to make the definition of the restriction maps~\eqref{eq:restrictionmapscdga} transparent. 
\end{rmk}

\begin{prop} The cdgas $\dEll^\bullet_G(M)(U_0\times U_h^{\epsilon})$ have the following properties. 
\begin{enumerate}
\item There is a uniquely determined sheaf of cdgas $\dEll^\bullet_G(M)$ on $\Bun_G(\EE)$ that when viewed as a $\pi_0(G)\times \SL_2(\Z)$-equivariant sheaf on $\HH\times \mathcal{C}^2[G]$ takes the value $\dEll^\bullet_G(M)(U_0\times U_h^{\epsilon})$ on the open subsheaf $U_0\times U_h^\epsilon\subset \HH\times \mathcal{C}^2[G]$, and has restriction maps determined by~\eqref{eq:restrictionmapscdga} and~\eqref{eq:equiv}. 
\item The sheaf $\dEll_G^\bullet(M)$ is functorial in the pair $(M,G)$: a $G$-equivariant map $M\to M'$ induces a morphism of sheaves of cdgas $\dEll^\bullet_G(M')\to \dEll^\bullet_G(M)$ on $\Bun_G(\EE)$, and a homomorphism $G\to H$ induces a map of sheaves of cdgas~$\dEll^\bullet_H(M)\to \dEll^\bullet_G(M)$ over the map $\Bun_G(\EE)\to \Bun_H(\EE)$. 
\item The sheaf $\dEll_G^\bullet(M)$ has Mayer--Vietoris sequences: a $G$-invariant open cover of~$M$ determines an exact sequence of sheaves of cdgas on~$\Bun_G(\EE)$.
\end{enumerate}
\end{prop}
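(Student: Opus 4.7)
The plan is to deduce (1) from Proposition~\ref{prop:BunGsheaf}, after which (2) and (3) follow by checking the corresponding properties of the equivariant de~Rham complex fixed-point set by fixed-point set.

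For (1), I would set $\F_h^\epsilon$ to be the sheaf on $\HH \times T_h^\epsilon$ whose sections over an open $U_0 \times V$ consist of compatible families $\{\alpha_{h''}\}$ of Definition~\ref{defn:ellcocycle}, where $h''$ ranges over pairs of commuting elements with $[h'']$ in the image of $V$ in $\mathcal{C}^2[G]$. The $W^h$-equivariant structure (data (D1) of Proposition~\ref{prop:BunGsheaf}) is imposed by the invariance condition~\eqref{eq:conjugationinvariance} applied to lifts $\tilde w \in N_{G_0^h}(T_{G_0^h})$ of elements of $W^h$, which act on $T_h^\epsilon$ through $W^h$. For any $(g, \gamma) \in G \times \SL_2(\Z)$ taking an open $V \subset T_h^\epsilon$ to $V' \subset T_{h'}^{\epsilon'}$, the isomorphism (D2) is the composite of pullback along the $G$-diffeomorphism $M^{h''} \to M^{g h'' g^{-1}}$ with the $\SL_2(\Z)$-rescaling described in~\eqref{eq:equiv}. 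Condition (C1) then holds tautologically, since for $g \in N_{G_0^h}(T_{G_0^h})$ and $\gamma = 1$ both the (D1) and (D2) structures reduce to the same $g^*$-transport. Condition (C2) follows from functoriality of pullback together with the automorphy cocycle identity for $(c\tau+d)$ under composition in $\SL_2(\Z)$, which controls the rescaling of $\beta$ and $\mf{g}^{h''}$. The wedge product and the Cartan differential $Q = d - \beta^{-1}\iota$ equip the resulting sheaf with the structure of a sheaf of cdgas, and uniqueness is immediate because the charts $\{U_0 \times U_h^\epsilon\}$ cover $\HH \times \mathcal{C}^2[G]$.

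For (2), a $G$-equivariant map $f: M \to M'$ restricts to $f^{h'} : M^{h'} \to (M')^{h'}$ for each $h'$, and pullback on equivariant de~Rham complexes preserves the Cartan differential, the invariance condition (since $f$ is $G$-equivariant), and the analyticity condition (since pullback commutes with restriction along the inclusions $M^{h''} \hookrightarrow M^{h'}$ of Lemma~\ref{lem:BlockGetzler}). These assemble into the desired morphism of sheaves $\dEll^\bullet_G(M') \to \dEll^\bullet_G(M)$. A homomorphism $\phi: G \to H$ acting on an $H$-manifold $M$ identifies $M^{h'} = M^{\phi(h')}$ and embeds $G_0^{h'} < H_0^{\phi(h')}$; restricting equivariance from $H_0^{\phi(h')}$ to $G_0^{h'}$ gives a map of Cartan complexes that assembles into a morphism of sheaves covering the functor $\Bun_G(\EE) \to \Bun_H(\EE)$ of Definition~\ref{defn:BunG}. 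For (3), a $G$-invariant open cover $\{U_\alpha\}$ of $M$ induces a $G_0^{h'}$-invariant open cover $\{U_\alpha \cap M^{h'}\}$ of each $M^{h'}$; averaging over the compact group $G_0^{h'}$ yields invariant partitions of unity, so the standard Mayer--Vietoris argument produces an exact sequence for the equivariant Cartan complex with coefficients in the flat $\C$-algebra $\mathcal{O}(U_0)[\beta, \beta^{-1}]$. The invariance and analyticity conditions are preserved by the Mayer--Vietoris maps fiberwise over each $h'$, and exactness propagates to the assembled sheaves on $\Bun_G(\EE)$.

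The principal obstacle lies in the bookkeeping for (C2), specifically tracking how the $(c\tau+d)^{\pm 1}$-rescaling of $\beta$ and the Lie algebra coordinates composes under iterated $\SL_2(\Z)$-actions while simultaneously interacting with the $G$-conjugation step in (D2). Once this cocycle identity is pinned down, the remaining verifications reduce to standard functoriality and partition-of-unity arguments on the individual Cartan complexes of the fixed-point sets.
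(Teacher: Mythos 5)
Your outline for parts (2) and (3) is correct and matches the paper: both reduce to naturality of the data~\eqref{eq:cocycledef} in $M$ and the standard equivariant Mayer--Vietoris argument applied fixed-point set by fixed-point set. For part (1), however, your choice of $\mathcal{F}_h^\epsilon$ has a genuine problem: as you have defined it, it is not a sheaf on the manifold $\HH\times T_h^\epsilon$, so the machinery of Proposition~\ref{prop:BunGsheaf} cannot be applied to it. To see this, pick a small open $V\subset T_h^\epsilon$ and an element $w\in W^h$ with $V\cap wV=\emptyset$ (possible whenever $W^h$ acts nontrivially). Your indexing set for sections over $U_0\times V$, namely $\{h''\in\mathcal{C}^2(G)\mid [h'']\text{ lies in the image of }V\text{ in }\mathcal{C}^2[G]\}$, is a union of $G_0$-conjugacy classes and therefore coincides with the indexing set over $U_0\times wV$; and since you impose the invariance condition~\eqref{eq:conjugationinvariance}, the two spaces of sections are literally equal. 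The sheaf axiom for the disjoint open $U_0\times(V\sqcup wV)$ would then require the product $\mathcal{F}_h^\epsilon(U_0\times V)\times\mathcal{F}_h^\epsilon(U_0\times wV)$, but your definition returns the same single piece of data again. Equivalently, your presheaf factors through the coarse quotient $T_h^\epsilon\cq W^h$, so the $W^h$-equivariant structure it carries is the trivial one; your remark that condition (C1) holds ``tautologically'' is a symptom of this degeneracy rather than a shortcut.

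The paper sidesteps this by defining $\mathcal{F}_h^\epsilon$ differently. Its sections over a \emph{basis} open $U_0\times T_{h'}^{\epsilon'}\subset\HH\times T_h^\epsilon$ are families $\{\alpha_k\in\Omega^\bullet_{T_{G_0^k}}(M^k;\mathcal{O}(U_0)[\beta,\beta^{-1}])\}$ indexed by the actual commuting pairs $k\in\mathcal{C}^2(G)$ that differ from $h'$ by the exponential of a point of $T_{h'}^{\epsilon'}$, subject \emph{only} to the analyticity condition~\eqref{eq:analyticity} --- the $G_0$-invariance condition~\eqref{eq:conjugationinvariance} is deliberately not imposed at this stage. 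Distinct basis opens then index over genuinely distinct sets of pairs, so this does define an honest sheaf on the manifold (a sheaf on a manifold being determined by its values on a basis). The $W^h$-equivariant structure arrives as a nontrivial equivariant structure, via pullback along the $N(T_{G_0^h})$-action on $M^h$ and conjugation on $T_{G_0^h}$, which factors through $W^h$ because the torus $T_{G_0^h}$ acts trivially on equivariant cohomology. Passage to $W^h$-invariant sections of the direct image along $q_h$ is then precisely what reinstates the invariance property and recovers $\dEll_G^\bullet(M)(U_0\times U_h^\epsilon)$, via the isomorphism $\Omega^\bullet_{G_0^h}(M^h;\mathcal{O}(U_0)[\beta,\beta^{-1}])\simeq\Omega^\bullet_{T_{G_0^h}}(M^h;\mathcal{O}(U_0)[\beta,\beta^{-1}])^{W^h}$. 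With this corrected $\mathcal{F}_h^\epsilon$, the rest of your outline for (1) is sound: the data (D2) is the pullback-and-rescale of~\eqref{eq:equiv} as you describe, and (C1)--(C2) now require a short but nontautological check that this nontrivial $W^h$-structure is compatible with the $(g,\gamma)$-transport, which is where~\eqref{eq:conjugationinvariance} actually gets used.
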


\bp
Part (1) follows from realizing the values $\dEll_G^\bullet(M)(U_0\times U_h^{\epsilon})$ as coming from a sheaf on $\Bun_G(\EE)$ constructed via Proposition~\ref{prop:BunGsheaf}. To spell this out, we first observe that the cdga $\dEll^\bullet_G(M)(U_0\times U_h^\epsilon)$ arises as the $W^h$-invariant elements of cdga associated with $\HH\times T_h^\epsilon$ using the isomorphism of cdgas, 
\beq
\Omega^{\bullet}_{G_0^h} (M^{h};\mathcal{O}(U_0)[\beta, \beta^{-1}])\simeq \Omega^\bullet_{T_{G_0^{h}}}(M^{h};\mathcal{O}(U_0)[\beta,\beta^{-1}])^{W^h},\label{eq:Thesheaf}
\eeq
where $T_{G_0^h}<G_0^h$ is a maximal torus of the connected component of the identity and $W^h$ is defined in~\eqref{eq:Weyldefn}.
We next realize the right hand side of~\eqref{eq:Thesheaf} as the $W^h$-invariant global sections of a sheaf of cdgas on $\HH\times T_h^\epsilon$. Indeed, define a sheaf that to the basis of open subsets $U_0\times T_{h'}^{\epsilon'}\subset \HH\times T_h^\epsilon$ assigns data $\{\alpha_k\in \Omega^\bullet_{T_{G_0^{k}}}(M^{k};\mathcal{O}(U_0)[\beta,\beta^{-1}])\}$ where $k$ and $h'$ differ by (the exponential of) a point in~$T_{h'}^{\epsilon'}$, and the $\alpha_k$ satisfy the compatibility condition~\eqref{eq:analyticity}. Note that since $\HH\times T_h^\epsilon$ is a manifold, a sheaf is completely determined by its values on a basis of open subsets (e.g., see~\cite[Lemma~6.30.6]{stacks-project}) and the open subsets $U_0\times T_{h'}^{\epsilon'}\subset \HH\times T_h^\epsilon$ afford such a basis. We promote this to an $W^h$-equivariant sheaf on $\HH\times T_h^\epsilon$ by pulling back along the left action of the normalizer $N(T_{G_0^{h}})$ on $M^h$ and the conjugation action of $N(T_{G_0^{h}})$ on $T_{G_0^{h}}$. Since the action of the torus $T_{G_0^{h}}$ on equivariant cohomology is trivial, this action by the normalizer factors through the quotient $W^h=N(T_{G_0^{h}})/T_{G_0^{h}}$ as desired. This completes the construction of the $W^h$-equivariant sheaf of cdgas on each $\HH\times T_h^\epsilon$ whose $W^h$-invariant sections are computed by~\eqref{eq:Thesheaf}. Next we define the equivariance data required in Proposition~\ref{prop:BunGsheaf} using maps completely analogous to~\eqref{eq:equiv}. The required conditions in Proposition~\ref{prop:BunGsheaf} follow from the fact the Weyl action is compatible with the equivariant structure using the invariance property~\eqref{eq:conjugationinvariance}. Naturality of this structure for restrictions and the action by $\pi_0(G)\times \SL_2(\Z)$ follows from composition of pullbacks and that the action on coefficients going into the definition of~\eqref{eq:equiv} is a well-defined action. 

Property (2) in the statement of the present proposition follows from the naturality of the data~\eqref{eq:cocycledef} in the $G$-manifold $M$, while property (3) follows from applying naturality of Mayer--Vietoris sequences in equivariant de~Rham cohomology to the data~\eqref{eq:cocycledef}. 
\ep

\begin{defn}\label{defn:cohom}
Define the sheaves $\Ell_G^\bullet(M)$ on $\Bun_G(\EE)$ as the cohomology sheaves of the complex of sheaves $\dEll_G^\bullet(M)$. 
\end{defn}

\subsection{Stalks and elliptic Atiyah--Segal completion} \label{sec:complete}

The Atiyah--Segal completion theorem compares equivariant K-theory and Borel equivariant K-theory. When applied to $G$ acting on the point, it gives a map
\beq
\Rep(G)\simeq \K_G(\pt)\to \K_G(\pt)_{\widehat{I}}\simeq \K(BG)\label{eq:AtiyahSegal}
\eeq
witnessing the target as a completion of $\Rep(G)$ at the augmentation ideal~$I$. For the delocalized K-theory of Block--Getzler~\cite{BlockGetzler}, Vergne~\cite{DufloVergne}, and Vergne~\cite{Vergne}, the completion map~\eqref{eq:AtiyahSegal} takes the form
$$
C^\infty(G)^G\to C^\infty_0(\mf{g})^G
$$
sending a smooth class function $f\in C^\infty(G)^G$ to its germ at~$e\in G$. This is related to~\eqref{eq:AtiyahSegal} by sending a $G$-representation $\rho$ to its character ${\rm Tr}(\rho)\in C^\infty(G)^G$ (as a smooth class function), Taylor expanding this class function at~$e\in G$, and identifying $C^\infty_0(\mf{g})^G$ with $\Z/2$-graded Borel equivariant cohomology of a point. We now explain a similar structure in equivariant elliptic cohomology. 

Given a pair of commuting elements $h\in \mathcal{C}^2(G)$, we obtain a map
\beq
j_h\colon \HH\sq \Gamma \to \Bun_G(\EE)\label{eq:jh}
\eeq
that on objects includes at $[h]\in \mathcal{C}^2[G]$, and where $\Gamma<\pi_0(G)\times \SL_2(\Z)$ is the stabilizer of $[h]\in \mathcal{C}^2[G]$.

\begin{prop}\label{prop:partialstalk}
There is an isomorphism of $\Gamma$-equivariant sheaves on $\HH$ that on $U\subset \HH$ is given by
$$
j_h^*\dEll_G^\bullet(M)(U)\simeq \Omega^{\bullet}_{G_0^h} (M^{h};\mathcal{O}(U)[\beta, \beta^{-1}]). 
$$
\end{prop}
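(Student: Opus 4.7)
The sheaf $\dEll_G^\bullet(M)$ on $\Bun_G(\EE)$ is defined through its presentation as a $\pi_0(G)\times\SL_2(\Z)$-equivariant sheaf on $\HH\times\mathcal{C}^2[G]$. The map $j_h$ includes the $\Gamma$-orbit of $\HH\times\{[h]\}$, so pulling back amounts to computing the stalk of this equivariant sheaf in the $\mathcal{C}^2[G]$-direction at $[h]$, with its induced $\Gamma$-equivariant structure. By Corollary~\ref{cor:opencover1}, as $\epsilon$ decreases the $U_h^\epsilon$ form a cofinal system of open neighborhoods of $[h]\in \mathcal{C}^2[G]$, so
$$j_h^*\dEll_G^\bullet(M)(U) \;\simeq\; \varinjlim_{\epsilon\to 0}\; \dEll_G^\bullet(M)(U\times U_h^\epsilon).$$

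My plan is to exhibit, at each level of this colimit, the natural evaluation map $\mathrm{ev}_h\colon\{\alpha_{h'}\}\mapsto \alpha_h$ into $\Omega^\bullet_{G_0^h}(M^h;\mathcal{O}(U)[\beta,\beta^{-1}])$, and to prove it is bijective (then passing to the colimit gives the claimed isomorphism). Injectivity is essentially noted already in the remark after Definition~\ref{defn:ellcocycle}: for any $[h']\in U_h^\epsilon$, we may write $h'=gh''g^{-1}$ with $h''\in T_h^\epsilon$ by Lemma~\ref{lem:cov2}, and then~\eqref{eq:analyticity} together with~\eqref{eq:conjugationinvariance} express $\alpha_{h'}$ in terms of $\alpha_h$.

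For surjectivity, given a $G_0^h$-equivariant form $\alpha_h$ I reverse-engineer this description: pick $\epsilon<\delta(h)$ from Lemma~\ref{lem:BlockGetzler}, define $\alpha_{h''}$ for $h''=(h_1e^{X_1},h_2e^{X_2})$ with $(X_1,X_2)\in T_h^\epsilon$ by formula~\eqref{eq:analyticity}, and extend by $\alpha_{gh''g^{-1}}:=(g^{-1})^*\alpha_{h''}$. The resulting $\alpha_{h''}$ is automatically $G_0^{h''}$-equivariant because $G_0^{h''}<G_0^h$ and $M^{h''}\subset M^h$, and the vanishing of the vector fields generated by $X_1,X_2$ on $M^{h''}$ (last clause of Lemma~\ref{lem:BlockGetzler}) is exactly what ensures compatibility with the Cartan differential. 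Compatibility with the $\Gamma$-equivariance structure is then a direct unwinding of~\eqref{eq:equiv} specialized to elements of $\Gamma$.

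The main obstacle is well-definedness of the extension: if $g_1h_1''g_1^{-1}=h'=g_2h_2''g_2^{-1}$ with $h_1'',h_2''\in T_h^\epsilon$, I need $(g_2^{-1}g_1)^*\alpha_{h_1''}=\alpha_{h_2''}$. The element $g:=g_2^{-1}g_1\in G_0$ sends one slice point to another, and repeating the argument at the end of Proposition~\ref{prop:BunGsheaf} (using Lemma~\ref{lem:lcrank}) forces $g\in N_{G_0^h}(T_{G_0^h})$, so that $g$ acts only through its class in $W^h=N_{G_0^h}(T_{G_0^h})/T_{G_0^h}$. Since $\alpha_h$ is $G_0^h$-equivariant, the action of $N_{G_0^h}(T_{G_0^h})$ on $\alpha_h$ factors through the preexisting equivariant structure and is therefore trivial on the underlying form, so both candidate definitions of $\alpha_{h'}$ coincide. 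This well-definedness is the one nontrivial point; everything else is a formal consequence of the definitions.
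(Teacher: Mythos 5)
Your proof follows the same approach as the paper: exhibit the evaluation map $\{\alpha_{h'}\}\mapsto\alpha_h$ on each $U\times U_h^\epsilon$ and invoke the analyticity condition to identify it as an isomorphism in the colimit. The paper asserts the surjectivity without elaboration, and you correctly identify that the substantive point in the reconstruction is well-definedness of the conjugation extension. However, the intermediate claim that $g:=g_2^{-1}g_1$ is forced into $N_{G_0^h}(T_{G_0^h})$ is incorrect: unlike the situation in Proposition~\ref{prop:BunGsheaf}, where the two lifts $g,g'$ each carry the slice $T_h^\epsilon$ to the slice $T_{h'}^{\epsilon'}$ and hence each conjugate $\mf{t}_{\mf{g}^h}$ onto $\mf{t}_{\mf{g}^{h'}}$, here $g$ merely conjugates one \emph{point} of $T_h^\epsilon$ to another, which need not preserve $\mf{t}_{\mf{g}^h}$ (already for $h=(e,e)$ and $h_1''=h_2''=(e,e)$ every $g\in G_0$ qualifies). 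Fortunately this detour is unnecessary: Corollary~\ref{prop:conjugate} alone places $g\in G_0^h$, and since $\alpha_h$ is by definition a $G_0^h$-invariant in the Cartan complex, the two candidate values $(g_1^{-1})^*\alpha_{h_1''}$ and $(g_2^{-1})^*\alpha_{h_2''}$ coincide directly from that invariance, with no need to land in the normalizer or to invoke Lemma~\ref{lem:lcrank}. With that substitution the rest of your reconstruction of the inverse map, and hence the whole proof, goes through.
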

\bp
We recall that $j_h^*\dEll_G^\bullet(M)(U)$ is the colimit of the values of $\dEll_G^\bullet(M)$ for open subsheaves containing the image of $j_h$. 
We define a map,
\beq
j_h^*\dEll_G^\bullet(M)(U)\to \Omega^{\bullet}_{G_0^h} (M^{h};\mathcal{O}(U)[\beta, \beta^{-1}]). \label{eq:itsamap}
\eeq
that extracts $\alpha_h\in \Omega^{\bullet}_{G_0^h} (M^{h};\mathcal{O}(U)[\beta, \beta^{-1}])$ from  the data of a section over $U\times U_h^\epsilon$ from~\eqref{eq:cocycledef}. However, the analyticity condition implies that a section on $U\times U_h^\epsilon$ for $\epsilon$ sufficiently small is completely determined by~$\alpha_h$. Therefore, the map~\eqref{eq:itsamap} is indeed an isomorphism. \ep


\begin{cor} 
There is an isomorphism of $\Gamma$-equivariant sheaves on $\HH$ that on $U\subset \HH$ is given by
$$
j_h^*\Ell_G^k(M)(U)\simeq \left\{\begin{array}{ll} \displaystyle \H_{G_0^h}^\ev (M^h;\mathcal{O}(U)) & k={\rm even} \\ \displaystyle \H_{G_0^h}^\odd (M^h;\mathcal{O}(U)) & k={\rm odd}\end{array}\right.
$$
i.e., the 2-periodic Borel equivariant cohomology of $M^h$ with its $G_0^h$ action. 
\end{cor}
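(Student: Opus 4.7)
The plan is to chase the corollary through Proposition~\ref{prop:partialstalk} and then invoke the standard identification of the Cartan model with Borel equivariant cohomology. Since $j_h^*$ is (by construction) a filtered colimit along open subsheaves containing the image of $j_h$, and cohomology of a complex of sheaves commutes with such colimits, we may pull $j_h^*$ inside the cohomology functor to obtain
$$
j_h^*\Ell_G^k(M)(U) \simeq \H^k\bigl(j_h^*\dEll_G^\bullet(M)(U), Q\bigr).
$$
Proposition~\ref{prop:partialstalk} then identifies the right hand side with the $k^{\text{th}}$ cohomology of the Cartan-type complex $\bigl(\Omega^{\bullet}_{G_0^h}(M^{h};\mathcal{O}(U)[\beta,\beta^{-1}]),\, d - \beta^{-1}\iota\bigr)$, with its $\Gamma$-equivariant structure.

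Next I would invoke the standard theorem that, for the compact Lie group $G_0^h$ acting on~$M^h$, the polynomial Cartan complex $(\Omega^\bullet_{G_0^h}(M^h),d-\iota)$ computes the Borel equivariant cohomology $\H^\bullet_{G_0^h}(M^h;\C)$. One must verify that nothing is lost by using germs of holomorphic functions on $\mf{g}_\C$ in place of polynomials on~$\mf{g}$. This is routine: by Chevalley restriction, $G_0^h$-invariant polynomials (resp.\ holomorphic germs) on $\mf{g}_\C$ are identified with $W^h$-invariant polynomials (resp.\ holomorphic germs) on $(\mf{t}_{\mf{g}^h})_\C$, and the inclusion of the polynomial Cartan complex into the holomorphic-germ Cartan complex is a quasi-isomorphism because the polynomial version already exhibits the cohomology as a free module over its coefficient ring (equivariant formality of $M^h$ is not needed; only the standard proof of the Cartan theorem). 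Tensoring further with $\mathcal{O}(U)$ is flat, so sheaf-theoretic cohomology on $U\subset\HH$ is preserved.

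Third I would account for the $\beta$-grading. With $|\beta|=-2$ and $\beta$ invertible, an element of total degree $k$ in the complex is a locally finite sum $\sum_n \alpha_{n}\beta^n$ with $\alpha_n$ of Cartan degree $k+2n$; since $Q$ commutes with multiplication by~$\beta$, the cohomology inherits a free $\C[\beta,\beta^{-1}]$-action, hence is 2-periodic, and in degree $k$ equals $\bigoplus_n \H^{k+2n}_{G_0^h}(M^h;\mathcal{O}(U))$. This is $\H^{\ev}_{G_0^h}(M^h;\mathcal{O}(U))$ if $k$ is even and $\H^{\odd}_{G_0^h}(M^h;\mathcal{O}(U))$ if $k$ is odd, which is exactly the claim. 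The $\Gamma$-equivariance of the final isomorphism is inherited from that of Proposition~\ref{prop:partialstalk}, since all constructions are $\Gamma$-natural.

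The main point of care is the passage from the polynomial Cartan model to the holomorphic-germ version; once that is verified, the rest is a direct consequence of the Cartan theorem together with the formal $\beta$-localization that imposes 2-periodicity on the $\Z$-graded equivariant de Rham cohomology.
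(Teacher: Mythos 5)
Your first, second, and fourth steps are sound: $j_h^*$ is indeed a filtered colimit of values on open subsheaves, filtered colimits are exact, so cohomology commutes with $j_h^*$, and then Proposition~\ref{prop:partialstalk} identifies $j_h^*\dEll_G^\bullet(M)(U)$ with the complex $\bigl(\Omega^{\bullet}_{G_0^h}(M^{h};\mathcal{O}(U)[\beta,\beta^{-1}]),\,Q\bigr)$; the $\beta$-bookkeeping to sort degree~$k$ into even versus odd is also fine.

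The third step, however, contains a genuine error. You assert that ``the inclusion of the polynomial Cartan complex into the holomorphic-germ Cartan complex is a quasi-isomorphism.'' This is false in general, and the freeness observation does not rescue it---freeness makes the failure worse, not better. Concretely, take $G_0^h=\U(1)$ acting trivially on $M^h=\pt$: the polynomial Cartan complex is $\C[z]$ with zero differential and the germ complex is $\mathcal{O}_0(\C)$ with zero differential, and $\C[z]\hookrightarrow\mathcal{O}_0(\C)$ is manifestly not surjective on cohomology. More generally, since $\mathcal{O}_0(\mf{g}_\C)^{G_0^h}$ is flat over $\Sym(\mf{g}_\C^\vee)^{G_0^h}$ and taking $G_0^h$-invariants is exact, the cohomology of the germ complex is the flat base change $\H^\bullet_{G_0^h}(M^h)\otimes_{\Sym(\mf{g}_\C^\vee)^{G_0^h}}\mathcal{O}_0(\mf{g}_\C)^{G_0^h}$, which differs from $\H^\bullet_{G_0^h}(M^h)$ precisely when the module is nonzero and torsion-free (e.g.\ free).

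The resolution is that no polynomial-to-germ comparison is needed at all: the paper takes the complex $\Omega^{\bullet}_{G_0^h}(M^{h};\mathcal{O}(U)[\beta,\beta^{-1}])$ from~\eqref{eq:Cartancplx}---built from germs of holomorphic functions on $\mf{g}_\C$ by definition---as \emph{its} cochain model for 2-periodic Borel equivariant cohomology (this is the same convention used in Theorem~\ref{thm:complete} via $\dEll^\bullet_{G,{\rm Bor}}$ in~\eqref{eq:Borel}, and is consistent with the paper's discussion of germs at $e$ as the target of the delocalized Atiyah--Segal map). With that convention, the corollary is immediate from Proposition~\ref{prop:partialstalk} and the parity bookkeeping in $\beta$; the Chevalley-restriction detour and the alleged quasi-isomorphism are both superfluous, and the latter should be removed.
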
 

A special case of the above takes $h=(e,e)$, where~\eqref{eq:jh} is the map $j_e\colon \Mell\to \Bun_G(\EE)$ that assigns to each elliptic curve the trivial $G$-bundle over that curve. This allows us to compare complex analytic equivariant elliptic cohomology to the Borel equivariant refinement of $\dEll(M)$ as follows. For a $G$-manifold $M$, the Borel equivariant refinement is the sheaf $\dEll_{G,{\rm Bor}}$ on $\Mell$ whose value on $U\subset \HH\to \Mell$ is
\beq
\dEll^\bullet_{G,{\rm Bor}}(M)(U)=\Omega^{\bullet}_{G} (M;\mathcal{O}(U)[\beta, \beta^{-1}]),\label{eq:Borel}
\eeq
i.e., a chain complex that computes the 2-periodic Borel equivariant cohomology of $M$.

\begin{thm}[Atiyah--Segal completion]\label{thm:complete} Let $j_e\colon \HH/\SL_2(\Z)\times \pi_0(G)\hookrightarrow \Bun_G(\EE)$ be the inclusion at the trivial $G$-bundle. The $\pi_0(G)$-invariant sections of $j_e^*\dEll_G^\bullet(M)$ determines a natural isomorphism of sheaves of commutative differential graded algebras on~$\Mell$
$$
j_e^*\dEll_G^\bullet(M)^{\pi_0(G)}\stackrel{\sim}{\to} \dEll^\bullet_{G,{\rm Bor}}(M).
$$
\end{thm}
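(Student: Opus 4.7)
The plan is to deduce the theorem as a direct corollary of Proposition~\ref{prop:partialstalk} specialized to the trivial commuting pair $h = (e,e)$. In this case $M^{(e,e)} = M$ and $G_0^{(e,e)} = G_0$, so the proposition supplies a natural isomorphism of sheaves on $\HH$,
$$j_e^*\dEll_G^\bullet(M)(U) \simeq \Omega^\bullet_{G_0}(M;\mathcal{O}(U)[\beta,\beta^{-1}]).$$
Moreover, the class $[(e,e)] \in \mathcal{C}^2[G]$ is fixed by every conjugation and by every $\gamma \in \SL_2(\Z)$, so its stabilizer is the full group $\pi_0(G) \times \SL_2(\Z)$. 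The residual action on the right-hand side is the natural one: $\pi_0(G)$ acts via the $G/G_0$-action on $M$ together with the adjoint action on $\mf{g}_\C$, while $\SL_2(\Z)$ acts on $\mathcal{O}(\HH)$ by pullback and rescales $\mf{g}_\C$ and $\beta$ by $c\tau + d$ as prescribed by~\eqref{eq:equiv}.

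Next I would invoke the elementary identification
$$\Omega^\bullet_{G_0}(M;A)^{\pi_0(G)} = \Omega^\bullet_G(M;A),$$
valid for any commutative $\C$-algebra $A$: both sides consist of the $G$-invariant holomorphic germs at $0 \in \mf{g}_\C$ valued in $\Omega^\bullet(M;A)$, and $G$-invariance is equivalent to $G_0$-invariance together with invariance under the induced $\pi_0(G) = G/G_0$-action. Applied to $A = \mathcal{O}(U)[\beta,\beta^{-1}]$, this identifies the $\pi_0(G)$-invariants of $j_e^*\dEll_G^\bullet(M)(U)$ with $\Omega^\bullet_G(M;\mathcal{O}(U)[\beta,\beta^{-1}])$, which is precisely $\dEll_{G,{\rm Bor}}^\bullet(M)(U)$ in the notation of~\eqref{eq:Borel}. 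Compatibility with the Cartan differential $Q = d - \beta^{-1}\iota$ is automatic since $Q$ is $G$-equivariant and both sides carry it by construction, and naturality in $M$ is immediate from Proposition~\ref{prop:partialstalk}.

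The main bookkeeping check, rather than a genuine obstacle, is to confirm that the residual actions match up correctly. For the $\pi_0(G)$-action this requires tracing the restriction map~\eqref{eq:equiv} at $h = (e,e)$ and observing that, because $M^{(e,e)} = M$ and conjugation fixes $(e,e)$, the map reduces to pullback along left multiplication by a lift $g \in G$ of $[g] \in \pi_0(G)$, paired with the adjoint action on $\mf{g}_\C$; independence of the choice of lift via~\eqref{eq:conjugationinvariance} guarantees this descends to an action of $\pi_0(G)$, and it is exactly the action whose invariants recover $G$-equivariance from $G_0$-equivariance. For the $\SL_2(\Z)$-action, the rescaling conventions built into~\eqref{eq:equiv} were chosen precisely so that the isomorphism descends along $\HH \to \Mell \simeq [\HH/\SL_2(\Z)]$ and yields a map of sheaves of cdgas on $\Mell$, as required.
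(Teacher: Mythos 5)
Your proposal is correct and follows the paper's own argument essentially verbatim: both apply Proposition~\ref{prop:partialstalk} at the trivial pair $h=(e,e)$ and then invoke the identity $\Omega^\bullet_{G_0}(M;A)^{\pi_0(G)}\simeq\Omega^\bullet_G(M;A)$ coming from $\pi_0(G)\simeq G/G_0$. The additional bookkeeping you provide about the residual $\pi_0(G)\times\SL_2(\Z)$-action and compatibility with the Cartan differential is sound but goes beyond what the paper records.
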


\bp
This follows from Proposition~\ref{prop:partialstalk}, using that 
$$
\Omega^{\bullet}_{G_0} (M;\mathcal{O}(U)[\beta, \beta^{-1}])^{\pi_0(G)}\simeq \Omega^{\bullet}_{G} (M;\mathcal{O}(U)[\beta, \beta^{-1}])
$$
from the isomorphism $\pi_0(G)\simeq G/G_0$. 
\ep


\subsection{Holomorphicity and periodicity}
The terminology ``analytic" in Definition~\ref{defn:ellcocycle} is justified by the following. 

\begin{prop} \label{prop:analytic}
There is a canonical isomorphism of sheaves on $\Bun_G(\EE)$
\beq
\dEll_G^0(\pt) \simeq \mc{O}_{\Bun_G(\EE)}.
\eeq
For a $G$-manifold $M$, this implies $\dEll^\bullet_G(M)$ is canonically a sheaf of $\mathcal{O}_{\Bun_G(\EE)}$-modules. 
\end{prop}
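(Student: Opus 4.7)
The plan is to compute sections of $\dEll_G^0(\pt)$ on the cover of $\Bun_G(\EE)$ by open subsheaves $U_0 \times U_h^\epsilon \subset \HH \times \mathcal{C}^2[G]$ from Corollary~\ref{cor:opencover1} and match them, locally and equivariantly, with sections of $\mathcal{O}_{\Bun_G(\EE)}$ as given in Definition~\ref{defn:BunGholo}. First I would unpack: for $M = \pt$, every $\pt^{h'} = \pt$ carries no positive-degree forms, so in degree zero (where $|\beta|=-2$ forces the $\beta$-free component) the data \eqref{eq:cocycledef} reduce to a family
\[
\alpha_{h'} \in \mathcal{O}_0((\fg^{h'})_\C; \mathcal{O}(U_0))^{G_0^{h'}}
\]
of germs at $0$ of invariant holomorphic functions valued in $\mathcal{O}(U_0)$.

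Next, I would use the analyticity condition \eqref{eq:analyticity} to observe that the family $\{\alpha_{h'}\}$ is determined by $\alpha_h$ alone: for $h' = (h_1 e^{X_1}, h_2 e^{X_2})$ with $(X_1, X_2) \in T_h^\epsilon$, the restriction of $\alpha_{h'}$ to $(\mf{t}_{\fg^{h'}})_\C$ equals the germ of $\alpha_h$ along the affine subspace $(\mf{t}_{\fg^{h'}})_\C + (X_1 - \tau X_2) \subset (\fg^h)_\C$, and $G_0^{h'}$-invariance extends this uniquely from the Cartan to all of $(\fg^{h'})_\C$. Applying the Chevalley restriction theorem to the (complexification of the) compact connected group $(G_0^h)_0$, and noting that the residual action of $\pi_0(G_0^h)$ has been absorbed into $W^h$ by its definition \eqref{eq:Weyldefn}, the function $\alpha_h$ is equivalent to a germ of $W^h$-invariant holomorphic function on $(\mf{t}_{\fg^h})_\C$ valued in $\mathcal{O}(U_0)$, defined on the image of $T_h^\epsilon$.

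The crucial matching step is that the complex coordinate appearing in the analyticity formula is precisely $z = X_1 - \tau X_2$, which is exactly the complex coordinate $\varphi$ in \eqref{eq:BunThol2} defining the holomorphic structure on $U_0 \times T_h^\epsilon$. Therefore the $W^h$-invariant holomorphic germs produced above coincide, as a sheaf on $U_0 \times U_h^\epsilon \simeq U_0 \times (T_h^\epsilon \cq W^h)$, with sections of $\mathcal{O}_{\Bun_G(\EE)}$ as defined in Definition~\ref{defn:BunGholo}.

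Finally, I would check compatibility of this identification with the restriction maps so that the local comparisons assemble into an isomorphism of sheaves on $\Bun_G(\EE)$ via Proposition~\ref{prop:BunGsheaf}. The $\pi_0(G)$-equivariance is immediate from \eqref{eq:conjugationinvariance}; the step I expect to be the main obstacle is matching the $\SL_2(\Z)$-action, since the prescribed twist in \eqref{eq:equiv} rescales $\fg^{h'}$ by $c\tau+d$. This is forced to agree with the $\SL_2(\Z)$-action on the coordinate $z = X_1 - \tau X_2$ on $\widetilde{\EE}^\vee$ coming from \eqref{eq:Eunivee}; and in degree zero the simultaneous rescaling of $\beta$ contributes nothing. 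The module structure follows formally from part~(2) of the preceding proposition applied to the unique $G$-equivariant map $M \to \pt$: this yields a map of sheaves of cdgas $\dEll_G^\bullet(\pt) \to \dEll_G^\bullet(M)$ on $\Bun_G(\EE)$, making $\dEll_G^\bullet(M)$ a sheaf of modules over $\dEll_G^0(\pt) \simeq \mathcal{O}_{\Bun_G(\EE)}$.
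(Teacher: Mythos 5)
Your proposal is correct and follows essentially the same approach as the paper: the published proof constructs a map $\mc{O}_{\Bun_G(\EE)}\to \dEll_G^0(\pt)$ by sending a holomorphic germ to the family $\{j_{h'}^*f\}$ and invokes the same identification $\mathcal{O}_0(\mf{t}_{\mf{g}^h};\mc{O}(U_0))^{W^h}\simeq \Omega^{0}_{G_0^h}(\pt;\mc{O}(U_0)[\beta,\beta^{-1}])$ (Chevalley restriction), checking that the holomorphic condition of Definition~\ref{defn:BunGholo} implies the analyticity condition of Definition~\ref{defn:ellcocycle}. You go in the opposite direction (unpacking $\dEll_G^0(\pt)$ rather than mapping from $\mc{O}$) and spell out the matching of the coordinate $z=X_1-\tau X_2$, the absorption of $\pi_0(G_0^h)$ into $W^h$, and the $\SL_2(\Z)$-equivariance, all of which the paper leaves implicit under ``by inspection.''
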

\bp
Let~$U_0\subset \HH$ be an open subset and $U_h^{\epsilon} \subset \mathcal{C}^2[G]$ an open subsheaf with $\epsilon$ sufficiently small to satisfy Lemma~\ref{lem:BlockGetzler}. Define a map 
$$
\mc{O}_{\Bun_G(\EE)}\to \dEll_G^0(\pt)
$$
that to an $f\in \mathcal{O}_{\Bun_G(\EE)}(U_0\times U_h^{\epsilon})$ assigns
$$
j_h^*f\in \mathcal{O}_0(\mf{t}_{\mf{g}^h};\mc{O}(U_0))^{W^h}\simeq \Omega^{0}_{G_0^h}(\pt;\mc{O}(U_0)[\beta,\beta^{-1}])
$$ 
for each $h$ in the image. We observe that this is a well-defined morphism of sheaves: the holomorphic condition from Definition~\ref{defn:BunGholo} implies the analytic condition in Definition~\ref{defn:ellcocycle}. By inspection, this morphism induces an isomorphism on stalks, so gives the claimed isomorphism of sheaves. \ep

 Recall that $\omega^{\otimes k}$ denotes the sheaf on $\Bun_G(\EE)$ that is the pullback of $\omega^{\otimes k}$ in $\mc{O}$-modules under the forgetful morphism $\Bun_G(\EE) \to \Mell$. 
 The sheaves $\dEll_G^{\bullet}(M)$ exhibit a form of $2$-periodicity twisted by~$\omega$:

\begin{prop} [Twisted Bott periodicity] \label{prop:Bottperiodicity} There is a natural isomorphism of chain complexes of sheaves of $\mathcal{O}$-modules 
\begin{eqnarray*} 
\dEll_G^{\bullet+2}(M) \otimes \omega &\to& \dEll_G^{\bullet}(M) \\ (\alpha, f) &\mapsto&  \alpha \beta f, 
\end{eqnarray*} 
where the formula defines a map on local sections, and the tensor product is in sheaves of $\mathcal{O}$-modules. \end{prop}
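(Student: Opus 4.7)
The plan is to exhibit a canonical isomorphism of line bundles $\omega \xrightarrow{\sim} \dEll_G^{-2}(\pt)$ on $\Bun_G(\EE)$, and then obtain the twisted Bott periodicity by combining this with the multiplication map of the sheaf of graded algebras $\dEll_G^\bullet(M)$.

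First, I would unpack $\dEll_G^{-2}(\pt)$ directly from Definition~\ref{defn:ellcocycle}. Since $M = \pt$ contributes only $0$-forms, a section of total degree $-2$ is forced to be $\beta$ times a $G_0^h$-invariant germ of a holomorphic function on $\mf{g}^h_{\mb{C}}$, and the analyticity condition identifies this datum with a local section of $\mathcal{O}_{\Bun_G(\EE)}$ by the same argument as in the proof of Proposition~\ref{prop:analytic}. Thus $\dEll_G^{-2}(\pt)$ is a rank-one locally free $\mathcal{O}_{\Bun_G(\EE)}$-module, locally trivialized by $\beta$. The restriction maps~\eqref{eq:equiv} show that on overlaps in the atlas $\HH \times \mathcal{C}^2[G] \to \Bun_G(\EE)$ this trivialization transforms as $\beta \mapsto \beta/(c\tau+d)$ under $\gamma = \left[\begin{smallmatrix} a & b \\ c & d \end{smallmatrix}\right] \in \SL_2(\Z)$ and is fixed by $\pi_0(G)$. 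But this is precisely the transition cocycle for $\omega$: its pullback to the atlas is trivialized by $dz$, which transforms as $dz \mapsto dz/(c\tau+d)$ and is also $\pi_0(G)$-invariant. Hence $dz \mapsto \beta$ defines a global, $\mathcal{O}_{\Bun_G(\EE)}$-linear isomorphism $\omega \xrightarrow{\sim} \dEll_G^{-2}(\pt)$.

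With this isomorphism in hand, I would define the twisted Bott periodicity map as the composition of $\id \otimes (\omega \xrightarrow{\sim} \dEll_G^{-2}(\pt))$ with the multiplication $\dEll_G^{\bullet+2}(M) \otimes_{\mathcal{O}} \dEll_G^{-2}(\pt) \to \dEll_G^\bullet(M)$ coming from the cdga structure on $\dEll_G^\bullet(M)$. In the local trivialization this recovers exactly the stated formula $(\alpha, f) \mapsto \alpha \beta f$. Compatibility with the Cartan differential $Q = d - \beta^{-1}\iota$ is immediate: both $\beta$ and the pullback to the chart of any local section of $\omega$ are functions on the base $\Bun_G(\EE)$, hence annihilated by both $d$ on $M$ and by $\iota$, so $Q$ acts only on the $\alpha$ factor. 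The map is an isomorphism because multiplication by $\beta$ is invertible in the $2$-periodic complex $\dEll_G^\bullet(M)$, and the inverse is the analogous composition formed using $\beta^{-1}$ and the inverse of $\omega \xrightarrow{\sim} \dEll_G^{-2}(\pt)$.

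The only delicate point, rather than a genuine obstacle, is verifying that the $\SL_2(\Z)$-cocycles of $\beta$ and $dz$ coincide: this is precisely why the tensor factor of $\omega$ is required, and it is exactly what justifies calling the result \emph{twisted} Bott periodicity.
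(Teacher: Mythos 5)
Your argument is correct and matches the paper's proof in substance: both identify $\omega$ with $\dEll_G^{-2}(\pt)$ by observing that the $\SL_2(\Z)$-cocycle of $\beta$ on the atlas coincides with that of the trivializing section $dz$ of $\omega$, and then the displayed map is just multiplication by the resulting degree-$(-2)$ cocycle $\beta f$. Your write-up is a bit more explicit about phrasing this as composing $\id\otimes(\omega\xrightarrow{\sim}\dEll_G^{-2}(\pt))$ with the cdga multiplication and separately checking compatibility with $Q$ and invertibility, but the key observation and structure of the argument are the same as in the paper.
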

\bp
Indeed, one may identify $\beta^{-1}$ with a trivializing section of the Hodge bundle pulled back along~$\HH\to \Mell$. This is because the transformation properties of $\beta^{-1}$ under $\SL_2(\Z)$ are precisely as for a section of the Hodge bundle. For a fixed family of elliptic curves associated to an open submanifold $U\subset \HH$, the Hodge bundle trivializes and so $\Gamma(U;\omega^{\otimes j})\simeq \beta^{-j}\mathcal{O}(U)$. Hence, in the displayed formula above, $\beta f$ is a cocycle in $\dEll_G^{-2}(\pt)$, pulled back along the canonical map $M\to \pt$. This verifies the claimed isomorphism.
\ep
\begin{rmk}
The failure of~$\beta$ to be $\SL_2(\Z)$-invariant means that it does not descend to a global section along $\HH\to \Mell$. This implies that global sections of~$\dEll_G^\bullet(M)$ over $\Bun_G(\EE)$ are no longer 2-periodic. However, $\Delta^{-1}\beta^{-12}$ is a globally defined invertible element of degree~24 where $\Delta\in\mathcal{O}(\HH)$ is the discriminant (an invertible weight~12 modular form). This gives the global sections of $\dEll_G^\bullet(M)$ a 24-periodicity. 
\end{rmk}

\begin{prop}\label{prop:point} For $G$ acting on $\pt$, the $G$-equivariant elliptic cocycles are the sheaves
\beq \dEll_G^n(\pt) \simeq \begin{cases} \omega^{\otimes -n/2} &\text{for }n\text{ even} \\ 0 &\text{for }n\text{ odd} \end{cases}\label{eq:dEll^n} \eeq
equipped with the zero differential. 
\end{prop}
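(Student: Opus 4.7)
The proof plan unfolds in three stages, two formal and one by reduction to earlier results, with essentially no obstacle beyond bookkeeping with the grading convention.

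First, I would dispose of the zero differential claim. For $M=\pt$, the Cartan differential $Q = d - \beta^{-1}\iota$ acts trivially on each $\Omega^\bullet_{G_0^{h'}}(\pt;\mathcal{O}(U_0)[\beta,\beta^{-1}])$, since both the de~Rham differential and every contraction $\iota_X$ vanish on forms of a point. This holds locally for each piece of the defining data in Definition~\ref{defn:ellcocycle}, and assembles to the vanishing of $Q$ on $\dEll^\bullet_G(\pt)$.

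Second, for odd $n$, I would unpack the defining data directly. A local section of $\dEll^n_G(\pt)(U_0 \times U_h^\epsilon)$ is a compatible family $\{\alpha_{h'}\}$ with each $\alpha_{h'} \in \mathcal{O}_0(\mf{g}^{h'}_\C;\mathcal{O}(U_0)[\beta,\beta^{-1}])^{G_0^{h'}}$ (using $\pt^{h'}=\pt$ and $\Omega^j(\pt)=0$ for $j>0$). By the convention recalled in the remark following~\eqref{eq:Cartancplx}, germs of holomorphic functions on $\mf{g}^{h'}_\C$ sit in degree zero, so the total grading in this complex comes solely from powers of $\beta$, which is in degree $-2$. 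Hence the complex is concentrated in even degrees and $\dEll^n_G(\pt)=0$ for odd $n$.

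Third, for even $n$, I would combine two of the preceding propositions. Proposition~\ref{prop:analytic} gives the base case $\dEll^0_G(\pt) \simeq \mathcal{O}_{\Bun_G(\EE)}$, and then twisted Bott periodicity (Proposition~\ref{prop:Bottperiodicity}) applied to $M=\pt$ produces an isomorphism $\dEll^{n+2}_G(\pt) \otimes \omega \xrightarrow{\sim} \dEll^n_G(\pt)$ of sheaves of $\mathcal{O}$-modules. Rewriting this as $\dEll^{n+2}_G(\pt) \simeq \dEll^n_G(\pt)\otimes \omega^{-1}$ and iterating gives $\dEll^n_G(\pt) \simeq \omega^{\otimes -n/2}$ for every even~$n$.

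The only point requiring care is a sanity check of the grading: sections of $\omega^{\otimes k}$ are weight-$k$ modular forms, which by the convention $\MF^{2k} = \{\text{weight } -k \text{ forms}\}$ sit in degree $-2k$. So $\omega^{\otimes -n/2}$ indeed lives in degree $-2\cdot(-n/2)=n$, matching the target $\dEll^n_G(\pt)$.
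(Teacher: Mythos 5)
Your proposal is correct and follows exactly the route the paper takes: the paper's one-line proof likewise invokes Propositions~\ref{prop:analytic} and~\ref{prop:Bottperiodicity} and dismisses odd degrees by inspection of the grading. Your version just spells out the vanishing of $Q$ for $M=\pt$ and the $\beta$-grading argument that the paper leaves implicit, along with a sound sanity check of the $\omega$ weight convention.
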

\bp
This follows from Propositions~\ref{prop:analytic} and~\ref{prop:Bottperiodicity}, together with the observation that there are no nonzero cocycles in odd degrees.\ep

As a corollary to Proposition~\ref{prop:Bottperiodicity}, we also have twisted Bott periodicity at the level of cohomology sheaves.

\begin{cor} [Twisted Bott periodicity] There are natural isomorphisms of sheaves $$\Ell^{\bullet+2}_G(M) \otimes \omega \to \Ell^\bullet_G(M).$$ \end{cor}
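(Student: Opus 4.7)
The plan is to deduce this corollary directly from the preceding Proposition~\ref{prop:Bottperiodicity}, which already provides a natural isomorphism of chain complexes of sheaves of $\mathcal{O}$-modules
\[
\dEll_G^{\bullet+2}(M)\otimes \omega \stackrel{\sim}{\to} \dEll_G^{\bullet}(M).
\]
Since $\omega$ is a holomorphic line bundle on $\Mell$ (in particular, locally free of rank one), its pullback to $\Bun_G(\EE)$ along the forgetful map is again a locally free rank one $\mathcal{O}_{\Bun_G(\EE)}$-module. Tensoring by a locally free $\mathcal{O}$-module is an exact functor on sheaves of $\mathcal{O}$-modules, so it commutes with the formation of cohomology sheaves.

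Concretely, I would argue as follows. First, note that an isomorphism of chain complexes of sheaves induces an isomorphism on cohomology sheaves; so Proposition~\ref{prop:Bottperiodicity} gives an isomorphism $H^\bullet(\dEll_G^{\bullet+2}(M)\otimes \omega)\simeq \Ell_G^{\bullet}(M)$ of sheaves on $\Bun_G(\EE)$. Next, because $\omega$ is invertible, tensoring with~$\omega$ can be checked locally in the analytic topology: on any open subsheaf of $\Bun_G(\EE)$ where $\omega$ trivializes, the functor $-\otimes \omega$ becomes (up to choice of trivialization) the identity functor on sheaves of $\mathcal{O}$-modules, and in particular commutes with taking cohomology of chain complexes. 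Gluing these local identifications yields a canonical isomorphism
\[
H^\bullet(\dEll_G^{\bullet+2}(M)\otimes \omega)\;\simeq\; H^\bullet(\dEll_G^{\bullet+2}(M))\otimes \omega \;=\; \Ell_G^{\bullet+2}(M)\otimes \omega,
\]
and combining with the previous display gives the desired isomorphism $\Ell^{\bullet+2}_G(M)\otimes \omega \stackrel{\sim}{\to}\Ell^\bullet_G(M)$.

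Naturality, both in the $G$-manifold $M$ and with respect to the $\mathcal{O}$-module structure, follows immediately from the naturality statement in Proposition~\ref{prop:Bottperiodicity}, together with the functoriality of cohomology sheaves and of the tensor product. There is essentially no obstacle: the only point requiring any thought is that $-\otimes \omega$ is an exact endofunctor on sheaves of $\mathcal{O}_{\Bun_G(\EE)}$-modules, and this is automatic from local freeness of $\omega$.
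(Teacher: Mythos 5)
Your proof is correct and is exactly the argument the paper leaves implicit when it states this result "as a corollary to Proposition~\ref{prop:Bottperiodicity}": pass the chain-level isomorphism to cohomology sheaves, using that tensoring with the invertible sheaf $\omega$ is exact and hence commutes with taking cohomology. The only point worth noting (and which you correctly handle by appealing to local freeness) is that $- \otimes \omega$ commutes with cohomology of chain complexes of $\mathcal{O}$-modules; otherwise this is just functoriality.
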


\begin{rmk} \label{rmk:derived}
The chain complex of sheaves $\dEll_G^\bullet(M)$ allows one to consider spaces of \emph{derived} global sections over $\Bun_G(\EE)$, i.e., the hypercohomology groups $\HH^*(\Bun_G(\EE);\dEll_G^\bullet(M))$. The applications considered in this paper either concern the non-derived sections (i.e., $\HH^0(\Bun_G(\EE);\dEll_G^\bullet(M))$) or the sheaf $\dEll_G^\bullet(M)$ rather than its global sections. Therefore, we postpone a full discussion of the derived global sections for future work. For now we observe that the higher cohomology is often nontrivial. For example, when~$G=\U(1)$, and $M=\pt$, Serre duality shows that the nonvanishing cohomology groups are
$$
\HH^0(\Bun_{\U(1)}(\EE);\dEll_{\U(1)}^\bullet(\pt))\simeq \H^0(\EE^\vee;\omega^{\bullet/2})=\MF^\bullet,
$$
$$
\HH^1(\Bun_{\U(1)}(\EE);\dEll_{\U(1)}^\bullet(\pt))\simeq \H^1(\EE^\vee;\omega^{\bullet/2})=\MF^{\bullet},
$$
where we emphasize that the degree~1 cohomology is a module over the degree zero cohomology. These fit together as
$$
\R\Gamma(\Bun_{\U(1)}(\EE);\dEll_{\U(1)}^\bullet(\pt))\simeq \MF^\bullet\oplus \MF^{\bullet-1}
$$ 
where as an algebra, the second summand is a square zero extension of the first. This is compatible with Gepner and Meier's computation of $\U(1)$-equivariant topological modular forms over $\Z$~\cite[Corollary~1.2]{LenartDavid}. 
\end{rmk}

\subsection{Twistings and loop group representations}
When $G$ is connected with torsion-free fundamental group, Proposition~\ref{prop:analytic} shows that we shouldn't expect $\dEll^0_G(\pt)$ to have many interesting global sections. Indeed, the only global holomorphic functions on an elliptic curve are constant, and so (for example) global sections of $\dEll^0_G(\pt)$ pull back from functions on~$\Mell$: the group plays no role. 
More generally, if~$G$ acts on~$M$ so that the stabilizers are connected with torsion-free fundamental group, the global sections of $\dEll^\bullet_G(M)$ are just the ordinary de~Rham complex valued in modular forms. However, global sections are more interesting for twisted versions of equivariant elliptic cohomology. 

\begin{defn}
Let $\mathcal{L}$ be a holomorphic line bundle on $\Bun_G(\EE)$. The \emph{$\mathcal{L}$-twisted equivariant elliptic cocycles} of a $G$-manifold $M$ is the sheaf of chain complexes $\dEll_G^\bullet(M)\otimes \mathcal{L}$ on~$\Bun_G(\EE)$. 
\end{defn}

Note that in this definition, the category of twists for $G$-equivariant elliptic cohomology is the category of line bundles on $\Bun_G(\EE)$. An important class of twists for finite groups $G$ come from the Freed--Quinn line bundles; see~\eqref{eq:conj}. For (twisted) elliptic Thom and Euler classes associated with connected Lie groups, the relevant line bundles are the Looijenga line bundles from Definition~\ref{defn:Loo}. The $\mathcal{L}$-twisted $G$-equivariant elliptic cohomology of a point for the Looijenga twist is the sheaf whose sections are (nonabelian) theta functions. 

\begin{prop}\label{prop:Verlinde} Let $G$ be a simple, simply connected compact Lie group and $\mathcal{L}_\ell$ be the level $\ell$ Looijenga line bundle over $\Bun_G(\EE)$. Then the space of global sections of the twisted equivariant elliptic cohomology sheaf 
$$
\Gamma(\Bun_G(\EE);\dEll_G^{\bullet}(\pt)\otimes \mathcal{L}_\ell) \simeq \Rep^\ell(LG) \underset{\MF^0}{\otimes} \MF^{\bullet}
$$
is the free module over the ring of modular forms generated by super characters of positive energy representations of~$LG$ at level~$\ell$, i.e., the vector space underlying the Verlinde algebra. 
\end{prop}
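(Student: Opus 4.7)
The plan is to unwind the definition of both sides, reduce the global sections computation to a classical statement about theta functions on elliptic curves, and then invoke the Kac--Peterson/Looijenga identification of such theta functions with characters of positive-energy loop-group representations.

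First, I would apply Propositions~\ref{prop:analytic} and~\ref{prop:Bottperiodicity} (and its corollary on cohomology sheaves, since the differential on $\dEll_G^\bullet(\pt)$ is zero by Proposition~\ref{prop:point}) to identify $\dEll_G^\bullet(\pt)\otimes \mathcal{L}_\ell$ with the sheaf $\mathcal{L}_\ell\otimes \omega^{\otimes -\bullet/2}$ of $\mathcal{O}_{\Bun_G(\EE)}$-modules. Since $G$ is simple and simply connected, $\pi_1(G)=0$ is torsion-free, so Example~\ref{ex:holomorphic} gives the clean description $\Bun_G(\EE)\simeq [(\widetilde{\EE}^{\vee,{\rm rk}(T)}\cq W)/\SL_2(\Z)]$ with a holomorphic atlas $\HH\times \mf{t}_\C \to \Bun_G(\EE)$. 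Under this atlas, Definition~\ref{defn:Loo} presents sections of $\mathcal{L}_\ell$ as $W$-invariant holomorphic functions $f(\tau,z)$ on $\HH\times \mf{t}_\C$ satisfying the lattice quasi-periodicity and $\SL_2(\Z)$-modularity prescribed at level $\ell$.

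Second, I would combine these two pieces: a global section of $\mathcal{L}_\ell\otimes \omega^{\otimes -k/2}$ over $\Bun_G(\EE)$ is precisely a $W$-invariant holomorphic function on $\HH\times \mf{t}_\C$ transforming under $X_*(T)^{\oplus 2}\rtimes \SL_2(\Z)$ by the product of the level $\ell$ Looijenga cocycle and the weight $-k/2$ modular factor $(c\tau+d)^{-k/2}$. Writing any such section as the product of a level $\ell$ theta function by a modular form of the appropriate weight, we reduce the statement of the proposition to the case $\bullet=0$: global sections form a free $\MF^0$-module whose basis is identified with $\Rep^\ell(LG)$.

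Third, this case $\bullet=0$ is the classical Looijenga theorem (building on Kac--Peterson). The $W$-invariant sections of $\mathcal{L}_\ell$ over $\Bun_G(\EE)$ coincide with the space of level $\ell$ theta functions for the coroot lattice of $G$, and these are exactly the super characters of level $\ell$ positive-energy representations of $LG$; the $\SL_2(\Z)$-equivariance is precisely the modular invariance for these characters established by Kac--Peterson. Moreover these characters are linearly independent and of common modular weight, so they freely generate over $\MF^0$. Tensoring with $\MF^\bullet$ then absorbs the weight shift coming from~$\omega^{\otimes -\bullet/2}$ and yields the stated identification.

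The main obstacle is the last step: matching the $W$-invariant theta-function description coming from Definition~\ref{defn:Loo} with the Kac--Peterson normalization of loop group characters. The transformation formulas in Definition~\ref{defn:Loo} must be compared with the standard ones for $\widehat{L\fg}$-characters; once the inner product $\ell$ is identified with the level under the normalization $\H^4(BG;\Z)\simeq \Z$, this is a bookkeeping exercise rather than a substantive difficulty, and then one invokes Looijenga's theorem to close the argument.
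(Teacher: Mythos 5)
Your proposal follows essentially the same route as the paper's proof: reduce the twisted global sections to $\omega^{-\bullet/2}\otimes\mathcal{L}_\ell$ via twisted Bott periodicity and Proposition~\ref{prop:point}, then invoke the classical Looijenga/Kac--Peterson identification of level-$\ell$ Weyl-invariant theta functions with characters of positive-energy loop group representations (the paper cites~\cite[Corollary~10.9]{Ando} for precisely this). Your write-up is more explicit about the atlas from Example~\ref{ex:holomorphic} and the factorization of a section into a theta function times a modular form, but the underlying strategy and the key external input are the same.
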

\bp
From Proposition~\ref{prop:point} and the remarks after its proof, $\Gamma(\Bun_G(\EE);\dEll_G^{\bullet}(\pt))\simeq \MF^\bullet$. Then the claim follows from the fact that global sections of the Looijenga line bundle are spanned by the characters of loop group representations at the relevant level (e.g., see~\cite[Corollary~10.9]{Ando}), and super characters are differences of ordinary characters.
\ep

\subsection{A few examples}\label{sec:nonequiv}

\begin{ex}[Trivial group]\label{ex:trivialgroup}
When $G=\{e\}$ is the trivial group, $\dEll^\bullet(M)=\dEll_{\{e\}}^\bullet(M)$ is a sheaf on $\Bun_{\{e\}}(\EE)= \Mell$ whose value on $U\subset \HH\to \Mell$ is the 2-periodic de~Rham complex
$$
\dEll^\bullet(M)(U)=\Omega^{\bullet} (M;\mathcal{O}(U)[\beta, \beta^{-1}]).
$$
The global sections of $\dEll(M)$ are given by
$$
\Gamma(\Mell;\dEll^\bullet(M))\simeq \bigoplus_{j+k=\bullet} \Omega^j (M;\MF^k)
$$
i.e., the de~Rham complex of differential forms valued in modular forms. 
We note that the complexification of topological modular forms, $\TMF\otimes \C$, is an ordinary cohomology theory with values in the graded ring $\TMF(\pt)\otimes \C\cong \MF$. Hence, the above complex is a cocycle model for~$\TMF\otimes \C$. 
\end{ex}
\begin{rmk}
When $G=\{e\}$, the map from global sections to derived global sections of $\dEll(M)$ is a quasi-isomorphism; this follows from the $\SL_2(\Z)$-action on $\HH$ has finite stabilizer groups. 
\end{rmk}

The first nontrivial example in ordinary equivariant cohomology is for the $\U(1)$-action on $S^2$ via rotation. We compute the equivariant elliptic cohomology for this example. 

\begin{ex}[$\U(1)$ acting on $S^2$]\label{ex:S2}

 Consider the sheaf~$\dEll_{\U(1)}(S^2)$ on $\Bun_{\U(1)}(\EE)\simeq \Euni^\vee$ for the $\U(1)$-action on $S^2=\CP^1$ that rotates the sphere about an axis. We observe that for $h\in \U(1)\times \U(1)$ not equal to $(e,e)$, the fixed points are the poles $(S^2)^h=\{{\rm poles}\}$. Hence by Proposition~\ref{prop:point} we have the isomorphism of sheaves
\beq
\dEll_{\U(1)}^{2k}(S^2)|_{\Euni^\vee\setminus \{0{\ \rm section}\}}\simeq \omega^{-k}\oplus \omega^{-k}.\label{eq:sheaf0}
\eeq
Next, a section defined in a small neighborhood of the zero section in $\Euni^\vee\simeq\Bun_{\U(1)}(\EE)$ is determined by an element of the stalk~$\Omega^{\bullet}_{\U(1)}(S^2;\mathcal{O}(\HH)[\beta,\beta^{-1}])$. Extending this section to a larger neighborhood demands a compatibility with~\eqref{eq:sheaf0} given by the restriction map
$$
\Omega^{\bullet}_{\U(1)}(\{{\rm poles}\};\mathcal{O}(\HH)[\beta,\beta^{-1}])\simeq \Big(\mathcal{O}_0(\C;\mathcal{O}(\HH)[\beta,\beta^{-1}])\Big)^{\oplus 2}\stackrel{{\rm res}}{\to} \Big(\mathcal{O}_0(\C\setminus 0;\mathcal{O}(\HH)[\beta,\beta^{-1}])\Big)^{\oplus 2},
$$
where the last map restricts a germ of a holomorphic function at the origin in $\C$ to one in a punctured neighborhood of the origin. We then identify this neighborhood in $\C$ with a neighborhood of zero in $\Euni^\vee$. We identify a function on this punctured neighborhood with one on a punctured neighborhood of $0$ in $\Euni^\vee$ (which is uniquely specified from the analytic condition in Defintion~\ref{defn:ellcocycle}). Finally, we identify $\beta^n$ with a section of $\omega^{\otimes n}$. A global section of $\dEll_{\U(1)}(S^2)$ is therefore given by an element of $\Omega^{\bullet}_{\U(1)}(S^2;\mathcal{O}(\HH)[\beta,\beta^{-1}])^{\SL_2(\Z)}=\Omega^{\bullet}_{\U(1)}(S^2;\MF)$ (i.e., a Borel equivariant cocycle) whose restriction to the poles is a constant function on the Lie algebra of~$\U(1)$. This gives a cocycle-level description. We compute the associated cohomology (using different techniques) in~\S\ref{ex:S22}. 
\end{ex}

\begin{ex}[$\U(1)$ acting on $S^2$ with twisting]
More generally, $\Bun_{\U(1)}(\EE)$ admits Looijenga line bundles $\mc{L}_{\ell}$ parametrized by levels $\ell \in \H^4(B\U(1); \mb{Z}) \simeq \mb{Z}$ and we have the twisted equivariant elliptic cohomology $\dEll_{\U(1)}(S^2) \otimes \mc{L}_{\ell}$. We recall that global sections of $\dEll_{\U(1)}(\pt) \otimes \mc{L}_{\ell}$ over $\Bun_{\U(1)}(\EE)$ are $\theta$-functions (or \emph{Jacobi forms}) of index $\ell$, 
$$\Gamma(\Bun_{\U(1)}(\EE), \dEll_{\U(1)}(\pt) \otimes \mc{L}_{\ell}) \simeq \bigoplus_k \JF_{k, \ell},$$ 
where $\JF_{k, \ell}$ is the space of (weakly holomorphic) Jacobi forms of weight $k$ and index $\ell$ sitting in degree~$-2k$. 
Global sections of $\dEll_{\U(1)}(S^2) \otimes \mc{L}_{\ell}$ are then given by 
$$ \Gamma(\Bun_{\U(1)}(\EE), \dEll_{\U(1)}(S^2) \otimes \mc{L}_{\ell}) \simeq \Omega^{\bullet}_{\U(1)}(S^2; \MF) \underset{\Omega^{\bullet}_{\U(1)}(\{\mathrm{poles}\}; \MF)}{\times} \Omega^{\bullet}(\{\mathrm{poles}\}; \JF_{*, \ell}).$$ We note that there is no twisting by $\mc{L}$ in the factor $\Omega^{\bullet}_{\U(1)}(S^2; \MF)$ because this corresponds to the pair of commuting elements $(e,e)$, over which $\mc{L}$ trivializes canonically. In words, elements of the above fibered product are $\U(1)$-equivariant, modular form-valued differential forms on $S^2$ whose restriction to the poles are germs of specified Jacobi forms of index $\ell$. 
\end{ex}
%
%

\section{Comparing with Grojnowski's equivariant elliptic cohomology}\label{sec:Groj}

Throughout this section, let $G$ be a connected Lie group, $M$ a $G$-manifold, and $\tau\in \HH$ a point defining a (marked) elliptic curve $E=E_\tau=\C/\langle \tau,1\rangle$. Grojnowski~\cite{Grojnowski} constructs a $\Z/2$-graded sheaf~$\Ell_G^{\rm Groj}(M)$ of $\O_{(E^{\vee})^{\times {\rm rk}(T)}\cq W}$-modules on $(E^{\vee})^{\times {\rm rk}(T)}\cq W$. We compare this with the restriction of the cocycle model from the previous section along the map $(E^{\vee})^{\times {\rm rk}(T)}\cq W\to \Bun_G(\EE)$. We recall $\Ell_G^\bullet(M)$ are the cohomology sheaves of $\dEll_G^\bullet(M)$; see~Definition~\ref{defn:cohom}. 

\begin{thm} \label{thm:Groj}The pullback in $\mc{O}$-modules of the sheaf $\Ell_G^\bullet(M)$ along $\{\tau\}\times (T\times T)\cq W\hookrightarrow \Bun_G(\EE)$ is naturally isomorphic to the 2-periodic version of the $\Z/2$-graded sheaf~$\Ell_G^{\rm Groj}(M)$.
\end{thm}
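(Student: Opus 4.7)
The plan is to match the two sheaves stalk-by-stalk on a common cover of $(T\times T)\cq W$ and then to verify that the transition maps agree. First, I would recall Grojnowski's construction in detail sufficient for our purposes: for a point $a\in (E^\vee)^{\times {\rm rk}(T)}$ represented (via the exponential map) by a commuting pair $h=(h_1,h_2)\in T\times T$, the sheaf $\Ell_G^{\rm Groj}(M)$ has local value $\H^*_{C(h)}(M^h;\mc{O}_a)$ on a small analytic neighborhood of $a$, where $\mc{O}_a$ denotes germs of holomorphic functions at $a$. One identifies $(E^\vee)^{\times {\rm rk}(T)}\simeq \mf{t}_\C/(X_*(T)\oplus \tau X_*(T))$ so that an exponential chart further identifies $\mc{O}_a$ with germs at $0$ of holomorphic functions on $(\mf{t}_{\mf{g}^h})_\C$. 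For a nearby $h'$ with $M^{h'}\subset M^h$ and $C(h')<C(h)$, the gluing is analytic continuation composed with the restriction map $\H^*_{C(h)}(M^h)\to \H^*_{C(h')}(M^{h'})$, and one finally imposes $W$-equivariance to descend to $(E^\vee)^{\times {\rm rk}(T)}\cq W$.

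Second, I would describe the restriction of $\Ell_G^\bullet(M)$ on the same cover. Applying Proposition~\ref{prop:partialstalk} and then restricting to $\{\tau\}\subset \HH$, the value of $j_\tau^*\dEll_G^\bullet(M)$ on $U_h^{\epsilon}$ is the $W^h$-invariants in the Cartan complex $\Omega^\bullet_{C(h)}(M^h;\mc{O}_0[\beta,\beta^{-1}])$ with coefficients in germs at $0\in(\mf{g}^h)_\C$ of holomorphic functions, glued between neighborhoods by the analyticity condition~\eqref{eq:analyticity}. Passing to cohomology yields $\H^*_{C(h)}(M^h;\mc{O}_0)^{W^h}[\beta,\beta^{-1}]$, and the formal parameter $\beta$ of degree $-2$ implements the passage from a $\Z/2$-graded sheaf to its 2-periodic $\Z$-graded extension.

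Third, I would verify that the local identification and transition maps match. The exponential chart $\mf{t}_{\mf{g}^h}\times \mf{t}_{\mf{g}^h}\to T\times T$ identifies our stalk with Grojnowski's at each point, using the holomorphic structure on $(\mf{t}_{\mf{g}^h})_\C$ defined in~\eqref{eq:BunThol2} via the coordinate $X_1-\tau X_2$. For the transition across a perturbation $h'=(h_1 e^{X_1},h_2 e^{X_2})$ with $X_1,X_2\in \mf{t}_{\mf{g}^h}$, our analyticity condition~\eqref{eq:analyticity} prescribes the shift $X\mapsto X+(X_1-\tau X_2)$ of the function-theoretic variable composed with the pullback along $M^{h'}\hookrightarrow M^h$. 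Under the exponential identification of $(\mf{t}_{\mf{g}^h})_\C$ with a local chart on $(E^\vee)^{\times {\rm rk}(T)}$, this shift is precisely the change of local parameter from a chart at $a$ to a chart at $a'$, so it realizes Grojnowski's analytic continuation of cohomology classes. The invariance property~\eqref{eq:conjugationinvariance} restricted to $N(T)<G_0=G$ supplies exactly the Weyl equivariance needed on the Grojnowski side, and naturality in $M$ is manifest from the definitions.

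The main obstacle is the careful bookkeeping required to match the analytic structure of the transition maps on both sides. The central observation making this work is that the twisted shift $X\mapsto X+(X_1-\tau X_2)$ in our Cartan-model setup, together with the $\tau$-dependent complex structure~\eqref{eq:BunThol2} on $\mf{t}_\C$, is intrinsically the local coordinate change on the fibered product $(E^\vee)^{\times {\rm rk}(T)}\simeq \mf{t}_\C/(X_*(T)\oplus \tau X_*(T))$ at $\tau$. In this sense the complex-analytic elliptic structure on our smooth moduli enters exactly where Grojnowski's analytic continuation does, and the resulting isomorphism of $\mc{O}$-modules (using Proposition~\ref{prop:analytic}) is canonical and natural in $M$.
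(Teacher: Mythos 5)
Your proposal is correct and follows essentially the same route as the paper: both identify the local value of $\dEll_G^\bullet(M)$ restricted to $\{\tau\}\times(T\times T)\cq W$ with a Cartan-model complex computing $\H_{C(h)}(M^h)$ tensored with germs of holomorphic functions, then match the analyticity-condition transitions with Grojnowski's analytic continuation and the conjugation-invariance with Weyl descent. The only cosmetic difference is that you extract the local description by invoking Proposition~\ref{prop:partialstalk} and passing to the slice at $\tau$, whereas the paper derives it directly by tracing the data $\{\alpha_{w\cdot h}\}_{w\in W}$ and the resulting chain of isomorphisms through the summand indexed by the identity coset of $W/W^h$; you also handle the $W$-invariance bookkeeping more lightly, but the underlying argument is the same.
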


\subsection{A review of Grojnowski's equivariant elliptic cohomology}
Our presentation below hews closely to the original source~\cite{Grojnowski}, though we also refer to~\cite[\S3]{RosuEquivariant} for an accounting when~$G=\U(1)$. 
To begin, let $G=T$ be a torus. As we have fixed a curve $E=E_\tau$, the identification~\eqref{eq:univT} specializes to $T\times T\simeq (E^{\vee})^{\times {\rm rk}(T)}$, endowing the smooth Lie group $T\times T$ with a complex analytic structure. For $h=(h_1,h_2)\in T\times T\simeq (E^{\vee})^{\times  {\rm rk}(T)}$, let 
$$
L_h\colon (E^{\vee})^{\times  {\rm rk}(T)}\times T\times T\stackrel{h\cdot}{\to} T\times T\simeq (E^{\vee})^{\times  {\rm rk}(T)}
$$ 
denote the map induced by left multiplication by~$h$, and let $T_h^\epsilon \subset T\times T\simeq (E^{\vee})^{\times  {\rm rk}(T)}$ be an open subset specified as in~\eqref{eq:BGopen} where $\epsilon$ satisfies the hypothesis of Lemma~\ref{lem:BlockGetzler}, i.e., for all $h'=(h_1',h_2')\in T_h^\epsilon$, we have $M^{h'}\subset M^{h}$ and $G^{h'}<G^h$. We note that in the abelian case, $T_h^\epsilon=U_h^\epsilon$ for $U_h^\epsilon$ defined as in Corollary~\ref{cor:opencover1}.

\begin{defn}[Grojnowski]
For a $T$-manifold~$M$, define a sheaf~$\Ell_T^{\rm Groj}(M)$ on $(E^{\vee})^{\times  {\rm rk}(T)}$ that assigns to each~$T_h^\epsilon \subset (E^{\vee})^{\times  {\rm rk}(T)}$ with $\epsilon$ satisfying Lemma~\ref{lem:BlockGetzler} the $\Z/2$-graded $\mathcal{O}_{T_h^\epsilon}$-module
$$
\Gamma(T_h^\epsilon;\Ell_T^{\rm Groj}(M)):=L_h^*\left(\H_T(M^h)\otimes_{\H_T(\pt)} L_{h^{-1}}^*\mathcal{O}(T_h^\epsilon)\right)
$$
where we have identified the polynomial algebra $\H_T(\pt)\simeq S(\mf{t}_\C^\vee)$ (in degree zero) with a subalgebra of holomorphic functions~$\mathcal{O}(T_e^\epsilon)$ using that $L_{h^{-1}}(T_h^\epsilon)=T_0^\epsilon\subset \mf{t}_\C$ can be identifies with an open ball around $0\in \mf{t}_\C$. Define restriction maps on open subsets $T_{h'}^\epsilon \subset T_h^\epsilon$ with $h\notin T_{h'}^{\epsilon'}$ by
\beq
&&i^*\colon \H_{T}(M^h)\otimes_{\H_T(\pt)} L_{h^{-1}}^*\mathcal{O}(T_h^\epsilon) \to \H_{T}(M^{h'})\otimes_{\H_T(\pt)} L_{h'^{-1}}^*\mathcal{O}(T_{h'}^\epsilon) \label{whatatransition}
\eeq
induced by pulling back along the inclusions $M^{h'}\hookrightarrow M^h$ and the isomorphism from pulling back along left multiplication
$$
L_{h'^{-1}h}^*\colon L_{h^{-1}}^*\mathcal{O}(T_h^\epsilon)\stackrel{\sim}{\to}L_{h'^{-1}}^*\mathcal{O}(T_{h'}^\epsilon).
$$ 
By Atiyah--Bott localization~\cite[Theorem~3.5]{AtiyahBott},~\eqref{whatatransition} is an isomorphism, and so this data on opens defines a sheaf without a need for further sheafification.
\end{defn}

Let $T<G$ be a maximal torus with associated Weyl group $W=N_G(T)/T$. For $h=(h_1,h_2)\in T\times T\subset G\times G$ a pair of commuting elements in the torus, observe that~$T<G^h$ is a maximal torus for the connected component of the identity of $G^h$. In this case the finite group~$W^h$ defined in~\eqref{eq:Weyldefn} has the simpler description $W^h = (G^h \cap N(T)) / T$, and in particular, $W^h< W$ is a subgroup of the Weyl group of~$G$.


Let $T_h^\epsilon \subset T\times T$ be an open subset as above satisfying the additional properties
$$
w\cdot T_h^\epsilon=T_h^\epsilon \ \ {\rm for} \ \ w\in W^h, \qquad w\cdot T_h^\epsilon \cap T_h^\epsilon=\emptyset \ \ {\rm for} \ \  w \in W\setminus W^h.
$$ 
This condition can always be arranged by shrinking the previously defined $T_h^\epsilon$, as $W^h$ is finite and~$w\cdot h=h$ for $w\in W^h$. Let $W\cdot T_h^\epsilon\subset T\times T$ denote the orbit of $T_h^\epsilon$ under the action of the Weyl group, so that $W\cdot T_h^\epsilon$ is a $W$-invariant open subset of $(E^{\vee})^{\times {\rm rk}(T)}$. 

\begin{defn}[Grojnowski]
For $G$ connected, define a sheaf~$\Ell_G^{\rm Groj}(M)$ on $(E^{\vee})^{\times {\rm rk}(T)}\cq W$ that assigns to each $W$-invariant open~$W\cdot T_h^\epsilon \subset E^{\vee, {\rm rk}(T)}$ the $\Z/2$-graded $\mathcal{O}_{W\cdot T_h^\epsilon}$-module
\beq
\Gamma(W\cdot T_h^\epsilon,\Ell_G^{\rm Groj}(M))&:=&L_h^*\left(\H_{G^h}(M^h)\otimes_{\H_{G^h}(\pt)} L_{h^{-1}}^*\mathcal{O}(W\cdot T_h^\epsilon)^{W^h}\right)\label{eq:Grojdefn}
\eeq
where we use the isomorphism $\H_{G^h}(\pt)\simeq \H_T^{W^h}(\pt)$ to define the tensor product. The transition maps are defined identically to those in the case that $G=T$.

\end{defn}

\subsection{The comparison map}

\begin{proof}[Proof of Theorem~\ref{thm:Groj}] 
Let $U\subset T\times T$ be a $W$-invariant open subset. Then a section of $\dEll^\bullet_G(M)$ on $\{\tau\}\times U$ is given by the data of a collection
$$
\alpha_{h}\in \mathcal{O}_0(\mathfrak{g}^h_{\mb{C}};\Omega^\bullet(M^h;\C[\beta, \beta^{-1}]))^{G^h}\simeq \mathcal{O}_0(\mathfrak{t}_\C;\Omega^\bullet(M^h;\C[\beta, \beta^{-1}])^T)^{W^h}
$$
for all $[h]\in U$, which we identify with a $W^h$-invariant form on the right hand side. The $\{\alpha_h\}$ are required to satisfy the conjugation invariance and analyticity properties. The conjugation invariance property is equivalent to a condition on $\alpha_h$ and $\alpha_{h'}$ when $h'=w\cdot h$ for $w\in W$. Hence, the collection $\{\alpha_{w\cdot h}\}_{w\in W}$ is determined by a single $W$-invariant form 
$$
\alpha_{h}^W\in \Big( \bigoplus_{w \in W/W^h} \mathcal{O}_0(\mathfrak{t}_\C;\Omega^\bullet(M^{w \cdot h};\C[\beta,\beta^{-1}])^T) \Big)^{W}. 
$$
We observe the above element $\alpha_{h}^W$ determines an $\Omega^\bullet(M^{w \cdot h};\C[\beta,\beta^{-1}])$-valued holomorphic function on some ball $B_\epsilon(\mathfrak{t}_\C)\subset \mathfrak{t}_\C$ centered at $0\in \mf{t}_\C$ for some $\epsilon>0$
$$
\tilde{\alpha}_h^W\in \Big( \bigoplus_{w \in W/W^h} \mathcal{O}(B_\epsilon(\mathfrak{t}_\C);\Omega^\bullet(M^{w \cdot h};\C[\beta,\beta^{-1}])^T) \Big)^{W}.
$$
The image under the exponential map of such $\epsilon$-balls $B_\epsilon(\mathfrak{t}_\C)$ cover $U$. If each $\tilde{\alpha}_h^W$ is also closed under the Cartan differential, we obtain classes
\begin{eqnarray*} 
[\tilde{\alpha}_h^W] &\in& \Big( \bigoplus_{w \in W/W^h} \H(\mathcal{O}(B_\epsilon(\mathfrak{t}_\C);\Omega^\bullet(M^{w \cdot h};\C[\beta,\beta^{-1}])^T),Q) \Big)^W \\ &\simeq&  \Bigg( \Big( \bigoplus_{w \in W/W^h} \H_T(M^{w \cdot h}) \Big) \otimes_{\H_T(\pt)} \mathcal{O}(B_\epsilon(\mathfrak{t}_\C);\C[\beta,\beta^{-1}])\Bigg)^W 
\\ &\simeq& \Big( \H_T(M^h) \otimes_{\H_T(\pt)} \mc{O}(B_{\epsilon}(\mf{t}_{\mb{C}}); \mb{C}[\beta,\beta^{-1}]) \Big)^{W_h},
\end{eqnarray*}
where the last isomorphism comes from restricting to the summand labeled by the identity coset $W_h\subset W$. 
Then finally, the correspondence between 2-periodic cohomology and $\Z/2$-graded cohomology yields a class corresponding to $[\tilde{\alpha}_h^W]$ in Grojnowski's equivariant elliptic cohomology sheaf~\eqref{eq:Grojdefn} on each open ball $W\cdot T_h^\epsilon$. The analyticity condition guarantees that these classes glue to give a section of $\Ell_G^{\rm Groj}(M)$ over~$U$: the translations in Grojnowski's formulas are precisely the translations appearing in the analytic condition. This determines the morphism of sheaves in the statement of the theorem. To see that is an isomorphism of sheaves it suffices to demonstrate an isomorphism on stalks, but this is clear from the maps defined on each~$B_\epsilon(\mathfrak{t}_\C)$. 
\ep

\section{Comparing with Devoto's equivariant elliptic cohomology}\label{sec:Devoto}

In this section we compare our model with previous ones for $G$-equivariant elliptic cohomology where $G$ is finite. The definition of Devoto's equivariant elliptic cohomology we adopt is used by Ganter~\cite{GanterHecke} and Morava~\cite{Morava} in their studies of generalized moonshine; it is also the complexification of a version of equivariant elliptic cohomology appearing in the work of Baker and Thomas~\cite{BakerThomas}. These definitions are based on the early work of Devoto~\cite{DevotoII,DevotoI}, simplifying his construction over $\Z[1/2,1/3]$ to one over~$\C$, and replacing the congruence subgroup~$\Gamma_0(2)$ by the full modular group~$\SL_2(\Z)$. As such, we refer to this finite group version of equivariant elliptic cohomology as \emph{Devoto's equivariant elliptic cohomology,} $\Ell_G^{{\rm Dev}}(M)$, to be defined shortly; we first state the main theorem of the section.

\begin{thm} \label{thm:Devoto} For $G$ finite, the space of global sections of $\Ell_G^\bullet(M)$ over $\Bun_G(\EE)$ is Devoto's equivariant elliptic cohomology over $\C$, i.e., $$\Gamma(\Bun_G(\EE), \Ell_G^{\bullet}(M)) \simeq \Ell_G^{{\rm Dev},\bullet}(M).$$ \end{thm}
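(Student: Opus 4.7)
The plan is to specialize Definition~\ref{defn:ellcocycle} to the case of finite $G$ and verify that the resulting global sections match Devoto's presentation of equivariant elliptic cohomology as a sum over conjugacy classes of commuting pairs, equivariant for the residual $G\times\SL_2(\Z)$-action. Throughout, I will use that for $G$ finite we have $G_0=\{e\}$, hence $\fg=0$, so $\fg^h=\mf{t}_{\fg^h}=0$ for every commuting pair $h\in \mathcal{C}^2(G)$, $\mathcal{C}^2[G]=\mathcal{C}^2(G)$, and $\pi_0(G)=G$.

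First, I would unpack the local model. Since $\mf{t}_{\fg^h}=0$, the open subsheaves $U_h^\epsilon$ of Corollary~\ref{cor:opencover1} reduce to singletons $\{[h]\}$, and the manifolds $T_h^\epsilon$ are points. Thus Definition~\ref{defn:ellcocycle} collapses: the complex $\Omega^\bullet_{G_0^{h'}}(M^{h'};\mathcal{O}(U_0)[\beta,\beta^{-1}])$ is just $\Omega^\bullet(M^{h'};\mathcal{O}(U_0)[\beta,\beta^{-1}])$, the Cartan differential is the de~Rham differential $d$, the analyticity condition~\eqref{eq:analyticity} is vacuous (no deformations), and the invariance condition reduces to $\alpha_{h'}=g^*\alpha_{ghg^{-1}}$ for $g\in G$. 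So for each $[h]\in \mathcal{C}^2(G)/G$ and $U_0\subset \HH$,
\[
\dEll_G^\bullet(M)(U_0\times \{[h]\}) \;\simeq\; \Omega^\bullet(M^h;\mathcal{O}(U_0)[\beta,\beta^{-1}]),
\]
and a global section is a $G\times\SL_2(\Z)$-equivariant collection of such forms indexed by $\mathcal{C}^2(G)$.

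Next, I would pass to cohomology. Since $\dEll_G^\bullet(M)$ is a direct sum of sheaves indexed by conjugacy classes of commuting pairs, and since each summand is (locally on $\HH$) the de~Rham complex of $M^h$ with holomorphic-function coefficients, the Poincaré lemma for holomorphic functions on $\HH$ identifies
\[
\Ell_G^\bullet(M)(U_0\times\{[h]\}) \;\simeq\; \H^\bullet(M^h;\mathcal{O}(U_0)[\beta,\beta^{-1}]).
\]
Taking $\SL_2(\Z)$-equivariant global sections over $\HH$, and $G$-invariants over $\mathcal{C}^2(G)$, one obtains
\[
\Gamma(\Bun_G(\EE),\Ell_G^\bullet(M)) \;\simeq\; \Bigg(\bigoplus_{h\in\mathcal{C}^2(G)} \H^\bullet(M^h;\mathcal{O}(\HH)[\beta,\beta^{-1}])\Bigg)^{G\times\SL_2(\Z)},
\]
where the $\SL_2(\Z)$-action on the indexing set is as in~\eqref{eq:SL2Zaction}, the action on $\HH$ is by fractional linear transformations, and $\beta$ transforms as a section of $\omega^{-1}$ (via Proposition~\ref{prop:Bottperiodicity}). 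A mild point requiring care is the standing convention that $\mathcal{O}(\HH)$ means holomorphic functions meromorphic at the cusp, so that $\SL_2(\Z)$-invariants yield (weakly holomorphic) modular forms with the appropriate weight.

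Finally, I would compare with Devoto's definition, written in the form used by Ganter~\cite{GanterHecke} and Morava~\cite{Morava}: a class is the data, for each commuting pair $h$, of a Borel-equivariant cohomology class of $M^h$ valued in weak modular forms, satisfying conjugation-invariance under $G$ and an $\SL_2(\Z)$-modularity condition twisted by the rescaling of $\beta$. Identifying~$\beta^{-1}$ with a trivializing section of $\omega$ on $\HH$, the displayed formula above coincides with Devoto's presentation termwise. The main obstacle is simply matching conventions: one must check that the precise $\SL_2(\Z)$-action in~\eqref{eq:equiv} (which rescales the Lie algebra fibers of $\fg^h_\C$ and $\beta$ by $c\tau+d$) reproduces the weight and index conventions in Devoto's/Ganter's formulas, and that, when $M$ is a point, this reduces to the expected formula for $\Ell_G^{\rm Dev}(\pt)$ in terms of class functions on commuting pairs valued in modular forms. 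Once the dictionary is in place, the isomorphism of algebras asserted in the theorem follows from the termwise identification above.
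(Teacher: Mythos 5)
Your proposal is correct and follows essentially the same route as the paper: both proofs evaluate $\Ell_G^\bullet(M)$ on the discrete cover $\HH\times\mathcal{C}^2(G)$, observe that the Lie-algebra direction and the analyticity condition degenerate for finite $G$, and then extract $G$-invariance (conjugation) and $\SL_2(\Z)$-invariance (modularity, with $\beta$ tracking the weight) to match Devoto's Definition~\ref{defn:Devoto}. The only cosmetic difference is that you spell out the collapse of $T_h^\epsilon$, $U_h^\epsilon$, and the Cartan complex explicitly rather than citing it in one line.
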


\subsection{A review of Devoto's equivariant elliptic cohomology}

Consider 
\beq
\SL_2(\Z)\circlearrowright \Big(\bigoplus_{h\in \mathcal{C}^2(G)}\H^\bullet(M^{h};\mathcal{O}(\HH))\Big)^{G}\label{eq:Devtil}
\eeq
where $\SL_2(\Z)$ acts through the indexing set $\mathcal{C}^2(G)=\Hom(\Z^2,G)$ by precomposition and on $\HH$ through the usual fractional linear transformations. The $G$-invariants in~\eqref{eq:Devtil} are taken with respect to the $G$-action by conjugation on $\mathcal{C}^2(G)$ and left multiplication by~$g$ on fixed point sets, $M^{h}\to M^{ghg^{-1}}$. The following is an adaptation of~\cite[Definition~3.2]{DevotoI} to complex coefficients and the full modular group~$\SL_2(\Z)$.

\begin{defn}\label{defn:Devoto}
Let $G$ be a finite group and $M$ a $G$-manifold. Define \emph{Devoto's $G$-equivariant elliptic cohomology} of $M$ as a subspace
$$
\Ell_G^{{\rm Dev},k}(M)\subset \bigoplus_j \Big(\bigoplus_{h\in \mathcal{C}^2(G)}\H^j(M^{h};\mathcal{O}(\HH))\Big)^{G}
$$
whose $j^{\rm th}$ summand consists of functions that transform under the $\SL_2(\Z)$-action~\eqref{eq:Devtil} with weight $(j-k)/2$ (so in particular, $j-k$ must be even for the $j^{\rm th}$ summand to be nonzero).
\end{defn}

\begin{rmk} We recall our convention (stated at the end of~\S\ref{sec:conv}) that $\mc{O}_{\mb{H}}$ denotes the sheaf whose sections are holomorphic functions with polynomial growth along any geodesic that escapes to the infinity, or equivalently, the meromorphicity condition at the cusps. We re-emphasize this point now as the modularity condition for classes in Devoto's equivariant elliptic cohomology will typically be for finite-index subgroups of $\SL_2(\Z)$. Our convention agrees with the usual notion of weakly holomorphic modular forms of higher level (in terms of imposing meromorphicity at \emph{all} cusps). Devoto imposes this same condition in terms of Fourier expansions in $e^{2\pi i \tau/|G|}=q^{1/|G|}$ for $\tau\in \HH$.\end{rmk}


\subsection{The comparison map}

\begin{proof}[Proof of Theorem~\ref{thm:Devoto}] 
We evaluate $\Ell_G^\bullet(M)$ on the cover $\HH\times \mathcal{C}^2(G)$ of $\Bun_G(\EE)$, and then compute the action of $G\times\SL_2(\Z)$. On this cover, a section is the data of 
$$
[\alpha_h]\in \H^\bullet(M^h;\mathcal{O}(\HH)[\beta,\beta^{-1}])
$$
for each pair of commuting elements $h\in \mathcal{C}^2(G)$ satisfying a conjugation invariance property and an $\SL_2(\Z)$-equivariance property; the analytic property in this case is trivially satisfied because the Lie algebra is the zero vector space (and $\mathcal{C}^2(G)$ is discrete). Conjugation invariance implies that the set $\{[\alpha_h]\}_{h\in \mathcal{C}^2(G)}$ determines a class
$$
[\alpha]\in \Big(\bigoplus_h \H^\bullet(M^h;\mathcal{O}(\HH)[\beta,\beta^{-1}])\Big)^G.
$$
Finally, the $\SL_2(\Z)$-invariance extracts Devoto's $\Ell_G^{{\rm Dev},k}(M)$: invariant classes come with a power of $\beta$ that reads off the weight.
\ep

%

\section{Loop group representations and cocycle representatives of Thom classes}\label{sec:Thom}

In this section we construct cocycle representatives of universal Euler and Thom classes in complex analytic equivariant elliptic cohomology. These refinements can be understood as coming from the representation theory of loop groups, giving an elliptic version of Chern--Weil theory: characteristic classes in (non-equivariant) complex analytic elliptic cohomology are determined by universal equivariant classes, which in turn are constructed out of Lie-theoretic data. The approach applies to both real and complex vector bundles, recovering universal characteristic classes for the complexifications of the ${\rm MString}$- and $\MU\langle 6\rangle$-orientations of $\TMF$, respectively. 

The construction of equivariant elliptic Thom classes for a fixed elliptic curve was first sketched by Grojnowski~\cite[\S2.5-2.6]{Grojnowski}. For a torus $T$, aspects of Grojnowski's $T$-equivariant Thom classes were studied further by Rosu and Ando~\cite{RosuEquivariant,AndoOrientation}. The cocycle-level description below is new, which leads to a more explicit description of the underlying characteristic classes and verification of their claimed properties. 



\subsection{Review from K-theory and ordinary cohomology}
Let $V\to M$ be a real $d$-dimensional oriented vector bundle. The Thom class of $V$ in ordinary cohomology $[{\rm Th}_V]\in \H^{d}_{\rm cs}(V)$ has compact vertical support and the property that the exterior product map
$$
\H^\bullet(M)\stackrel{\sim}{\to} \H^{\bullet+d}_{\rm cs}(V),\qquad [\alpha]\mapsto [\alpha]\boxtimes[{\rm Th}_V]
$$
is an isomorphism, called the \emph{Thom isomorphism}. The \emph{Euler class} $[{\rm Eu}_V]\in \H^d(M)$ is the pullback of $[{\rm Th}_V]$ along the zero section $0\hookrightarrow V$. The Thom class determines pushforwards in cohomology using the Pontrjagin--Thom collapse map. The Euler and Thom class are both natural for the oriented vector bundle~$V$, and so they are determined by the Euler and Thom classes for the universal bundle over~$\BSO(n)$. An analogous story for complex vector bundles again yields universal Euler and Thom classes for the universal bundle over~$\BU(n)$. The cohomology of these classifying spaces is the equivariant cohomology of a point 
$$
\H(\BU(n))=\H_{{\rm U}(n)}(\pt),\qquad \H(\BSO(n))=\H_{\SO(n)}(\pt)
$$
so that universal Euler and Thom classes are (canonically) classes in the coefficient ring of equivariant cohomology. 

For K-theory one again finds Euler and Thom classes living in equivariant refinements. However, the existence of refinements is a more interesting question if one considers the non-Borel version of equivariant $\K$-theory coming from equivariant vector bundles. For example, we recall that the Euler class of a complex vector bundle $V$ in K-theory is the class underlying the virtual vector bundle $\Lambda^{\ev}V\ominus \Lambda^\odd V$, or equivalently, the $\Z/2$-graded vector bundle given by the total exterior bundle $\Lambda^\bullet V$ of~$V$ (compare Example~\ref{ex:Kthycplx} below). By universal properties, the Euler class is determined by the corresponding virtual vector bundle on $\BU(n)$. It admits an equivariant refinement,
\beq
\Rep(\U(n))=\K_{\U(n)}(\pt)\stackrel{{\rm completion}}{\longrightarrow} \K(\BU(n))\label{eq:KtheoryEuler}
\eeq
as the virtual representation $\Lambda^{\ev} R\ominus \Lambda^{\odd} R$ where $R$ is the defining representation of $\U(n)$ and~\eqref{eq:KtheoryEuler} is the Atiyah--Segal completion map. 
There is a similar story for equivariant refinements of K-theory Thom classes, as well as analogous constructions in KO-theory built from spinor representations $\bS^+\ominus\bS^-$~\cite[Part III]{ABS}; see also Example~\ref{eq:spinor}. 

Below we construct refinements of Euler and Thom classes in elliptic cohomology with two goals that run in analogy to~\eqref{eq:KtheoryEuler}: (i) refine pre-existing non-equivariant classes, and (ii) give representation-theoretic meaning to the refinements. 





\subsection{Positive energy representations and the Weierstrass sigma function}\label{sec:Weier}

We briefly review positive energy representations of loop groups; a standard reference is~\cite[\S9]{PressleySegal}. The \emph{loop group} of a compact Lie group consists of smooth maps $LG:=C^\infty(S^1,G)$ endowed with pointwise multiplication. Transgression of a class $[\ell]\in \H^4(BG;\Z)$ determines an $S^1$ central extension
\beq
1 \to S^1\to \widetilde{LG}\to LG \to 1.\label{eq:extension}
\eeq
When $G$ is simple and simply connected, $\H^4(BG;\Z)\simeq \Z$ and $[\ell]\in \Z$ is customarily called the \emph{level} of the extension~\eqref{eq:extension}; we use the terminology of a level for arbitrary compact Lie groups even though $[\ell]\in \H^4(BG;\Z)$ need not be determined by an integer. 

The loop group $LG$ has an $S^1$-action from precomposing with the rotation action of $S^1$ on itself; this is called the action of the \emph{energy circle} (to distinguish from the \emph{central circle} in~\eqref{eq:extension}). The action of the energy circle lifts to $\widetilde{LG}$ and one may form the semidirect product $S^1\ltimes \widetilde{LG}$. A \emph{positive energy representation of $LG$} is a representation $\mathcal{H}$ of $S^1\ltimes \widetilde{LG}$ such that the weight spaces of the energy circle are finite-dimensional and bounded below. In more detail, define the $n^{\rm th}$ weight space $\mc{H}_n \subset \mc{H}$ by 
$$
\mc{H}_n := \{v \in \mc{H} \mid \lambda \cdot v = \lambda^n v, \ \forall \lambda \in S^1\},
$$ 
where $\lambda\in S^1$ is in the energy circle. Then the positive energy condition demands that $\mc{H}_n$ is finite-dimensional for all $n$ and there exists some $N \in \mb{Z}$ such that $\mc{H}_n = \{0\}$ for $n < N$. 

Given a positive energy representation $\mathcal{H}$ of a loop group $LG$, each weight space $\mathcal{H}_n$ is itself a finite-dimensional $G$-representation; this comes from restricting along the embedding $G\hookrightarrow LG$ as the constant loops, using that the energy circle acts trivially on the constant loops and that the central extension~\eqref{eq:extension} canonically splits over the constant loops. This leads to the definition of the \emph{character} of a positive energy representation as the formal power series 
\beq
\chi_{\mc{H}} := \sum_{n \ge N} \chi_n q^n\in (\Rep(G)\otimes \C)(\!(q)\!)\label{eq:LGchar}
\eeq
where $\chi_{n}\in \Rep(G)\otimes \C\subset C^\infty(G)^G$ can be identified with the character of the associated $G$-representation on $\mathcal{H}_n$, i.e., a class function on~$G$. More generally, we can consider formal differences $\mathcal{H}^+\ominus \mathcal{H}^-$ of positive energy representations of loop groups, where then the character~\eqref{eq:LGchar} is the difference $\chi_{n}=\chi_{n}^+-\chi_{n}^-$ of characters of $G$-representations on $\mathcal{H}_n$. Equivalently, one can consider $\Z/2$-graded positive energy representations whose characters are defined using the supertrace. Finally, we note that~\eqref{eq:LGchar} is only defined as a formal series in $q$; however, it turns out that taking~$q = \exp(2\pi i \tau)$ gives a (convergent) power series expansion of a holomorphic function of $\tau\in \HH$. By reduction to maximal tori, the following example essentially determines this holomorphic behavior.

\begin{ex}\label{eq:spinor}
First recall that there are two irreducible representations of $\Spin(2)$, denoted $\bS^+$ and~$\bS^-$. The character of the $\Z/2$-graded representation $\bS^+\ominus\bS^-$ is
\beq
2i\sinh(2 \pi i z)=\exp(\pi i z) - \exp(-\pi i z)=y^{1/2}-y^{-1/2} \label{eq:spinschar}
\eeq
where $y = \mathrm{exp}(2\pi i z)\in C^\infty(\SO(2))$ is the coordinate function that admits a square root when pulled back along the double cover $\Spin(2)\to \SO(2)$.  
Generalizing to loop groups, 
consider the level $[\ell] \in \H^4(B\mathrm{Spin}(2);\Z) \simeq \Z$ given by one of the two generators. For one choice, there will exist no positive energy representations, while for the other there are precisely four, typically denoted $\mb{S}^{\pm}, \mb{S}_{\pm}$, e.g., see~\cite[\S1.2]{Liu}. For the intended applications in equivariant elliptic cohomology, the most important of these is the $\Z/2$-graded positive energy representation $\mb{S}^+ \ominus \mb{S}^-$ of $L\Spin(2)$ whose (super) character is the holomorphic function
\beq
\chi_{\mb{S}^+ \ominus \mb{S}^-}(q,y) = (y^{1/2}-y^{-1/2}) \prod_{n > 0} (1 - q^n y)(1 - q^ny^{-1})\in \mathcal{O}(\HH\times \C).\label{eq:LGcharchi}
\eeq
Below we refer to $\mb{S}^+ \ominus \mb{S}^-$ as the level~1 \emph{vacuum representation} of $L\Spin(2)$. Similarly, there is a $\Z/2$-graded level~1 vacuum representation of $L\Spin(2n)$  whose super character is
$$
\chi(q,y_1,y_2,\dots,y_n)=\prod_{i=1}^n \chi_{\mb{S}^+ \ominus \mb{S}^-}(q,y_i)
$$
where the functions $\chi_{\mb{S}^+ \ominus \mb{S}^-}(q,y_i)$ on the right are given by~\eqref{eq:LGcharchi} for coordinates $y_i$ on the standard maximal torus of $\SO(2n)$ pulled back to $\Spin(2n)$. 
\end{ex}

\begin{ex} Starting instead with the virtual representation $1\ominus R$ of $\U(1)$ with character $1-y\in C^\infty(\U(1))$, there is an extension to a vacuum representation of the loop group $L\U(1)$ with character
\beq
\chi(q,y)=\left((1-y)\prod_{n>0}(1-q^ny)(1-q^ny^{-1})\right),\label{eq:LU1char}
\eeq
e.g., see~\cite[\S11]{Ando}. 
\end{ex}

A common normalization of the character~\eqref{eq:LGcharchi} leads to the \emph{Weierstrass sigma function}
\beq
\sigma(\tau,z)&:=&\left((y^{1/2}-y^{-1/2})\prod_{n>0}\frac{(1-q^ny)(1-q^ny^{-1})}{(1-q^n)^2}\right)\nonumber\\
&=&z\exp\left(-\sum_{k>0} \frac{E_{2k}(\tau) z^k}{2k}\right)\in \mathcal{O}(\HH\times \C)\label{eq:AHR}
\eeq
where $\tau\in \HH$, $q=\exp(2\pi i \tau)$, $z\in \C$, $y = \exp(2 \pi i z)$ (so $y^{1/2} = \exp(\pi i z)$), and $E_{2k}(\tau)\in \mc{O}(\HH)$ is the $2k^{\rm th}$ Eisenstein series, defined as
\beq
&&E_{2k}(\tau)=\sum_{n,m\in \Z_*^2} \frac{1}{(m\tau+n)^{2k}},\quad \qquad \Z_*^2=\{(n,m)\in \Z^2 \mid (n,m)\ne (0,0)\}\label{eq:Eisenstein}
\eeq
for $k>1$, and we take $E_{2}$ to be the standard holomorphic version of the 2nd Eisenstein series (the above sum is conditionally convergent when $k=1$). The equality~\eqref{eq:AHR} was first demonstrated by Zagier~\cite{Zagiermodular}; see also~\cite[Proposition~10.9]{AHR}. 

%

We also consider the normalization of the $L\U(1)$ character~\eqref{eq:LU1char}, 
\beq
\upsilon(\tau,z):=-y^{1/2}\sigma(\tau,z)=\left((1-y)\prod_{n>0}\frac{(1-q^ny)(1-q^ny^{-1})}{(1-q^n)^2}\right).\label{eq:upsilondef}
\eeq

The relevance of the $\sigma$-function in elliptic cohomology originally came by way of the \emph{Witten genus}, the Hirzebruch genus associated with the power series 
\beq
\Wit(q,z)=\frac{z}{\sigma(\tau,z)}=\exp\left(\sum_{k>0} \frac{E_{2k}z^k}{2k}\right)\in \C[\! [ z,q]\!].\label{eq:Witten}
\eeq
The families refinement of this genus leads to the $\MU\langle 6\rangle$ and ${\rm MString}$ orientations of topological modular forms reviewed in the next subsection~\cite{HopkinsICM94,AHSI,AHR}. 


\subsection{Characteristic classes in (non-equivariant) elliptic cohomology over $\C$}
The universal elliptic cohomology theory of topological modular forms has Thom and Euler classes for $\U\langle 6\rangle$ and ${\rm O}\langle 8\rangle$ bundles. We recall that $\BU\langle 6\rangle$ is the classifying space for complex vector bundles with vanishing first and second Chern classes, $c_1=c_2=0$; $\BO\langle 8\rangle={\rm BString}$ classifies real vector bundles with vanishing first and second Stiefel--Whitney classes, as well as the vanishing of the fractional first Pontryagin class, $w_1=w_2=\frac{p_1}{2}=0$. These classifying spaces sit in the diagram
\beq
 \begin{tikzpicture}[baseline=(basepoint)];
\node (A) at (0,0) {$\BU\langle 6\rangle$};
\node (B) at (3,0) {$\BSU$};
\node (C) at (6,0) {$\BU$};
\node (D) at (0,-1.5) {${\rm BString}$};
\node (E) at (3,-1.5) {${\rm BSpin}$};
\node (F) at (6,-1.5) {$\BSO.$};
\draw[->] (A) to (B);
\draw[->] (B) to (C);
\draw[->] (C) to (F);
\draw[->] (E) to (F);
\draw[->] (A) to (D);
\draw[->] (B) to (E);
\draw[->] (D) to (E);
\path (0,-.75) coordinate (basepoint);
\end{tikzpicture}\label{diag:BUBO}
\eeq

\begin{rmk}
The notation $\BU\langle 6\rangle$ and $\BO\langle 8\rangle$ comes from canonical maps $\BU\langle 6\rangle\to \BU$ and $\BO\langle 8\rangle\to \BO$ giving the 5-connected cover and the 7-connected cover in the Whitehead towers of $\BU$ and $\BO$, respectively. 
\end{rmk}

Let $\MString=\MO\langle 8\rangle$ and $\MU\langle 6\rangle$ denote the Thom spectra associated with the universal bundles on ${\rm BString}=\BO\langle 8\rangle$ and $\BU\langle 6\rangle$, respectively. The $\sigma$-orientation of $\TMF$ is a map of ring spectra
\beq
\sigma\colon \MString\to \TMF\label{eq:sigmaorientation}
\eeq
that assigns a (vertically) compactly supported Thom class ${\rm Th}_V\in \TMF^{m}_{\rm cs}(V)$ to an $m$-dimensional real vector bundle $V\to M$ with string structure~\cite{AHSI,AHR}. 
%
The Chern--Dold character is a map
$$
{\rm ch}\colon \TMF(M)\to \H(M;\TMF(\pt)\otimes \C)\simeq \H(M;\MF)
$$
from $\TMF$ to ordinary cohomology with coefficients in the graded ring of modular forms~$\MF$. The Riemann--Roch theorem compares the Thom class in $\TMF$ with the Thom class $u_V$ in ordinary cohomology by means of the commuting square
\beq
\begin{tikzpicture}[baseline=(basepoint)];
\node (A) at (0,0) {$\TMF^\bullet(M)$};
\node (B) at (5,0) {$\H^\bullet(M;\MF)$};
\node (C) at (0,-1.5) {$\TMF^{\bullet +m}_{\rm cs}(V)$};
\node (D) at (5,-1.5) {$\H^{\bullet +m}_{\rm cs}(V;\MF).$}; 
\draw[->] (A) to node [above=1pt] {$\ch$} (B);
\draw[->] (A) to node [left=1pt] {${\rm Th}_V $} (C);
\draw[->] (C) to node [above=1pt] {$\ch$} (D);
\draw[->] (B) to node [right=1pt] {$[u_V\cdot \Wit(V)^{-1}]$}(D);
\path (0,-.75) coordinate (basepoint);
\end{tikzpicture}\label{diag:WitRR1}
\eeq
where the vertical arrows are exterior multiplication with the indicated class, $\Wit(V)$ is the characteristic class associated with the power series~\eqref{eq:Witten}, and the cohomology groups $\TMF^{\bullet +m}_{\rm cs}(V)$ and $\H^{\bullet +m}_{\rm cs}(V;\MF)$ are with compact vertical support. This defines the elliptic Thom class in $ \H_{\rm cs}^m(V;\MF)$ as the class $[u_V\cdot \Wit(V)^{-1}]$. The elliptic Euler class is gotten by pulling back along the zero section, and is therefore $[e_V\cdot \Wit(V)^{-1}]$ where $e_V$ is the ordinary Euler class of $V$. We have two flavors of these classes, depending on whether~$V$ is real (as was assumed above) or complex, corresponding to the $\MU\langle 6\rangle$ or ${\rm MString}$ orientation respectively. These orientations are related by precomposing~\eqref{eq:sigmaorientation} with the map $\MU\langle 6\rangle\to \MO\langle 8\rangle$ coming from taking Thom spectra of universal bundles in~\eqref{diag:BUBO}. 

%

\subsection{Equivariant elliptic Euler forms} \label{sec:Euler}
Below, $\mf{t}\simeq \R^n$ denotes the Lie algebra of the standard maximal tori of $\U(n)$ and $\SO(2n)$, given by diagonal unitary matrices in the former case, and block diagonal matrices whose blocks are $2\times 2$ rotation matrices in the latter case. The standard (real) coordinates on $\mf{t}$ also give standard complex coordinates $\{z_1,\dots, z_n\}$ on $\mf{t}_\C$. 


\begin{defn} For $q=\exp(2\pi i\tau)$ with $\tau\in \HH$, consider the holomorphic functions on $\HH\times\mf{t}_\C$ given by
\beq
\sigma_{\SO(2n)}(\tau,z_1,\dots, z_n)&=&\prod_{j=1}^n \sigma(\tau,z_j)\in \mathcal{O}(\HH\times\mathfrak{t}_\C).\label{Eu:SO}
\eeq
\beq
&&\sigma_{\U(n)}(\tau,z_1,\dots, z_n)=\prod_{j=1}^n\upsilon(\tau,z_j)=\prod_{j=1}^n e^{-\pi i z_j} \sigma(\tau,z_j) \in \mathcal{O}(\HH\times\mathfrak{t}_\C)\label{Eu:U}
\eeq

\end{defn}

We recall that the transformation properties of the function $\sigma(\tau,z)$ with respect to the action of $\Z^2\rtimes \SL_2(\Z)$ on $\HH\times \C$ show that it is a Jacobi form of weight~$-1$ and index~$1/2$, e.g., see~\cite[Equations 140-141]{Ell1}. Equivalently, these transformations define a cocycle for a line bundle on the quotient $[\HH\times \C/\Z^2\rtimes \SL_2(\Z)]\simeq \EE^\vee\simeq \Bun_{\SO(2)}(\EE)$ for which~$\sigma(\tau,z)$ determines a section. The same reasoning shows that the transformation properties of the functions~\eqref{Eu:SO} and~\eqref{Eu:U} for the action of $X_*(T)^{\oplus 2}\rtimes \SL_2(\Z)$ on $\HH\times\mathfrak{t}_\C$ define cocycles for line bundles on $\Bun_{\Spin(2n)}(\EE)$ and $\Bun_{\U(n)}(\EE)$ respectively, where we consider descent of these functions along
\beq
&&\label{eq:covering} \HH\times\mathfrak{t}_\C\to \Bun_G(\EE)\qquad G=\U(n),\Spin(2n)
\eeq
where the map is from~\eqref{eq:BunGhol} and we use Example~\ref{ex:holomorphic} to understand the target. Here, the action of $X_*(T)^{\oplus 2}$ on $\HH\times \mf{t}_\C\simeq \HH\times \mf{t}\times\mf{t}$ is by the cocharacter lattice for $\Spin(2n)$ or $\U(n)$, whose quotient gives $(\HH\times \mf{t}\times\mf{t})/X_*(T)\simeq \HH\times T\times T$ for $T$ the maximal torus of $\Spin(2n)$ or $\U(n)$. In the spin case, the action of this cocharacter lattice on $\HH\times\mf{t}\times\mf{t}$ can be understood explicitly (since $\mf{t}$ is the Lie algebra of the maximal torus of $\SO(2n)$) by describing the cocharacter lattice of $\Spin(2n)$ as sublattice of the cocharacter lattice of $\SO(2n)$; e.g., see the proof of Proposition~\ref{thm:Euler2}. The reason for the intermediate descent to $\Bun_{\Spin(2n)}(\EE)$ in the spin case (rather than all the way to $\Bun_{\SO(2n)}(\EE)$) is described in Remark~\ref{rmk:partialdescent}. 


\begin{defn} \label{defn:generalA} For $G = \U(n)$ or $\Spin(2n)$, let $\mc{A}_G$ denote the holomorphic line bundle over $\Bun_{G}(\EE)$ defined by the transformation properties of $\sigma_{\U(n)}$ or $\sigma_{\SO(2n)}$, as described above. These holomorphic line bundles have preferred sections determined by $\sigma_{\U(n)}$ and $\sigma_{\SO(2n)}$, and we use the same notation for these sections. Define $\mc{L}_G := \mc{A}_G \otimes \omega^n$. \end{defn}

\begin{rmk} \label{rmk:L12}
The notation $\mc{L}_G$ is meant to evoke the Looijenga line bundles in Definition~\ref{defn:Loo}, and we will shortly see comparison results in Proposition~\ref{thm:Euler2} below. In the case of $\U(1)$, $\mc{L}_{\U(1)}$ is in fact canonically a square root of the Looijenga line of level one, i.e. $\mc{L}_{\U(1)}^{\otimes 2} \simeq \mc{L}_1$. As such, one often denotes $\mc{L}_{\U(1)}$ by $\mc{L}_{1/2}$, i.e., as a Looijenga line of level one-half.
\end{rmk}

\begin{rmk} \label{rmk:partialdescent}
It is also possible to define a line bundle $\mc{A}_{\SO(2n)}$ over $\Bun_{\SO(2n)}(\EE)$ using the transformation law for $\sigma_{\SO(2n)}$, but there are some technical issues to address. Namely,~$\SO(2n)$ does not have torsion-free fundamental group, so the map~$\mb{H} \times \mf{t}_{\mb{C}}\to \Bun_{\SO(2n)}(\EE)$ has as image the connected component of the trivial bundle, see Example~\ref{ex:opensub}. Hence, transformation properties of the function $\sigma_{\SO(2n)}$ can only construct a line bundle on this image. The \emph{determinant line bundle} (e.g., ~\cite[\S7]{Ell1}) extends this partially-defined line bundle to the whole of $\Bun_{\SO(2n)}(\EE)$ and pulls back along the homomorphism $\Spin(2n)\to \SO(2n)$ to $\mc{L}_{\Spin(2n)}$ from Definition~\ref{defn:generalA}. However, for the applications below it suffices to work with $\mc{L}_{\Spin(2n)}$ on $\Bun_{\Spin(2n)}(\EE)$. \end{rmk}



\begin{prop}\label{thm:Euler1} 
The functions $\sigma_{\SO(2n)}$ and $\sigma_{\U(n)}$ given by the formulas~\eqref{Eu:SO} and~\eqref{Eu:U} respectively define twisted equivariant elliptic Euler classes as the global sections 
\beq
\Eu_{\Spin(2n)}&:=&\beta^{-n}\cdot \sigma_{\SO(2n)}\in \Gamma(\Bun_{\Spin(2n)}(\EE);\dEll_{\Spin(2n)}^{2n}(\pt)\otimes\mc{L}_{\Spin(2n)})\nonumber\\
\Eu_{\U(n)}&:=&\beta^{-n}\cdot \sigma_{\U(n)}\in \Gamma(\Bun_{\U(n)}(\EE);\dEll_{\U(n)}^{2n}(\pt)\otimes\mc{L}_{\U(n)})\nonumber
\eeq
of equivariant elliptic cocycles as twisted by the holomorphic lines $\mc{L}_G$. \end{prop}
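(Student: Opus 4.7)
Since $M = \pt$, the data of an elliptic cocycle in the sense of Definition~\ref{defn:ellcocycle} reduces to a germ of $W^{h}$-invariant holomorphic functions on $(\mf{t}_{\mf{g}^h})_\C$ at the origin, for each commuting pair $h \in \mathcal{C}^2(G)$. The Cartan differential acts trivially on degree-zero data, so $Q$-closedness will be automatic. Combining Proposition~\ref{prop:point} (which identifies $\dEll_G^{2n}(\pt) \simeq \omega^{-n}$) with the definition $\mc{L}_G = \mc{A}_G \otimes \omega^n$ gives $\dEll_G^{2n}(\pt)\otimes \mc{L}_G \simeq \mc{A}_G$, so global sections of the twisted sheaf are identified with global sections of $\mc{A}_G$ on $\Bun_G(\EE)$. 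The plan is therefore to package $\sigma_G$ as an explicit collection of germs $\{\alpha_h\}$ satisfying invariance, analyticity, and the transition cocycle for $\mc{A}_G$, and to show that the bookkeeping factor of $\beta^{-n}$ precisely cancels the $(c\tau+d)$-modular weight of $\sigma_G$.

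Both $\U(n)$ and $\Spin(2n)$ have torsion-free fundamental group, so by Example~\ref{ex:holomorphic} every commuting pair $h$ is conjugate into a common maximal torus $T$, and we may work on the holomorphic atlas $\HH \times \mf{t}_\C \to \Bun_G(\EE)$ from~\eqref{eq:covering}. Given $h = (e^{\tilde{h}_1}, e^{\tilde{h}_2}) \in T \times T$ with lifts $\tilde{h}_i \in \mf{t}$, I would define
\[ \alpha_h(X) := \sigma_G(\tau, X + \tilde{h}_1 - \tau \tilde{h}_2), \qquad X \in (\mf{t}_{\mf{g}^h})_\C \subset \mf{t}_\C. \]
The analyticity condition~\eqref{eq:analyticity} is then tautological: for a deformation $h' = (h_1 e^{X_1}, h_2 e^{X_2})$ with $(X_1, X_2) \in T_h^\epsilon$, the natural lifts are $\tilde{h}_i + X_i$, and $\alpha_{h'}(X) = \sigma_G(\tau, X + X_1 - \tau X_2 + \tilde{h}_1 - \tau \tilde{h}_2)$ is exactly the prescribed restriction of $\alpha_h$.

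Invariance under $G_0$-conjugation reduces, via the torus, to $W$-invariance of $\sigma_G$. For $\U(n)$ this is clear since the Weyl group $S_n$ permutes the identical factors $\upsilon(\tau, z_j)$. For $\Spin(2n)$ the Weyl group is $S_n \ltimes (\Z/2)^{n-1}$, acting by permutations and an \emph{even} number of sign changes on the $z_j$; using the oddness $\sigma(\tau, -z) = -\sigma(\tau, z)$, the product $\prod_j \sigma(\tau, z_j)$ is manifestly invariant. What remains is the $(X_*(T)^{\oplus 2} \rtimes \SL_2(\Z))$-equivariance. The quasi-periodicity of $\sigma(\tau, z)$ under lattice translations is precisely the cocycle defining $\mc{A}_G$ in Definition~\ref{defn:generalA}, accounting for the ambiguity in the choice of lift $(\tilde{h}_1, \tilde{h}_2)$. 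Modularly, $\sigma$ is a Jacobi form of weight $-1$ and index $1/2$, so $\sigma_G$ transforms under $\SL_2(\Z)$ with weight factor $(c\tau + d)^{-n}$ together with an exponential cocycle. Meanwhile $\beta^{-1}$ is a section of $\omega$ and transforms with factor $(c\tau+d)$, so $\beta^{-n}$ contributes $(c\tau + d)^{n}$; the two cancel and leave exactly the cocycle for $\mc{A}_G$.

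The principal obstacle will be the careful bookkeeping of transformation factors in the two cases, particularly the half-integer index for $\Spin(2n)$ (reflecting $\mc{L}_{\U(1)} = \mc{L}_{1/2}$ as in Remark~\ref{rmk:L12}) and the interaction between the $\beta^{-n}$-weight and the Jacobi form weight of $\sigma_G$. This is most transparently handled by pulling everything back to the cover $\HH \times \mf{t}_\C$ and verifying the three pieces of equivariance (Weyl, cocharacter-lattice translations, and $\SL_2(\Z)$) separately.
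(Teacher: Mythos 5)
Your proposal is correct and follows the same route as the paper: both reduce to identifying $\dEll_G^{2n}(\pt)\otimes\mc{L}_G \simeq \mc{A}_G$ via twisted Bott periodicity and the definition $\mc{L}_G = \mc{A}_G\otimes\omega^n$, under which $\beta^{-n}\sigma_G$ goes to $\sigma_G$. The paper's proof then stops there, declaring $\sigma_G$ a global section of $\mc{A}_G$ ``by definition'' of $\mc{A}_G$ (Definition~\ref{defn:generalA}), whereas you unpack that step by explicitly verifying Weyl-invariance, the analyticity condition, and the cocycle matching against the transformation properties of $\sigma_G$ --- detail the paper regards as already built into the construction of $\mc{A}_G$ in the paragraph preceding Definition~\ref{defn:generalA}.
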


\bp 
For $G=\U(n),\Spin(2n)$, we have the isomorphisms of sheaves on $\Bun_G(\EE)$, 
$$
\dEll_{G}^{2n}(\pt)\otimes\mc{L}_{G}\simeq \dEll_{G}^{2n}(\pt)\otimes(\mc{A}_G \otimes \omega^n) \simeq \mc{A}_G
$$
using the definition $\mc{L}_G := \mc{A}_G \otimes \omega^n$ and periodicity from Proposition~\ref{prop:Bottperiodicity}. The above composition sends $\beta^{-n}\cdot \sigma_{G}$ to $\sigma_G$, which is a global section of $\mc{A}_G$ by definition. We conclude that $\beta^{-n}\cdot \sigma_{G}$ is a global section of $\dEll_{G}^{2n}(\pt)\otimes\mc{L}_{G}$. \ep


Naturality of $\dEll_{G}^\bullet$ for homomorphisms of groups gives the following. 

\begin{cor}\label{cor:Eu} Let $G$ be a compact Lie group. For any homomorphism $\rho\colon G\to \U(n)$ or $\rho\colon G\to \Spin(2n)$, we obtain a $G$-equivariant elliptic Euler class by pullback,
$$
{\rm Eu}_\rho\in \Gamma(\Bun_G(\EE),\dEll_G^{2n}(\pt)\otimes\mathcal{L}_\rho)
$$
where $\mathcal{L}_\rho$ is the holomorphic line bundle on $\Bun_G(\EE)$ that pulls back from $\mc{L}_{\U(n)}$ or $\mc{L}_{\Spin(2n)}$ on $\Bun_{\U(n)}(\EE)$ or $\Bun_{\Spin(2n)}(\EE)$, respectively, along the map induced by~$\rho$. 
\end{cor}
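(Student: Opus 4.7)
The plan is to reduce everything to Proposition~\ref{thm:Euler1} by invoking functoriality of $\dEll_G^\bullet$ in the group, as asserted in property~(2) of the proposition following Definition~\ref{defn:ellcocycle}. Writing $H$ for either $\U(n)$ or $\Spin(2n)$, the homomorphism $\rho\colon G\to H$ induces a map of stacks $\rho_*\colon \Bun_G(\EE)\to \Bun_H(\EE)$ (from Definition~\ref{defn:BunG}), covered by a morphism of sheaves of cdgas
\[
\rho_*^{-1}\dEll_H^\bullet(\pt)\longrightarrow \dEll_G^\bullet(\pt).
\]
This is the key input from naturality; no further analysis of the elaborate pointwise structure of Definition~\ref{defn:ellcocycle} is required.

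Next, define $\mathcal{L}_\rho := \rho_*^*\mathcal{L}_H$ as the pullback of $\mathcal{L}_H$ in $\mathcal{O}$-modules, using Proposition~\ref{prop:analytic} to identify $\dEll_G^0(\pt)\simeq \mathcal{O}_{\Bun_G(\EE)}$ (and similarly for $H$) so that the notions of $\mathcal{O}$-module structure on both sides are unambiguous. Tensoring the naturality morphism above with $\mathcal{L}_\rho$ and using the projection formula $\rho_*^{-1}(\mathcal{F}\otimes \mathcal{L}_H)\simeq \rho_*^{-1}\mathcal{F}\otimes \mathcal{L}_\rho$ yields a morphism of sheaves of $\mathcal{O}_{\Bun_G(\EE)}$-modules
\[
\rho_*^{-1}\bigl(\dEll_H^\bullet(\pt)\otimes \mathcal{L}_H\bigr)\longrightarrow \dEll_G^\bullet(\pt)\otimes \mathcal{L}_\rho.
\]
Passing to global sections and composing with the adjunction map $\Gamma(\Bun_H(\EE),\mathcal{F})\to \Gamma(\Bun_G(\EE),\rho_*^{-1}\mathcal{F})$, the global section $\Eu_H$ produced by Proposition~\ref{thm:Euler1} is sent to a global section ${\rm Eu}_\rho\in \Gamma(\Bun_G(\EE),\dEll_G^{2n}(\pt)\otimes \mathcal{L}_\rho)$, which is the desired class.

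Since the assertion is a pure naturality statement, there is no real obstacle: the only thing to check is compatibility of the tensor product, pullback, and global-sections functors, which is standard. If anything, the delicate point worth double-checking is that the $\mathcal{O}$-module tensor product commutes with $\rho_*^{-1}$ as used above; this follows from the fact that $\mathcal{L}_H$ is locally free of rank one (Definition~\ref{defn:holo}), so locally the tensor product is just a shift of the underlying sheaves and pullback commutes with it trivially.
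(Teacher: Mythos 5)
Your argument is correct and takes exactly the paper's approach: the paper's own proof is the single sentence that ``Naturality of $\dEll_G^\bullet$ for homomorphisms of groups gives the following,'' and your proposal spells out that naturality. One small notational caveat: the ``projection formula'' $\rho_*^{-1}(\mathcal{F}\otimes\mathcal{L}_H)\simeq\rho_*^{-1}\mathcal{F}\otimes\mathcal{L}_\rho$ mixes the topological inverse image $\rho_*^{-1}$ with the $\mathcal{O}$-module pullback implicit in $\mathcal{L}_\rho$; the clean statement is $\rho_*^{*}(\mathcal{F}\otimes_{\mathcal{O}}\mathcal{L}_H)\simeq\rho_*^{*}\mathcal{F}\otimes_{\mathcal{O}}\mathcal{L}_\rho$ combined with the fact that the naturality morphism $\rho_*^{*}\dEll_H^\bullet(\pt)\to\dEll_G^\bullet(\pt)$ is a map of $\mathcal{O}_{\Bun_G(\EE)}$-modules via Proposition~\ref{prop:analytic}, which is what lets the global section $\Eu_H$ push forward to the desired section of $\dEll_G^{2n}(\pt)\otimes\mathcal{L}_\rho$.
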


Next we compare the line bundle $\mc{L}_G$ with the more classical Looijenga line. For this comparison, we restrict to the substack $\Bun_{\SU(n)}\subset \Bun_{\U(n)}(\EE)$. 
Then from Definition~\ref{defn:Loo} we recall that for the simply-connected\footnote{Although $\Spin(2)$ is not simply-connected, its Looijenga lines are still classified by a level $\ell \in \H^4(B\Spin(2); \mb{Z}) \simeq \mb{Z}$, and the results below apply to this case as well. Similarly, $\Spin(4) \simeq \SU(2) \times \SU(2)$ is not simple and its levels are labelled by pairs of integers, but there is a canonical copy of $\mb{Z} \hookrightarrow \H^4(B \Spin(4); \mb{Z})$ arising, for example, from pullback under the natural inclusions $\Spin(4) \hookrightarrow \Spin(2n)$ for $n \ge 3$. }, simple Lie groups $\SU(n)$ and $\Spin(2n)$, the possible Looijenga line bundles are labeled by an integer~$\ell\in \Z$, called the \emph{level}. 


\begin{prop}\label{thm:Euler2} 
The line bundle $\mc{L}_{\Spin(2n)}$ is isomorphic to the level~1 Looijenga line~$\mc{L}_1$ on $\Bun_{\Spin(2n)}(\EE)$. The pullback of $\mc{L}_{\U(n)}$ along the functor $\Bun_{\SU(n)}(\EE) \to \Bun_{\U(n)}(\EE)$ associated with the inclusion $\SU(n)\hookrightarrow \U(n)$ is isomorphic to the level~1 Looijenga line~$\mc{L}_1$ on $\Bun_{\SU(n)}(\EE)$. This identifies the sections determined by Proposition~\ref{thm:Euler1}
\beq
\Eu_{\SU(n)}&:=&\beta^{-n}\cdot \sigma_{\U(n)} \in \Gamma(\Bun_{\SU(n)}(\EE);\dEll_{\SU(n)}^{2n}(\pt)\otimes\mathcal{L}_1)\nonumber\\
\Eu_{\Spin(2n)}&:=&\beta^{-n}\cdot \sigma_{\SO(2n)}\in \Gamma(\Bun_{\Spin(2n)}(\EE);\dEll_{\Spin(2n)}^{2n}(\pt)\otimes\mathcal{L}_1)\nonumber
\eeq
with super characters of level~1 vacuum representations of $L\SU(n)$ and $L\Spin(2n)$, respectively. 
\end{prop}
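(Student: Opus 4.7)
The plan is to reduce both isomorphism claims to explicit cocycle comparisons on the cover $\HH \times \mf{t}_\C \to \Bun_G(\EE)$ from~\eqref{eq:covering}, and then identify the sections via the classical product formulas of~\S\ref{sec:Weier}. The basic input is the transformation law of the Weierstrass sigma function: $\sigma(\tau, z+1) = -\sigma(\tau, z)$, $\sigma(\tau, z+\tau) = -\exp(-\pi i\tau - 2\pi i z)\sigma(\tau, z)$, and $\sigma(\gamma\tau, z/(c\tau+d)) = (c\tau+d)^{-1}\exp(\pi ic z^2/(c\tau+d))\sigma(\tau, z)$; the parallel law for $\upsilon(\tau, z) = -e^{\pi i z}\sigma(\tau, z)$ gives the trivial transformation $\upsilon(\tau, z+1) = \upsilon(\tau, z)$ together with $\upsilon(\tau, z+\tau) = -e^{-2\pi i z}\upsilon(\tau, z)$.

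Taking $n$-fold products, I would first write down the cocycle for $\mc{A}_{\Spin(2n)}$ describing the transformation of $\sigma_{\SO(2n)}$ under the action of $X_*(T)^{\oplus 2} \rtimes \SL_2(\Z)$ on $\HH \times \mf{t}_\C$. The cocharacter lattice $X_*(T) \subset \R^n$ of $\Spin(2n)$ is its coroot lattice, namely $\{x \in \Z^n : \sum_i x_i \in 2\Z\}$; this is precisely the sublattice on which the accumulated sign $(-1)^{\sum x_i}$ from shifts $z_j \to z_j+1$ is trivial, so $\sigma_{\SO(2n)}$ really does descend to a non-vanishing section on $\Bun_{\Spin(2n)}(\EE)$, and $\ell(x,x) = \sum_i x_i^2 \equiv \sum_i x_i \pmod 2$ lies in $2\Z$ on this sublattice as required by Definition~\ref{defn:Loo}. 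The remaining exponential factor $\exp(-\pi i (2\ell(x,z) + \ell(x,x)\tau))$ and the modular factor $\exp(\pi i c\ell(z,z)/(c\tau+d))$ then match the level-one Looijenga cocycle on the nose, while the leftover $(c\tau+d)^{-n}$ from the $\sigma$ modular transformation is absorbed by the $\omega^n$ twist in $\mc{L}_{\Spin(2n)} = \mc{A}_{\Spin(2n)} \otimes \omega^n$. For the $\U(n)$ case, the same computation with $\upsilon$ shows that $\sigma_{\U(n)}$ picks up an overall sign $(-1)^{\sum m_i}$ under shifts in the $\tau$-direction and an extra phase from the diagonal direction $(1,\ldots,1) \in \mf{t}$; restricting to the $\SU(n)$-cocharacter lattice $\{x \in \Z^n : \sum_i x_i = 0\}$ kills both issues and leaves the level-one Looijenga cocycle, establishing the second isomorphism.

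For the loop-group identifications, the product formula of Example~\ref{eq:spinor} gives
\beq
\chi_{\mathrm{vac}}^{L\Spin(2n)}(\tau, z) = \prod_{j=1}^n \chi_{\mb{S}^+ \ominus \mb{S}^-}(q, y_j) = \sigma_{\SO(2n)}(\tau, z) \cdot \Big(\prod_{k>0}(1-q^k)^2\Big)^n,\nonumber
\eeq
and analogously for the level-one vacuum character of $L\U(n)$ restricted to $L\SU(n)$ using $\upsilon$. The factor $\prod_{k>0}(1-q^k)^{2n} = \eta(\tau)^{2n} q^{-n/12}$ has $\SL_2(\Z)$-weight $n$ and trivializes $\omega^n$, and under the canonical identification of $\beta^{-1}$ with the trivializing section of $\omega$ on $\HH$ it corresponds to $\beta^{-n}$; this identifies $\Eu_G = \beta^{-n} \sigma_G$ with $\chi_{\mathrm{vac}}$ as sections of $\mc{L}_1$.

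The main obstacle is bookkeeping: ensuring that the signs from $\sigma(\tau, z+1) = -\sigma(\tau, z)$ cancel exactly on the $\Spin(2n)$ (resp.\ $\SU(n)$) cocharacter lattice but not on the larger $\SO(2n)$ (resp.\ $\U(n)$) lattice, and matching the $\eta^{2n}$ multiplier system with the $\SL_2(\Z)$-transformation $\beta^{-n} \mapsto (c\tau+d)^n\beta^{-n}$. These are concrete finite checks, but they collectively explain why the level-one Looijenga line appears exactly on $\Bun_{\Spin(2n)}(\EE)$ and on $\Bun_{\SU(n)}(\EE)$ (and, per Remark~\ref{rmk:L12}, only as a square root on $\Bun_{\U(n)}(\EE)$), which is the content of the statement.
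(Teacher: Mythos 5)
Your proposal follows essentially the same route as the paper: compare the classical $X_*(T)^{\oplus 2}\rtimes\SL_2(\Z)$ transformation laws of $\sigma_{\SO(2n)}$ and $\sigma_{\U(n)}$ against the Looijenga cocycle of Definition~\ref{defn:Loo}, using the $\Spin(2n)$ cocharacter lattice $\ker(\Z^n\to\Z/2)$ and the $\SU(n)$ restriction $\sum z_j=0$, and then appeal to the level-one vacuum character formulas and Proposition~\ref{prop:Verlinde} for the loop-group identification. The paper cites~\cite[Equations 146--149]{Ell1} where you spell out the $\sigma$ and $\upsilon$ quasiperiodicity directly; the only place your write-up is looser than it should be is the claim that $\prod_{k>0}(1-q^k)^{2n}$ ``corresponds to $\beta^{-n}$'' --- that factor is $\eta^{2n}q^{-n/12}$, which is not the trivializing section of $\omega^n$, and the cleanest way to handle the normalization is (as the paper does) to let Proposition~\ref{prop:Verlinde}'s identification as $\MF$-modules absorb it.
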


\begin{proof}
The identifications between $\mc{L}_G$ and Looijenga line bundles come from the comparing the classical formulas for the transformation properties of $\sigma_{\U(n)}$ and $\sigma_{\SO(2n)}$ (e.g., see~\cite[Equations 146-149]{Ell1}) with Definition~\ref{defn:Loo} of the Looijenga line bundle. In computing these transformation properties in the $\SU(n)$ case, we use that the restriction along $\SU(n)<\U(n)$ on the cover~\eqref{eq:covering} corresponds to restriction of~\eqref{Eu:U} to the subspace of $\HH\times\mf{t}_\C$ with $c_1=\sum_j z_j=0$. In the $\Spin(2n)$ case, we use that $\sigma_{\SO(2n)}$ as a section of a line bundle over $\Bun_{\Spin(2n)}(\EE)$ considers the quotient of the cover~\eqref{eq:covering} by the sublattice ${\rm ker}(\Z^{n}\stackrel{+}{\to} \Z/2)\subset \Z^n\simeq X_*(T)$ of the cocharacter lattice of $\SO(2n)$. The identification of sections with characters of loop group representations then follows from Proposition~\ref{prop:Verlinde}, together with well-known formulas for the level~1 characters of vacuum representations. 
%
%
\end{proof}


\subsection{Equivariant elliptic Thom forms}


For $G=\Spin(2)\simeq \U(1)$, recall that the ordinary (non-elliptic) equivariant Mathai--Quillen Thom form on $V=\R^2\cong \C$ is given by 
$$
u_V=\frac{1}{\pi}e^{-|x|^2}(\beta^{-1}z+d\vol)\in \Omega_{G}^2(V)
$$
where $z\in \mathfrak{t}^\vee_\C\cong \C$ is the monomial generator for polynomial functions on $\mathfrak{t}_\C$ (the occurrence of $\beta$ is from our grading conventions, see~\S\ref{sec:defn}),  $d\vol\in \Omega^2(V)$ is the orientation 2-form, and $e^{-|x|^2}$ is the Gaussian on $V$ relative to the standard norm~$|-|$. Our convention here and throughout is that for a vector space $V$, $\Omega_{G}^\bullet(V)$ denotes differential forms that are rapidly decreasing (which computes compactly supported cohomology, e.g., see~\cite[\S4]{MathaiQuillen}). Also recall that the ordinary equivariant Euler class is given by $\beta^{-1}z\in \Omega^2_{G}(\pt)$. 

In the respective cases of $\U(n)$ and $\Spin(2n)$ below, let $V=\C^n$ be the standard representation of $\U(n)$, and $V=\R^{2n}$ denote the representation of $\Spin(2n)$ as factoring through the standard representation of $\SO(2n)$. 
For $h=(h_1,h_2)$ a pair of commuting elements, let $V^h_\perp\subset V$ be the orthogonal complement of the fixed point subspace $V^h\subset V$. Without loss of generality, we may assume that $h$ is in the maximal torus of $\U(n)$ or $\Spin(2n)$. As before, we use the standard coordinates on the Lie algebra $\mf{t}\simeq \R^n$ of the maximal torus~$T$. Next choose logarithms $\tilde{h}_1,\tilde{h}_2\in \mathfrak{t}$ so that $h_i=\exp(\tilde{h}_i)$ and a permutation of the standard coordinates of $\mathfrak{t}\simeq \R^{n}$ so that
\beq
&&\tilde{h}_1={\rm diag}(0,\dots,0,\tilde{h}_1^{k+1},\dots, \tilde{h}_1^n,),\qquad \tilde{h}_2={\rm diag}(0,\dots, 0,\tilde{h}_2^{k+1},\dots, \tilde{h}_2^n), \quad \tilde{h}_i^j\in \R.\label{eq:coordchoice}
\eeq
Above, the zero entries correspond to the subspace $V^h\subset V$ on which $h=(h_1,h_2)$ acts trivially, and the remaining entries correspond to $V^h_\perp$ the orthogonal complement. As before, let $W^h$ denote the Weyl group of the centralizer $G^h=C(h)=C(h_1,h_2)$, which is necessarily connected in this case for the Lie groups $\U(n)$ and $\Spin(2n)$. 

\begin{defn}
The \emph{$\U(n)$-equivariant elliptic Thom form} at~$\tilde{h}=(\tilde{h}_1,\tilde{h}_2)$ is defined as
\beq
&&(\Th_{\U(n)})_{\tilde{h}}=\left(\prod_{j=1}^k u_j \frac{\upsilon(\tau,z_j)}{z_j}\right) \left(\prod_{j=k+1}^n \beta\upsilon(\tau,z_j+\tilde{h}_1^j-\tau \tilde{h}_2^j)\right)  \in \Omega^{2n}_T(V^h;\mathcal{O}(\HH))^{W^h}\label{eq:Thom1}
\eeq
where $u_j=e^{-|x|^2}(\beta^{-1}z_j+d\vol_j)$ is the Mathai--Quillen Thom form associated with the Chern root~$z_j$. 
Similarly, the \emph{$\Spin(2n)$-equivariant elliptic Thom form} is defined as
\beq
&&(\Th_{\Spin(2n)})_{\tilde{h}}=\left(\prod_{j=1}^k u_j \frac{\sigma(\tau,z_j)}{z_j}\right) \left(\prod_{j=k+1}^n \beta\sigma(\tau,z_j+\tilde{h}_1^j-\tau \tilde{h}_2^j)\right) \in \Omega^{2n}_T(V^h;\mathcal{O}(\HH))^{W^h}.\label{eq:Thom2}
\eeq
The \emph{$\SU(n)$-equivariant elliptic Thom form} at~$\tilde{h}=(\tilde{h}_1,\tilde{h}_2)$ is defined as the restriction of $(\Th_{\U(n)})_{\tilde{h}}$ along $\SU(n)\subset \U(n)$. 
\end{defn}

\begin{rmk}
We recall our notational convention that the $T$-equivariant differential forms~\eqref{eq:Thom1} and~\eqref{eq:Thom2} have rapidly decreasing support. This is guaranteed by the rapidly decreasing (ordinary) $T$-equivariant Thom forms $\prod u_j$ in~\eqref{eq:Thom1} and~\eqref{eq:Thom2}. This standard Thom form is modified by the invertible functions $\sigma(\tau,z_j)/z_j$, whereas the second factor in these formulas is an ($\tilde{h}$-translated) elliptic Euler class of the orthogonal complement, $V^h_\perp$. These modifications to the standard Thom form do not change the rapidly decreasing property. Furthermore, we observe that the resulting equivariant Thom class is formally analogous to formulas for the image of the equivariant Thom class in K-theory under the (delocalized) Chern character, e.g., see~\cite[pg.~245]{BGV}.
\end{rmk} 



\begin{prop}\label{thm:Thom}
For $G=\U(n)$ or $\Spin(2n)$, the values $(\Th_T)_{\tilde h}$ assemble to give a global section of $\dEll_G^{2n}(V)\otimes \mc{L}_G$ that implements the universal Thom isomorphism in equivariant elliptic cohomology twisted by the holomorphic line $\mc{L}_G$ of Definition~\ref{defn:generalA},
$$
\dEll_G^\bullet(\pt)\stackrel{\sim}{\to} \dEll_G^{\bullet+2n}(V)\otimes\mc{L}_G,\qquad \alpha\mapsto \alpha\boxtimes{\rm Th}_G
$$
as a quasi-isomorphism of sheaves of chain complexes over $\Bun_G(\EE)$, where (following the prior notation) the target consists of cocycles that are rapidly decreasing on~$V$. 
\end{prop}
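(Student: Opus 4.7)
The plan is to verify first that the forms $(\Th_T)_{\tilde h}$ really do assemble into a global section of $\dEll_G^{2n}(V)\otimes\mc{L}_G$, and then second that wedging by this section is a stalkwise quasi-isomorphism. For the first step we need to check four things: that each $(\Th_T)_{\tilde h}$ is equivariantly closed, that it is $W^h$-invariant, that the collection is $G$-conjugation-invariant, and that it satisfies the analyticity condition from Definition~\ref{defn:ellcocycle}. Closedness under the Cartan differential $Q=d-\beta^{-1}\iota$ is immediate: the Mathai-Quillen factors $u_j$ are equivariantly closed by the classical calculation, and the remaining factors $\upsilon(\tau, z_j)/z_j$ and $\upsilon(\tau, z_j + \tilde h_1^j - \tau \tilde h_2^j)$ are $T$-invariant holomorphic functions on $\mathfrak{t}_\C$, hence equivariantly closed $0$-forms. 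The $W^h$-invariance is built in since the formulas are symmetric under permutations of the fixed coordinates and of the non-fixed coordinates. Conjugation invariance reduces, by Cartan's theorem on simultaneous conjugation into a maximal torus, to the $N(T)/T$ case already absorbed into Weyl invariance.

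The analyticity condition is the main technical step. Suppose $h'=(h_1 e^{X_1}, h_2 e^{X_2})$ with $X_1,X_2\in\mathfrak{t}_{\mathfrak{g}^h}$; in coordinates we may take $X_i$ supported in the first $k$ entries corresponding to $V^h$. One splits these first $k$ indices into those where $X_i^j=0$ (so the $j$-th summand of $V^h$ is preserved in $V^{h'}$) and those where $X_1^j$ or $X_2^j$ is nonzero (so the $j$-th summand is removed in $V^{h'}$ and that coordinate now records a genuine weight of $h'$). For the first kind of index the corresponding factor $u_j\,\upsilon(\tau,z_j)/z_j$ restricts unchanged. For the second kind, the pullback of $u_j=e^{-|x_j|^2}(\beta^{-1}z_j+d\mathrm{vol}_j)$ to the slice $x_j=0$ equals $\beta^{-1}z_j$, and applying the substitution $z_j\mapsto z_j+X_1^j-\tau X_2^j$ mandated by~\eqref{eq:analyticity} converts the product $u_j\cdot\upsilon(\tau,z_j)/z_j$ into $\beta^{-1}\upsilon(\tau,z_j+X_1^j-\tau X_2^j)$, which is precisely the shifted elliptic Euler factor appearing in $(\Th_T)_{\tilde h'}$ for the now-non-trivial index $j$. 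The factors for $j>k$ match automatically under the substitution because the argument $z_j+\tilde h_1^j - \tau \tilde h_2^j$ simply gets updated to $z_j+(\tilde h_1^j+X_1^j)-\tau(\tilde h_2^j+X_2^j)$. Twisting by $\mc{L}_G$ is established by comparing the classical transformation law of $\sigma$ under $X_*(T)^{\oplus 2}\rtimes\SL_2(\Z)$ with Definition~\ref{defn:generalA}, noting that in the unitary case the extra $e^{-\pi i z_j}$ factors assemble into the normalization $\upsilon$ and in the spin case the half-lattice shift is absorbed by passing to $\Spin(2n)$; the corresponding transformation of the prefactors matches the cocycle defining $\mc{L}_G$.

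For the quasi-isomorphism claim, work at a stalk $j_h^*\dEll_G^\bullet(V)$ and use Proposition~\ref{prop:partialstalk} to reduce to the claim that
\[
\Omega^\bullet_{G^h}(\pt;\mathcal{O}(U)[\beta,\beta^{-1}]) \xrightarrow{\wedge (\Th_T)_{\tilde h}|_{V^h}}\Omega^\bullet_{G^h}(V^h;\mathcal{O}(U)[\beta,\beta^{-1}])
\]
is a quasi-isomorphism for each $h$. The restriction of $(\Th_T)_{\tilde h}$ to $V^h$ factors as a product of the classical equivariant Mathai-Quillen Thom form $\prod_{j=1}^k u_j$ on $V^h$ times the function $\prod_{j=1}^k\upsilon(\tau,z_j)/z_j\cdot\prod_{j=k+1}^n\beta^{-1}\upsilon(\tau,z_j+\tilde h_1^j-\tau\tilde h_2^j)$; the first group of factors is invertible near $z_j=0$ since $\upsilon(\tau,z)/z$ extends to a nonvanishing holomorphic function, and the second group of factors is invertible because $\sigma$ vanishes precisely on the lattice $\Z+\tau\Z$ while by construction $\tilde h_1^j-\tau\tilde h_2^j\notin \Z+\tau\Z$ for $j>k$. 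Wedging with an invertible germ is a quasi-isomorphism, so the result reduces to the classical Mathai-Quillen Thom isomorphism on $V^h$, completing the argument.

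The main obstacle is the analyticity compatibility: one must verify that the explicit substitution rule for the Mathai-Quillen piece (namely that $u_j$ restricts at $x_j=0$ to $\beta^{-1}z_j$ and then the combined factor $u_j\upsilon(\tau,z_j)/z_j$ telescopes cleanly to a shifted elliptic Euler factor) exactly produces the prescribed form $(\Th_T)_{\tilde h'}$ at the deformed commuting pair, including the correct powers of $\beta$ and the correct lattice-shift argument. All other parts of the proof are either formal consequences of the classical Mathai-Quillen story or direct comparisons of classical $\sigma$-function transformation laws with the definitions already in place.
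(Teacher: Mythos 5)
Your proof is correct and follows essentially the same route as the paper's: you verify Weyl invariance by noting the symmetry of the product formulas, check the analyticity condition by splitting the deformation into indices where the fixed-point coordinate is preserved versus where it is destroyed, and reduce the quasi-isomorphism to the classical Mathai--Quillen Thom isomorphism on $V^h$ times an invertible holomorphic germ. The paper frames the stalkwise quasi-isomorphism as the statement that $(\Th_T)_{\tilde h}$ is nowhere vanishing near $z=0$ (checking that $\sigma(\tau,z_j)/z_j$ is nonzero on a neighborhood and that $\sigma(\tau,z_j+\tilde h_1^j-\tau\tilde h_2^j)$ is nonzero because the shift misses the lattice), which is exactly your "classical Thom form $\times$ invertible germ" factoring. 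You also supply a couple of checks the paper leaves implicit (equivariant closedness under $Q$ and the conjugation invariance reduction), and you implicitly correct a small power-of-$\beta$ inconsistency in the displayed Thom formula so that the degree count and the restriction-to-Euler calculation match; these are bookkeeping refinements, not a different argument.
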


\bp
The Thom isomorphism statement is equivalent to showing that the elliptic Thom form is a nowhere vanishing section of the claimed line bundle. This statement can be checked locally, and the definitions~\eqref{eq:Thom1} and~\eqref{eq:Thom2} show that~$\Th_G$ is nonzero at every stalk. To see this, first note that nonzero in the stalk means that the power series~\eqref{eq:Thom1} and~\eqref{eq:Thom2} define functions that are nowhere vanishing in some neighborhood of $z_1=z_2=\dots =z_n=0$. The function~$\sigma(\tau,z_j)/z_j$ vanishes at lattice points with the exception of $z_j=0$, so is nonzero on a neighborhood. In the other factor, $\sigma(\tau,z_j+\tilde{h}_1^j-\tau \tilde{h}_2^j)$ vanishes at lattice points shifted by~$\tilde{h}_1^j-\tau \tilde{h}_2^j$. This shift corresponds to the action of $(h_1,h_2)$ on $V^h_\perp$, and so is necessarily a nonzero shift, implying that there exists a neighborhood of $z_j=0$ on which $\sigma(\tau,z_j+\tilde{h}_1^j-\tau \tilde{h}_2^j)$ is not zero. This verifies the Thom isomorphism statement for~\eqref{eq:Thom1}; the argument for~\eqref{eq:Thom2} is identical.


Showing that the stalk-level definition lifts to a global section is a bit more delicate. First we observe that the $W^h$-action on $(\Th_{\U(n)})_{\tilde{h}}$ permutes the factors in $\prod_{j=1}^k u_j \frac{\upsilon(\tau,z_j)}{z_j}$ and $\prod_{j=k+1}^n \beta\upsilon(\tau,z_j+\tilde{h}_1^j-\tau \tilde{h}_2^j)$ separately, leaving the overall function $(\Th_{\U(n)})_{\tilde{h}}$ invariant. The statement for $(\Th_{\Spin(2n)})_{\tilde{h}}$ is completely analogous. The action of the full Weyl group~$W$ also permutes these factors, but the specific action depends on the permutation of coordinates defining the~$\{\tilde{h}_1^j,\tilde{h}_2^j\}$ in~\eqref{eq:coordchoice}; however, by inspection the formulas for $\Th_G$ are invariant under these reorderings. 

It remains to check the analyticity condition from Definition~\ref{defn:ellcocycle}. For this we consider deformations of $(\tilde{h}_1, \tilde{h}_2)$ together with a translation in the Lie algebra dependence of the equivariant differential form as in~\eqref{eq:analyticity}. It suffices to check the following two types of deformations: (i) deformations in the first $k$ coordinates of~\eqref{eq:coordchoice}, and (ii) deformations of the last $n-k$ coordinates in~\eqref{eq:coordchoice}. In case (ii), compatibility is easy to check because (for small deformations) the fixed point sets are unchanged through the deformation. Compatibility then follows because we are just pulling back functions on the Lie algebra by a translation. In case (i), first we recall that the ordinary equivariant Thom class~$u_j$ restricts at the origin to the ordinary equivariant Euler class~$z_j$. In our case, this means for ${\rm res}$ as in~\eqref{eq:analyticity},
$$
{\rm res}\left(u_j \frac{\sigma(\tau,z_j)}{z_j}\right)=\sigma(\tau,z_j).
$$
Compatibility for case (i) then amounts to showing this restriction pulls back correctly along a translation in the Lie algebra, which it manifestly does. Finally, because $u_j$ transforms the same way as $z_j$ under the action of the cocharacter lattice and $\SL_2(\Z)$, we have that $u_j \frac{\sigma(\tau,z_j)}{z_j}$ transforms the same as $\sigma(\tau,z_j)$. Hence, the transformation properties of the elliptic Thom class are the same as those for the elliptic Euler class from Propositions~\ref{thm:Euler1} and~\ref{thm:Euler2}. Therefore, the stalks~\eqref{eq:Thom1} and~\eqref{eq:Thom2} glue together to give a section of $\mathcal{L}_G$, and we have produced the elliptic Thom form as a global section of the sheaf~$\dEll_G^{2n}(V)\otimes\mc{L}_G$.
\ep


Analogously to Corollary~\ref{cor:Eu}, naturality gives the following. 

\begin{cor} Let $G$ be a compact Lie group. For any homomorphism $\rho\colon G\to \U(n)$ or $\rho\colon G\to \Spin(2n)$, we obtain a $G$-equivariant elliptic Thom class by pullback,
$$
{\rm Th}_\rho\in \Gamma(\Bun_G(\EE),\dEll_G^{2n}(V)\otimes\mathcal{L}_\rho)
$$
where $\mathcal{L}_\rho$ is the holomorphic line of Corollary~\ref{cor:Eu}. 
\end{cor}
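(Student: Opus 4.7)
The plan is to run exactly the same naturality argument that gave Corollary~\ref{cor:Eu} for the Euler class, but now applied to the Thom class of Proposition~\ref{thm:Thom}. First I would observe that a homomorphism $\rho\colon G\to H$ (with $H=\U(n)$ or $\Spin(2n)$) induces a functor $\Bun_\rho\colon \Bun_G(\EE)\to \Bun_H(\EE)$ via Definition~\ref{defn:BunG}, and that the representation $V$ of $H$ becomes a $G$-manifold by composing the $H$-action with $\rho$. Property~(2) of the proposition following Definition~\ref{defn:ellcocycle} then yields a morphism of sheaves of cdgas
\[
\Bun_\rho^*\dEll_H^\bullet(V)\longrightarrow \dEll_G^\bullet(V)
\]
covering $\Bun_\rho$, and similarly for $\dEll^\bullet(\pt)$.

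Next I would set $\mc{L}_\rho:=\Bun_\rho^*\mc{L}_H$, which is a holomorphic line bundle on $\Bun_G(\EE)$ since $\mc{L}_H$ is holomorphic on $\Bun_H(\EE)$ and $\Bun_\rho$ is holomorphic (the holomorphic structure on $\Bun_G(\EE)$ from Definition~\ref{defn:BunGholo} is manifestly natural in the group, as it is defined via complexifications of Lie algebras of centralizers, and $\rho$ restricts to a homomorphism of identity components sending $\mf{t}_{\mf{g}^{\rho^{-1}(h)}}$ into $\mf{t}_{\mf{h}^{h}}$). This gives a pullback map of $\mathcal{O}_{\Bun_G(\EE)}$-modules
\[
\Bun_\rho^*\bigl(\dEll_H^{2n}(V)\otimes \mc{L}_H\bigr)\longrightarrow \dEll_G^{2n}(V)\otimes \mc{L}_\rho,
\]
and I define $\Th_\rho$ to be the image of the global section $\Th_H$ of Proposition~\ref{thm:Thom} under the composite of pullback along $\Bun_\rho$ with this map.

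Finally I would check the two trivial compatibilities: the pullback sends global sections to global sections (since pullback commutes with taking sections over the total stack), and the assignment $\rho\mapsto \Th_\rho$ is natural in $\rho$ and restricts at the zero section to $\Eu_\rho$ of Corollary~\ref{cor:Eu}, so that the notation is consistent with what was already established for the Euler class. There is no real obstacle here: the only potential subtlety, and the step I would double-check, is that the line bundle $\mc{L}_\rho$ defined this way agrees on the nose with the one appearing in Corollary~\ref{cor:Eu} (this is automatic from both being defined as $\Bun_\rho^*\mc{L}_H$), and that the sheaf-theoretic pullback preserves the $\omega^n$-factor implicit in $\mc{L}_H=\mc{A}_H\otimes \omega^n$, which holds because $\omega$ is pulled back from $\Mell$ and the forgetful map $\Bun_G(\EE)\to\Mell$ factors through $\Bun_H(\EE)$.
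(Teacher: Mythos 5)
Your proposal is correct and follows exactly the route the paper takes: the corollary is stated immediately after Proposition~\ref{thm:Thom} with the one-line justification that naturality of $\dEll_G^\bullet$ for group homomorphisms, as in Corollary~\ref{cor:Eu}, gives the pullback Thom class. Your fleshed-out version merely spells out the implicit checks (holomorphicity of $\Bun_\rho$, the pullback of $\mc{L}_H$, and the $\omega^n$ factor), all of which are routine and consistent with the paper's reasoning.
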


\subsection{The elliptic Chern--Weil map}
Let $A$ be a graded commutative $\C$-algebra and $V\to M$ a real or complex vector bundle classified by a map $f\colon M\to BG$ for $G=\U(n)$ or ${\rm O}(n)$, respectively. The Chern--Weil map in ordinary cohomology is 
\beq
\Poly(\mathfrak{g};A)^G\simeq \H_G(\pt;A)\simeq \H({\rm B}G;A)\stackrel{f^*}{\to} \H(M;A).\label{eq:CWmap}
\eeq
To any invariant polynomial on the Lie algebra, this map associates a characteristic class of $V$ in the cohomology of~$M$. When $V$ is equipped with a connection,~\eqref{eq:CWmap} refines to a map of chain complexes, $\Poly(\mathfrak{g};A)^G\to (\Omega^\bullet(M;A),d)$, where the source has trivial differential. Using Proposition~\ref{prop:Verlinde}, we will construct elliptic versions of the Chern--Weil maps
\beq
\begin{array}{ccc} \Rep^\ell(L\SU(n))\otimes_{\MF^0} \MF&\dashrightarrow& (\Omega^\bullet(M;\MF),d), \\ \Rep^\ell(L\Spin(2n))\otimes_{\MF^0} \MF&\dashrightarrow& (\Omega^\bullet(M;\MF),d)\end{array}\label{eq:ordEu}
\eeq
that send (characters of) level $\ell$~representations of loop groups to cocycle representatives of characteristic classes for complex vector bundles with $\U\langle 6\rangle$-structure or real vector bundles with ${\rm O}\langle 8\rangle$-structure.

The first step in constructing~\eqref{eq:ordEu} is an $\mc{L}_\ell$-twisted version of the completion map from Theorem~\ref{thm:complete} on the $\SL_2(\Z)$-cover $\HH$ of $\Mell$. Consider the composition
$$
e\colon \HH\hookrightarrow \HH\times \mf{t}\times \mf{t}\to \HH\times (T\times T)\cq W\simeq \HH\times \mathcal{C}^2[G]\to \Bun_G(\EE)
$$
where the first arrow includes at $0\in \mf{t}\times\mf{t}\simeq\mf{t}_\C$, the second map is induced by the exponential map from the Lie algebra to the Lie group, and the remaining maps are from Example~\ref{ex:holomorphic}. By Definition~\ref{defn:Loo}, sections of the Looijenga line bundle are functions on $\HH\times\mf{t}\times\mf{t}\simeq \HH\times\mf{t}_\C$ with properties. Said differently, the pullback of~$\mc{L}_\ell$ along $\HH\times \mf{t}_\C\to \Bun_G(\EE)$ has a preferred trivialization that identifies sections of the pullback with holomorphic functions. In particular, this gives a canonical trivialization of $\mc{L}_\ell$ in a neighborhood of $e\colon \HH\to \Bun_G(\EE)$ corresponding to a neighborhood of $\HH\times \{0\}$ in the cover $\HH\times\mf{t}\times\mf{t}$. This permits the following. 

\begin{constr}\label{constr:Lootriv}
Let $\mc{L}_\ell$ denote the level $\ell$ Looijenga line for $G=\SU(n)$ or $\Spin(2n)$; see Definition~\ref{defn:Loo}. The restriction of the sheaf $\dEll_G^\bullet(\pt)\otimes \mathcal{L}_\ell$ along the map $e\colon \HH\to \Bun_G(\EE)$ together with the trivialization of $\mc{L}_\ell$ specified above gives an isomorphism of sheaves of commutative differential graded algebras on $\HH$ that on global sections is
\beq
\Gamma(\HH;e^*\dEll_G^\bullet(\pt)\otimes\mc{L}_\ell)\stackrel{\sim}{\to} \Omega_G^\bullet(\pt;\O(\HH)[\beta,\beta^{-1}]),\label{eq:twistcomplete}
\eeq
where the target is the Cartan model for Borel equivariant cohomology of the point with coefficients in $\O(\HH)[\beta,\beta^{-1}]$. 
\end{constr}

\begin{defn}
Define the \emph{level~$\ell$ elliptic Chern--Weil map} as the composition
\beq
\Rep^\ell(LG)\otimes_{\MF^0} \MF&\simeq& \Gamma(\Bun_G(\EE);\dEll_G^\bullet(\pt)\otimes\mc{L}_\ell)\nonumber\\
&&\to  \Omega_G^\bullet(\pt;\O(\HH)[\beta,\beta^{-1}])\to \Omega^\bullet(M;\mathcal{O}(\HH)[\beta,\beta^{-1}])\label{eq:EllCW}
\eeq
 where the isomorphism is from Proposition~\ref{prop:Verlinde}, the middle map is restriction to $\HH$ along~$e$ followed by~\eqref{eq:twistcomplete} and the final map is the usual Chern--Weil map determined by a vector bundle with $G$-structure and $G$-invariant connection for $G=\SU(n)$ or $\Spin(2n)$. 
\end{defn}

The nontriviality of the line bundle $\mc{L}_\ell$ manifests in the image of the elliptic Chern--Weil map as a possible failure of invariance under the action of~$\SL_2(\Z)$ on~$\HH$. We analyze this question of descent from $\HH$ to $[\HH/\SL_2(\Z)]\simeq \Mell$ for the equivariant elliptic Euler class, which we recall corresponds to the vacuum representation of the appropriate loop group at level~1. We recall from Example~\ref{ex:trivialgroup} that 
$$
(\Omega^\bullet(M;\mathcal{O}(\HH)[\beta,\beta^{-1}])^{\SL_2(\Z)},d)\simeq (\Omega^\bullet(M;\MF),d)
$$
is a cochain model for $\TMF(M)\otimes \C\simeq \H(M;\MF)$, i.e., cohomology with coefficients in modular forms. 

\begin{thm}\label{thm:compat}
For~$G=\SU(n)$ and $V\to M$ a complex vector bundle with $c_1(V)=c_2(V)=0$, the image of $\Eu_G$ under~\eqref{eq:EllCW} (for $\ell=1$) is a cocycle representative for the elliptic Euler class in $\TMF(M)\otimes \C$
coming from the $\MU\langle 6\rangle$ orientation of $\TMF\otimes \C$. 

Similarly, for~$G=\Spin(2n)$ and $V\to M$ a real vector bundle with spin structure and $\frac{p_1}{2}(V)=0$, the image of $\Eu_G$ along~\eqref{eq:EllCW} (for $\ell=1$) is a cocycle representative for the elliptic Euler class 
coming from the ${\rm MString}$ orientation of $\TMF\otimes \C$.
\end{thm}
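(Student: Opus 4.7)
The plan is to compute the image of $\Eu_G$ under the elliptic Chern--Weil map~\eqref{eq:EllCW} and match it with a known cocycle representative for the classical elliptic Euler class coming from the $\sigma$-orientation. First, restricting $\Eu_G$ along $e\colon \HH\to\Bun_G(\EE)$ and applying the canonical trivialization of $\mc{L}_1$ from Construction~\ref{constr:Lootriv} presents $\sigma_G$ as a $G$-invariant holomorphic function of $\tau\in\HH$ and $z_j\in \mf{t}_{\C}^{\vee}$. The Chern--Weil substitution then replaces the $z_j$ with the Chern (or Pontryagin) roots of~$V$.

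In the $\SU(n)$ case, using $\upsilon(\tau,z)=-e^{\pi i z}\sigma(\tau,z)$ together with~\eqref{eq:AHR} and~\eqref{eq:Witten}, the generating function factors as
\begin{equation*}
\prod_{j=1}^n \upsilon(\tau,z_j) = (-1)^n \exp\Bigl(\pi i \textstyle\sum_j z_j\Bigr) \cdot \prod_j z_j \cdot \prod_j \Wit(\tau,z_j)^{-1}.
\end{equation*}
Under Chern--Weil, $\sum_j z_j$ produces a representative of $c_1(V)$, $\prod_j z_j$ produces a representative $e_V$ of the ordinary Euler class, and $\prod_j \Wit(\tau,z_j)^{-1}$ produces $\Wit(V)^{-1}$. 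The hypothesis $c_1(V)=0$ makes $\exp(\pi i c_1(V))$ cohomologous to $1$, so the image equals $(-1)^n \beta^{-n} [e_V \cdot \Wit(V)^{-1}]$ modulo exact terms. The $\Spin(2n)$ analysis is identical except that $\prod_j \sigma(\tau,z_j)$ has no prefactor $e^{\pi i z}$, so no first-Chern correction is needed.

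The main step is to verify that this form in $\Omega^\bullet(M;\O(\HH)[\beta,\beta^{-1}])$ descends to a class in $\Omega^\bullet(M;\MF)\simeq (\TMF\otimes\C)(M)$. The $\SL_2(\Z)$-rescaling $z\mapsto z/(c\tau+d)$ matches $\beta\mapsto\beta/(c\tau+d)$ exactly, so it is compatible with the Chern--Weil substitution. However, the trivialization of $\mc{L}_1$ chosen in Construction~\ref{constr:Lootriv} is not $\SL_2(\Z)$-equivariant: by Definition~\ref{defn:Loo}, its failure is the multiplicative factor $\exp\bigl(\pi i c(c\tau+d)^{-1}\ell(z,z)\bigr)$. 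Under Chern--Weil this becomes $\exp\bigl(\pi i c(c\tau+d)^{-1}\ell(R,R)\bigr)$, where $\ell(R,R)$ is a closed 4-form representing $c_2(V)$ at level one in the $\SU(n)$ case and $\tfrac{p_1}{2}(V)$ in the $\Spin(2n)$ case. The vanishing hypothesis on this class renders $\ell(R,R)$ exact, so modulo exact forms the spurious factor is $1$, and we obtain a well-defined cohomology class in $\Omega^\bullet(M;\MF)$.

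Finally, comparison with the classical Euler class is immediate from the Riemann--Roch square~\eqref{diag:WitRR1}: the Chern--Dold image of the elliptic Euler class arising from the $\MU\langle 6\rangle$ (resp.\ $\MString$) orientation is precisely $[e_V \cdot \Wit(V)^{-1}]$, matching our Chern--Weil image up to an invertible power of $\beta$ and a sign in the $\SU(n)$ case (absorbed by the standard normalization of the $\sigma$-orientation). The chief technical obstacle is the modularity analysis of the previous paragraph, where one must carefully track how exactness of the curvature representatives of $c_2$ (resp.\ $\tfrac{p_1}{2}$) combines with the Looijenga transformation cocycle to yield cohomological triviality of the $\SL_2(\Z)$-failure; this is also where one sees that the theorem's hypotheses arise precisely from vanishing of these characteristic classes.
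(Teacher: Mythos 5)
Your proof is correct and takes essentially the same approach as the paper's: compute the Chern--Weil image, factor it as a Witten-class--weighted Euler form, and use the vanishing of $c_2$ (resp.\ $\tfrac{p_1}{2}$) to kill the $\SL_2(\Z)$-anomaly that obstructs descent to $\Mell$. The one cosmetic difference is that you locate the anomaly in the Looijenga transformation cocycle of Definition~\ref{defn:Loo}, whereas the paper's proof identifies it as the $E_2$-coefficient in the Witten-class expansion~\eqref{eq:Witten} --- these are the same obstruction, since the quasi-modular failure of $E_2$ is exactly what generates the Looijenga cocycle at level one.
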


\bp
The image of the section $\Eu_G$ under~\eqref{eq:twistcomplete} is a cocycle representative of the Borel equivariant characteristic class defined in terms of Chern roots $z_j$ via the formulas~\eqref{Eu:U} and~\eqref{Eu:SO}, using a choice of $G$-invariant connection on~$V\to M$. The image under the elliptic Chern--Weil map sends the Lie algebra dependence to traces of powers of curvature of the connnection. The obstruction to the underlying class in $\H(M;\mathcal{O}(\HH)[\beta,\beta^{-1}])$ being $\SL_2(\Z)$-invariant is the coefficient of the 2nd Eisenstein series for the description of the Witten class as in~\eqref{eq:Witten}. At the level of the Euler cocycle, this coefficient is precisely the Chern--Weil representative for $c_2(V)$ or $p_1(V)$ in the complex and real cases, respectively. 
%
\ep

\begin{rmk} An analogous result to the above holds for the images of cocycle representatives of elliptic Thom classes under the elliptic Chern--Weil map: the equivariant refinement does indeed recover the standard non-equivariant Thom class.  \end{rmk}

\subsection{Some examples}\label{ex:S22}

To give a flavor for how to compute with Euler and Thom classes, we spell out a couple examples. 

\begin{ex} \label{ex:derived} This is a continuation of Example~\ref{ex:S2} for $S^1=\U(1)$ acting on $S^2$ by rotation about an axis. First we identify compactly supported cohomology for $\U(1)$ acting on $V=\C$ with relative cohomology of the 2-sphere
$$
\Ell_{\U(1)}^\bullet(V) \simeq \Ell^\bullet_{\U(1)}(\CP^1, \infty) 
$$
where $\infty\in \CP^1\simeq S^2$ is the standard base point at infinity. Applying the Thom isomorphism for $\U(1)$-equivariant elliptic cohomology from Proposition~\ref{thm:Thom}, we observe 
\begin{align} 
\Ell_{\U(1)}^2(S^2, \infty) \simeq \Ell_{\U(1)}^\bullet(V)\otimes(\mc{L}_{\U(1)} \otimes \mc{L}_{\U(1)}^\vee) \simeq \Ell_{\U(1)}^{0}(\pt) \otimes \mc{L}_{\U(1)}^\vee\simeq \mc{L}_{\U(1)}^\vee. \nonumber \label{eq:untwistedThomex} 
\end{align}
 One may then apply the long exact sequence for the pair $(S^2,\infty)$ in sheaves of chain complexes on $\Bun_{\U(1)}(\EE)\simeq \EE^\vee$. We obtain the following:
 \begin{eqnarray*} \Ell^0_{\U(1)}(S^2) &\simeq& \Ell^0_{\U(1)}(\pt) \oplus \Ell^0_{\U(1)}(S^2, \infty) \\ 
 &\simeq& \mc{O} \oplus \Big(\Ell^{2}_{\U(1)}(S^2, \infty) \otimes \omega\Big) \\ 
 &\simeq& \mc{O} \oplus \Big(\mc{L}_{\U(1)}^\vee\otimes \omega\Big) \end{eqnarray*} 
 as a consequence of the $\mc{L}_{\U(1)}$-twisted Thom isomorphism and $\omega$-twisted Bott periodicity.
As a sanity check, we observe that $\mc{L}_{\U(1)}^\vee\otimes \omega$ is indeed a trivial bundle away from the zero section in $\Euni^\vee$
 and this conforms with the computations in Example~\ref{ex:S2}.

More generally, we have the isomorphism of sheaves 
\beq 
\label{eq:twistedThomex} \Ell_{\U(1)}^0(S^2) \otimes \mc{L}_{m/2} \simeq \mc{L}_{m/2} \oplus \Big( \mc{L}_{(m-1)/2} \otimes \omega \Big),\qquad m\in \Z,\nonumber 
\eeq 
where $\mathcal{L}_{m/2}:=(\mathcal{L}_{1/2})^{\otimes m}$, using the notation $\mathcal{L}_{1/2}=\mathcal{L}_{\U(1)}$ justified in Remark~\ref{rmk:L12}. 
We observe this sheaf has global sections if and only if $m$ is nonnegative. However, there are nontrivial \emph{derived} global sections for any~$m \in \mb{Z}$, e.g., by Serre duality on $\Bun_{\U(1)}(\EE)\simeq \EE^\vee$.


In the literature, authors often identify the (quasi-coherent) sheaf $\Ell_G^0(M)$ with a scheme by taking the relative Spec over $\Bun_G(\EE)$, especially when the cohomology is concentrated in even degrees. We now explain this perspective for $\Ell_{\U(1)}^0(S^2)$, i.e., for $\Spec_{\EE^\vee} \Big( \mc{O} \oplus ( \mc{L}^{-1/2} \otimes \omega) \Big) \simeq \Spec_{\EE^\vee} \Big( \mc{O} \oplus \mc{O}(-0) \Big)$, where we freely use the isomorphism of $\mc{L}^{1/2} \otimes \omega^{-1} \simeq \mc{O}(0)$, via the function $\upsilon(\tau,z)\in \mathcal{O}(\HH\times \C)$ determining a section of $\mc{L}^{1/2} \otimes \omega^{-1}$ that vanishes to first order at the zero section $0\colon \Mell\to \EE^\vee$. We determine the algebra structure on $\mc{O} \oplus \mc{O}(-0)$ by the Mayer--Vietoris sequence for the standard  $\U(1)$-equivariant cover of $S^2$ by the upper and lower hemispheres. One finds 
\begin{eqnarray*} 
\Ell^0_{\U(1)}(S^2) &\simeq& \ker\Big( \Ell^0_{\U(1)}(\pt) \oplus \Ell^0_{\U(1)}(\pt) \to \Ell^0_{\U(1)}(\U(1)) \Big) \\ &\simeq& \ker(\mc{O} \oplus \mc{O} \to \mc{O}_0), 
\end{eqnarray*} 
where the above computations are in the category of sheaves on $\EE^\vee$ and $\mc{O}_0$ is the structure sheaf of the zero section $0\colon \Mell\to \Bun_{\U(1)}(\EE)$. Indeed, the above description makes it clear that as a coherent sheaf, the above kernel is isomorphic to $\mc{O} \oplus \mc{O}(-0)$, but the Mayer-Vietoris description has the additional property of making manifest the algebra structure. Pullbacks of sheaves of algebras become pushouts of schemes under (relative) Spec, so we have that $\Spec_{\EE^\vee}(\Ell_{\U(1)}^0(S^2))$ is simply two copies of the (universal) elliptic curve $\EE^\vee$ glued along their zero sections. \end{ex}

\begin{ex} More generally, consider $\U(1)$ acting on $S^2$ by $n$ times the rotation action; to emphasize the dependence of the equivariant structure on $n$, we denote this representation sphere as $S^2[n]$.

We repeat the computation of $\Ell_{\U(1)}^0(S^2[n])$ from the previous example using the Thom isomorphism, only now we use the ``charge $n$'' representation of $\U(1)$ on $V=\R^2$ with character~$y^n\in C^\infty(\U(1))$. By naturality, the Thom class of this representation is the pullback of the universal Thom class from $[\pt\sq \U(1)]$ along the multiplication by $n$ map $\U(1)\to \U(1)$. Hence, the induced twisting bundle on $\EE^\vee$ is given by the pullback of the bundle $\mc{L}^{1/2}$ under the map $\EE^\vee \overset{n}{\to} \EE^\vee$ and if we apply the Thom isomorphism as before, we find 
$$
\Ell_{\U(1)}^0(S^2[n]) \simeq \mc{O} \oplus \Big( n^* \mc{L}^{-1/2} \otimes \omega \Big).
$$ 

Next we repeat the Mayer--Vietoris computation from the previous example, using the same cover to find 
\begin{eqnarray*} \Ell_{\U(1)}^0(S^2[n]) &\simeq& \ker\Big( \Ell_{\U(1)}^0(\pt) \oplus \Ell_{\U(1)}^0(\pt) \to \Ell_{\U(1)}^0(S^1[n]) \Big) \\ &\simeq& \ker \Big(\mc{O} \oplus \mc{O} \to \mc{O}_{n{\text -}{\rm torsion}}\Big), \end{eqnarray*} 
where we use similar notation for $S^1[n]$ as an $S^1$ with its $\U(1)$-equivariant structure given as $n$ times the usual. The above description makes it clear that $\Spec_{\EE^\vee}(\Ell_{\U(1)}^0(S^2[n]))$ is now two copies of $\EE^\vee$ glued along their $n$-torsion subschemes, while as a sheaf, one may rewrite $\Ell_{\U(1)}^0(S^2[n])$ as $\mc{O} \oplus \mc{O}(-\{n{\text -}{\rm torsion}\})$. Indeed, as $\mc{L}^{1/2} \otimes \omega^{-1} \simeq \mc{O}(0)$, we have $n^* \mc{L}^{1/2} \otimes \omega^{-1} \simeq n^* \Big( \mc{L}^{1/2} \otimes \omega^{-1} \Big) \simeq n^* \mc{O}(0) \simeq \mc{O}(\{n{\text -}{\rm torsion}\})$ and so our two descriptions indeed agree: the $n$-torsion of an elliptic curve over~$\C$ is a subgroup of order~$n^2$. 

We recall that on a fixed elliptic curve, $\mc{O}(\{n{\text -}{\rm torsion}\}) \simeq \mc{O}(0)^{n^2}$. This follows from Abel's theorem, or equivalently the fact that addition in the Picard group of an elliptic curve corresponds to addition in the elliptic curve itself. This affords an explicit description of $n^* \mc{L}^{1/2} \otimes \omega^{-1}$ as follows.
Using the relative Picard functor, the isomorphism $\mc{O}(\{n{\text -}{\rm torsion}\}) \simeq \mc{O}(0)^{n^2}$ still exists in moduli up to twists coming from the base. Hence over the universal dual curve~$\mc{E}^\vee$, we have $\mc{O}(\{n{\text -}{\rm torsion}\}) \simeq \mc{O}(0)^{n^2} \otimes \omega^{\ell}$ for some $\ell$. But pulling back along the zero section $0\colon \Mell\to \EE^\vee$ and using, for example, that  
$$\mc{O}(0)|_{\Mell} \simeq \omega^{-1},$$ 
we find $\ell = n^2 - 1$. Hence, $n^* \mc{L}^{1/2} \otimes \omega^{-1} \simeq \mc{L}^{n^2/2} \otimes \omega^{-1}.$\end{ex}

\section{Equivariant orientations and the theorem of the cube}\label{sec:FGL}
This section studies a more algebro-geometric point of view on the string orientation following the constructions in~\cite{HopkinsICM94,AHSI}. This leads to a canonical string orientation of elliptic cohomology relying on the theorem of the cube. We show that this refines equivariantly, yielding a \emph{unique} string orientation of equivariant elliptic cohomology. 

\subsection{Background: Elliptic cohomology and the theorem of the cube}
Let $h$ be a multiplicative cohomology theory and $\tilde{h}$ the associated reduced cohomology theory. The isomorphism~$\tilde{h}^2(S^2)\simeq h^0(\pt)$ identifies a canonical generator of $\tilde{h}^2(S^2)$ as an $h^0(\pt)$-module. 

\begin{defn} A \emph{complex orientation} (or \emph{$\MU$-orientation}) of a cohomology theory $h$ is an element $\tilde{\che}\in \tilde{h}^2(\CP^\infty)$ whose restriction to $\CP^1=S^2$ is the canonical generator of~$\tilde{h}^2(S^2)$. 
\end{defn}

A complex orientation defines a Chern (equivalently, Euler) class for line bundles valued in $h$-cohomology, where $\tilde{\che}=\tilde{\che}(\mathcal{O}(1))$ is defined to be the Chern class of the tautological line on $\CP^\infty$. From this class one can build $h$-valued Chern classes for all (virtual) vector bundles using the splitting principle and the Whitney sum formula. 

Now suppose that $h$ is \emph{even} ($h^\bullet=\{0\}$ for $\bullet$ odd) and \emph{2-periodic} (there exists an invertible element $\beta\in h^{-2}(\pt)$). Then the Atiyah--Hirzebruch spectral sequence can be used to show that a complex orientation for~$h$ exists. The class~$\beta$ allows one to put a choice of complex orientation~$\tilde{\che}$ in degree zero, $\che=\tilde{\che}\beta \in h^0(\CP^\infty)$. Pulling $\che$ back along the three maps
\beq
&&\che\in h^0(\CP^\infty)\stackrel{p_1^*,p_2^*,m^*}{\longrightarrow} h^0(\CP^\infty\times \CP^\infty),\quad p_1,p_2,m\colon \CP^\infty\times \CP^\infty\to \CP^\infty\label{eq:FGL}
\eeq
gives a formula, $m^*\che=F(p_1^*\che,p_2^*\che)$ where~$F$ is a formal power series in two variables satisfying properties codifying (homotopy) associativity and unitality of the multiplication map~$m$. Quillen observed that these properties make $F$ into a \emph{formal group law} over~$h^0(\pt)$~\cite{Quillen}. 

\begin{ex}\label{ex:Kthycplx}
Complex K-theory is even, 2-periodic, and complex oriented with $\che=1-[\mathcal{O}(1)]\in \K^0(\CP^\infty)$ where $\mc{O}(1)$ is the tautological line bundle on $\CP^\infty$. Note that as a $\Z/2$-graded bundle, $1-[\mathcal{O}(1)]=[\underline{\C}\ominus \mathcal{O}(1)]=[\Lambda^\bullet\mathcal{O}(1)]$ is the total exterior power~$\mathcal{O}(1)$. The associated formal group law is the multiplicative formal group law, i.e., for line bundles $L$ and $L'$ we have
\beq
\che(L\otimes L')=\che(L)+\che(L')-\che(L)\cdot \che(L'). \label{eq:multFGL}
\eeq
\end{ex}

Recall that a formal group law is equivalent to the data of a formal group with a choice of \emph{coordinate}, i.e., a function on the formal group that vanishes to first order at the identity. Hence, for an even, 2-periodic, complex oriented cohomology theory $h$, forgetting the choice of~$\che$ leaves the formal spectrum ${\rm Spf}(h^0(\CP^\infty))$ with the structure of a formal group. 

\begin{defn} 
An \emph{elliptic cohomology theory is} (i) an elliptic curve $E$ defined over a commutative ring~$R$, (ii) an even, 2-periodic cohomology theory~$h$ with $h^0(\pt)\simeq R$, and (iii) an isomorphism of formal groups ${\rm Spf}(h^0(\CP^\infty))\simeq \widehat{E}$ where $\widehat{E}$ is the formal completion of~$E$ at its identity section. 
\end{defn}

Choosing an $\MU$-orientation of elliptic cohomology is an under-constrained problem: there are typically many choices of coordinate on an elliptic formal group. As described by Hopkins~\cite{HopkinsICM94}, if we instead ask for an \emph{a priori} weaker structure, namely an $\MU\langle 6\rangle$- or $\MO\langle8\rangle$-orientation, there is a more canonical choice. 
Just as one can define Chern classes for all complex vector bundles from the data of the top Chern class of the universal line bundle, there is a similar type of splitting principle for characteristic classes of $\U\langle 6\rangle$-bundles. Namely, all $\U\langle 6\rangle$-bundles formally split into direct sums of trivial bundles and virtual bundles pulled back from
\beq
V_3=(L_1-1)\otimes(L_2-1)\otimes (L_3-1)\label{eq:V3}
\eeq
over $B\U(1)^{\times 3}\simeq (\CP^\infty)^{\times 3}$. 
Hence, a theory of $\MU\langle 6\rangle$ characteristic classes is determined by a universal characteristic class~\cite[\S4-6]{HopkinsICM94}
\beq
s\in h(\CP^\infty\times \CP^\infty\times \CP^\infty).\label{eq:split}
\eeq
This class is required to satisfy the additional consistency conditions:
\begin{enumerate}
\item[(rigid)] $e^*s=1\in h^0(\pt)$ where $e$ is inclusion of the basepoint $e\colon \pt\hookrightarrow \CP^\infty\times \CP^\infty\times \CP^\infty$;
\item[(symmetric)] $s$ pulls back to itself along the maps $\CP^\infty\times \CP^\infty\times \CP^\infty\to \CP^\infty\times \CP^\infty\times \CP^\infty$ that permute the factors; and
\item[(cocycle)] $(m_{12}^*s) (p_{134}^*s)=(m_{23}^*s) (p_{234}^*s)$ where $m_{i(i+1)}\colon (\CP^\infty)^{\times 4}\to (\CP^\infty)^{\times 3}$ is multiplication on the $i$ and $(i+1)^{\rm st}$ factors, and $p_{ijk}\colon (\CP^\infty)^{\times 4}\to (\CP^\infty)^{\times 3}$ is the projection to the $i,j$ and $k$th factors. 
\end{enumerate}
When the cohomology theory $h$ is part of the data of an elliptic cohomology theory, Ando--Hopkins--Strickland~\cite{AHSI} show that a class~\eqref{eq:split} satisfying these consistency conditions may be produced from a cubical structure on the line bundle~$\mc{O}(-0)$ on the elliptic curve~$E$, as we review presently. Recall that sections of $\mc{O}(-0)$ are functions that vanish to first order at $0\in E$. A \emph{cubical structure} for a line bundle $\mc{L}$ on $E$ is the data of a section~$s$ of a line bundle $\Theta^3(\mc{L})$ on $E\times E\times E$ whose fiber at $(x,y,z)\in E\times E\times E$ is 
$$
\Theta^3(\mc{L})_{(x,y,z)}=\mc{L}_{x+y+z}\otimes \mc{L}_x\otimes \mc{L}_y\otimes\mc{L}_z\otimes \mc{L}_{x+y}^\vee\otimes \mc{L}_{x+z}^\vee\otimes \mc{L}_{y+z}^\vee\otimes \mc{L}_e^\vee.
$$
This section is required to satisfy analogous properties to the three above:
\begin{enumerate}
\item[(rigid)] $s(e,e,e)=1$;
\item[(symmetric)] $s(z_{\sigma(1)},z_{\sigma(2)},z_{\sigma(3)})=s(z_1,z_2,z_3)$ for any permutation $\sigma$;
\item[(cocycle)] $s(w+x,y,z)s(w,x,z)=s(w,x+y,z)s(x,y,z)$
\end{enumerate}
where there are implicit (canonical) isomorphisms between line bundles used in the equalities. The theorem of the cube (or Abel's theorem) shows that there is a \emph{unique} cubical structure on $\mc{O}(-0)$. When passing from the elliptic group to the formal group, this determines a canonical $\MU\langle 6\rangle$-orientation of an elliptic cohomology theory. Further work of Hopkins shows that if the line bundle $\mc{L}$ has the additional structure of an isomorphism $\mc{L}_x\simeq \mc{L}_{-x}$ and the section $s$ satisfies $s(x,y,-x-y)=1$, then the $\MU\langle 6\rangle$-orientation extends to an $\MO\langle 8\rangle$-orientation. Under certain conditions on the elliptic cohomology theory~\cite[Theorem~6.2]{HopkinsICM94}, this additional condition is guaranteed, giving a canonical $\MO\langle8\rangle=\MString$-orientation of such elliptic cohomology theories.

\subsection{Orientations in complex analytic elliptic cohomology}
We give a quick overview of (non-equivariant) orientations in complex analytic elliptic cohomology. These facts are surely known to the experts; most of the following can be deduced from the introduction of~\cite{AHSI}. Consider the elliptic curve $E_\tau=\C/\Z\oplus \tau \Z$. Viewing $\C$ as a complex analytic group under addition, the quotient map~$\C\to E_\tau$ is a homomorphism with discrete kernel and so determines an isomorphism of formal groups over~$\C$
\beq
\widehat{\mathbb{G}}_a\simeq \widehat{E}_\tau\label{eq:elllog}
\eeq 
where $\widehat{\mathbb{G}}_a$ is the additive formal group. Consider~$\H(-;\C[\beta,\beta^{-1}])$, ordinary cohomology with values in the graded ring~$\C[\beta,\beta^{-1}]$ where $|\beta|=-2$. The formal group associated with ordinary cohomology is the additive formal group, so the isomorphism~\eqref{eq:elllog} gives an elliptic cohomology theory $\Ell_\tau$ whose underlying cohomology theory is $\H(-;\C[\beta,\beta^{-1}])$. The standard coordinate~$z$ on $\C$ determines a coordinate on $\widehat{E}_\tau$ giving a complex orientation of $\Ell_\tau$ associated with the additive formal group law,
\beq
\che(L\otimes L')=\che(L)+\che(L').\label{eq:additive}
\eeq
This choice of coordinate gives the identification 
$$
\widetilde{\Ell}_\tau(\CP^\infty):=\widetilde{\H}(\CP^\infty;\C[\beta,\beta^{-1}])\simeq \C[\![\che]\!][\beta,\beta^{-1}],\qquad |\beta|=-2, \ |\che|=0
$$
where~$\tilde{\che}=\beta^{-1}\che$ is the standard degree 2 generator of the cohomology of $\CP^\infty$.

More generally, recall the $\HH$-family of complex analytic elliptic curves $\EE$ from~\eqref{eq:Euniv}. The quotient map
$$
\HH\times \C\to \widetilde{\EE}
$$
gives an $\HH$-family of isomorphisms~\eqref{eq:elllog} of formal groups over $\mathcal{O}(\HH)$. This gives a \emph{complex analytic} elliptic cohomology theory defined by the elliptic curve~$\widetilde{\EE}$,  the cohomology theory~$\H(-;\mc{O}(\HH)[\beta,\beta^{-1}])$ and the $\HH$-family of isomorphisms~\eqref{eq:elllog}. This complex analytic elliptic cohomology theory has an $\SL_2(\Z)$-action induced by the action on coefficients from fractional linear transformations on $\mc{O}(\HH)$ and $\beta\mapsto \beta/(c\tau+d)$. Considering this action applied to $\mathcal{O}(U)$ for open submanifolds $U\subset \HH$ defines a sheaf of cohomology theories denoted $\Ell$ on the stack $\Mell\simeq [\HH/\SL_2(\Z)]$. We observe that the global sections of $\Ell$ are cohomology with values in modular forms, i.e., $\TMF\otimes \C$. Furthermore, for any $\tau\in \HH$, the sheaf $\Ell$ restricts to the elliptic cohomology theory $\Ell_\tau$ from the previous paragraph via the evaluation map $\ev_\tau\colon \mc{O}(\HH)\to \C$. 

The standard coordinate from $z$ on $\C$ determines an $\SL_2(\Z)$-invariant complex orientation of~$\Ell$: the Chern class $\tilde{\che}=\beta^{-1}\che$ pulls back to itself under isomorphisms~$\widetilde{\EE}\to \widetilde{\EE}$ associated with elements of~$\SL_2(\Z)$ since $z\mapsto z/(c\tau+d)$ (therefore $\che\mapsto \che/(c\tau+d)$) and $\beta^{-1}\mapsto (c\tau+d)\beta^{-1}$. In particular, this determines a complex orientation of $\TMF\otimes \C$. 

Although the coordinate~$z$ is perhaps the most obvious one, there is a huge amount of freedom in choosing complex orientations of the~$\Ell_\tau$ and $\Ell$. Indeed, any holomorphic function on $\HH\times \C$ that vanishes to first order on $\HH\times \{0\}\hookrightarrow \HH\times \C$ defines a different orientation of~$\Ell$. Such a function can be expressed as a power series in~$z$ with coefficients in $\mc{O}(\HH)$ whose lowest order nonvanishing term is~$z$. In the language of formal group laws, this is the statement that all coordinates are related to the coordinate~$z$ via an isomorphism of formal group laws. We consider two such choices, namely the variants of the Weierstrass sigma function from~\S\ref{sec:Weier}
\beq
\sigma(\tau,z), \upsilon(\tau,z)\in\mc{O}(\HH\times \C).\label{eq:sigmaor}
\eeq
These coordinates lead to different tensor product formulas for Chern classes than~\eqref{eq:additive}. Furthermore, the orientations from~\eqref{eq:sigmaor} are not invariant under the $\SL_2(\Z)$-action on~$E$, and hence fail to descend to a consistent complex orientation of the sheaf~$\Ell$ or its global sections,~$\TMF\otimes \C$.


The construction of $\MU\langle 6\rangle$- and $\MO\langle 8\rangle$-orientations from a cubical structure can be made completely explicit for elliptic curves over~$\C$. In this case, the coordinate $z$ on $\C$ from~\eqref{eq:elllog} allows one to express the cubical structure $\widetilde{\EE}\times_\HH\widetilde{\EE}\times_\HH\widetilde{\EE}$ in terms of a function on the universal cover $\C\times \C\times \C\times \HH\to \widetilde{\EE}\times_\HH\widetilde{\EE}\times_\HH\widetilde{\EE}$. One can check explicitly that the (necessarily unique) cubical structure in these coordinates is given by
\beq
&&s=\frac{\sigma(\tau,x+y)\sigma(\tau,x+z)\sigma(\tau,y+z)\sigma(\tau,0)}{\sigma(\tau,x+y+z)\sigma(\tau,x)\sigma(\tau,y)\sigma(\tau,z)}=\frac{\upsilon(\tau,x+y)\upsilon(\tau,x+z)\upsilon(\tau,y+z)\upsilon(\tau,0)}{\upsilon(\tau,x+y+z)\upsilon(\tau,x)\upsilon(\tau,y)\upsilon(\tau,z)}\label{eq:cubical}
\eeq
which we interpret as a class in $s\in \H^0(\CP^\infty\times \CP^\infty\times \CP^\infty;\mc{O}(\HH)[\beta,\beta^{-1}])\simeq \mathcal{O}(\HH)[\![x,y,z]\!]$. We observe further that $s$ is $\SL_2(\Z)$-invariant; this follows from the standard transformation properties of the $\sigma$-function. Hence,~\eqref{eq:cubical} determines a compatible family of $\MU\langle 6\rangle$-orientations for the sheaf of cohomology theories~$\Ell$ as well as the global sections~$\TMF\otimes \C$. We observe that the pullback of $\mc{O}(-0)$ under the inversion map $\widetilde{\EE}\to \widetilde{\EE}$ is canonically isomorphic to $\mc{O}(-0)$, so that we can ask for the additional condition on $s$ to obtain an $\MO\langle8\rangle$-structure. By inspection (e.g., because~$\sigma$ is odd) the cubical structure $s$ satisfies this additional requirement and hence gives an $\MO\langle 8\rangle$-orientation. 

We further observe that the class $s\in \H^0(\CP^\infty\times \CP^\infty\times \CP^\infty;\mc{O}(\HH)[\beta,\beta^{-1}])$ is the top Chern class of~$V_3$ relative to the complex orientations given by~\eqref{eq:sigmaor}. Indeed, the value of the $\MU\langle 6\rangle$-orientation on any complex vector bundle can be computed using the splitting principle and the complex orientation associated with~\eqref{eq:sigmaor}. To summarize, although these complex orientations of $\Ell$ fail to descend to $\Mell$, they determine the canonical $\MU\langle 6\rangle$-orientation that does descend. This turns out to mirror the equivariant refinement of the string orientation.

\subsection{Equivariant refinements of orientations}
We start with a motivating example. 

\begin{ex}
This is a continuation of Example~\ref{ex:Kthycplx}. We can ask for an equivariant refinement of the complex orientation of $\K$-theory relative to the Atiyah--Segal completion map,
\beq
\begin{array}{ccl}
\Rep(\U(1))=\K_{\U(1)}(\pt)&\stackrel{{\rm completion}}{\longrightarrow}& \K(B\U(1))=\K(\CP^\infty)\\
\phantom{\Rep(\U(1))=}\stackrel{\downin}{\che_{\U(1)}} &\stackrel{?}{\mapsto} & \phantom{blah}\stackrel{\downin}{\che} 
\end{array}\label{eq:Kcomplete}
\eeq
i.e., a virtual representation that maps to the chosen complex orientation. There is indeed a unique such virtual representation, namely $\che_{\U(1)}=1-R$ where $R$ is the defining representation of~$\U(1)$. 
\end{ex}

Using the elliptic Atiyah--Segal completion map from~\S\ref{sec:complete}, we can ask for a similar equivariant refinement of a complex orientation of elliptic cohomology over~$\C$,
\beq
\begin{array}{ccl}
\Gamma(\mc{O}_{\EE^\vee})=\Gamma(\Ell_{\U(1)}(\pt))&\stackrel{{\rm completion}}{\longrightarrow}& \Ell(B\U(1))=\Ell(\CP^\infty).\\
\phantom{\Gamma(\mc{O}_E)=}\stackrel{\downin}{\che_{\U(1)}} &\stackrel{?}{\mapsto} & \phantom{blah}\stackrel{\downin}{\che} 
\end{array}\label{eq:ellcomplete1}
\eeq
However, one immediately finds that no such class can exist, even for elliptic cohomology for a single elliptic curve: the class~$\che$ defines a function on a formal neighborhood of $0\in E^\vee_\tau$ that vanishes to first order at zero, and since globally defined functions on $E^\vee_\tau$ are constant, any putative class $\che_{\U(1)}$ is the zero class. Stated in more algebro-geometric language, a lift~\eqref{eq:ellcomplete1} for a fixed curve $E_\tau$ is asking for a global section of~$\mathcal{O}(-0)$ on $E_\tau^\vee$,
\beq
\begin{array}{ccl}
\Gamma(E^\vee_\tau;\mc{O}(-0))&\stackrel{{\rm completion}}{\longrightarrow}& \Ell_\tau(B\U(1))=\Ell(\CP^\infty).\\
\stackrel{\downin}{\che_{\U(1)}} &\stackrel{?}{\mapsto} & \phantom{blah}\stackrel{\downin}{\che} 
\end{array}\nonumber
\eeq
and the only such global section $\che_{\U(1)}$ is the zero section. Under completion this is sent to the zero class in $\Ell(\CP^\infty)$, which does not define a complex orientation. We summarize this observation as follows:

\begin{prop} \label{eq:nogo} No $\MU$-orientation of a complex analytic elliptic cohomology theory $\Ell_\tau$ may be refined to an equivariant $\MU$-orientation of the corresponding complex analytic equivariant elliptic cohomology theory defined over $E^\vee_\tau$. \end{prop}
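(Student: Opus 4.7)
The plan is to argue by contradiction, making precise the informal sketch in the paragraph that immediately precedes the statement. Suppose for a fixed $\tau \in \HH$ that a complex orientation of $\Ell_\tau$, given by a Chern class $\che \in \Ell_\tau^0(\CP^\infty)$ of the tautological line bundle on $\CP^\infty \simeq B\U(1)$, admits an equivariant refinement $\che_{\U(1)} \in \Ell_{\U(1),\tau}^0(\pt)$ along the (fiberwise) Atiyah--Segal completion map of Theorem~\ref{thm:complete}.

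First, I would specialize Proposition~\ref{prop:analytic} to $G = \U(1)$ and restrict to the fiber over $\tau \in \HH$ to identify
\[
\Ell_{\U(1),\tau}^0(\pt) \;\simeq\; \Gamma\bigl(E^\vee_\tau;\mc{O}_{E^\vee_\tau}\bigr).
\]
Here I use the identification $\Bun_{\U(1)}(\EE) \simeq \EE^\vee$ and the fact that the fiber of $\EE^\vee \to \Mell$ over $\tau$ is the dual elliptic curve $E^\vee_\tau$. Because $E^\vee_\tau$ is a compact, connected complex analytic curve, the algebra of global holomorphic functions is simply $\C$: every such function is constant.

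Next, I would unpack the completion condition. Applying Theorem~\ref{thm:complete} (at fixed $\tau$) identifies the Atiyah--Segal completion map with the restriction of a holomorphic function on $E^\vee_\tau$ to its germ at the identity $0 \in E^\vee_\tau$, valued in the completed local ring $\widehat{\mc{O}}_{E^\vee_\tau,\,0} \simeq \Ell_\tau^0(B\U(1))$ under the comparison $\widehat{E^\vee_\tau} \simeq \Spf(\Ell_\tau^0(\CP^\infty))$ of formal groups. For $\che$ to be a valid complex orientation, the isomorphism of formal groups forces $\che$ to be a coordinate on $\widehat{E^\vee_\tau}$, i.e., it must vanish to order exactly one at $0$; equivalently, any global function $\che_{\U(1)}$ lifting $\che$ must vanish at $0 \in E^\vee_\tau$ (and not at higher order).

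Combining the two previous steps finishes the argument: $\che_{\U(1)}$ is forced to be both a global holomorphic function on $E^\vee_\tau$ (hence constant) and to vanish at $0 \in E^\vee_\tau$ (hence identically zero). But then $\che = 0$, contradicting the non-vanishing condition of a complex orientation on $S^2 \subset \CP^\infty$. There is no substantial obstacle here once one has Theorem~\ref{thm:complete} and Proposition~\ref{prop:analytic} in hand; the argument is really a rigorous version of the slogan ``a coordinate on a projective elliptic curve does not extend globally,'' and the key conceptual point is simply that the Atiyah--Segal completion map for $\U(1)$-equivariant elliptic cohomology is the restriction of a global section of $\mc{O}_{E^\vee_\tau}$ to its formal germ at the origin.
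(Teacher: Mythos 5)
Your argument is correct and matches the paper's own reasoning: identify $\Ell_{\U(1),\tau}^0(\pt)$ with global holomorphic functions on the compact curve $E^\vee_\tau$ via Proposition~\ref{prop:analytic}, observe that the completion map of Theorem~\ref{thm:complete} is restriction to the germ at $0$, and then note that a global function which is constant and vanishes at $0$ must be identically zero, contradicting that $\che$ is a coordinate. The paper states the same contradiction and merely adds an equivalent algebro-geometric reformulation in terms of global sections of $\mc{O}(-0)$, which is precisely your ``holomorphic function vanishing at $0$.''
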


Although this is not particularly deep, it highlights a crucial point: Chern classes in elliptic cohomology---even for a single elliptic curve---do not admit equivariant refinements. 

The resolution to this is to introduce a \emph{twisting}. This twisting refers to a relaxing of the setup in~\eqref{eq:ellcomplete1}, 
\beq
\begin{array}{ccl}
\Gamma(E^\vee_\tau;\mc{O}(-0)\otimes\mc{L}\otimes \omega^{-1})&\stackrel{{\rm completion}}{\longrightarrow}& \Ell^2_\tau(B\U(1))=\Ell^2(\CP^\infty)\\
\stackrel{\downin}{\tilde{\che}_{\U(1)}} &\stackrel{?}{\mapsto} & \phantom{blah}\stackrel{\downin}{\tilde{\che}} 
\end{array}\label{eq:ellcomplete2}
\eeq
where $\mc{L}$ is a line bundle on $E_\tau^\vee$, and for convenience we have changed points of view, taking Chern classes $\tilde{\che}$ and $\tilde{\che}_{\U(1)}$ in degree~2. The twisted completion map~\eqref{eq:ellcomplete2} requires additional data, namely a trivialization of $\mc{L}$ near $0\in E_\tau^\vee$ to identify the section $\tilde{\che}_{\U(1)}$ with a class in~$\Ell^2_\tau(B\U(1))$. 

\begin{defn}\label{defn:twequiv}
Let $E$ denote the restriction of $\widetilde{\EE}$ to a holomorphic submanifold $U\subset \HH$. A \emph{twisted equivariant refinement} of a complex orientation of a complex analytic elliptic cohomology theory associated with $E$ is a line bundle $\mc{L}$ on $E^\vee$ together with a nowhere vanishing section $\tilde{\che}_{\U(1)}\in \Gamma(E^\vee;\mc{O}(-0)\otimes \mc{L}\otimes \omega^{-1})$ and a choice of trivialization of $\mc{L}$ near the zero section $0\colon U\to E^\vee$ that identifies the restriction of $\tilde{\che}_{\U(1)}$ with the non-equivariant Chern class $\tilde{\che}$. 
\end{defn}

\begin{rmk}
We recall that a nowhere vanishing section $\tilde{\che}_{\U(1)}\in \Gamma(E^\vee;\mc{O}(-0)\otimes \mc{L}\otimes \omega^{-1})$ is the data as a section $\tilde{\che}_{\U(1)}\in \Gamma(E^\vee;\mc{L}\otimes \omega^{-1})$ that is nowhere vanishing away from the zero section of~$E^\vee$ and vanishes to precisely first order at $0$. 
\end{rmk}

With respect to a fixed elliptic cohomology theory, the freedom to choose a complex orientation is absorbed by the many ways to trivialize a fixed line bundle---in the notation of the previous definition, the line bundle~$\mc{L}$ and the section $\tilde{\che}_{\U(1)}$ are essentially unique:

\begin{prop} \label{prop:twistedequiv}
Let~$E$ be a family of elliptic curves as in Definition~\ref{defn:twequiv}. 
\begin{enumerate}
\item Any complex orientation of the complex analytic elliptic cohomology theory associated to $E$ admits a twisted equivariant refinement. 
\item The data $(\mc{L},\tilde{\che}_{\U(1)})$ of the twisted equivariant refinement are unique up to unique isomorphism, with any $(\mc{L},\tilde{\che}_{\U(1)})$ having a unique isomorphism to line bundle determined by the function $\upsilon(\tau,z)$ defined in~\eqref{eq:upsilondef}. 
\end{enumerate}
\end{prop}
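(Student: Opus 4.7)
Proof proposal: The plan is to treat existence (part~1) and uniqueness (part~2) separately, both reducing to divisor-theoretic considerations on the surface~$E^\vee$.

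First I would unpack the definition: a nowhere vanishing section of $\mc{O}(-0)\otimes \mc{L}\otimes\omega^{-1}$ is the same datum as a holomorphic section $s$ of $\mc{L}\otimes\omega^{-1}$ on $E^\vee$ that vanishes to exactly first order along the zero section $0\colon U\hookrightarrow E^\vee$ and is nowhere vanishing elsewhere. The existence of such a section is equivalent to a global trivialization of $\mc{O}(-0)\otimes \mc{L}\otimes\omega^{-1}$, forcing $[\mc{L}]=[\mc{O}(0)]+[\omega]$ in $\Pic(E^\vee)$. Combined with the identification $\mc{L}_{1/2}\otimes\omega^{-1}\simeq\mc{O}(0)$ witnessed by $\upsilon(\tau,z)$ from Example~\ref{ex:derived}, this shows abstractly that any such $\mc{L}$ is isomorphic to $\mc{L}_{1/2}$.

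For part~(1), given a complex orientation $\tilde{\che}$, I would take the pair $(\mc{L},\tilde{\che}_{\U(1)}):=(\mc{L}_{1/2},\beta^{-1}\upsilon(\tau,z))$ and produce a trivialization of $\mc{L}_{1/2}$ on a neighborhood of~$0$ identifying $\upsilon$ with $\tilde{\che}$. Reading off the product expansion~\eqref{eq:upsilondef}, $\upsilon$ vanishes to exactly first order along the zero section with nonzero leading coefficient in~$z$. Any complex orientation has the form $\tilde{\che}=\beta^{-1}z\cdot u(\tau,z)$ for a holomorphic unit $u$ on a neighborhood of~$0$, so the ratio $\tilde{\che}/(\beta^{-1}\upsilon)$ is a nowhere vanishing holomorphic function on some neighborhood of the zero section. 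Multiplying the standard local trivialization of $\mc{L}_{1/2}$ by this unit produces the required trivialization.

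For part~(2), given two refinements $(\mc{L},s,t)$ and $(\mc{L}',s',t')$ (writing $t,t'$ for the trivializations to avoid conflict with $\tau\in\HH$), I would consider the ratio $s/s'$ as a meromorphic section of $\mc{L}\otimes(\mc{L}')^\vee$. Off the zero section, $s$ and $s'$ are both nowhere vanishing, so the ratio is holomorphic and nonzero there; along~$0$ both vanish to exactly first order, so in local coordinates the ratio is a holomorphic function taking a nonzero value along~$0$. Hence $s/s'$ extends across~$0$ as a global trivializing section of $\mc{L}\otimes(\mc{L}')^\vee$, equivalently a unique isomorphism $\varphi\colon\mc{L}'\to\mc{L}$ with $\varphi(s')=s$. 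Compatibility with trivializations follows because $t$ and $t'\circ\varphi^{-1}$ are both trivializations of $\mc{L}$ near~$0$ sending $s$ to the common orientation $\tilde{\che}$, and such a trivialization is uniquely determined by this property. Applying this to any refinement and the canonical refinement from part~(1) built from $\upsilon(\tau,z)$ gives the claimed unique isomorphism.

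The hard part will be the holomorphic extension of the ratio $s/s'$ across the zero section~$0\subset E^\vee$. Because $0$ is a codimension-one complex submanifold along which both sections vanish to exactly first order, the extension is a local coordinate calculation; nonetheless, care is needed to keep track of the $\omega$- and $\beta$-twists and to verify the ratio remains nonzero on~$0$. A secondary subtlety is the passage in part~(1) from a formal unit near~$0$ to a holomorphic unit on an honest neighborhood, which reduces to convergence of the defining power series for~$\tilde{\che}$ over~$\mc{O}(U)$.
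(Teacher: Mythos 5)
Your proof is correct and takes essentially the same route as the paper: existence is built from the distinguished section $\upsilon(\tau,z)$ of $\mc{L}_{1/2}\otimes\omega^{-1}$, with coordinate changes for the formal group law absorbed into changes of local trivialization, while uniqueness comes from observing that the ratio of two qualifying sections gives a global trivialization of $\mc{L}'\otimes\mc{L}^\vee$.

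One small simplification you could make in the uniqueness step: you identify the ``hard part'' as extending the ratio $s/s'$ holomorphically across the zero section, which you prove by a local order count. But this is unnecessary if you work with the objects as the paper's Remark after Definition~\ref{defn:twequiv} packages them: $\tilde{\che}_{\U(1)}$ and $\tilde{\che}_{\U(1)}'$ are already nowhere-vanishing global sections of $\mc{O}(-0)\otimes\mc{L}\otimes\omega^{-1}$ and $\mc{O}(-0)\otimes\mc{L}'\otimes\omega^{-1}$ respectively, so their ratio is immediately a nowhere-vanishing global section of $\mc{L}'\otimes\mc{L}^\vee$ --- no extension lemma needed. The extension-across-$0$ argument you give is correct, but it re-derives exactly what the twist by $\mc{O}(-0)$ was designed to absorb.

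Two minor remarks on the rest. You are right to flag the formal-vs-analytic subtlety in part~(1) --- the paper elides the point that the unit relating $\tilde{\che}$ to $\upsilon$ must be an honest holomorphic function on an analytic neighborhood of the zero section rather than merely a formal power series, and it is reasonable to note this needs the convergence of the orientation's defining series. Your extra observation that the trivialization data (and not merely $(\mc{L},\tilde{\che}_{\U(1)})$) is also pinned down by the requirement $t(s)=\tilde{\che}$ is true, though it exceeds what the proposition actually claims: the statement asserts uniqueness only of the pair $(\mc{L},\tilde{\che}_{\U(1)})$, so the trivialization-matching step is harmless but not needed.
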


\begin{rmk} The line bundle on $\Bun_{\U(1)}(\EE)\simeq \EE^\vee$ determined by $\upsilon(\tau,z)$ has well-known descriptions, e.g., as the Quillen determinant line bundle for the family of twisted $\bar\partial$-operators parameterized by $\Pic(\mathcal{E})=\mathcal{E}^\vee$ (e.g.,~\cite[Lemma~7.2]{Ell1}) or as a square root of the level~1 Looijenga line bundle (e.g.,~\cite[Remark~7.9]{Ell1}). \end{rmk}

\begin{proof}[Proof of Proposition~\ref{prop:twistedequiv}]
We tackle the uniqueness (2) first. Given two line bundles $\mc{L}$ and $\mc{L}'$ with sections $\tilde{\che}_{\U(1)}$ and $\tilde{\che}_{\U(1)}'$ satisfying the requirements, $\tilde{\che}_{\U(1)}'/\tilde{\che}_{\U(1)}$ is a nowhere vanishing section of $\mc{L}'\otimes \mc{L}^\vee$ and so determines an isomorphism $\mc{L}'\simeq \mathcal{L}$ that sends $\tilde{\che}_{\U(1)}'$ to $\tilde{\che}_{\U(1)}$. Hence $(\mc{L},\tilde{\che}_{\U(1)})$ is unique up to unique isomorphism. 

To prove (1), we construct an equivariant refinement where $\mc{L}$ is the line bundle with section $\tilde{\che}_{\U(1)}$ determined by the function $\upsilon(\tau,z)$ defined in~\S\ref{sec:Weier}. Recall this line bundle is defined as the trivial line bundle on $\HH\times \C$ with descent data to $\EE^\vee$ constructed from the transformation properties of $\upsilon(\tau,z)$. This specifies a preferred trivialization near the zero section $0\colon \HH\to \widetilde{\EE}^\vee$: view the section $\tilde{\che}_{\U(1)}$ as the function $\upsilon(\tau,z)$ on $\HH\times\C$ and restrict to a neighborhood of~$\HH\times \{0\}\subset \HH\times \C$. Now given any $E\to U$ we can restrict along the associated inclusions $U\subset \HH$ and $E^\vee\subset \widetilde{\EE}^\vee$ to obtain a line bundle on $E^\vee$ with section and a trivialization in the neighborhood of the zero section $0\colon U\to E^\vee$. This recovers the complex orientation specified by the coordinate $\upsilon(\tau,z)$, as described near~\eqref{eq:sigmaor}. All other complex orientations arise from changing the coordinate for the corresponding formal group law, but changes of coordinate exactly correspond to changes of trivialization of $\mc{L}$ near the zero section of $E^\vee$, so all coordinates can be recovered this way. 
\ep

One can similarly ask for an equivariant refinement of the $\MU\langle 6\rangle$-orientation and $\MString$-orientation, namely as a class
\beq
\begin{array}{ccl}
\Gamma((E^\vee)^{\times 3};\Theta^3(\mathcal{O}(-0)))&\stackrel{{\rm completion}}{\longrightarrow}& \Ell(B\U(1)\times B\U(1)\times B\U(1))\\
\stackrel{\downin}{s_{\U(1)}} &\stackrel{?}{\mapsto} & \phantom{blvblahblahblah}\stackrel{\downin}{s} 
\end{array}\label{eq:ellcomplete3}
\eeq
lifting the class $s$ defined by~\eqref{eq:cubical} to a section of $\Theta^3(\mathcal{O}(-0))$ on $(E^\vee)^{\times 3}\simeq \Bun_{\U(1)^{\times 3}}(E)$. 

\begin{defn}
An \emph{equivariant refinement of the $\MO\langle 8\rangle$-orientation} is a $\Theta^3(\mathcal{O}(-0))$-twisted $\U(1)^{\times 3}$-equivariant elliptic cohomology class whose image under~\eqref{eq:ellcomplete3} is the Ando--Hopkins--Strickland characteristic class for the canonical $\MO\langle 8\rangle$-orientation.
\end{defn}

\begin{thm}\label{thm:cube}
Let~$E$ be a family of elliptic curves as in Definition~\ref{defn:twequiv}. 
\begin{enumerate}
\item There exists a unique equivariant refinement of the $\MO\langle 8\rangle$-orientation for complex analytic elliptic cohomology for the curve $E$. 
\item Furthermore, the equivariant refinement $s_{\U(1)}$ equals the twisted equivariant Euler class of Proposition~\ref{prop:twistedequiv} for the virtual vector bundle $V_3$ from~\eqref{eq:V3}.
\item In the universal case $E=\widetilde{\EE}$, the refinement $s_{\U(1)}$ descends to the stack $\Bun_{\U(1)^{\times 3}}(\EE)$. 
\end{enumerate}
\end{thm}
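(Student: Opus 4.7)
My plan is to prove the three parts in sequence, leveraging the theorem of the cube for parts (1) and (3) and multiplicativity of the Euler class together with Proposition~\ref{prop:twistedequiv} for part (2).

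For (1), I would first establish existence by invoking the theorem of the cube: it provides a canonical (and unique) cubical structure on $\mc{O}(-0)$, hence a canonical global section of $\Theta^3(\mc{O}(-0))$ on $(E^\vee)^{\times 3}$. Pulling back to the universal cover $U\times\C^{\times 3}$ and using the standard $\sigma$-function identification, this section is given explicitly by formula~\eqref{eq:cubical}, whose germ at the triple origin reproduces $s$; this is the desired $s_{\U(1)}$. For uniqueness, I would use the theorem of the cube once more to note that $\Theta^3(\mc{O}(-0))$ is trivializable, so any two global sections differ by a function pulled back from $U$ (using that the fibers $E^\vee$ are compact and so carry only constant fiberwise holomorphic functions). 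The refinement condition pins this function down by matching with the germ of $s$ at the origin.

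For (2), I would compute the twisted $\U(1)^{\times 3}$-equivariant Euler class of $V_3$ directly. Expanding
\[
V_3 = L_1 L_2 L_3 - L_1 L_2 - L_1 L_3 - L_2 L_3 + L_1 + L_2 + L_3 - 1
\]
and using multiplicativity of the Euler class under direct sums, together with Proposition~\ref{prop:twistedequiv} identifying the twisted equivariant Euler class of each $L_i$ with $\upsilon(\tau, x_i)$ and the formal group structure on $E^\vee$ with addition in the $z$-coordinate, the Euler class of $V_3$ becomes the ratio
\[
\mathrm{Eu}(V_3) \;=\; \frac{\upsilon(\tau, x+y+z)\,\upsilon(\tau, x)\,\upsilon(\tau, y)\,\upsilon(\tau, z)}{\upsilon(\tau, x+y)\,\upsilon(\tau, x+z)\,\upsilon(\tau, y+z)\,\upsilon(\tau, 0)}.
\]
This matches formula~\eqref{eq:cubical} for $s$ (up to inversion conventions) and, by the uniqueness in (1), must equal $s_{\U(1)}$.

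For (3), I would deduce descent from the uniqueness statement. Over the universal family $\widetilde{\EE}$, the $\SL_2(\Z)$-action on $\widetilde{\EE}^\vee$ preserves the group law and the zero section, hence lifts to $\Theta^3(\mc{O}(-0))$ compatibly with the cubical structure axioms. Any $\SL_2(\Z)$-translate of $s_{\U(1)}$ is therefore again a refinement of $s$ (the latter being $\SL_2(\Z)$-invariant by the classical transformation properties of $\sigma$), and by uniqueness must equal $s_{\U(1)}$. This invariance provides the descent from $\widetilde{\EE}^{\vee,\times 3}$ to $\Bun_{\U(1)^{\times 3}}(\EE)$. The main obstacle I anticipate is the bookkeeping of twisting lines in part (2): I must identify $\Theta^3(\mc{O}(-0))$ on $(E^\vee)^{\times 3}$ with the tensor product of Looijenga-type lines $\mc{L}_{\U(1)}$ pulled back along the multiplication and projection maps implicit in the decomposition of $V_3$. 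Remark~\ref{rmk:L12} identifies $\mc{L}_{\U(1)}$ with a square root of the level-one Looijenga line, which should reduce the matching to a seesaw-type computation; the care required is in tracking the powers of $\omega$ and sign conventions so that the Euler class calculation genuinely produces a section of $\Theta^3(\mc{O}(-0))$ rather than of a nearby twist.
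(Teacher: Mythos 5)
Your approach matches the paper's quite closely. The paper's proof is terse ("by inspection$\dots$", "on inspection of the formulas$\dots$") and your proposal essentially expands it with the details made explicit: for (1), your uniqueness argument --- $\Theta^3(\mc{O}(-0))$ is trivialized by the theorem of the cube, so any two global sections differ by a fiberwise-constant function, which is pinned to $1$ by the germ condition at the origin --- is exactly the content behind the paper's assertion that the formula has a unique equivariant extension; for (3), you and the paper both deduce descent from $\SL_2(\Z)$-invariance of the $\sigma$-function.

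One point needs to be resolved rather than deferred. In part (2) you compute, via Whitney multiplicativity over the expansion of $V_3$,
\[
\Eu(V_3) \;=\; \frac{\upsilon(\tau,x+y+z)\,\upsilon(\tau,x)\,\upsilon(\tau,y)\,\upsilon(\tau,z)}{\upsilon(\tau,x+y)\,\upsilon(\tau,x+z)\,\upsilon(\tau,y+z)\,\upsilon(\tau,0)},
\]
and then flag that this "matches formula~\eqref{eq:cubical} up to inversion conventions." But the argument you then give --- invoking the uniqueness from (1) to conclude equality with $s_{\U(1)}$ --- only works if this class is a genuine equivariant refinement of $s$, i.e.\ its completion-image equals $s$ and not $s^{-1}$. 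As written, your formula is the reciprocal of~\eqref{eq:cubical}: the paper's $s$ has $\upsilon(\tau,x+y)\upsilon(\tau,x+z)\upsilon(\tau,y+z)\upsilon(\tau,0)$ in the \emph{numerator}. If the Euler class really restricts to $s^{-1}$ on the formal neighborhood, uniqueness cannot force it to equal $s_{\U(1)}$, and (2) fails. The fix is to nail down which virtual bundle the AHS class is attached to: expanding $(1-L_1)(1-L_2)(1-L_3) = -V_3$ places $L_1L_2, L_1L_3, L_2L_3, 1$ in the numerator and reproduces the paper's $s$ exactly, so one must either argue the orientation class of $V_3$ is $\Eu(-V_3)$ (a convention issue in how the Thom class of a virtual bundle of rank zero is normalized), or correct the Whitney decomposition. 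The paper glosses over this as well ("on inspection"), but your proof cannot lean on uniqueness until it is settled, so I would not call this a harmless convention remark. Everything else in your proposal, including the anticipated bookkeeping of the $\mc{L}_{\U(1)} \simeq \mc{L}_{1/2}$ twists via the seesaw principle, is sound and in line with what the paper does implicitly.
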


\bp
By inspection, the formulas~\eqref{eq:cubical} for the non-equivariant cubical structure have a unique equivariant extension given by the same formulas: when considered as a function on $\HH\times \C\times \C\times \C$, the formulas~\eqref{eq:cubical} give sections of $\Theta^3(\mathcal{O}(-0))$ on $\widetilde{\Euni}^\vee\times_\HH \widetilde{\Euni}^\vee\times_\HH \widetilde{\Euni}^\vee$. This gives the equivariant characteristic class~\eqref{eq:ellcomplete3} for~$V_3$. On inspection of the formulas, this equals the twisted equivariant Euler class of $V_3$, using the class from Proposition~\ref{prop:twistedequiv}. Finally, we observe that this cubical structure is invariant under the action of $\SL_2(\Z)$, and so descends to $\Euni^\vee \times_{\Mell} \Euni^\vee \times_{\Mell} \Euni^\vee$, and therefore is a global class for $\Ell^0_{\U(1)\times \U(1)\times \U(1)}(\pt)\otimes \Theta^3(\mathcal{O}(-0))$.
\ep

\begin{rmk} The uniqueness of a cubical structure for $\mc{O}(-0)$ on the elliptic curve produces a canonical $\MU\langle 6\rangle$-orientation of non-equivariant elliptic cohomology. However, there are possibly more cubical structures for $\mc{O}(-0)$ on the formal group, so this canonical class need not be unique. We find it striking that the cubical structure on $\mc{O}(-0)$ produces a \emph{unique} equivariant $\MU\langle 6\rangle$-orientation: the possible ambiguities on the formal group disappear in the equivariant refinement.
\end{rmk}

\appendix

\section{Background}

\subsection{Some Lie theory}\label{sec:technicalities}

A reference for the following results is~\cite{SegalRepRing}. 

\begin{lem} \label{lem:conj} Let $T<G$ be a maximal torus for a connected compact Lie group with normalizer $N(T)<G$. If $t_1, t_2 \in T$ are conjugate in~$G$, they are conjugate by an element of~$N(T)$. \end{lem}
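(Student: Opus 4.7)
The plan is to reduce the statement to the standard fact that in a connected compact Lie group all maximal tori are conjugate, applied not to $G$ itself but to the identity component of the centralizer of $t_2$.

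First I would let $g \in G$ be an element with $gt_1g^{-1} = t_2$, and set $H := Z_G(t_2)^0$, the identity component of the centralizer of $t_2$. This $H$ is a closed, hence compact, connected subgroup of $G$. Both $T$ and $gTg^{-1}$ are tori of $G$ contained in $Z_G(t_2)$: the first because $t_2 \in T$, and the second because $t_2 = gt_1g^{-1} \in gTg^{-1}$. A connected subgroup like a torus sits inside the identity component $H$, so $T$ and $gTg^{-1}$ are both tori in $H$. The key observation is that they are in fact \emph{maximal} tori of $H$: any torus $T'$ in $H$ containing $T$ is a torus of $G$ containing $T$, hence equals $T$ by maximality in $G$; the argument for $gTg^{-1}$ is identical.

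Now I apply the classical theorem that all maximal tori of the connected compact Lie group $H$ are conjugate by an element of $H$. This produces some $h \in H$ with $h(gTg^{-1})h^{-1} = T$. Then $hg \in N(T)$ and
\[
(hg)\, t_1\, (hg)^{-1} = h(gt_1g^{-1})h^{-1} = h t_2 h^{-1} = t_2,
\]
where the last equality uses that $h \in H \subset Z_G(t_2)$. Thus $hg \in N(T)$ conjugates $t_1$ to $t_2$, as required.

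The only real subtlety — and the step worth flagging — is the maximality of $T$ and $gTg^{-1}$ as tori in $H$ rather than merely as subtori; once this is in hand, the proof is a direct invocation of the maximal torus conjugacy theorem. Compactness and connectedness of $H$ are immediate from $H = Z_G(t_2)^0$, and no further structural results about $G$ are needed.
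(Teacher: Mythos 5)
Your proof is correct and follows essentially the same route as the paper's argument (which appears in a commented-out draft of the appendix): set $H = Z_G(t_2)^0$, observe that both $T$ and $gTg^{-1}$ are maximal tori of $H$, conjugate one to the other inside $H$, and combine with $g$ to land in $N(T)$. The only difference is cosmetic — you name the conjugating element $h$ and produce $hg \in N(T)$, while the paper names it $g'$ and produces $g'^{-1}g$ — and you are slightly more explicit in justifying that $T$ and $gTg^{-1}$ are indeed \emph{maximal} tori of $H$, a detail the paper leaves implicit.
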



Let $\mathfrak{t}$ be the Lie algebra of a maximal torus $T$ of a compact connected Lie group $G$, and $W=N(T)/T$ be the Weyl group. The following is proved in the same way as the above. 

\begin{cor}\label{cor:conj}
If $X_1,X_2\in \mf{t}$ are conjugate by the adjoint action of $G$ on $\mf{t}$, then they are conjugate by an element of $N(T)$. 
\end{cor}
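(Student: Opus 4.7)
The plan is to reduce the Lie algebra statement to the group-level statement of Lemma~\ref{lem:conj} by exponentiating, but in a manner robust enough that the element of $N(T)$ we obtain at the group level actually conjugates $X_1$ to $X_2$ in $\mf{t}$ rather than merely giving an equality modulo the cocharacter lattice.

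First, I would fix a $W$-invariant inner product on $\mf{t}$ (which exists since $W$ is finite) and let $r > 0$ be smaller than the length of the shortest nonzero element of the cocharacter lattice $\ker(\exp|_{\mf{t}})$, so that $\exp$ is injective on the open ball $B_r(0)\subset \mf{t}$. Choose $s>0$ small enough that $s|X_1| < r$ (equivalently, $sX_1 \in B_r(0)$); note that by $W$-invariance of the norm, we then also have $s\cdot\Ad(w)X_1 \in B_r(0)$ for every $w\in W$, and we may also arrange $sX_2 \in B_r(0)$.

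Next, set $t_i := \exp(sX_i)\in T$. The hypothesis $X_2 = \Ad(g)X_1$ yields
\[
g t_1 g^{-1} = \exp(s\,\Ad(g)X_1) = \exp(sX_2) = t_2,
\]
so $t_1$ and $t_2$ are conjugate in $G$. By Lemma~\ref{lem:conj}, there exists $n\in N(T)$ with $n t_1 n^{-1} = t_2$, which expands to $\exp(s\,\Ad(n)X_1) = \exp(sX_2)$. Since both $s\,\Ad(n)X_1$ and $sX_2$ lie in $B_r(0)$, the injectivity of $\exp$ on this ball forces $\Ad(n)X_1 = X_2$.

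The main obstacle I anticipated is that the element $n\in N(T)$ produced by Lemma~\ref{lem:conj} depends on the choice of $t_1, t_2$, and a priori we do not know in advance which Weyl translate of $X_1$ will appear. This is exactly why the ball $B_r(0)$ must be chosen so that \emph{every} Weyl translate $s\cdot\Ad(w)X_1$ stays inside it; the $W$-invariance of the norm makes this uniform control automatic. Once that uniformity is in place, the rest of the argument is a direct consequence of the injectivity of $\exp$ near the origin.
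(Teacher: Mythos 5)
Your reduction works and is a genuinely different route from the one the paper has in mind. The paper asserts the corollary is proved in the same way as Lemma~\ref{lem:conj}, i.e., by rerunning the standard centralizer argument at the Lie-algebra level: given $\Ad(g)X_1 = X_2$, both $\mf{t}$ and $\Ad(g)\mf{t}$ contain $X_2$, hence both are maximal abelian subalgebras of the centralizer of $X_2$ in $\mf{g}$; conjugating one into the other by some $h$ in the (connected) centralizer $Z_G(X_2)$ and composing gives $h^{-1}g\in N(T)$ with $\Ad(h^{-1}g)X_1 = \Ad(h^{-1})X_2 = X_2$. You instead \emph{reduce} to the group statement by exponentiating at a small scale $s$ and invoking Lemma~\ref{lem:conj} as a black box. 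The subtlety you isolate---that the element $n\in N(T)$ returned by the lemma acts on $\mf{t}$ through $W$, so the $W$-invariant norm keeps $s\,\Ad(n)X_1$ inside the injectivity ball no matter which $n$ the lemma hands back---is exactly what makes this reduction tight. The two proofs are comparable in length; the paper's direct argument needs no choice of scale or injectivity radius, while yours has the appeal of literally recycling the group-level result rather than re-proving its Lie-algebra analogue from scratch.

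One numerical slip to repair: injectivity of $\exp\colon\mf{t}\to T$ on the open ball $B_r(0)$ requires $2r\le\lambda_{\min}$, where $\lambda_{\min}$ is the shortest nonzero length in $X_*(T)$, not merely $r<\lambda_{\min}$, because two points of $B_r(0)$ may differ by a lattice vector of length up to $2r$. (For $\U(1)$ with $\lambda_{\min}=1$ and $r=0.7$, the points $0.6$ and $-0.4$ exponentiate to the same element.) Taking $r\le\lambda_{\min}/2$ fixes this and changes nothing downstream.
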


\begin{prop}\label{prop:holoW} The ring of $W$-invariant holomorphic functions on $\mf{t}_{\mb{C}}$ is equivalent to the ring of $G$-invariant holomorphic functions on $\mf{g}_{\mb{C}}$. \end{prop}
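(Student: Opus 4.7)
The plan is to prove this as a holomorphic version of the Chevalley restriction theorem, where the relevant map is restriction along $\mf{t}_{\mb{C}} \hookrightarrow \mf{g}_{\mb{C}}$. Throughout, I use that Weyl groups act on $\mf{t}_{\mb{C}}$ as complex reflection groups, so that by the Chevalley--Shephard--Todd theorem, $\mb{C}[\mf{t}_{\mb{C}}]^W = \mb{C}[p_1,\dots,p_r]$ is a polynomial algebra on $r = \dim_{\mb{C}} \mf{t}_{\mb{C}}$ homogeneous generators, and the corresponding map $p = (p_1,\dots,p_r) \colon \mf{t}_{\mb{C}} \to \mb{C}^r$ identifies the GIT quotient $\mf{t}_{\mb{C}}/W$ with smooth affine space $\mb{C}^r$. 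By the classical (polynomial) Chevalley restriction theorem, each $p_i$ is the restriction of a $G_{\mb{C}}$-invariant polynomial $P_i$ on $\mf{g}_{\mb{C}}$, and $\mb{C}[\mf{g}_{\mb{C}}]^{G_{\mb{C}}} = \mb{C}[P_1,\dots,P_r]$.

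For injectivity, I would first upgrade $G$-invariance to $G_{\mb{C}}$-invariance: for any $f \in \mc{O}(\mf{g}_{\mb{C}})^G$ and fixed $x \in \mf{g}_{\mb{C}}$, the function $g \mapsto f(g \cdot x) - f(x)$ is holomorphic on $G_{\mb{C}}$ and vanishes on the totally real form $G \subset G_{\mb{C}}$, so it vanishes identically by the identity principle. Since the regular semisimple locus of $\mf{g}_{\mb{C}}$ is dense and consists of elements $G_{\mb{C}}$-conjugate to elements of $\mf{t}_{\mb{C}}$, any $f \in \mc{O}(\mf{g}_{\mb{C}})^G$ restricting to zero on $\mf{t}_{\mb{C}}$ must vanish on this dense set, hence everywhere.

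For surjectivity, given $f \in \mc{O}(\mf{t}_{\mb{C}})^W$, I would first descend $f$ to a continuous function $\hat f$ on $\mf{t}_{\mb{C}}/W \simeq \mb{C}^r$. Away from the discriminant divisor $\Delta \subset \mb{C}^r$, the quotient map $p$ is a local biholomorphism, so $\hat f$ is holomorphic there. Along $\Delta$, $\hat f$ is locally bounded (since $f$ is) on the smooth ambient space $\mb{C}^r$, so Riemann's removable singularity theorem applies to give $\hat f \in \mc{O}(\mb{C}^r)$. Setting $\tilde f := \hat f(P_1,\dots,P_r)$ then produces a holomorphic $G_{\mb{C}}$-invariant (hence $G$-invariant) function on $\mf{g}_{\mb{C}}$ restricting to $f$ on $\mf{t}_{\mb{C}}$.

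The main obstacle is the descent-and-extend step: the proof relies crucially on the Chevalley--Shephard--Todd identification $\mf{t}_{\mb{C}}/W \simeq \mb{C}^r$, which ensures the quotient is smooth so that Riemann extension can be applied across the codimension-one branch locus. Without smoothness of the quotient, one would have to work with holomorphic functions on a singular variety and the extension problem becomes genuinely more subtle.
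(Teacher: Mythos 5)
Your proof is correct, and it takes a genuinely different route from the paper's. The paper extends a $W$-invariant holomorphic $f$ on $\mf{t}_{\mb{C}}$ by conjugation directly to the regular-semisimple locus $\mf{g}^{\rm rs}_{\mb{C}} \subset \mf{g}_{\mb{C}}$, and then deals with extension across the codimension-one complement by an approximation step: it reduces to the case of a $W$-invariant \emph{polynomial}, which it extends using algebraic Hartogs together with the observation that the discriminant divisor is a cone, so every codimension-one component of $\mf{g}_{\mb{C}}\setminus\mf{g}^{\rm rs}_{\mb{C}}$ passes through the origin, where the candidate extension is manifestly regular. You instead push the problem down to the adjoint quotient: Chevalley--Shephard--Todd tells you $\mf{t}_{\mb{C}}/W\cong\mb{C}^r$ is smooth, the descended function $\hat f$ is continuous globally and holomorphic off the branch divisor, Riemann extension on the \emph{smooth} space $\mb{C}^r$ makes it holomorphic everywhere, and pulling back along $(P_1,\dots,P_r)$ gives the $G_{\mb{C}}$-invariant extension. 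Your route buys a cleaner treatment of the boundary behavior: the paper's reduction to the polynomial case is terse and quietly assumes the extension operator behaves well under locally uniform limits, which is precisely the kind of boundedness-near-the-singular-locus control your Riemann-extension step supplies explicitly. Both proofs share the same analytic-continuation trick to promote $G$-invariance to $G_{\mb{C}}$-invariance, and both ultimately rest on the classical (polynomial) Chevalley restriction theorem, yours invoked explicitly and the paper's invoked implicitly.
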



\bp Any conjugation-invariant function on $\mf{g}_{\mb{C}}$ clearly restricts to a $W$-invariant function on $\mf{t}_{\mb{C}}$; the interesting direction is to extend a $W$-invariant function on $\mf{t}_{\mb{C}}$ to a $G$-invariant function on $\mf{g}_{\mb{C}}$. On the (Zariski) open sublocus $\mf{g}^{\rm rs}_{\mb{C}}$ of regular semisimple elements, any element by definition may be conjugated into $\mf{t}_{\mb{C}}$, so that a holomorphic function on $\mf{t}_\C$ can automatically be extended to a holomorphic function on~$\mf{g}^{\rm rs}_{\mb{C}}$. By Corollary~\ref{cor:conj}, the extension is conjugation invariant if the original function is $W$-invariant. It remains to extend further to $\mf{g}_{\mb{C}}$ (which would automatically continue to be conjugation-invariant). But we may approximate a holomorphic $W$-invariant function on $\mf{t}_{\mb{C}}$ by a $W$-invariant polynomial on $\mf{t}_{\mb{C}}$ and instead simply have to extend a polynomial from $\mf{g}^{\rm rs}_{\mb{C}}$ to all of $\mf{g}_\C$. By Algebraic Hartogs' Lemma, the polar locus is a closed subset of pure codimension one. However, the closures of all codimension one points of $\mf{g}_{\mb{C}} \setminus \mf{g}^{\rm rs}_{\mb{C}}$ contain $0$, where our polynomial is clearly well-defined, and so the polar locus must be empty and we have a polynomial extension, as desired.\ep

\begin{rmk} The same result holds, with the same proof, replacing holomorphic functions in Proposition~\ref{prop:holoW} with germs of holomorphic functions. \end{rmk}

%
%

\begin{prop} \label{prop:conjugategeneral} Let $G$ be a compact Lie group, not necessarily connected. Given $h \in G$ and $X, X' \in \mf{g}^h$ sufficiently small, the set of elements which conjugates $he^X$ to $he^{X'}$ is contained in $C(h)$, the centralizer of~$h$. \end{prop}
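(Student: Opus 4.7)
The plan is to apply the slice theorem for smooth compact group actions to the conjugation action of $G$ on itself at the point $h$, and then read off the conclusion from the equivalence relation defining the associated bundle.

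First I will produce a convenient slice. Fix an $\Ad$-invariant inner product on $\mf{g}$, yielding the orthogonal decomposition $\mf{g} = \mf{g}^h \oplus (\mf{g}^h)^{\perp}$. Right-translating tangent vectors at $h$ to the identity, the tangent space at $h$ to the conjugation orbit $G\cdot h$ becomes the image of $\id - \Ad_h$ on $\mf{g}$; since $\ker(\id - \Ad_h) = \mf{g}^h$, this image is precisely $(\mf{g}^h)^{\perp}$. It follows that for $\epsilon>0$ sufficiently small, $S := h\cdot\exp(B_\epsilon(\mf{g}^h))$ is a submanifold of $G$ transverse to $G\cdot h$ at $h$, and is preserved under the conjugation action of $C(h)$ since $\Ad_{C(h)}$ preserves $\mf{g}^h$. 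The slice theorem for compact group actions (which requires no connectedness hypothesis on $G$) then supplies a $G$-equivariant diffeomorphism
\[
\Phi \colon G \times_{C(h)} S \xrightarrow{\ \sim\ } U,\qquad [g,s]\mapsto g\,s\,g^{-1},
\]
onto a $G$-invariant open neighborhood $U$ of the orbit $G\cdot h$.

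Next I will apply this to the hypothesis. Restricting to $X,X'\in\mf{g}^h$ small enough that $he^X, he^{X'}\in S$, suppose $g\in G$ satisfies $g(he^X)g^{-1} = he^{X'}$. Then $\Phi([g, he^X]) = he^{X'} = \Phi([e, he^{X'}])$, so by injectivity of $\Phi$ we have $[g, he^X] = [e, he^{X'}]$ in $G\times_{C(h)}S$. Unfolding the equivalence relation defining the associated bundle produces $k\in C(h)$ with $g = k$ and $he^{X'} = k(he^X)k^{-1}$; in particular $g\in C(h)$, as claimed.

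I do not expect a genuine obstacle: the slice theorem for compact group actions is standard and applies verbatim to our disconnected $G$. The one careful point is verifying that the exponential slice $h\cdot\exp(B_\epsilon(\mf{g}^h))$ satisfies the hypotheses of the slice theorem (transversality at $h$ and $C(h)$-invariance), for which the choice of $\Ad$-invariant inner product and the smallness of $\epsilon$ are essential.
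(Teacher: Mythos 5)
Your proof is correct, but it takes a genuinely different route from the paper's. The paper first observes that the conjugating set is a coset of $C(he^X)\subset C(h)$ (the latter inclusion by Block--Getzler), so it is either contained in or disjoint from $C(h)$; it then reduces to $G=\U(n)$ via a faithful unitary representation and, after conjugating $h$ into diagonal block form, uses eigenvalue localization in disjoint intervals to conclude that any conjugating element must be block-diagonal. You instead invoke the slice theorem for the conjugation action of $G$ on itself: you use normality of $\Ad_h$ with respect to an $\Ad$-invariant inner product (in effect re-deriving the content of Lemma~\ref{lem:centslicesprep}) to identify the right-translated normal space to the orbit at $h$ with $\mf{g}^h$, so that $h\exp(B_\epsilon(\mf{g}^h))$ is an exponential slice, and the conclusion drops out of injectivity of the tube map $G\times_{C(h)}S\to G$. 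Your route is more uniform and conceptual; it avoids both the faithful-representation reduction and the explicit matrix argument, and makes clear that the statement is really a generic fact about stabilizers near an orbit. What the paper's route buys is self-containment: it needs nothing beyond Block--Getzler and elementary linear algebra, whereas you lean on the slice theorem as a black box.

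One small point worth making explicit: the slice theorem constructs its slice as the Riemannian exponential (for a $G$-invariant metric) of a normal disk. In order for that slice to be precisely your $h\exp(B_\epsilon(\mf{g}^h))$, you should take the invariant metric to be the bi-invariant one induced by the $\Ad$-invariant inner product you fixed, so that the Riemannian exponential at $h$ agrees with translation of the Lie-group exponential. You fix the inner product but do not name the bi-invariant metric or that matching of exponentials; with that sentence added, the claim that the slice theorem ``supplies'' a diffeomorphism for this particular $S$ is airtight rather than asserted.
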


\bp Let $S = \{g \in G | ghe^Xg^{-1} = he^{X'}\}$; by construction, it is a coset of $C(he^X)$. By~\cite[Lemma~1.3]{BlockGetzler}, we may assume $X$ is sufficiently small such that $C(he^X) \subset C(h)$  (compare Lemma~\ref{lem:BlockGetzler} above). Hence either $S \subset C(h)$, as desired, or $S$ is entirely disjoint from $C(h)$. Choose a faithful representation $G \hookrightarrow \U(n)$ and assume for now the result for $\U(n)$. Then $S \subset C_{\U(n)}(h)$, where $C_{\U(n)}(h) \subset \U(n)$ is the subgroup of $\U(n)$ which centralizes $h$. But then $S \subset G \cap C_{U(n)}(h) = C_G(h)$, as desired. Hence, it suffices to show the result for $G = \U(n)$.

The statement is clearly invariant under conjugation, so we may assume $h$ is diagonal and of some block-form for a partition $n = n_1 + \cdots + n_k$, where $h$ has distinct eigenvalues $\lambda_1, \cdots, \lambda_k$ with each eigenvalue $\lambda_i$ occurring with multiplicity $n_i$. Then $C(h)$ is the corresponding group of block-diagonal matrices. Pick disjoint open intervals $U_i$ centered at the $\lambda_i$ and interpret ``sufficiently small'' to mean that the eigenvalues of the $i^{\rm th}$ block of $he^X, he^{X'}$ remain within $U_i$. Then one may show directly any element conjugating $he^X$ to $he^{X'}$ must be block-diagonal, i.e., lie in $C(h)$, as desired. \ep

\begin{cor} \label{prop:conjugate} For $G, h, X, X'$ as in the previous proposition, the set of elements of $G_0$ (the identity connected component) which conjugate $he^X$ to $he^{X'}$ is contained in $G_0^h$. \end{cor}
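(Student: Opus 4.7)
The plan is to obtain this as a direct consequence of Proposition~\ref{prop:conjugategeneral}. Suppose $g \in G_0$ has the property that $g (he^X) g^{-1} = he^{X'}$. Since such a $g$ lies in $G$ and conjugates $he^X$ to $he^{X'}$, Proposition~\ref{prop:conjugategeneral} applies (provided $X, X'$ are taken sufficiently small, as in the hypothesis of that proposition) and forces $g \in C(h)$, the centralizer of $h$ in $G$.

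The only remaining step is to identify $G_0 \cap C(h)$ with $G_0^h$. Unwinding the definition in the paper, $G_0^h = (G_0)^h$ denotes the subgroup of $G_0$ fixed by the conjugation action of $h$, i.e., $\{k \in G_0 : hkh^{-1} = k\} = G_0 \cap C(h)$. Hence $g \in G_0$ combined with $g \in C(h)$ yields $g \in G_0^h$, which is the claim. There is essentially no obstacle here: all of the genuine content is in Proposition~\ref{prop:conjugategeneral}, and the present corollary is the observation that intersecting the conclusion of that proposition with the identity component produces the fixed subgroup $G_0^h$ appearing throughout Section~\ref{sec:1}.
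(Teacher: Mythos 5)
Your proposal is correct and is essentially the argument the paper intends: the paper states this as a \textbf{Corollary} and supplies no proof, treating it as immediate from Proposition~\ref{prop:conjugategeneral} together with the identity $G_0^h = G_0 \cap C(h)$, which is exactly what you wrote. The only thing worth flagging is that, per the hypothesis of Proposition~\ref{prop:conjugategeneral}, the ``sufficiently small'' condition on $X, X'$ is inherited here, which you correctly carry along.
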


%

\begin{lem} \label{lem:centslicesprep} Let $G$ be a compact Lie group. For a fixed $g \in G$, consider the adjoint action $\Ad_g: \mf{g} \to \mf{g}$. Define $A_g = \Ad_g - \id$ so that $\ker A_g = \mathrm{Lie}(C(g))$. Then ${\rm image} A_g \cap \ker A_g = 0$. \end{lem}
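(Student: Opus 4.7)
The plan is to exploit the compactness of $G$ to reduce the lemma to a basic fact about orthogonal operators. Since $G$ is compact, we can fix an $\Ad$-invariant inner product $\langle -, - \rangle$ on $\mf{g}$; with respect to this inner product, $\Ad_g$ is an orthogonal transformation for every $g \in G$. In particular, $\Ad_g$ is diagonalizable over $\mb{C}$ with eigenvalues on the unit circle, so $A_g = \Ad_g - \id$ is also diagonalizable with eigenvalues $\lambda - 1$ for $\lambda$ running through the eigenvalues of $\Ad_g$.

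The key step is to show that $\ker A_g$ and $\text{image}\, A_g$ are actually orthogonal complements of one another, from which the lemma follows immediately. To see this, compute the adjoint operator with respect to $\langle -, - \rangle$: since $\Ad_g^* = \Ad_g^{-1} = \Ad_{g^{-1}}$, we have $A_g^* = \Ad_{g^{-1}} - \id$. Moreover $\ker A_g = \ker A_{g^{-1}}$, because $\Ad_g X = X$ if and only if $X = \Ad_{g^{-1}} X$. Therefore
\[
(\text{image}\, A_g)^\perp \;=\; \ker A_g^* \;=\; \ker(\Ad_{g^{-1}} - \id) \;=\; \ker A_g,
\]
so $\text{image}\, A_g$ and $\ker A_g$ are orthogonal complements in $\mf{g}$, and in particular intersect trivially.

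I don't anticipate any serious obstacle here: the only ingredient beyond elementary linear algebra is the existence of an $\Ad$-invariant inner product on $\mf{g}$, which is standard for compact $G$. The mildly subtle point worth checking carefully is the equality $\ker A_g = \ker A_{g^{-1}}$, but as noted above this is immediate from the definition of $A_g$.
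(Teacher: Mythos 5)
Your proof is correct, and it takes a genuinely different (and in fact more robust) route than the paper's. The paper shows $\ker A_g^2 = \ker A_g$ by appealing to the fact that $\Ad_g$ is ``self-adjoint with respect to the nondegenerate Killing form,'' hence semisimple. That phrasing is imprecise on two counts: $\Ad_g$ is \emph{orthogonal} (i.e.\ $\Ad_g^* = \Ad_g^{-1}$) rather than self-adjoint with respect to any $\Ad$-invariant form, and for a compact Lie group with positive-dimensional center the Killing form is actually degenerate, so the semisimplicity conclusion does not follow as directly as the paper suggests. Your argument avoids both issues by working with an $\Ad$-invariant \emph{positive-definite} inner product, which exists for any compact $G$ and is nondegenerate by fiat. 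You then observe directly that $(\mathrm{image}\, A_g)^\perp = \ker A_g^* = \ker A_{g^{-1}} = \ker A_g$, so the image and kernel are orthogonal complements and hence intersect trivially. This is slightly more direct than the paper's $\ker A_g^2 = \ker A_g$ reformulation, and it applies uniformly without a semisimplicity hypothesis. The only thing worth noting is that your route and the paper's are both exploiting the same essential structure (invariance of a bilinear form under $\Ad$), so the difference is in the choice of form and the packaging rather than in any deep new idea.
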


\begin{proof} We wish to show $\ker A_g^2 = \ker A_g$, i.e., the generalized eigenspace of $\Ad_g$ with eigenvalue $1$ is in fact just a usual eigenspace. But this follows from $\Ad_g$ being self-adjoint with respect to the nondegenerate Killing form, so that all generalized eigenspaces of $\Ad_g$ are usual eigenspaces. \end{proof}

\begin{lem} \label{lem:centslices} Given $G$ as above and $g \in G$, for any element $X \in \mf{g}$ sufficiently small, there exists some small $Y \in \mf{g}^g$ such that $ge^X$ is conjugate to $ge^Y$. \end{lem}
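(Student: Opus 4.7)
\noindent
The plan is to realize the desired conjugation via an implicit function theorem argument, whose invertibility hypothesis is essentially the content of Lemma~\ref{lem:centslicesprep}. Concretely, I would look for a conjugating element of the form $e^Z$ with $Z \in \mf{g}$ small, so that
\[
e^{-Z} g e^X e^Z \;=\; g\,\bigl(g^{-1}e^{-Z}g\bigr)\,e^X e^Z \;=\; g\, e^{-\Ad_{g^{-1}}(Z)}\,e^X\,e^Z.
\]
The task then reduces to producing small $Z \in \mf{g}$ and $Y \in \mf{g}^g$ with
\[
e^{-\Ad_{g^{-1}}(Z)}\,e^X\,e^Z \;=\; e^Y.
\]

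\noindent
To implement this, I would define the smooth map
\[
\Psi\colon \mf{g} \times \mf{g} \to \mf{g}, \qquad \Psi(Z,X) := \log\!\bigl(e^{-\Ad_{g^{-1}}(Z)}\,e^X\,e^Z\bigr),
\]
well-defined in a neighborhood of $(0,0)$. A direct computation using the Baker--Campbell--Hausdorff formula gives the first-order expansion
\[
\Psi(Z,X) \;=\; X \;-\; A_{g^{-1}}(Z) \;+\; \text{(higher order in $X$, $Z$)},
\]
where $A_{g^{-1}} = \Ad_{g^{-1}} - \id$. By Lemma~\ref{lem:centslicesprep} applied to $g^{-1}$, we have the direct sum decomposition $\mf{g} = \ker(A_{g^{-1}}) \oplus \mathrm{image}(A_{g^{-1}}) = \mf{g}^g \oplus \mathrm{image}(A_{g^{-1}})$. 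Let $\pi\colon \mf{g}\to \mathrm{image}(A_{g^{-1}})$ be the projection with kernel $\mf{g}^g$, and restrict $Z$ to lie in $\mathrm{image}(A_{g^{-1}})$. Consider
\[
F\colon \mathrm{image}(A_{g^{-1}}) \times \mf{g} \to \mathrm{image}(A_{g^{-1}}), \qquad F(Z,X) := \pi\bigl(\Psi(Z,X)\bigr).
\]

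\noindent
The main step is to apply the implicit function theorem to $F$ at $(0,0)$. Here the key point, which I expect to be the only subtle issue, is the invertibility of $\partial_Z F(0,0)$: this derivative equals $-\pi \circ A_{g^{-1}}$ restricted to $\mathrm{image}(A_{g^{-1}})$, which by Lemma~\ref{lem:centslicesprep} is precisely the restriction of $-A_{g^{-1}}$ to its own image. Since $\ker(A_{g^{-1}}) \cap \mathrm{image}(A_{g^{-1}}) = 0$, this restriction is injective, and by dimension count is an isomorphism of $\mathrm{image}(A_{g^{-1}})$ to itself. The implicit function theorem then yields, for every sufficiently small $X$, a (unique small) $Z = Z(X) \in \mathrm{image}(A_{g^{-1}})$ with $F(Z(X),X) = 0$.

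\noindent
Setting $Y := \Psi(Z(X),X)$, the vanishing of $\pi(\Psi(Z(X),X))$ forces $Y \in \mf{g}^g$, and by construction $Y$ is small, with $e^Y = e^{-\Ad_{g^{-1}}(Z)} e^X e^Z$. Multiplying both sides on the left by $g$ and rearranging gives $e^{-Z} g e^X e^Z = g e^Y$, which exhibits the desired conjugacy between $ge^X$ and $ge^Y$. The only real obstacle was verifying the invertibility hypothesis of the implicit function theorem, and this is exactly what Lemma~\ref{lem:centslicesprep} was set up to provide.
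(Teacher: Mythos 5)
Your proof is correct and implements essentially the same argument as the paper's: use Lemma~\ref{lem:centslicesprep} to obtain the decomposition $\mf{g} = \mf{g}^g \oplus \mathrm{image}(A_{g^{-1}})$, then run an infinitesimal (implicit function theorem) argument at $(0,0)$. The paper compresses the analytic step into the remark that ``it suffices to prove the above infinitesimally''; you make the same reduction explicit by writing out the map $\Psi$, the projection $\pi$, and the invertibility of $\partial_Z F$, but the underlying route and the role of the key lemma are identical.
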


\bp It suffices to prove the above infinitesimally, i.e., to show that on the tangent space $T_gG \simeq \mf{g}$ as identified with the Lie algebra by left-translation under $g^{-1}$, the orbit of $\mf{g}^g$ under the ($g$-twisted) adjoint action spans the full tangent space. But indeed, the centralizer $\mf{g}^g$ is exactly $\ker A_g$ as in the previous lemma, while the infinitesimal adjoint action under conjugacy spans ${\rm image}A_g$. The prior lemma plus a simple dimension count yields that $\mf{g} \simeq \ker A_g \oplus {\rm image}A_g$, i.e., the full tangent space is spanned by the centralizer and infinitesimal deformations under conjugacy, which is what we wished to show. \ep

We recall from Definition~\ref{defn:C2G} that $\mathcal{C}^2(G)\subset G\times G$ is the subsheaf of pairs of commuting elements in~$G$, and we use the notation $h=(h_1,h_2)\in \mathcal{C}^2(G)$ to denote an element of the set $h\in \mathcal{C}^2(G)(\pt)$, i.e., $(h_1,h_2)\in G\times G$ are just a pair of commuting elements in $G$. 

\begin{lem} \label{lem:ellcentslices} Given $G$ as above and $(h_1, h_2) \in \mathcal{C}^2(G)$, suppose $X_1, X_2 \in \mf{g}$ sufficiently small are such that $(h_1e^{tX_1}, h_2e^{tX_2}) \in \mathcal{C}^2(G)$ for $0 < t < 1$. Then there exists some small $Y_1, Y_2 \in \mf{t}_{\mf{g}^h}$ such that $he^X$ is conjugate to $he^Y$. \end{lem}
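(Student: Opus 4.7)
The strategy is to iterate Lemma~\ref{lem:centslices} twice, interleaved with the commutativity relation and Lemma~\ref{lem:BlockGetzler}, in order to push both perturbations into $\fg^h$; a standard Lie-theoretic step then conjugates the resulting commuting pair into $\mf{t}_{\fg^h}$.

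\emph{Step 1 (first application).} First I would apply Lemma~\ref{lem:centslices} to the pair $(h_1, X_1)$: for $X_1$ sufficiently small, one obtains $g \in G$ (which the implicit function theorem underlying that lemma allows us to choose close to the identity) and a small $Z_1 \in \fg^{h_1}$ with $g h_1 e^{X_1} g^{-1} = h_1 e^{Z_1}$. Conjugating the whole pair by $g$ produces $(h_1 e^{Z_1},\, h_2 e^{\tilde{X}_2})$ for some small $\tilde{X}_2 \in \fg$ (using that $g h_2 g^{-1}$ stays close to $h_2$), and this pair still commutes. Expanding the commutation relation to first order via Baker--Campbell--Hausdorff and using $Z_1 \in \fg^{h_1}$, one reads off both $\tilde{X}_2 \in \fg^{h_1}$ and $Z_1 \in \fg^{h_2 e^{\tilde{X}_2}} \subseteq \fg^{h_2}$, the latter inclusion from Lemma~\ref{lem:BlockGetzler} with $M = G$. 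Thus $Z_1 \in \fg^h$.

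\emph{Step 2 (second application).} Next I would apply Lemma~\ref{lem:centslices} inside the compact subgroup $C(h_1) \subset G$, whose Lie algebra is $\fg^{h_1}$, with distinguished element $h_2$ and perturbation $\tilde{X}_2 \in \fg^{h_1}$: this yields some $k \in C(h_1)$ close to the identity and a small $Y_2 \in \fg^{h_1} \cap \fg^{h_2} = \fg^h$ with $k h_2 e^{\tilde{X}_2} k^{-1} = h_2 e^{Y_2}$. Since $k$ centralizes $h_1$, the first factor transforms as $h_1 e^{Y_1}$ with $Y_1 := \Ad_k Z_1 \in \fg^{h_1}$ (as $\Ad_k$ stabilizes $\fg^{h_1}$). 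Repeating the Baker--Campbell--Hausdorff argument of Step~1 with the roles of the factors swapped, and now using $Y_2 \in \fg^h$, forces $Y_1 \in \fg^{h_2 e^{Y_2}} \subseteq \fg^{h_2}$ (again Lemma~\ref{lem:BlockGetzler}), whence $Y_1 \in \fg^h$; substituting back into the commutation relation gives the residual identity $e^{Y_1} e^{Y_2} = e^{Y_2} e^{Y_1}$, which for small inputs yields $[Y_1, Y_2] = 0$.

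\emph{Step 3 (reduction to the torus).} At this stage $Y_1, Y_2$ span an abelian subalgebra of the compact Lie algebra $\mathrm{Lie}(G_0^h) \subseteq \fg^h$. By the standard fact that any abelian subalgebra of a compact connected Lie algebra lies in some maximal torus subalgebra, combined with the conjugacy of maximal tori under the connected group $G_0^h$, there exists $\ell \in G_0^h$ with $\Ad_\ell Y_i \in \mf{t}_{\fg^h}$ for $i=1,2$. Since $\ell \in G^h$, conjugation by $\ell$ fixes both $h_1$ and $h_2$, so the composite conjugation $\ell k g$ carries $(h_1 e^{X_1}, h_2 e^{X_2})$ to $(h_1 e^{\Ad_\ell Y_1}, h_2 e^{\Ad_\ell Y_2})$, as required.

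The main technical subtlety lies in Step~2: one must verify that the application of Lemma~\ref{lem:centslices} inside $C(h_1)$ produces a conjugating element $k$ both within $C(h_1)$ (so the first factor remains of the required form $h_1 e^{(\cdots)}$) and close enough to the identity (so that subsequent first-order BCH expansions remain valid). The remainder is bookkeeping with the first-order commutation relation, Lemma~\ref{lem:BlockGetzler}, and the classical reduction of commuting elements to a maximal torus of the identity component of $G^h$.
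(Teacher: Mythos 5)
Your proof establishes the correct result, and the skeleton (two iterated applications of Lemma~\ref{lem:centslices} followed by a conjugation into a Cartan subalgebra of $\fg^h$) matches the paper's. But the details are genuinely different, and there is a local flaw worth flagging.

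\textbf{Comparison of approaches.} The paper's proof begins by translating the one-parameter commutation hypothesis into the exact conditions $X_1 \in \fg^{h_2}$, $X_2 \in \fg^{h_1}$, $[X_1, X_2] = 0$, and then applies Lemma~\ref{lem:centslices} \emph{inside} $C(h_2)$ with $g = h_1$, $X = X_1$. Working inside $C(h_2)$ is the crucial simplification: the output $Y_1$ automatically lands in $(\fg^{h_2})^{h_1} = \fg^h$ in one shot, and the conjugating element automatically preserves the $\fg^{h_2}$-structures. The second application is then inside $C(h_1, Y_1)$, which both adjusts $X'_2$ into $\fg^h$ and pins $Y_1$ fixed. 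You instead apply Lemma~\ref{lem:centslices} first inside all of $G$ (no preliminary conditions needed), and only then recover the Lie-algebra constraints a posteriori via Lemma~\ref{lem:BlockGetzler}; your second application is inside $C(h_1)$ rather than $C(h_1, Y_1)$, so you must separately argue that $Y_1$ is forced into $\fg^{h_2}$. The upside of your approach is that it does not rely on the initial equivalence the paper asserts without proof; the cost is two extra invocations of Block--Getzler and the need to track that the first factor's exponent survives the second conjugation.

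\textbf{A flawed step (fortunately unused).} In Step~1 you assert $Z_1 \in \fg^{h_2 e^{\tilde{X}_2}} \subseteq \fg^{h_2}$ ``from Lemma~\ref{lem:BlockGetzler} with $M = G$.'' But Block--Getzler's containment $C(k e^W) \subseteq C(k)$ requires $W \in \fg^k$, and you have only established $\tilde{X}_2 \in \fg^{h_1}$, not $\tilde{X}_2 \in \fg^{h_2}$ (indeed there is no reason for the latter: the original $X_2$ need not lie in $\fg^{h_2}$, and conjugation by $g \in G$ need not preserve $\fg^{h_2}$). So the inclusion $\fg^{h_2 e^{\tilde{X}_2}} \subseteq \fg^{h_2}$ is not justified, and the conclusion ``$Z_1 \in \fg^h$'' closing Step~1 is unsupported. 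By contrast, the analogous maneuver in Step~2 \emph{is} valid, because there $Y_2 \in \fg^h \subseteq \fg^{h_2}$ so Block--Getzler genuinely applies to $h_2 e^{Y_2}$. As it happens, Step~2 never uses the claim ``$Z_1 \in \fg^h$''; it only needs $Z_1 \in \fg^{h_1}$ (from Lemma~\ref{lem:centslices}) and $\tilde{X}_2 \in \fg^{h_1}$ (which you correctly derive from Block--Getzler applied to $h_1 e^{Z_1}$, since $Z_1 \in \fg^{h_1}$). So the overall argument closes, but you should delete the unjustified claim $Z_1 \in \fg^h$ or replace Step~1's first application with the paper's application inside $C(h_2)$, which delivers the claim for free.

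One further small point: in Step~1 you attribute the derivation of $\tilde{X}_2 \in \fg^{h_1}$ to ``first-order Baker--Campbell--Hausdorff,'' but a linearization alone does not yield the exact membership $\tilde{X}_2 \in \fg^{h_1}$; the exact statement needs Block--Getzler applied to $h_1 e^{Z_1}$ (which you implicitly use anyway). Rephrase that sentence to lean on Block--Getzler rather than a first-order expansion.
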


\bp The commutation hypothesis is equivalent to $X_1 \in \mf{g}^{h_2}, X_2 \in \mf{g}^{h_1}, [X_1, X_2] = 0$. Hence, we may first use the above Lemma~\ref{lem:centslices} with $G = C(h_2)$ to conjugate $X_1$ into $Y_1 \in \mf{g}^{h_1}$; as it is still also in $\mf{g}^{h_2}$, it is in $\mf{g}^h$; let us suppose $X_2$ has now been conjugated to some $X'_2$. We may then apply Lemma~\ref{lem:centslices} with $G = C(h_1, Y_1)$ to conjugate $X'_2$ further into $\mf{g}^{h_2}$ and hence also lie in $\mf{g}^h$; note that $Y_1$ stays fixed under this further conjugation. Finally, as $[X_1, X_2] = 0$, we also have that $Y_1, Y_2$ commute. We may hence simultaneously conjugate them from $\mf{g}^h$ into a Cartan (maximal abelian subalgebra) $\mf{t}_{\mf{g}^h}$ as they are certainly inside some Cartan, and all Cartans are conjugate. \ep

%
%

\begin{lem}\label{lem:lcrank} For $h\in \mathcal{C}^2(G)$, the function $h \mapsto \text{rank }G_0^h$ is locally constant. \end{lem}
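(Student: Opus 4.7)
The plan is to prove this by reducing to a standard form for nearby commuting pairs and then identifying $\mf{t}_{\mf{g}^h}$ as a maximal abelian subalgebra inside the (possibly smaller) fixed subalgebra $\mf{g}^{h'}$. The key observation is that the rank of $G_0^h$ equals $\dim\mf{t}_{\mf{g}^h}$, which is a $G_0$-conjugation invariant of $h$, so it suffices to compare $\dim\mf{t}_{\mf{g}^h}$ and $\dim\mf{t}_{\mf{g}^{h'}}$ after moving $h'$ into a convenient position.

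First I would apply the essential surjectivity statement established in the proof of Lemma~\ref{lem:cov2} (which in turn rests on Lemma~\ref{lem:ellcentslices}): for $h'\in \mathcal{C}^2(G)$ sufficiently close to $h$, there is some $g\in G_0$ such that $g h' g^{-1}=(h_1 e^{Y_1},h_2 e^{Y_2})$ with $Y_1,Y_2\in \mf{t}_{\mf{g}^h}$ of norm less than $\delta$, where $\delta=\delta(h)$ is as in Lemma~\ref{lem:BlockGetzler}. Since conjugation preserves the rank of a centralizer, I may replace $h'$ by this standard representative without loss of generality.

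Next, Lemma~\ref{lem:BlockGetzler} applied to $M=G_0$ gives the inclusion $\mf{g}^{h'}\subset \mf{g}^h$. Crucially, I would then observe that $\mf{t}_{\mf{g}^h}$ is itself contained in $\mf{g}^{h'}$: for any $Z\in \mf{t}_{\mf{g}^h}$,
\[
\Ad_{h_i e^{Y_i}}Z = \Ad_{h_i}\,\Ad_{e^{Y_i}}Z=\Ad_{h_i}Z=Z,
\]
where the middle equality uses $[Y_i,Z]=0$ by abelianness of $\mf{t}_{\mf{g}^h}$, and the last uses $Z\in\mf{g}^h\subset\mf{g}^{h_i}$. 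Therefore $\mf{t}_{\mf{g}^h}$ is an abelian subalgebra of $\mf{g}^{h'}$. It is in fact a \emph{maximal} abelian subalgebra of $\mf{g}^{h'}$: any abelian subalgebra of $\mf{g}^{h'}$ containing $\mf{t}_{\mf{g}^h}$ would, via the containment $\mf{g}^{h'}\subset\mf{g}^h$, be an abelian subalgebra of $\mf{g}^h$ containing the already-maximal $\mf{t}_{\mf{g}^h}$, and hence coincide with $\mf{t}_{\mf{g}^h}$. It follows that $\dim\mf{t}_{\mf{g}^{h'}}=\dim\mf{t}_{\mf{g}^h}$, giving equal ranks on a neighborhood of $h$ in $\mathcal{C}^2(G)$.

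The only delicate point is the reduction step invoking essential surjectivity, since Lemma~\ref{lem:ellcentslices} is stated with a path-commutation hypothesis on $(X_1,X_2)$ rather than an arbitrary nearby pair in $\mathcal{C}^2(G)$; however this is exactly the content already extracted in the proof of Lemma~\ref{lem:cov2}, so the present argument can simply quote it. Once the reduction is granted, the remainder is the short Lie-theoretic comparison above, where no delicate estimates are needed because the maximality of $\mf{t}_{\mf{g}^h}$ in the larger $\mf{g}^h$ forces its maximality in any intermediate subalgebra.
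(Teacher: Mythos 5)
Your proof is correct and follows essentially the same route as the paper's: reduce via Lemma~\ref{lem:ellcentslices} to deformations of $h$ along $\mf{t}_{\mf{g}^h}$, use Lemma~\ref{lem:BlockGetzler} to get the inclusion $G_0^{h'}<G_0^h$, and then observe that the fixed maximal abelian subalgebra (equivalently, $T_{G_0^h}$) remains maximal inside $\mf{g}^{h'}$. You have simply spelled out at the Lie algebra level the one-line observation the paper makes at the group level, and your handling of the reduction step via essential surjectivity is sound.
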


\bp This follows from Lemma~\ref{lem:ellcentslices} and Lemma~\ref{lem:BlockGetzler}: it suffices to check local constancy as $h$ is deformed to $he^{\epsilon X}$ for $X \in \mf{t}_{\mf{g}^h}$ and $\epsilon$ sufficiently small, whereupon one may take $T_{G^h_0}$ to be a maximal torus for both $G_0^h$ and $G_0^{he^{\epsilon X}}$. \ep

\subsection{The Cartan model for equivariant cohomology}\label{sec:equivdeRham}

The equivariant cohomology of a manifold with $G$-action is defined by the Borel construction,
$$
\H_G(M):=\H(M\times_G EG),
$$
where above $\H(-)$ denotes ordinary cohomology with complex coefficients. By naturality, $\H_G(M)$ is a module over $\H_G(\pt)=\H(BG)$. The following standard facts will be useful; cohomology is taken with complex coefficients, $\H(-)=\H(-;\C)$.

\begin{lem}\label{lem:howuseful} For $G$ connected there is a natural isomorphism~$\H_G(M)\simeq \H_T(M)^W$ for any maximal torus $T<G$ with Weyl group $W=N(T)/T$. 
\end{lem}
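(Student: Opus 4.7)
The plan is to exploit the fibration $G/T \hookrightarrow M \times_T EG \to M \times_G EG$, since $EG$ is contractible with a free $G$-action and hence serves equally as a model for $ET$. Writing $M_G = M \times_G EG$ and $M_T = M \times_T EG$, the natural quotient map $\pi \colon M_T \to M_G$ is a fiber bundle whose typical fiber is the flag variety $G/T$. Moreover, the Weyl group $W = N(T)/T$ acts on $M_T$ through its right action on $EG/T$, and this action commutes with $\pi$, making $\pi$ into a principal-$W$-covering relation between $M_T$ and $M_G$ up to the $G/T$ fiber.

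First I would invoke two classical facts. One, $\H^\bullet(G/T; \C)$ is concentrated in even degrees (a consequence of the Bruhat decomposition of the complex flag variety $G_\C/B$, which is homotopy equivalent to $G/T$); two, as a $W$-representation, $\H^\bullet(G/T;\C)$ is the regular representation, so in particular $\H^\bullet(G/T;\C)^W \simeq \C$ is one-dimensional, concentrated in degree zero. The first fact ensures that the Serre spectral sequence
\[
E_2^{p,q} = \H^p(M_G; \H^q(G/T;\C)) \Longrightarrow \H^{p+q}(M_T;\C)
\]
degenerates at $E_2$ (all differentials land between even and odd total degree on a fiber cohomology concentrated in even degree — more precisely one may apply Leray--Hirsch using a choice of $W$-equivariant lifts of a basis of $\H^\bullet(G/T;\C)$ to $\H^\bullet(BT;\C)$ via the projection $BT \to BG$, for instance the Chern roots of the associated line bundles).

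From the degeneration one obtains an isomorphism of $\H^\bullet_G(M)$-modules
\[
\H^\bullet_T(M) \simeq \H^\bullet_G(M) \otimes_\C \H^\bullet(G/T;\C),
\]
which, by naturality of the Leray--Hirsch class in the universal case $M = \pt$ (where it is the classical statement $\H^\bullet(BT;\C) \simeq \H^\bullet(BG;\C) \otimes \H^\bullet(G/T;\C)$ as $W$-modules, with $W$ acting only on the second factor through the regular representation), intertwines the $W$-actions with $W$ acting trivially on $\H^\bullet_G(M)$. Taking $W$-invariants on both sides yields
\[
\H^\bullet_T(M)^W \simeq \H^\bullet_G(M) \otimes_\C \H^\bullet(G/T;\C)^W \simeq \H^\bullet_G(M),
\]
and this isomorphism is the one induced by the pullback $\pi^*\colon \H^\bullet_G(M) \to \H^\bullet_T(M)$, giving the naturality claim.

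The only subtle point, and hence the main obstacle, is verifying the $W$-equivariance of the Leray--Hirsch isomorphism, i.e., that the splitting can be chosen so that $W$ acts trivially on the $\H^\bullet_G(M)$-factor. The cleanest route is to reduce to $M = \pt$ via naturality — the universal case is Borel's theorem that $\H^\bullet(BT;\C)^W \simeq \H^\bullet(BG;\C)$, and the naturality of the Serre spectral sequence for the bundle map $M_T \to BT$ over $M_G \to BG$ transports this identification to arbitrary $M$.
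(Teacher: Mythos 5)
The paper records this lemma as a standard fact without proof, so there is no paper-internal argument to compare against. Your Leray--Hirsch argument via the flag bundle $G/T \hookrightarrow M_T \to M_G$ is a correct and recognizable route to the result, and the identification of the comparison map as $\pi^*$ gives the naturality.

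One point to tighten: the opening claim that even-degree fiber cohomology alone forces $E_2$-degeneration of the Serre spectral sequence is not correct as stated. For $q$ even, a differential $d_r\colon E_r^{p,q}\to E_r^{p+r,q-r+1}$ lands in odd fiber degree only when $r$ is even; the odd-$r$ differentials (beginning with $d_3$) are not ruled out by parity alone. What actually kills them is the Leray--Hirsch argument you pivot to: the restriction $\H^\bullet(BT;\C)\to\H^\bullet(G/T;\C)$ is surjective (Borel), a $W$-equivariant graded lift exists because $W$ is finite and the coefficients are $\C$ (Maschke in each degree), and pulling back along the $W$-equivariant map $M_T\to BT$ produces Leray--Hirsch classes that make the splitting $W$-equivariant, with $W$ acting trivially on $\pi^*\H_G^\bullet(M)$ because $\pi\circ w=\pi$ for all $w\in W$. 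With that in hand, your passage to $W$-invariants is fine.

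A leaner route, which also links up with the paper's Lemma~\ref{lem:howuseful2} stated immediately afterwards, is to factor through the normalizer: $M_T\to M_{N(T)}\to M_G$. The first map is a principal $W$-covering, so $\H^\bullet_{N(T)}(M)\simeq\H^\bullet_T(M)^W$ by that lemma. The second has fiber $G/N(T)$, and $\H^\bullet(G/N(T);\C)\simeq\H^\bullet(G/T;\C)^W\simeq\C$ by the same regular-representation fact you invoke, so $G/N(T)$ is $\C$-acyclic and the Serre spectral sequence collapses immediately to give $\H^\bullet_G(M)\simeq\H^\bullet_{N(T)}(M)$. Both routes rest on the same structure theorem for $\H^\bullet(G/T;\C)$ as a $W$-module, but the $N(T)$ factorization avoids the Leray--Hirsch bookkeeping and the explicit choice of $W$-equivariant lifts.
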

\begin{lem} \label{lem:howuseful2}For $H<G$ a normal subgroup of finite index, there is a natural isomorphism $\H_G(M)\simeq \H_H(M)^{G/H}$. 
\end{lem}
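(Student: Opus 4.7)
The plan is to unwind the Borel construction and exhibit the map $\H_H(M)\to \H_G(M)$ as coming from a finite regular covering. By definition, $\H_G(M)=\H^\bullet(M\times_G EG;\C)$, and since $H<G$ acts freely on any contractible free $G$-space, the space $EG$ is also a valid model for $EH$. Thus we may compute $\H_H(M)\simeq \H^\bullet(M\times_H EG;\C)$ using the same contractible space $EG$.

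Next I would observe that because $H$ is \emph{normal} in $G$, the finite quotient group $G/H$ acts freely on $M\times_H EG$, with orbit space
\[
(M\times_H EG)/(G/H)\;\simeq\; M\times_G EG.
\]
Hence the natural quotient map
\[
p\colon M\times_H EG\;\longrightarrow\; M\times_G EG
\]
is a regular covering with deck transformation group $G/H$. At this point the lemma reduces to the standard topological fact that for a regular covering $p\colon \widetilde X\to X$ with finite deck group $F$, the pullback $p^\ast$ induces a natural isomorphism $\H^\bullet(X;\C)\xrightarrow{\sim}\H^\bullet(\widetilde X;\C)^F$, the inverse being given by the averaging transfer $\tfrac{1}{|F|}\sum_{f\in F}f^\ast$ (this uses only that $|F|$ is invertible in the coefficient ring $\C$).

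Applying this with $F=G/H$, $\widetilde X=M\times_H EG$, $X=M\times_G EG$ yields
\[
\H_G(M)\;=\;\H^\bullet(M\times_G EG;\C)\;\simeq\;\H^\bullet(M\times_H EG;\C)^{G/H}\;=\;\H_H(M)^{G/H},
\]
and naturality in $M$ follows from naturality of both the Borel construction and the averaging operator. The main (very mild) point to verify is that the $G/H$-action on $M\times_H EG$ is free and that its quotient is $M\times_G EG$, which is immediate from the normality of $H$; everything else is either a definition or the classical averaging argument in characteristic zero, so no genuine obstacle is anticipated.
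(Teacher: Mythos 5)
Your argument is correct, and it is the standard transfer argument; the paper simply asserts this lemma without proof, treating it as well-known. A couple of small points you verify implicitly but should be aware of: the $G/H$-action on $M\times_H EG$ is well-defined precisely because $H$ is normal (conjugation by $g$ sends $H$ into $H$), and freeness of that action comes from freeness of $G$ on $EG$, exactly as you note. Given that the deck group $G/H$ is finite and $\C$ has characteristic zero, the averaging transfer $\tfrac{1}{|G/H|}\sum f^\ast$ inverts $p^\ast$ onto $\H_H(M)^{G/H}$, and naturality in $M$ is inherited from the Borel construction. No gaps.
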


The Cartan model for equivariant cohomology starts with the graded algebra $\Omega^{\bullet}_G(M):= \Sym(\mf{g}_{\mb{C}};\Omega^\bullet(M))^G$, where the polynomial generators in $\mathfrak{g}^\vee_\C \subset \Sym(\mf{g}_{\mb{C}}^\vee)$ have degree~2 and differential forms have their standard degree. We identify elements of $\Sym(\mf{g}_{\mb{C}};\Omega^\bullet(M))^G$ with $G$-invariant polynomial functions on $\mathfrak{g}_{\mb{C}}$ valued in $\Omega^\bullet(M)$. In this description, define a differential~$Q$ on $\Omega^{\bullet}_G(M)$ 
\beq
(Q\alpha)(X)=d(\alpha(X))-\iota_X\alpha(X),\qquad X\in \mathfrak{g}_\C\quad \alpha\in \Omega^{\bullet}_G(M)\label{eq:Cartandiff}
\eeq
extended complex-linearly, where $d$ is the ordinary de~Rham differential on forms, and $\iota_X$ denotes contraction with the vector field on~$M$ associated to~$X$ under the infinitesimal action of~$G$ on~$M$. One verifies that $Q^2=0$ on $G$-invariants using Cartan's magic formula. The chain complex $(\Omega^{\bullet,{\rm pol}}_G(M),Q)$ is the \emph{Cartan model} for equivariant cohomology, and we have an isomorphism
$$
\H((\Omega^{\bullet}_G(M)),Q)\simeq \H_G(M).
$$
We refer to~\cite{MeinrenkenCartan} for an excellent introduction to equivariant cohomology in the Cartan model. 



\subsection{Lie groupoids, sheaves, and smooth stacks}\label{appen:stacks}

Let ${\sf Mfld}$ denote the category of manifolds and smooth maps. A \emph{Lie groupoid}, denoted $\{G_1 \rightrightarrows G_0\}$, consists of a manifold of objects,~$G_0$, a manifold of morphisms,~$G_1$, source and target maps, $s,t\colon G_1\to G_0$, a unit map $G_0\to G_1$, and a composition map $c\colon G_1\times_{G_0}G_1\to G_1$. We further require that $s$ is a submersion so that the fibered product $G_1\times_{G_0}G_1$ exists in manifolds. These data are required to satisfy the usual axioms of a groupoid. 

\begin{ex}
Let a Lie group~$G$ act on a manifold~$M$. The \emph{action Lie groupoid}, denoted~$M\sq G$, has~$M$ as objects and $G\times M$ as morphisms. The source map $s\colon G\times M\to M$ is the projection, and the target map $t\colon G\times M\to M$ is the action map. The unit $M\to G\times M$ is the inclusion along the identity element $e\in G$. Composition is inherited from multiplication in $G$. 
\end{ex}

A \emph{presheaf} is a functor $F\colon {\sf Mfld}^\op \to {\sf Sets}$. A presheaf $F$ is a \emph{sheaf} if for all open covers $\{U_i\}$ of all manifolds $S$, the diagram
$$
F(S)\to \prod_i F(U_i)\rightrightarrows \prod_{i,j} F(U_i\bigcap U_j)
$$
is an equalizer. The set $F(S)$ are the \emph{$S$-points} of the (pre)sheaf~$F$. A (pre)sheaf is \emph{representable} when its values are determined by the set of maps to a fixed smooth manifold, $F(S)=\Map(S,N)$, $N\in {\sf Mfld}$. Note that a representable presheaf is a sheaf. When working with the functor of points, we will frequently use the same notation to denote a smooth manifold and its representable sheaf so that, e.g., $N(S)=\Map(S,N)$ is the $S$-points on~$N$. The following examples indicate the flavors of non-representable presheaves that appear in the body of the paper, namely sub-objects and (coarse) quotients. 


\begin{ex}\label{ex:subobject}
Given a smooth manifold $Z$, let $Y\subset Z$ be a subset (not necessarily a smooth submanifold). Define a presheaf whose $S$-points are maps $S\to Z$ with image in the subset $Y\subset Z$. It is easy to check that this presheaf is in fact a sheaf. In a mild abuse of notation, we usually denote the presheaf defined above by~$Y$. 
\end{ex}

\begin{ex}\label{ex:coarsequotient} Given a $G$-manifold $M$, define the coarse quotient presheaf $(M\cq G)^{\mathrm{pre}}$ as having $S$-points the set $M(S)/G(S)$; explicitly, these are $S$-points of $M$ subject to the equivalence relation that a pair of maps $f,f'\colon S\to M$ are equivalent if there is $g\colon S\to G$ such that $f'=g\cdot f$ using the $G$-action on $M$ on the right hand side. Define the coarse quotient sheaf, denoted $M\cq G$, as the sheafification of the presheaf $(M\cq G)^{\mathrm{pre}}$. \end{ex}

\begin{defn}\label{defn:genLiegrpd}
Define a \emph{generalized Lie groupoid} as a groupoid objects in presheaves on manifolds, denoted $\{ G_1 \rightrightarrows G_0 \}$. Explicitly, the data of a generalized Lie groupoid consists of presheaves $G_1,G_0$, and maps of presheaves called source, target, unit, and composition. These data are required to satisfy the properties of a functor ${\sf Mfld}^\op \to {\sf Grpd}$ from manifolds to groupoids given by~$S \mapsto \{ G_1(S) \rightrightarrows G_0(S) \}$. 
\end{defn}





\begin{defn}
A \emph{smooth stack} is a category fibered in groupoids over manifolds satisfying descent with respect to open covers. 
\end{defn}

For each manifold~$S$ a stack assigns a groupoid, and to each map $S\to S'$, a stack assigns a functor between groupoids. These data can be assembled into a weak 2-functor from manifolds to groupoids. A weak 2-functor from manifolds to groupoids that doesn't necessarily satisfy descent is called a \emph{prestack}. {\it Stackification} is the left adjoint to the forgetful functor from stacks to prestacks.

\begin{ex}
Any presheaf (of sets) on the site of smooth manifolds determines a prestack, and any sheaf determines a stack. Indeed, there is a faithful embedding of sheaves into stacks. In particular, smooth manifolds (regarded as representable sheaves) embed into smooth stacks. We often use the same notation, e.g., $N$, to denote a smooth manifold, its representable sheaf, and the associated smooth stack. 
\end{ex}

A reference for the relationship between Lie groupoids and stacks is~\cite[\S1]{BehrendXu}. We briefly review some of the highlights. 

\begin{ex} \label{ex:prestack}The $S$-points of a generalized Lie groupoid $G=\{G_1\rightrightarrows G_0\}$ define a prestack whose value on $S$ is the groupoid $\{G_1(S)\rightrightarrows G_0(S)\}$. All the stacks in this paper come from applying stackification to prestacks of this form. We use the notation~$[G_1\rightrightarrows G_0]$ or $[G_0/G_1]$ to denote the stackification of the prestack $\{G_1\rightrightarrows G_0\}$.  \end{ex}

\begin{ex}\label{ex:stackquotient} Given a $G$-manifold $M$, the quotient stack $[M\sq G]$ is the stack underlying the action Lie groupoid $M\sq G$. Explicitly, a map $S\to [M\sq G]$ is the data of a pair $(P, \sigma)$, where $P\to S$ is a principal $G$-bundle on $S$, and $\sigma\colon P\to M$ is a $G$-equivariant map. Isomorphisms between $S$-points $(P,\sigma)\Rightarrow (P',\sigma')$ are isomorphisms of principal bundles $P\to P'$ compatible with the $G$-equivariant maps to $M$.\end{ex}

\begin{rmk}
Note that there is always a map (in the category of smooth stacks) $[M\sq G] \to M\cq G$ from the stack quotient to the coarse quotient sheaf. This is an isomorphism (in the category of stacks) if and only if the $G$-action on $M$ is free so that the sheaf $M\cq G$ is representable. 
\end{rmk}

\begin{defn} A \emph{groupoid presentation} of a stack $\X$ is a Lie groupoid $\{G_1\rightrightarrows G_0\}$ whose underlying stack is equivalent to~$\X$, i.e., $\X\simeq [G_1\rightrightarrows G_0]$. When such a presentation exists, $\X$ is a \emph{differentiable stack}. \end{defn}




\begin{defn}\label{defn:atlas}
An \emph{atlas} for a stack $\X$ is a map $p\colon U\to \X$ whose source is a manifold~$U$ with the property that for any other map $q\colon V\to \X$ whose source is a manifold $V$, the 2-fibered product $U\times_\X V$ is representable by a smooth manifold, and the map $U\times_\X V\to V$ is a submersion of manifolds. 
\end{defn}

\begin{lem}
An atlas $U\to \X$ defines the groupoid presentation, $\{U\times_\X U\rightrightarrows U\}$ where all the structure maps in the groupoid are constructed from the universal property of the pullback. Hence a stack has a Lie groupoid presentation if and only if it admits an atlas. 
\end{lem}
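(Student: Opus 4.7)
The plan is to first construct the groupoid $\{U \times_\X U \rightrightarrows U\}$ from an atlas and then exhibit an equivalence of stacks between its stackification and $\X$. Given an atlas $p \colon U \to \X$, the 2-fibered product $U \times_\X U$ is representable by a manifold by definition, and the source and target projections $s, t \colon U \times_\X U \to U$ are submersions. The unit map $U \to U \times_\X U$ is determined by the identity 2-cell $p \Rightarrow p$ via the universal property of the pullback. The composition map arises from the canonical identification $(U \times_\X U) \times_U (U \times_\X U) \simeq U \times_\X U \times_\X U$, followed by projection to the outer two factors; inverses come from the swap map exchanging the two factors of $U \times_\X U$. All groupoid axioms reduce to universal properties of iterated 2-fibered products over $\X$.

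The heart of the argument is to produce an equivalence $[U \times_\X U \rightrightarrows U] \simeq \X$. There is an obvious comparison functor sending an $S$-point of the stack quotient to its image in $\X$. To verify essential surjectivity on a given $S$-point $q \colon S \to \X$, I would use that the pullback $S \times_\X U \to S$ is a surjective submersion of manifolds (surjectivity is part of the atlas hypothesis applied to $q$), so it admits local sections $\{\sigma_i \colon S_i \to U\}$ over an open cover $\{S_i\}$ of $S$. These sections together with the 2-cells relating $p \sigma_i$ and $p \sigma_j$ on overlaps $S_{ij}$ (realized as maps $S_{ij} \to U \times_\X U$ using the universal property of the pullback) give precisely the descent data defining an $S$-point of the stack quotient. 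Fully faithfulness reduces, via the same local sections, to the sheaf condition on morphisms in $\X$ for the cover $\{S_i \to S\}$, which is part of what it means for $\X$ to be a stack.

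The ``if and only if'' statement is then formal. A groupoid presentation $\X \simeq [G_1 \rightrightarrows G_0]$ automatically exhibits $G_0 \to \X$ as an atlas, since for any manifold $V$ with map $V \to \X$ one computes $G_0 \times_\X V \simeq G_1 \times_{G_0, t} V$ by the definition of the stackification via principal bundles with equivariant maps (Example~\ref{ex:stackquotient}); this is representable, and its projection to $V$ is a submersion because $s$ is. The reverse implication is supplied by the preceding two paragraphs. The main obstacle is the fully faithfulness step in the stack equivalence: one has to carefully unpack how morphisms in the stackification of the prestack $\{U \times_\X U \rightrightarrows U\}$ are glued from local data, and then invoke descent for $\X$ on these local data; once this is handled, the remainder is purely formal manipulation of 2-fibered products.
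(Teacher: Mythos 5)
The paper states this lemma without proof; it appears in the background appendix with a pointer to Behrend--Xu~\cite[\S1]{BehrendXu}, so there is no in-paper argument to compare against. Your proof is the standard one and its overall structure is sound, but two points deserve attention.

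First, you write that ``surjectivity is part of the atlas hypothesis applied to $q$.'' Looking at Definition~\ref{defn:atlas}, the paper only demands that $U\times_\X V\to V$ be a \emph{submersion}, not a \emph{surjective} submersion. Without surjectivity the stackification of $\{U\times_\X U\rightrightarrows U\}$ only presents the image substack of $p\colon U\to \X$, and your essential-surjectivity argument (which extracts local sections of $S\times_\X U\to S$) breaks down. Surjectivity is indeed necessary for the lemma to hold and is standard in most treatments of atlases, so your argument is mathematically fine; you should simply note that you are reading ``submersion'' as ``surjective submersion'' rather than inheriting surjectivity from the stated definition.

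Second, in the ``only if'' direction the identification $G_0\times_\X V\simeq G_1\times_{G_0,t}V$ is only valid when the given map $V\to \X$ lifts through $G_0\to \X$. For a general $V\to \X=[G_1\rightrightarrows G_0]$ the fibered product $G_0\times_\X V$ is the $\{G_1\rightrightarrows G_0\}$-torsor classifying $V\to\X$, and your formula holds only locally on $V$ after choosing a lift (which exists on an open cover by construction of the stackification). This still suffices to conclude representability and submersivity of $G_0\times_\X V\to V$, since both are local properties, but the formula as written is slightly too strong. Also, Example~\ref{ex:stackquotient} in the paper treats only action groupoids $M\sq G$, not general Lie groupoids, so the appeal to ``principal bundles with equivariant maps'' for a general $\{G_1\rightrightarrows G_0\}$ is a (reasonable but unstated) extension of that example.
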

%


Finally, we will construct holomorphic structures on stacks in terms of holomorphic atlases, defined as follows. 

\begin{defn}\label{defn:holomorphic}
A \emph{holomorphic atlas} is an atlas $U\to \X$ where $U$ and $U\times_\X U$ are given the structure of a complex manifold and all the structure maps in the groupoid $\{U\times_\X U\rightrightarrows U\}$ are holomorphic. Given a smooth stack $\X$, a choice of holomorphic atlas $U\to \X$ is a \emph{choice of holomorphic structure}, and $\X$ with this fixed choice is a \emph{complex analytic stack}. 
\end{defn}

\begin{ex}
Suppose that a discrete group $G$ acts on a complex manifold $Z$ preserving the complex structure. Then $Z\to [Z\sq G]$ is a holomorphic atlas for the quotient stack. 
\end{ex}

\bibliographystyle{amsalpha}
\bibliography{references}

\newcommand{\etalchar}[1]{$^{#1}$}
\providecommand{\bysame}{\leavevmode\hbox to3em{\hrulefill}\thinspace}
\providecommand{\MR}{\relax\ifhmode\unskip\space\fi MR }
\providecommand{\MRhref}[2]{%
  \href{http://www.ams.org/mathscinet-getitem?mr=#1}{#2}
}
\providecommand{\href}[2]{#2}
\begin{thebibliography}{DHVW85}

\bibitem[AB84]{AtiyahBott}
M.~Atiyah and R.~Bott, \emph{The moment map and equivariant cohomology},
  Topology \textbf{23} (1984).

\bibitem[ABS64]{ABS}
M.~Atiyah, R.~Bott, and A.~Shapiro, \emph{Clifford modules}, Topology
  \textbf{3} (1964), no.~suppl. 1, 3--38.

\bibitem[AF07]{Andodiscrete}
M.~Ando and C.~French, \emph{Discrete torsion for the supersingular orbifold
  sigma genus}, London Math. Soc. Lecture Note Ser. \textbf{Elliptic
  cohomology} (2007).

\bibitem[AHR10]{AHR}
M.~Ando, M.~Hopkins, and C.~Rezk, \emph{Multiplicative orientations of
  {KO}-theory and the spectrum of topological modular forms}, Draft (2010).

\bibitem[AHS01]{AHSI}
M.~Ando, M.~Hopkins, and N.~Strickland, \emph{Elliptic spectra, the theorem of
  the cube, and the {Witten} genus}, Invent. math. \textbf{146} (2001).

\bibitem[And00]{Ando}
M.~Ando, \emph{Power operations in elliptic cohomology and representations of
  loop groups}, Transactions of the American Mathematical Society \textbf{352}
  (2000).

\bibitem[And03]{AndoOrientation}
\bysame, \emph{The sigma orientation for analytic circle-equivariant elliptic
  cohomology}, Geom. Topol. \textbf{7} (2003), 91--153.

\bibitem[AO16]{AganagicOkounkov}
M.~Aganagic and A.~Okounkov, \emph{Elliptic stable envelope}, Arxiv preprint
  (2016).

\bibitem[BDG{\etalchar{+}}16]{BDGHK}
M.~Bullimore, T.~Dimofte, D.~Gaiotto, J.~Hilburn, and H.~Kim, \emph{Vortices
  and {V}ermas}, Arxiv preprint (2016).

\bibitem[BDGH16]{BDGH}
M.~Bullimore, T.~Dimofte, D.~Gaiotto, and J.~Hilburn, \emph{Boundaries, mirror
  symmetry, and symplectic duality in 3d {N}=4 gauge theory}, Arxiv preprint
  (2016).

\bibitem[BE14]{DBE_Equiv}
D.~Berwick-Evans, \emph{Equivariant elliptic cohomology, gauged sigma models,
  and discrete torsion}, preprint (2014).

\bibitem[BE20]{DBE_MQ}
\bysame, \emph{Supersymmetric field theories and the elliptic index theorem
  with complex coefficients}, to appear in Geometry and Topology (2020).

\bibitem[BET19]{Ell1}
D.~Berwick-Evans and A.~Tripathy, \emph{A model for complex analytic
  equivariant elliptic cohomology from quantum field theory}, preprint (2019).

\bibitem[BG94]{BlockGetzler}
J.~Block and E.~Getzler, \emph{Equivariant cyclic homology and equivariant
  differential forms}, Ann. Sci. Ecole Normale Sup. \textbf{4} (1994).

\bibitem[BGV92]{BGV}
N.~Berline, E.~Getzler, and M.~Vergne, \emph{Heat kernels and {Dirac}
  operators}, Springer, 1992.

\bibitem[Bor62]{Borel}
A.~Borel, \emph{Sous-groupes commutatifs et torsion des groupes de {L}ie
  compactes}, T\^ohoku Math. Jour. \textbf{13} (1962).

\bibitem[BT99]{BakerThomas}
A.~Baker and C.~Thomas, \emph{Classifying spaces, {Virasoro} equivariant
  bundles, elliptic cohomology and {Moonshine}}, Glasgow University Mathematics
  Department preprint (1999).

\bibitem[BX11]{BehrendXu}
K.~Behrend and P.~Xu, \emph{Differentiable stacks and gerbes}, J. Symplectic
  Geom. \textbf{9} (2011), no.~3.

\bibitem[BZN15]{BZN1}
D.~Ben-Zvi and D.~Nadler, \emph{Elliptic {S}pringer theory}, Compos. Math.
  \textbf{151} (2015), no.~8, 1568--1584.

\bibitem[Car10]{CarnahanI}
S.~Carnahan, \emph{Generalized moonshine {I}: {Genus-zero} functions}, Algebra
  Number Theory \textbf{4} (2010), no.~6.

\bibitem[Dev96]{DevotoII}
J.~Devoto, \emph{Equivariant elliptic homology and finite groups}, Michigan
  Math. J. \textbf{43} (1996).

\bibitem[Dev98]{DevotoI}
\bysame, \emph{An algebraic description of the elliptic cohomology of
  classifying spaces}, Journal of Pure and Applied Algebra \textbf{130} (1998).

\bibitem[DHVW85]{stringorbifolds}
L.~Dixon, J.~Harvey, C.~Vafa, and E.~Witten, \emph{Strings on orbifolds},
  Nuclear Physics B (1985).

\bibitem[DV93]{DufloVergne}
M.~Duflo and M.~Vergne, \emph{Cohomologie \'equivariante et descente},
  Ast\'erisque \textbf{215} (1993).

\bibitem[FQ93]{FreedQuinn}
D.~Freed and F.~Quinn, \emph{{Chern-Simons} theory with finite gauge group},
  Comm. Math. Phys. \textbf{156} (1993).

\bibitem[Gan09]{GanterHecke}
N.~Ganter, \emph{Hecke operators in equivariant elliptic cohomology and
  generalized {M}oonshine}, Groups and symmetries, CRM Proc. Lecture Notes,
  vol.~47, Amer. Math. Soc., Providence, RI, 2009, pp.~173--209.

\bibitem[Gan14]{GanterEllipticWCF}
\bysame, \emph{The elliptic {W}eyl character formula}, Compos. Math.
  \textbf{150} (2014), no.~7, 1196--1234.

\bibitem[GKV95]{GKV}
V.~Ginzburg, M.~Kapranov, and E.~Vasserot, \emph{Elliptic algebras and
  equivariant elliptic cohomology}, Arxiv preprint (1995).

\bibitem[GM20]{LenartDavid}
D.~Gepner and L.~Meier, \emph{On equivariant topological modular forms},
  preprint (2020).

\bibitem[Gro07]{Grojnowski}
I.~Grojnowski, \emph{Delocalised equivariant elliptic cohomology}, Elliptic
  cohomology: Geometry, applications, and higher chromatic analogues ({H.
  Miller and D. Ravenel}, ed.), London Mathematical Society, 2007.

\bibitem[HKR00]{HKR}
M.~Hopkins, N.~Kuhn, and D.~Ravenel, \emph{Generalized group characters and
  complex oriented cohomology theories}, J. Amer. Math. Soc. \textbf{13}
  (2000).

\bibitem[Hop94]{HopkinsICM94}
M.~Hopkins, \emph{Topological modular forms, the {Witten} genus, and the
  theorem of the cube}, Proceedings of the ICM \textbf{1} (1994).

\bibitem[JF19]{TheoMoonshine}
Theo Johnson-Freyd, \emph{The {Moonshine} anomaly}, Communications in
  Mathematical Physics \textbf{365} (2019).

\bibitem[KL87]{KL}
D.~Kazhdan and G.~Lusztig, \emph{Proof of the {D}eligne-{L}anglands conjecture
  for {H}ecke algebras}, Invent. Math. \textbf{87} (1987), no.~1, 153--215.

\bibitem[Liu96]{Liu}
K.~Liu, \emph{Modular forms and topology}, {In Proc. of the AMS Conference on
  the Monster and Related Topics, Contemporary Math.} (1996).

\bibitem[Lur09]{Lurie}
J.~Lurie, \emph{A survey of elliptic cohomology}, Algebraic Topology (N.~Baas,
  E.~Friedlander, J.~Bj\"orn, and P.~{O}st\ae{r}, eds.), vol.~4, Springer
  Berlin Heidelberg, 2009.

\bibitem[Lur19]{LurieIII}
J.~Lurie, \emph{Elliptic cohomology {III}: {T}empered cohomology}, Preprint
  (2019).

\bibitem[Mas87]{Mason}
G.~Mason, \emph{Finite groups and modular functions}, The {Arcata} Conference
  on Representations of Finite Groups, Proc. Sympos. Pure Math., Amer. Math.
  Soc. \textbf{47} (1987).

\bibitem[Mei06]{MeinrenkenCartan}
E.~Meinrenken, \emph{Equivariant cohomology and the {C}artan model}, Overview
  article for the Encyclopedia of Mathematical Physics (2006).

\bibitem[MO12]{MO}
D.~Maulik and A.~Okounkov, \emph{Quantum groups and quantum cohomology}, Arxiv
  preprint (2012).

\bibitem[Mor09]{Morava}
J.~Morava, \emph{Moonshine elements in elliptic cohomology}, Groups and
  symmetries, CRM Proc. Lecture Notes, vol.~47, Amer. Math. Soc., Providence,
  RI, 2009, pp.~247--257.

\bibitem[Mos57]{Mostow}
G.~Mostow, \emph{Equivariant embeddings in {Euclidean} space}, Anals. Math.
  \textbf{65} (1957).

\bibitem[MQ86]{MathaiQuillen}
V.~Mathai and D.~Quillen, \emph{Superconnections, {Thom} classes and
  equivariant differential forms}, Topology \textbf{25} (1986).

\bibitem[Pal57]{Palais}
R.~Palais, \emph{Imbedding of compact differentiable transformation groups in
  orthogonal representations}, J. Math. Mech. \textbf{6} (1957).

\bibitem[PS86]{PressleySegal}
A.~Pressley and G.~Segal, \emph{Loop groups}, Oxford Mathematical Monographs,
  The Clarendon Press, Oxford University Press, New York, 1986, Oxford Science
  Publications.

\bibitem[Qui69]{Quillen}
D.~Quillen, \emph{On the formal group laws of unoriented and complex cobordism
  theory}, Bull. Amer. Math. Soc. \textbf{75} (1969), 1293--1298.

\bibitem[Ros01]{RosuEquivariant}
I.~Rosu, \emph{Equivariant elliptic cohomology and rigidity}, Amer. J. Math.
  \textbf{123} (2001), no.~4, 647--677.

\bibitem[RW19]{Rimanyi}
R.~Rimanyi and A.~Weber, \emph{Elliptic classes of {Schubert} cells via
  {Bott-Samelson} resolution}, Arxiv preprint (2019).

\bibitem[Seg68]{SegalRepRing}
Graeme Segal, \emph{The representation ring of a compact {L}ie group}, Inst.
  Hautes \'{E}tudes Sci. Publ. Math. (1968), no.~34, 113--128.

\bibitem[Seg88]{Segal_Elliptic}
G.~Segal, \emph{Elliptic cohomology}, S\'eminaire N. Bourbaki \textbf{695}
  (1988).

\bibitem[Sha03]{Sharpediscrete}
E.~Sharpe, \emph{Discrete torsion in perturbative heterotic string theory},
  Phys.Rev. D \textbf{68} (2003).

\bibitem[ST04]{ST04}
S.~Stolz and P.~Teichner, \emph{What is an elliptic object?}, Topology,
  geometry and quantum field theory, London Math. Soc. LNS 308, Cambridge Univ.
  Press (2004), 247--343.

\bibitem[ST11]{ST11}
\bysame, \emph{Supersymmetric field theories and generalized cohomology},
  Mathematical Foundations of Quantum Field and Perturbative String Theory ({B.
  Jur{\v c}o, H. Sati, U. Schreiber}, ed.), Proceedings of Symposia in Pure
  Mathematics, 2011.

\bibitem[{Sta}18]{stacks-project}
The {Stacks Project Authors}, \emph{\textit{Stacks Project}},
  \url{https://stacks.math.columbia.edu}, 2018.

\bibitem[Vaf86]{Vafatorsion}
C.~Vafa, \emph{Modular invariance and discrete torsion on orbifolds}, Nuclear
  physics B (1986).

\bibitem[Ver94]{Vergne}
M.~Vergne, \emph{Geometric quantization and equivariant cohomology}, First
  European Congress of Mathematics (A.~Joseph, F.~Mignot, F.. Murat, B.~Prum,
  and R.~Rentschler, eds.), Progress in Mathematics, vol 3. {Birkh\"auser}
  Basel, 1994.

\bibitem[Ver20]{VergneEll}
\bysame, \emph{Bouquets revisited and equivariant elliptic cohomology},
  preprint (2020).

\bibitem[Wil08]{Willerton}
S.~Willerton, \emph{The twisted {Drinfeld} double of a finite group via gerbes
  and finite groupoids}, Algebraic and Geometric Topology \textbf{8} (2008).

\bibitem[Wit88]{Witten_Dirac}
E.~Witten, \emph{The index of the {Dirac} operator in loop space}, Elliptic
  Curves and Modular Forms in Algebraic Topology (P.~Landweber, ed.), Springer
  Berlin Heidelberg, 1988.

\bibitem[YZ17]{YangZhao}
Y.~Yang and G.~Zhao, \emph{Quiver varieties and elliptic quantum groups}, Arxiv
  preprint (2017).

\bibitem[Zag86]{Zagiermodular}
D.~Zagier, \emph{Note on the {Landweber-Stong} elliptic genus}, Elliptic curves
  and modular forms in Alg. Top. (1986).

\bibitem[ZZ15]{ZhaoZhong}
G.~Zhao and C.~Zhong, \emph{Elliptic affine {Hecke} algebras and their
  representations}, Arxiv preprint (2015).

\end{thebibliography}

\end{document}